\documentclass[11pt,a4paper]{article}
\pdfoutput=1

\usepackage[utf8]{inputenc}
\usepackage[T1]{fontenc}
\usepackage[english]{babel}
\usepackage{microtype}
\usepackage{amsmath}
\usepackage{amsfonts}
\usepackage{amssymb}
\usepackage{amsthm}
\usepackage{amscd}
\usepackage{latexsym}
\usepackage{thmtools}
\usepackage{mathtools} 
\usepackage{lmodern}
\usepackage{stmaryrd}
\usepackage{MnSymbol}
\usepackage{extarrows} 

\usepackage{hyperref}
\hypersetup{
    colorlinks=true,
    linkcolor=blue,
    urlcolor=RoyalBlue,
    citecolor=ForestGreen
    }
    
\usepackage[noabbrev,capitalize,nameinlink]{cleveref}
\crefname{prop}{Proposition}{Propositions}
\crefname{fact}{Fact}{Facts}

\usepackage[titles]{tocloft} 
\usepackage[shortlabels]{enumitem} 
\setlist[enumerate,1]{label={\upshape\alph*)}}
\setlist[enumerate,2]{label={\upshape\roman*)}}

\usepackage{xfrac}
\usepackage{graphicx}
\usepackage[dvipsnames]{xcolor} 
\usepackage{slashed}
\usepackage[autostyle=true]{csquotes}
\usepackage[all,cmtip]{xy}
\usepackage{tikz-cd}
\usepackage{cancel} 
\usepackage{xparse}

\usepackage[a4paper]{geometry}

\usepackage[citestyle=alphabetic,
            bibstyle=alphabetic,
            backend=bibtex,
            url=true,
            doi=false,
            isbn=false,
            giveninits=true]{biblatex}

\bibliography{absolute_cycles}
\AtEveryBibitem{%
  \ifentrytype{article}{
    \clearfield{url}%
    \clearfield{urldate}%
  }{}
  \ifentrytype{book}{
    \clearfield{url}%
    \clearfield{urldate}%
  }{}
  \ifentrytype{incollection}{
    \clearfield{url}%
    \clearfield{urldate}%
  }{}
} 

\declaretheorem[name=Theorem,numberwithin=section]{thm}
\declaretheorem[name=Lemma,numberlike=thm]{lem}
\declaretheorem[name=Corollary,numberlike=thm]{cor}
\declaretheorem[name=Proposition,numberlike=thm]{prop}

\declaretheorem[name=Definition,numberlike=thm,style=definition,qed=\(\lozenge\)]{defn}
\declaretheorem[name=Definition,numberlike=thm,style=definition,qed=\(\lozenge\),numbered=no]{defnintro}
\declaretheorem[name=Example,numberlike=thm,style=definition,qed=\(\lozenge\)]{ex}

\declaretheorem[name=Remark,numberlike=thm,style=definition,qed=\(\lozenge\)]{rem}

\newtheorem{thmintro}{Theorem}

\newcommand{\N}{\mathbb{N}}
\newcommand{\IN}{\mathbb{N}}

\newcommand{\IZ}{\mathbb{Z}}

\newcommand{\R}{\mathbb{R}}
\newcommand{\IR}{\mathbb{R}}
\newcommand{\C}{\mathbb{C}}

\newcommand{\cO}{\mathcal{O}}

\newcommand{\cF}{\mathcal{F}}

\newcommand{\cH}{\mathcal{H}}

\newcommand{\supp}{\mathrm{supp}}

\newcommand{\abs}{\mathrm{abs}}
\newcommand{\rel}{\mathrm{rel}}

\DeclareMathOperator{\im}{im}

\newcommand{\pt}{\mathrm{pt}} 
\newcommand{\loc}{\mathrm{loc}} 
\newcommand{\cc}{{cc}}

\newcommand{\olO}{\overline{\Omega}}
\newcommand{\dO}{\partial\Omega}
\newcommand{\dom}{\mathrm{dom}}

\newcommand{\fBH}{\mathfrak{B}(H)}

\newcommand{\lsim}{\lesssim}
\newcommand{\Dirac}{\slashed{D}}

\newcommand{\exc}{\mathrm{exc}}

\numberwithin{equation}{section} 

\author{Alexander Engel\thanks{Institut f{\"u}r Mathematik und Informatik der Universit{\"a}t Greifswald, Walther-Rathenau-Str.\,47, 17489 Greifswald, Germany, 
\href{mailto:alexander.engel@uni-greifswald.de}{alexander.engel@uni-greifswald.de}}
\and Matti Lyko\thanks{Institut f{\"u}r Mathematik und Informatik der Universit{\"a}t Greifswald, Walther-Rathenau-Str.\,47, 17489 Greifswald, Germany, 
\href{mailto:matti.lyko@uni-greifswald.de}{matti.lyko@uni-greifswald.de}}
}

\title{Regular boundary conditions and analytic $K$-homology}

\begin{document}

\maketitle

\begin{abstract}
  In their seminal work \cite{BDT1989}, Baum--Douglas--Taylor showed that under a certain compactness assumption, local boundary conditions for first-order elliptic operators over compact manifolds with boundary define cycles for relative $K$-homology. Using a systematic approach to regular boundary conditions, we generalize their result to elliptic operators of arbitrary order and to non-compact manifolds with potentially non-compact boundary. In addition, we show that under stronger regularity conditions, local boundary conditions even define cycles for absolute $K$-homology. In contrast to the relative classes, the resulting absolute $K$-homology classes are in general not independent of the choice of boundary condition.
\end{abstract}

\setcounter{tocdepth}{1}
\tableofcontents

\paragraph{Acknowledgements.}
We thank Rudolf Zeidler for encouriging us to write this paper based on the PhD thesis of the second named author. We acknowledge financial support by Deutsche Forschungsgemeinschaft DFG (grant EN 1163/7-1, project number 510244757, Uniformly elliptic boundary value problems).


\section{Introduction}

$K$-homology is the homology theory dual to $K$-theory. Its significance stems from the fact that elliptic (pseudo-)differential operators over manifolds naturally define classes in it, which allows one to extract the topological information stored in these operators. Using $K$-homology one can prove the Atiyah--Singer index theorem \cite{Baum2018,Baum2016}, derive localized analytic indices by exploiting the connection between $K$-homology and cyclic cohomology, and use this to prove the Novikov conjecture for hyperbolic groups \cite{CH1990}, and much more. It also plays a fundamental role in the Baum--Connes conjecture by constituting its domain \cite{BCH1994}. The idea for $K$-homology goes back to Atiyah \cite{Atiyah1969} and was refined by Kasparov \cite{Kasparov1975}.

In the case of a compact manifold $M$ with boundary $\partial M$, an elliptic operator $D$ over $M$ does not canonically define an (unbounded) self-adjoint operator on the corresponding $L^2$-space of sections on which $D$ acts, but one has to choose a boundary condition. In relation to $K$-homology, in their seminal paper \cite{BDT1989} Baum--Douglas--Taylor proved that local boundary conditions for an elliptic first-order operator $D$ which satisfy a certain regulairty property define cycles for the relative $K$-homology $K_*(M,\partial M)$, i.e.~a \emph{relative} cycle. It turns out that the corresponding relative $K$-homology class is actually independent of the chosen boundary condition. Moreover, they identify the image of this relative class under the $K$-homological boundary map $K_*(M,\partial M)\to K_{*-1}(\partial M)$ in geometrically interesting situations. In particular, they show (assuming $M$ to be spin) that the class of the Dirac operator over $M$ is mapped to the class of the corresponding Dirac operator over~$\partial M$.

In the present paper, we extend the result of Baum--Douglas--Taylor on the existence of relative cycles from boundary conditions simultaneously in the following directions:
\begin{itemize}
\item We generalize to non-compact manifolds with possibly non-compact boundary.
\item We generalize to higher-order differential operators.
\item We provide additional regularity assumptions under which boundary conditions define cycles in the $K$-homology group $K_*(M)$, i.e.~\emph{absolute} cycles.
\end{itemize}
We stress that the absolute cycle in the third point is over $M$ with its boundary, and not over the interior $M^\circ \coloneqq M \setminus \partial M$. That $D$ defines a class in $K_*(M^\circ)$ was established by Baum--Douglas--Taylor for first-order operators. Defining cycles for $K_*(M)$ is more subtle; the associated classes do depend on the boundary condition, while those in $K_*(M^\circ)$ do not. We discuss an explicit case of this in \cref{ex_euler_char}.

Further, we work with Lipschitz domains $\Omega$ in $M$ to allow for more general domains than manifolds with boundary (e.g., $\Omega$ could have corners). Conceptually, the Lipschitz assumption on the boundary $\dO$ (as opposed to arbitrarily rough boundaries) is inessential, but it streamlines the Sobolev space theory which forms our technical foundation.

Baum--Douglas--Taylor formulate their regularity condition needed for a relative cycle via compactness of resolvents. While this provides the most general statement, we found any generalizations to non-compact manifolds unsatisfying. Instead, we use a regularity notion based on inclusions of the domain into local Sobolev spaces. While Sobolev spaces are only one way to obtain compactness properties a la Baum--Douglas--Taylor, this approach is still quite general, and in line with current approaches to the general theory of boundary conditions for elliptic operators \cite{BaerBallmann2012,Grosse2014,Bandara2023,Baer2025}. Specifically, we work with the following notions:

\begin{defnintro}
Let $D$ be an elliptic differential operator of order $d$ over an open Lipschitz domain $\Omega \subset M$. A \emph{boundary condition} for $D$ is a closed extension $D_e$ of $D$ such that the Hilbert space adjoint $D_e^*$ in $L^2(\Omega)$ is a closed extension of the formal adjoint $D^\dagger$.
We call $D_e$ \emph{semi-regular} if $\dom(D_e)$ is in $H^d_\loc(\Omega)$, and \emph{regular} if both $D_e$ and $D_e^*$ are semi-regular. Moreover, we call $D_e$ \emph{local} if $\dom(D_e)$ is preserved by multiplication with any smooth, compactly supported function on $\olO$.
\end{defnintro}

The reader not accustomed to this approach to boundary conditions is referred to \cref{just_bc}, and to the above-cited literature on the theory of boundary conditions for elliptic operators, where this operator-theoretic definition is connected to the more familiar picture of specified data on the boundary. We also comment that our notion of locality is adopted from the first-order case as treated in Baum--Douglas--Taylor. For higher-order operators, it is somewhat restrictive, for example not every differential boundary condition need be local in this sense (see \cref{ExNeumann} below).

An abridged version of our main theorem reads as follows (we have neglected the grading assumptions, how the Fredholm modules are exactly defined and how they are related to each other):

\begin{thmintro}[{cf.~\cref{ExistencKHomClasses}}]\label{main_thm_intro}
Let $M$ be a smooth Riemannian manifold (possibly with boundary or incomplete) and $\Omega \subset M$ be an open Lipschitz domain. Denote by $\Omega^\circ \coloneqq \Omega\setminus \partial M$ and $\dO\coloneqq \olO\setminus\Omega^\circ$ the interior and boundary, respectively. Let $D$ be a symmetric differential operator over $\Omega$ which is elliptic over $\olO$, and let $D_e$ be a boundary condition for $D$.
Then:
\begin{itemize}
\item[i)] $D_e$ defines an absolute cycle for $K_*(\Omega^\circ)$.
\item[ii)] Assume that $D_e$ is local and semi-regular. Then $D_e$ defines a relative cycle for $K_*(\olO,\partial \Omega)$.
\item[iii)] Assume that $D_e$ is local, self-adjoint and regular. Then $D_e$ defines an absolute cycle for $K_*(\olO)$.
\end{itemize}
The corresponding classes in $K$-homology provided in i) and ii) do not depend on the boundary condition, but the class provided by iii) does in general.
\end{thmintro}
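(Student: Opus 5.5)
The plan is to realize each cycle as a bounded transform of a suitable self-adjoint extension, $F = D_\cc(1+D_\cc^2)^{-1/2}$, acting on $L^2(\Omega)$ with the natural representation of the relevant function algebra by multiplication. The Fredholm-module axioms then reduce to two properties:
\[
  f(1-F^2)\ \text{compact} \qquad\text{and}\qquad [F,f]\ \text{compact}.
\]
In case i) the algebra is $C_0(\Omega^\circ)$, in ii) it is $C_0(\olO)$ with the ideal $C_0(\Omega^\circ)$, and in iii) it is $C_0(\olO)$.

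First, the self-adjoint extension. $D_e$ is only a closed extension with $D^\dagger \subset D_e^*$, and $D$ is symmetric, so I would pass to the doubled operator
\[
  \begin{pmatrix} 0 & D_e^* \\ D_e & 0 \end{pmatrix}
\]
on $L^2(\Omega)\oplus L^2(\Omega)$. This is self-adjoint and odd, and it shifts the degree by one. In iii) $D_e$ is already self-adjoint and can be used directly.

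\textbf{Case i).} For $f \in C_c^\infty(\Omega^\circ)$, interior elliptic regularity shows that $f$ maps $\dom(D_e)$, and likewise $\dom(D_e^*)$, into $H^d$ with compact support in $\Omega^\circ$, since both operators restrict to the maximal domain there. Rellich then gives that $f(1+D_e^*D_e)^{-1/2}$ is compact, at least after replacing it by a suitable power. For the commutator I would use the integral formula
\[
  (1+T^2)^{-1/2} = \frac{1}{\pi}\int_0^\infty \lambda^{-1/2}(1+\lambda+T^2)^{-1}\,d\lambda .
\]
The commutator $[T,f]$ is a differential operator of order $d-1$ supported in $\operatorname{supp} f$, so it is relatively bounded of lower order. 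Together with the compactness above, this makes $[F,f]$ compact, via norm-convergent integrals of compact operators. Independence of the boundary condition follows because two extensions agree on $C_c^\infty(\Omega^\circ)$-localized pieces up to compacts: $f D_e f = f D f$ on compactly supported sections. A straight-line homotopy of bounded transforms, localized by such $f$, then gives an operator homotopy.

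\textbf{Case ii).} Now $f \in C_c^\infty(\olO)$. Locality ensures that $f$ preserves $\dom(D_e)$, and then also $\dom(D_e^*)$ by a duality argument using that $[D^\dagger,f]$ is a differential operator. Hence $[T,f]$ is defined on the domain and is of order $d-1$. Semi-regularity of $D_e$, combined with Rellich for Lipschitz domains ($H^d$ compactly embedded in $L^2$ on bounded Lipschitz pieces), gives compactness of $f(1+D_e^*D_e)^{-1/2}$ on one side. The commutator argument from i) then goes through whenever one side of the estimate is compact, and this yields compact commutators with all of $C_0(\olO)$. Compactness of $f(1-F^2)$, however, is obtained only on one summand unless $f$ is supported in the interior, where i) applies. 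This is exactly the relative-cycle requirement for the pair $(\olO,\partial\Omega)$. Independence of the boundary condition holds because the boundary map (excision) identifies the relative class with the i)-class. Concretely, any two semi-regular local conditions give relative cycles restricting to the same interior cycle, and for relative $K$-homology this restriction is injective on such classes (as in BDT). Alternatively, one can interpolate $D_{e,t}$ along the direct sum with a degenerate module.

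\textbf{Case iii).} With $D_e$ self-adjoint and regular, the domain lies in $H^d_\loc(\olO)$. Hence $f(1+D_e^2)^{-1/2}$ is compact for every $f \in C_c(\olO)$, and the commutator argument of ii) applies with locality. This gives an absolute cycle. For the dependence on the boundary condition I would point to \cref{ex_euler_char}, using the Gauss--Bonnet operator with absolute versus relative boundary conditions, whose indices give $\chi(\olO)$ versus $\chi(\olO,\partial\Omega)$.

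\textbf{Main obstacle.} I expect the hardest step to be the commutator compactness in ii) and iii) for higher order $d$. There $[D,f]$ has order $d-1$, and one must show that it is bounded from the graph norm of $D_e$ into $L^2$ with a compact gain near the boundary. This requires an intermediate estimate
\[
  \|u\|_{H^{d-1}(\operatorname{supp} f)} \lesssim \|u\| + \|D_e u\|
\]
on $\dom(D_e)$, coming from semi-regularity via the closed graph theorem. Controlling the adjoint side for the doubled operator in ii), where only $D_e$ is semi-regular, is where the argument could fail. I would try the resolvent identity so that $[D,f]$ always lands next to a resolvent of $D_e$ rather than of $D_e^*$.
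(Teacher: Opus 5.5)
Your proposal has genuine gaps, and the first is structural. The cycle in the theorem is $(L^2(\Omega;E),\rho,F_e)$ with $F_e=D_e(D_e^*D_e+I)^{-1/2}$, taken in the degree of $D$. The same operator appears in i)--iii), so that excision and restriction relate the classes. Replacing $D_e$ by $\hat D_e=\begin{pmatrix}0&D_e^*\\ D_e&0\end{pmatrix}$ causes three problems. It shifts the degree. It clashes with your undoubled treatment of iii). And in the ungraded case it produces the zero class: $G_t=\cos t\,F_e+i\sin t$ is an operator homotopy from $\begin{pmatrix}0&F_e^*\\ F_e&0\end{pmatrix}$ to the degenerate $\begin{pmatrix}0&-i\\ i&0\end{pmatrix}$, as soon as $(I-F_e^*F_e)f$ and $(F_e-F_e^*)f$ are compact for $f\in C_0(\Omega^\circ)$. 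The doubling hides exactly the missing step: proving that $(F_e-F_e^*)f$ is compact over the interior. The paper gets this from symmetry of $D$, which makes $D_e^*$ a boundary condition for $D$, together with the interior comparison of two boundary conditions (\cref{BdTLowerOrderPerturbation}) applied to $D_d=D_e^*$. That comparison is also what your independence claim in i) needs. The identity $fD_ef=fDf$ says nothing about the nonlocal operators $F_e,F_d$. The paper has to show that the localized resolvent differences $g\bigl((D_e^*D_e-z)^{-1}-(D_d^*D_d-z)^{-1}\bigr)f$ gain order, and then pass to the functional calculus.

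In ii), two further steps fail for $d\geq 2$.

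\emph{Locality of the adjoint.} Locality of $D_e$ does not transfer to $D_e^*$. Your duality argument needs $[f,D_e]$ to be $L^2$-bounded, which holds only for $d=1$ (\cref{LocalFirstOrder}). Indeed $\Delta_{\min}$ is local while $\Delta_{\min}^*=\Delta_{\max}$ is not (\cref{ExMinMax}). Also $D_e^*$ need not be semi-regular. So $[\hat D_e,f]$ is not even defined on $\dom(\hat D_e)$. The way around this is never to commute $f$ with $D_e^*$. One has $[F_e^*,f]=-[F_e,\bar f]^*$, and $[f,D_e^*D_e]$ can be bounded weakly through $\langle [f,D_e]u,D_ev\rangle-\langle D_eu,[f,D_e]v\rangle$, which involves only $D_e$.

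\emph{The commutator estimate.} The estimate you single out, $\|u\|_{H^{d-1}}\lesssim\|u\|+\|D_eu\|$, only makes $[f,D_e]$ bounded on $\cH^1(D_e)$. Write $R_\lambda=(D_e^*D_e+(1+\lambda)I)^{-1}$ and $D_e[f,(D_e^*D_e+I)^{-1/2}]=\frac1\pi\int_0^\infty\lambda^{-1/2}D_eR_\lambda[D_e^*D_e,f]R_\lambda\,d\lambda$. With only that estimate, the integrand is bounded merely by $O(\lambda^{-1})$, so you get no norm convergence. What is needed is the fractional gain $[f,D_e]\colon\cH^{1-1/d}(D_e)\to L^2$. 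The paper obtains it by interpolating semi-regularity to $\cH^\alpha(D_e)\hookrightarrow H^{d\alpha}_\loc$ (\cref{lem_regular_interpolation}). A duality/interpolation step, using that the function is real-valued, then puts the commutator into $\cH^1(D_e)$.

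Once the correct cycle is used, the rest of your outline is fine: the Rellich argument for $(I-F^2)f$, excision for relative independence, and the Euler characteristic example for iii).
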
 

We will also generalize \cref{main_thm_intro}.ii)\&iii) to non-local boundary conditions. Given a boundary condition $D_e$ for $D$, we consider the $C^*$-algebra generated by all smooth, compactly supported functions on $\olO$ preserving the domain of $D_e$. By Gelfand duality, it is of the form $C_0(X_e)$ for some space $X_e$. In fact, $X_e$ arises from $\olO$ by quotienting the boundary in a way prescribed by the boundary condition $D_e$. If $D_e$ is semi-regular, we show that it defines a relative cycle for $K_*(X_e,X_e\setminus\Omega^\circ)$. If it is actually self-adjoint and regular, we show that it defines an absolute cycle for $K_*(X_e)$. In particular, if the functions preserving $\dom(D_e)$ generate the $C^*$-algebra $C_0(\olO)$, then \cref{main_thm_intro} still holds in the given form for $D_e$. 

Index theory on manifolds with boundary is a topic of ongoing research. For example, in \cite{Chang2015} Chang--Weinberger--Yu define a relative Baum--Connes index map from relative $K$-homology to the $K$-theory of a relative group $C^*$-algebra. The image of the relative class of the Dirac operator under this map obstructs the existence of a positive-scalar-curvature metric with product structure near the boundary (see also \cite{Schick2021}), and Chang--Weinberger--Yu conjecture that the vanishing of this obstruction is also (stably) sufficient for the existence of such a metric.

While boundary conditions are not necessary to define relative classes for, say, the Dirac operator, they can nevertheless provide convenient representatives, enabling efficient computations. For example, in a forthcoming article the second named author gives an alternative proof of the delocalized APS-index theorem (see \cite{Piazza2014,Xie2014}) using boundary conditions in this way \cite{LykoAPS}. 

The absolute classes defined by local, self-adjoint, and regular boundary conditions differ from the relative classes in that they do depend on the boundary condition (see \cref{ex_khom_chiral}). Hence, \cref{main_thm_intro} enables the direct study of such boundary conditions via K-homology. While they are not the focus of this article, we sketch potential applications to spaces of boundary conditions, Agranovich--Dynin-type formulas, and positive scalar curvature in \cref{sec_questions}.

Let us say a few words about the proof of \cref{main_thm_intro}. To prove that a boundary condition defines a K-homology cycle, different (local) compactness statements have to be shown. In elliptic theory, such statements ultimately reduce to the Rellich--Kondrachov compactness theorem, stating that the inclusion of a positive-degree Sobolev spaces into $L^2$-space is locally compact. To achieve this reduction in the boundaryless case, one often appeals to pseudo-differential operators and their mapping properties on the Sobolev scale. Extending this argument to the situation with boundary provides significant difficulties, especially if the boundary is not assumed to be smooth. To replicate the style of argument used in the boundaryless case, we will therefore not work with the scale of Sobolev spaces directly. Instead, we will use a formal scale of spaces adapted to the given boundary condition, where the desired mapping properties are more easily established. Only afterwards will we appeal to (semi-)regularity to connect our formal scale to the Sobolev scale, and deduce compactness from Rellich--Kondrachov.

The use of this formal operator calculus as a convenient substitute for the pseudo-differential calculus and the Sobolev scale is well-established. Its use in index theory goes back at least to Connes--Moscovici \cite{Connes1995}, and is also used in the recent article by Fries \cite{Fries2025} mentioned below. For examples outside index theory, the formal operator calculus features in the theory of boundary conditions developed by Bär--Bandara \cite{Baer2025}, and the investigations of the Calderón projection by Booß-Bavnbeck--Lesch--Zhu \cite{BoossBavnbek2009}.

We are aware of two previous generalizations of the work of Baum--Douglas--Taylor to non-compact manifolds:
\begin{itemize}
\item Forsyth--Goffeng--Mesland--Rennie \cite{Forsyth2019}. This work sets up the theory of relative spectral triples, and describes how to obtain relative $K$-homology cycles from them. First-order elliptic operators are given as an example (mostly covering the minimal boundary condition), and the construction of absolute cycles is touched upon in Section 2.2.2 therein, but not systematically as we do it here.
\item Fries \cite{Fries2025}. A follow-up to the previously cited article, generalizing to higher-order operators. For elliptic operators, relative cycles given by the minimal boundary condition are covered.
\end{itemize}
Both papers center the non-commutative geometry of relative spectral triples, and aim at understanding their image under the K-homological boundary map. Hence, they mainly focus on relative cycles and the minimal boundary condition. In contrast, we aim for a systematic treatment relating boundary conditions and regularity to K-homology, also making absolute cycles relevant and naturally interesting for our purposes.

This paper was developed out of the PhD thesis of the second named author \cite{LykoThesis}, whose topic was to provide a version of the results of Baum--Douglas--Taylor in uniform $K$-homology for uniformly elliptic operators over (non-compact) manifolds of bounded geometry and (possibly non-compact) boundary (the boundaryless case was carried out earlier by the first named author \cite{Engel2014,Engel2019,Engel2018,Engel2018NYJM}). During the project we realized that there does not exist a complete generalization of the results of Baum--Douglas--Taylor to non-compact manifolds in the usual non-uniform setting. Based on the relevant part of the second named author's PhD thesis, this article attempts to provide that generalization.

\subsection*{Structure of this article}

We begin with a brief review of a few relevant results from the theory of Sobolev spaces in \cref{sec_sobolev}. In \cref{sec_bc} we then define and discuss boundary conditions alongside their properties (regularity in particular). \cref{SectionExamples} contains some examples of boundary conditions, as well as a comparison of the regularity notion used here with others in the literature. In \cref{SectionOpCalc} we present the formal operator calculus, and carry out the main technical computation needed for the proof of the main theorem. That proof is given in \cref{SecMainResult}, alongside a few other facts pertaining to $K$-homology classes defined by boundary conditions, and a generalization of the main result to non-local boundary conditions. We conclude the article with a some further questions in \cref{sec_questions}.

\section{Preliminaries on Sobolev spaces and elliptic operators}\label{sec_sobolev}

This section offers a review of local Sobolev spaces on domains, the Rellich--Kondrachov compactness theorem in \cref{prop_rellich_kondrachov}, and interior regularity for elliptic operators in \cref{IntReg}. These results form the foundation for our subsequent discussion of regular boundary conditions, whose domain is (by definition) made up of functions in the local Sobolev space.

Let $M$ be a smooth Riemannian manifold (possibly with boundary or incomplete) and let $\Omega \subset M$ be a \emph{Lipschitz domain}, i.e. an open subset of $M$ that is locally the area below the graph of a Lipschitz function (compare e.g. \cite[Def.~1.2.1.1]{Grisvard1985}). Denote by $\bar\Omega$ the closure of $\Omega$ in $M$, and $\Omega^\circ \coloneqq \Omega \setminus \partial M$. We use the volume density of the Riemannian metric on $M$ to define $L^2$-spaces for subsets like $\Omega$ of $M$.

\begin{defn}
For $s\geq 0$ we define $H^s_\loc(\Omega)$ as the space of all distributions $u$ on $\Omega$ such that for every Lipschitz domain $V \subset \Omega^\circ$ that is precompact in $\olO$, we have $u|_V \in H^s(V)$. We turn $H^s_\loc(\Omega)$ into a locally convex topological vector space by equipping it with the family of seminorms $\|-|_V\|_{H^s(V)}$ for every such $V$.

If we have a vector bundle $E$ over $\Omega$, equipped with a metric and connection, we analogously define $H^s_\loc(\Omega;E)$.
\end{defn}

\begin{rem}
Let us clarify a few aspects of the above definition.
\begin{itemize}
\item Recall that a distribution $u$ on $\Omega$ is an element of the dual space of the space of all smooth, compactly supported functions on $\Omega$. The restriction $u|_V$ is the distribution on $V$ given by $(u|_V)(f)=u(\tilde{f})$, where $\tilde{f}$ denotes the extension by $0$ of the smooth, compactly supported function $f$ on $V$.
\item The distributions in $H^s_\loc(\Omega)$ may touch the boundary of $\Omega$ in $M$ as well as the boundary of $M$ itself. 
\item To define $H^s(V)$ we exploit that the closure of $V$ in $\bar\Omega$ is compact to cover $V$ by finitely many smooth coordinate charts of $M$ to reduce to the case of bounded Lipschitz domains in Euclidean space. The seminorm $\|-|_V\|_{H^s(V)}$ depends on the choice of atlas for $V$, but not its equivalence class, and hence the structure of $H^s_\loc(\Omega)$ as a locally convex topological vector space is well-defined.
\item We only use the manifold $M$ as an external reference to tell when the open manifold $\Omega$ constitutes a Lipschitz domain and to have a notion of differentiability of arbitrary order on the closure $\olO$. Beyond that, $M$ is a largely superfluous datum. Indeed, if $M'$ is another manifold and $\Omega' \subseteq M$ another Lipschitz domain so that there exists a diffeomorphism between open neighborhoods of $\olO$ and $\olO'$, then $H^s_\loc(\Omega)$ and $H^s_\loc(\Omega')$ are isomorphic as locally convex topological vector spaces. The manifold $M$ will barely play a role in the rest of the paper.
\item A priori, there are multiple, possibly non-equivalent definitions of Sobolev spaces on open domains we could use. There is an intrinsic definition via functions on $V$ with finite Sobolev norm \cite[Definition~1.3.2.1]{Grisvard1985}, but also an extrinsic definition via restrictions of Sobolev functions on $\R^n$ to $V$ \cite[Definition~1.3.2.4]{Grisvard1985}. The former appears more natural, especially on manifolds, but the latter has favorable properties (embedding and compactness theorems, interpolation). On arbitrary domains the two may differ, but on Lipschitz domains they agree \cite[Section~1.4.3]{Grisvard1985}. We restrict ourselves to Lipschitz boundaries for $\Omega$ and the domains $V$ for this reason. Dropping this condition would cause no conceptual troubles, just technical ones.
\item Extending the previous point, we could also define $H^s_\loc(\Omega)$ using arbitrary pre-compact open domains $V$, using, say, the extrinsic definition for Sobolev spaces on arbitrary domains. Since every pre-compact domain is contained in a pre-compact Lipschitz domain (still assuming Lipschitz boundary for $\Omega$), and the Sobolev norm on the latter bounds that of the former, this would not change $H^s_\loc(\Omega)$ as a locally convex topological vector space.
\item Lastly, when working with Sobolev spaces of sections of a vector bundle, we shall often omit the bundle from the notation, and still write $H^s_\loc(\Omega)$. The implicit bundle should always be clear from context. \qedhere
\end{itemize}
\end{rem}

In the following lemma, we identify the interpolation spaces of Sobolev spaces on domains. For the definition of interpolation spaces, see \cite[Section~IV.2]{Taylor1996} or the original \cite{Calderon1964}. Interpolation spaces are a convenient tool to extend results for integer-degree Sobolev spaces to fractional degrees, see the proof of \cref{prop_rellich_kondrachov} below.

\begin{lem}\label{InterpolationSobolev}
Let $V\subset \Omega^\circ$ be an open Lipschitz domain which is precompact in $\bar\Omega$. Let $k\in\IN$ and $0\le \theta \le 1$. Then, the interpolation space between $L^2(V;E)$ and $H^k(V;E)$ is given by
$$
\left[L^2(V;E), H^k(V;E)\right]_\theta = H^{k\theta}(V;E) .
$$
\end{lem}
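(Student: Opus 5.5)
The plan is to reduce to the Euclidean statement for bounded Lipschitz domains and then transport it by localization. On $\R^n$, the identity $[L^2(\R^n), H^k(\R^n)]_\theta = H^{k\theta}(\R^n)$ holds with equivalent norms. This follows from the Fourier characterisation $\|u\|_{H^s}^2 = \int (1+|\xi|^2)^s|\hat u|^2\,d\xi$: complex interpolation of weighted $L^2$-spaces with weights $w_0=1$ and $w_1=(1+|\xi|^2)^k$ gives the weight $w_0^{1-\theta}w_1^{\theta}$ (Stein--Weiss, or \cite[Section~IV.2]{Taylor1996}).

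To pass to a bounded Lipschitz domain $U\subset\R^n$, I would use the extrinsic definition of $H^s(U)$ as restrictions, which agrees with the intrinsic one on Lipschitz domains \cite[Section~1.4.3]{Grisvard1985}. The key input is Stein's universal extension operator $\mathcal{E}$. It is a single linear map that is bounded $H^m(U)\to H^m(\R^n)$ for all integers $m\ge 0$ simultaneously, and it is a right inverse of the restriction map $R$. Hence $H^s(U)$ is a retract of $H^s(\R^n)$ via $R\circ\mathcal{E}=\mathrm{id}$ for $s=0,k$, and the retraction theorem for complex interpolation gives
\[
[L^2(U),H^k(U)]_\theta = R\bigl([L^2(\R^n),H^k(\R^n)]_\theta\bigr) = R\bigl(H^{k\theta}(\R^n)\bigr) = H^{k\theta}(U),
\]
with equivalent norms. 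Since $H^{k\theta}(U)$ is defined extrinsically, the last equality holds by definition. The main obstacle is exactly here: one needs an extension operator that is bounded on $L^2$ and on $H^k$ at the same time. Ad hoc reflection constructions fail for Lipschitz boundaries, so I would cite Stein's theorem rather than prove it.

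Finally, I would globalize to $V\subset\Omega^\circ$ and the bundle $E$. Because $V$ is precompact in $\olO$, I cover $\overline V$ by finitely many charts $\phi_j$ of $M$ that trivialize $E$, and choose a subordinate partition of unity $\chi_j$ together with cutoffs $\tilde\chi_j\equiv1$ on $\supp\chi_j$. Each $V\cap U_j$ may be taken to be a bounded Lipschitz domain after shrinking and adjusting the cover; I would use that $V$ is Lipschitz to arrange this. The map
\[
\iota\colon u\mapsto \bigl((\chi_j u)\circ\phi_j^{-1}\bigr)_j
\]
sends $H^m(V;E)$ into $\bigoplus_j H^m(\phi_j(V\cap U_j))^{r}$, where $r$ is the rank of $E$. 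The map $\pi\colon (v_j)_j\mapsto \sum_j \tilde\chi_j\,(v_j\circ\phi_j)$ goes back. Both are bounded for $m=0$ and $m=k$, since they only involve multiplication by smooth functions and diffeomorphisms, and $\pi\circ\iota=\mathrm{id}$. A second application of the retraction theorem, together with the fact that interpolation commutes with finite direct sums, reduces the claim to the Euclidean case above. The result is independent of the chosen atlas, because the norms are equivalent for different atlases.
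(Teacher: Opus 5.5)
Your proof is correct and follows essentially the same route as the paper. The paper cites Lions--Magenes for smooth domains and observes that their argument only uses an extension operator $H^s(U)\to H^s(\R^n)$, which Grisvard provides for Lipschitz domains; you write that argument out as a retraction via restriction and extension, followed by localization to charts. Your insistence on a single extension operator bounded on $L^2$ and $H^k$ simultaneously (Stein's universal extension) is exactly what the retraction step needs, and the paper's citation leaves this point implicit.
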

\begin{proof}
If $V$ is a smooth domain, this is proven in \cite[Thms.~9.6 \& 9.1 and (7.1)]{Lions1972}. But a close inspection of the proof in loc.cit. shows that it relies on the extension operator $H^s(U) \to H^s(\IR^n)$ if $U \subset \IR^n$ is an open and bounded subset of $\IR^n$. Since its existence is guaranteed if $\partial U$ is locally the graph of a Lipschitz function \cite[Thm.~1.4.3.1]{Grisvard1985}, we get $\left[L^2(V;E), H^k(V;E)\right]_\theta = H^{k\theta}(V;E)$ for all open Lipschitz domains of $\Omega^\circ$ which are precompact in $\bar\Omega$.
\end{proof}

We use this interpolation result to deduce the usual Rellich--Kondrachov compactness theorem for arbitrary Solbolev degree.

\begin{prop}\label{prop_rellich_kondrachov}
  Fix an open Lipschitz domain $V\subseteq \Omega^\circ$ that is precompact in $\olO$, and let $s>0$. Then, the embedding
  $$
    H^{s}(V;E) \longrightarrow L^2(V;E)
  $$
  is compact.
\end{prop}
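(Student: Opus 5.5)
The plan is to first prove the statement for integer degree $s=k\in\IN$, $k\ge 1$, and then pass to arbitrary $s>0$ by interpolation via \cref{InterpolationSobolev}.

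\textbf{Integer case.} Since $\overline{V}$ is compact in $\olO$, we cover it by finitely many coordinate charts of $M$ and take a subordinate smooth partition of unity $\{\chi_j\}$. We also trivialize $E$ over each chart. The map $u\mapsto (\chi_j u)_j$ is bounded $H^k(V;E)\to\bigoplus_j H^k(U_j;\C^r)$, where each $U_j$ is a bounded Lipschitz domain in $\IR^n$ (for instance, the image of $V$ intersected with a ball). It is compatible with $u=\sum_j\chi_j u$ on $L^2$. It therefore suffices to treat bounded Lipschitz domains $U\subset\IR^n$. There we use the extension operator $H^k(U)\to H^k(\IR^n)$ of \cite[Thm.~1.4.3.1]{Grisvard1985}, already invoked in the proof of \cref{InterpolationSobolev}, and multiply by a cutoff equal to $1$ near $\overline U$. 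Given a bounded sequence in $H^k(U)$, this produces a bounded sequence in $H^1(\IR^n)$ supported in a fixed ball $B$. The classical Rellich theorem then gives a subsequence converging in $L^2(B)$, and hence in $L^2(U)$. For the classical Rellich step one may argue via Fourier transform (uniform decay $\int_{|\xi|>R}|\hat u|^2\le R^{-2}\|u\|_{H^1}^2$ plus equicontinuity of $\hat u$ on compacts, using the fixed support) or via Fréchet--Kolmogorov. Either way this is standard.

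\textbf{Fractional case.} Let $0<s$ be arbitrary and pick an integer $k\ge s$, with $\theta=s/k\in(0,1]$. By \cref{InterpolationSobolev}, $H^s(V;E)=[L^2(V;E),H^k(V;E)]_\theta$ with equivalent norms. We then use the interpolation inequality
\[
\|u\|_{L^2}\le \|u\|_{H^s}\quad\text{and}\quad \|u\|_{H^{s}}\lesssim \|u\|_{L^2}^{1-\theta}\|u\|_{H^k}^\theta,
\]
but the needed direction is the compactness of $H^s\hookrightarrow L^2$. For this, the simplest route avoids abstract interpolation-of-compactness theorems. We show that $H^k(V)$ is dense in $H^s(V)$, which holds because complex interpolation spaces contain the intersection densely. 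The inclusion $\iota\colon H^s\to L^2$ is then approximated in operator norm by compact operators, namely $\iota\circ R_\epsilon$ for suitable smoothing maps $R_\epsilon\colon H^s\to H^k$.

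A cleaner alternative is to invoke the standard fact (Cwikel's/Lions--Peetre compactness theorem, or simply \cite{Calderon1964}) that if $A_1\hookrightarrow A_0$ is compact, then $[A_0,A_1]_\theta\hookrightarrow A_0$ is compact for $0<\theta\le 1$. This one-sided statement is elementary. If $u_n$ is bounded in $[A_0,A_1]_\theta$, we approximate it by elements of $A_1$ using the $K$-functional: $\|u-v\|_{A_0}\le t^{\theta}C$ with $\|v\|_{A_1}\le t^{\theta-1}C$. A diagonal argument then yields an $A_0$-Cauchy subsequence.

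\textbf{Main obstacle.} The main obstacle I expect is this last step, because the paper works with complex interpolation. I would handle it via the $K$-functional characterization, using that complex and real interpolation spaces between $L^2$ and $H^k$ both equal $H^{s}$ on Lipschitz domains by \cref{InterpolationSobolev} and its real analogue from \cite{Lions1972}. If that is unavailable, a more direct route is to reduce as above to $\IR^n$ with fixed support, where the Fourier estimate $\int_{|\xi|>R}|\hat u|^2\le R^{-2s}\|u\|_{H^s}^2$ gives compactness for all $s>0$ directly. In that route the only input needed is a bounded extension operator $H^s(U)\to H^s(\IR^n)$ for fractional $s$. This in turn follows by interpolating the integer-order extension operators using \cref{InterpolationSobolev}.
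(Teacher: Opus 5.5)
Your proof is correct and follows the paper's argument: classical Rellich in charts with a partition of unity for integer $k$, then $H^s=[L^2,H^k]_{s/k}$ via \cref{InterpolationSobolev} combined with the one-sided Lions--Peetre compactness theorem for interpolated operators. Your $K$-functional diagonal argument is a valid elementary proof of that theorem, since $[A_0,A_1]_\theta\hookrightarrow(A_0,A_1)_{\theta,\infty}$, or, as you note, real and complex interpolation agree for these Hilbert couples; by contrast, your first sketch with ``smoothing maps $R_\epsilon$'' does not follow from density of $H^k$ in $H^s$ alone and can simply be dropped.
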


\begin{proof}
  Suppose first that $s$ is an integer. If $V$ is in a coordinate chart over which $E$ is trivializable, the statement is covered by the classical Rellich--Kondrachov theorem, see e.g. \cite[Thm. 1.4.3.2]{Grisvard1985}. In general, due to its precompactness $V$ can be covered by a finite number of coordinate charts $U_i$ over which $E$ is trivializable. We may choose these charts such that their intersection with $V$ are Lipschitz domains. Then, using a partition of unity the compactness of $H^s(V;E)\hookrightarrow L^2(V;E)$ can be deduced from that of $H^s(V\cap U_i;E)\hookrightarrow L^2(V\cap U_i;E)$.

  The general case can be reduced to the case of integer $s$ by means of interpolation. Fix some integer $k\geq s$. By the \cref{InterpolationSobolev} we have $H^s(V;E)=[L^2(V;E),H^k(V;E)]_\theta$ for suitable $\theta$. It is a general fact that if $X,Y,Z$ are Banach spaces with $X$ being continuously included in $Y$, and $T\colon \, Y\to Z$ a continuous operator whose restriction to $X$ is compact, the operators $T_\theta\colon \, [Y,X]_\theta \to Z$ are compact for $0<\theta<1$, see \cite[Chapter V, Section~2]{LionsPeetre1964}. Taking $X=H^k(V;E)$, $Y=Z=L^2(V;E)$, and $T$ as the identity map on $L^2(V;E)$, the compactness of $H^s(V;E)\hookrightarrow L^2(V;E)$ thus follows from that of $H^k(V;E)\hookrightarrow L^2(V;E)$.
\end{proof}

We denote by $\Gamma_\cc(\Omega;E)$ the space of smooth sections which are compactly supported inside $\Omega^\circ$.

\begin{defn}
For a vector bundle $E$ over $\Omega$ we denote by $H^s_{0,\loc}(\Omega;E)$ the closure of $\Gamma_\cc(\Omega;E)$ inside $H^s_\loc(\Omega;E)$.
\end{defn}

Let $E$ and $F$ be vector bundles over $\Omega$, and $D\colon \, \Gamma_\cc(\Omega;E) \to \Gamma_\cc(\Omega;F)$ be a differential operator of order $d$. Through differentiation in the distributional sense, $D$ defines a continuous map\footnote{More concretely, it defines a bounded operator $H^s(V;E) \to H^{s-d}(V;F)$ for every open Lipschitz domain $V \subset \Omega^\circ$ that is precompact in $\bar\Omega$.\label{footnote_lem_differential_op_cont_sobolev}}
\begin{equation}\label{diff_sobolev}
  D \colon H^{s}_\loc(\Omega;E) \longrightarrow H^{s-d}_\loc(\Omega;F)
\end{equation}
for all $s\geq d$.

We are concerned with the case where $D$ is elliptic, i.e. its principal symbol $\sigma_D(\xi)$ is invertible for every non-zero cotangent vector $\xi\in T^*\Omega$. Let us recall interior regularity for elliptic operators in a form that will be sufficient for us.

\begin{prop}[{Interior regularity, \cite[Thm.\,V.11.1]{Taylor1996}}]\label{IntReg}
  Let $W$ be a smooth manifold without boundary, and $D$ be an elliptic operator of order $d$ acting on vector bundles $E, F$ over $W$. Let $V$ be open and precompact in $W$, $U$ be open and precompact in $V$, with $U$ and $V$ having smooth boundary. Then, for all distributions $u$ on $E$ with $Du \in L^2(W;F)$ it holds that $u \in H^d_\loc(W;E)$, and there is a constant $C$, dependent only on $U$ and $V$, such that
  $$
  \|u\|_{H^d(U;E)} \leq C \cdot \left( \|u\|_{L^2(V;E)} + \|Du\|_{L^2(V;F)} \right).
  $$
\end{prop}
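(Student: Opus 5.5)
The argument I would give is the classical parametrix argument. It is local, so I first reduce to a coordinate patch, then invert $D$ modulo smoothing operators, and finally use a cutoff to trade the loss of support control for lower-order terms that can be absorbed.

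\emph{Localization.} Since $\overline{U}$ is compact and contained in $V$, I cover $\overline U$ by finitely many coordinate balls $B_j$ with $\overline{B_j}\subset V$ over which $E$ and $F$ are trivial. I choose smaller balls $B_j'\Subset B_j$ that still cover $\overline U$. It then suffices to show two things for each $j$: that $u|_{B_j'}\in H^d$, and that $\|u\|_{H^d(B_j')}\le C(\|u\|_{L^2(B_j)}+\|Du\|_{L^2(B_j)})$. Membership in $H^d_\loc(W;E)$ follows by applying the same argument around every point of $W$. So from now on I work in $\R^n$, with $D$ a $d$-th order elliptic operator with smooth matrix coefficients on a ball $B$, and $B'\Subset B$.

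\emph{Parametrix.} I would invoke the standard construction of a properly supported pseudodifferential parametrix $Q$ of order $-d$ on $B$, built from the inverse $\sigma_D(\xi)^{-1}$ of the principal symbol. It satisfies $QD=I+R$ with $R$ smoothing, and $Q\colon H^{s}_{\mathrm{comp}}\to H^{s+d}_\loc$ is continuous for all $s$. I take a cutoff $\chi\in C_c^\infty(B)$ with $\chi\equiv1$ near $\overline{B'}$. I then write
\[
\chi u = Q D(\chi u) - R(\chi u) = Q(\chi Du) + Q([D,\chi]u) - R(\chi u).
\]
The first term lies in $H^d$ on $B'$, with norm bounded by $\|Du\|_{L^2(B)}$, and the last term is smooth with bounds in terms of $\|u\|_{L^2(B)}$. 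The commutator $[D,\chi]$ has order $d-1$ and coefficients supported in $\operatorname{supp}\nabla\chi$. Hence $Q([D,\chi]u)$ gains only one derivative over the regularity of $u$ near $\operatorname{supp}\nabla\chi$.

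\emph{Bootstrapping.} A priori $u$ is only a distribution, so locally $u\in H^{-m}$ for some $m$. The identity above shows that a gain of $H^{s}\Rightarrow H^{\min(s+1,d)}$ holds on a slightly smaller set. I therefore iterate over a finite nested family of cutoffs $\chi_0\succ\chi_1\succ\dots\succ\chi_N$, each equal to $1$ on the support of the next. After $N=m+d$ steps this yields $u\in H^d(B')$.

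\emph{Main obstacle.} The delicate point is the estimate in the stated form, with only $\|u\|_{L^2(V)}$ on the right rather than a negative Sobolev norm. Once $u\in H^d_\loc$ is known, I would start the bootstrap at $s=0$. At each step the commutator term is bounded by $\|u\|_{H^{s}}$ on the support of the previous cutoff, and the smoothing term by $\|u\|_{L^2}$. Chaining the $d$ steps gives $\|u\|_{H^d(B')}\lesssim\|u\|_{L^2(B)}+\|Du\|_{L^2(B)}$. The constant depends only on the cutoffs, the parametrix and the coefficients of $D$, and hence only on $U$ and $V$ (and $D$). Summing over the finitely many $j$ concludes the proof.
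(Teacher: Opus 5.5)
Your argument is correct. It is the standard parametrix proof, and the two-stage structure is exactly what is needed: first a qualitative bootstrap from $H^{-m}$ to establish $u\in H^d_{\mathrm{loc}}$, then a quantitative bootstrap starting at $s=0$. The term $\|u\|_{L^2(V;E)}$ on the right-hand side only makes sense once that regularity is known.

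The paper gives no proof of this proposition; it imports it from Taylor, so there is no in-paper argument to compare step by step. Compared with the more elementary classical route (a priori estimates by freezing coefficients, then mollifiers or difference quotients to upgrade regularity), your proof relies on the pseudodifferential calculus instead: the existence of a properly supported parametrix and its Sobolev mapping properties.

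Given that machinery, you can shorten the argument. Because $\chi\equiv 1$ near $\overline{B'}$, the distribution $[D,\chi]u$ is supported in a compact set at positive distance from $\overline{B'}$. The Schwartz kernel of $Q$ is smooth off the diagonal, so $Q([D,\chi]u)$ is smooth on $B'$. Moreover, $\|Q([D,\chi]u)\|_{H^d(B')}\lesssim \|[D,\chi]u\|_{H^{-N}}$ for every $N\geq 0$, with the constant depending on $N$. So on $B'$ the commutator term does not merely gain one derivative; it is smoothing. A single cutoff therefore already gives both $u\in H^d(B')$ and $\|u\|_{H^d(B')}\lesssim \|Du\|_{L^2(B)}+\|u\|_{L^2(B)}$, where for the estimate you take $N=d-1$. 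Your nested cutoffs are harmless but unnecessary.
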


Let the interior $\Omega^\circ$ take the role of $W$ in the preceeding proposition. If $D$ is only assumed to be elliptic over $\Omega^\circ$, the constant $C$ in \cref{IntReg} may blow up as $V$ and $U$ get arbitrarily close to the boundary of $\Omega$ or $M$. In our subsequent discussion of boundary conditions we shall need regularity up to the boundary, and to get it, we need to assume $D$ is elliptic up to the boundary in the following sense:

\begin{defn}
  A differential operator $D$ over $\Omega$ is said to be \emph{elliptic over $\olO$} if the vector bundles $E$ and $F$ together with all their structure extend to $\olO$, the coefficients of $D$ extend to $\olO$, and over each point $x\in\olO$ the principal symbol $\sigma_D(\xi)$ is invertible for all $\xi \in T^*_xM\setminus\{0\}$.  
\end{defn}

\begin{rem}\label{EllipticExt}
Let us explain how to get from our setting to the one of \cref{IntReg}. Suppose that $D$ is elliptic over $\olO$. First, attach a collar to the boundary of $M$ to embed $M$ in a boundaryless manifold $\tilde{M}$. Then, $\Omega^\circ$ is an open subset of $\tilde{M}$ with boundary $\dO^\circ = \dO \cup (\partial M \cap \Omega)$. Let $x\in\dO^\circ$. By assumption the coefficients of $D$ extend to $x$, hence they also extend to a neighborhood of $x$. If that neighborhood is chosen sufficiently small, the principal symbol of $D$ will still be invertible there. Taking the union of such neighborhoods we may produce an open neighborhood $W$ of $\olO$ in $\tilde{M}$ such that $D$ extends to an elliptic operator over $W$. The boundary of $W$ in $\tilde{M}$ can be chosen smooth.
\end{rem}

\section{Boundary conditions}\label{sec_bc}

This section is devoted to the prerequisites on boundary conditions, including the definition and properties of regular boundary conditions as we will use them in this article. This section is mainly concerned with the general theory, examples will be given in \cref{SectionExamples} below.

Let $\Omega$, $E$, $F$, and $D$ be as in the previous section. We will consider $D$ as a densely defined, unbounded operator from $\Gamma_\cc(\Omega;E) \subset L^2(\Omega;E)$ to $L^2(\Omega;F)$. The formal adjoint of $D$, denoted by $D^\dagger$, analogously defines an unbounded operator from $\Gamma_\cc(\Omega;F) \subset L^2(\Omega;F)$ to $L^2(\Omega;E)$.

\begin{defn}
\label{DefBdryCond}
A \emph{boundary condition} for $D$ is a closed extension $D_e$ of $D$ such that the Hilbert space adjoint $D_e^*$ is a closed extension of the formal adjoint $D^\dagger$.
\end{defn}

It may not seem intuitive that this operator-theoretic definition captures the concept of a boundary condition. \cref{MinMaxDomain} and \cref{just_bc} below should assure the reader that such a closed extension $D_e$ of $D$ does indeed only add information on the boundary. On a deeper level, one can often associate a Sobolev-like space of boundary data to the operator $D$, so that boundary conditions in our sense are canonically identified with closed subspaces of this boundary data space, recovering the intuitive picture of a boundary condition. This is carried out for first-order operators and compact boundaries in \cite{BaerBallmann2012}, higher-order operators and compact boundaries in \cite{Bandara2023}, and first-order operators and non-compact boundaries (under mild boundedness hypotheses) in \cite{Baer2025}.

\subsection{Minimal and maximal boundary condition}

While \cref{SectionExamples} below is devoted to examples of boundary conditions, the following two examples are needed for the general theory.

\begin{defn}
The \emph{minimal boundary condition} $D_{\min}$ of $D$ is the closure of $D$ as an unbounded operator from $L^2(\Omega;E)$ to $L^2(\Omega;F)$, and the \emph{maximal boundary condition} $D_{\max}$ is the (Hilbert space) adjoint $D_{\max} \coloneqq (D^\dagger)^*$ of $D^\dagger$ as an unbounded operator from $L^2(\Omega;F)$ to $L^2(\Omega;E)$.
\end{defn}

\begin{rem}\label{rem_min_max_conditions}
The minimal and maximal extensions can be characterized as follows:
\begin{enumerate}
\item A section $u\in L^2(\Omega;E)$ is in the domain of $D_{\min}$ if and only if there is a sequence $(u_n)_{n\in\N} \subset \Gamma_{cc}(\Omega;E)$, with $\|u-u_n\|_{L^2} \to 0$ and $\sup_n \|Du_n\|_{L^2}< \infty$ \cite[Lem.~1.8.1]{HigsonRoe2000}.

Equivalently, $\dom(D_{\min})$ is the closure of $\Gamma_{cc}(\Omega;E)$ in the graph norm
\[
\|u\|_D=\|u\|_{L^2}+ \|Du\|_{L^2},
\]
and $D_{\min}$ is the unique extension of $D$ to this closure.
\item A section $u\in L^2(\Omega;E)$ is in the domain of $D_{\max}$ if and only if there is $v\in L^2(\Omega;F)$ such that for all $w\in \Gamma_{cc}(\Omega;F)$ it holds that
\begin{equation}\label{eq_defn_Dmax}
\langle u , D^\dagger w\rangle_E = \langle v, w \rangle_F .
\end{equation}
Then $D_{\max}u= v$. 
\item\label{item_min_max_boundaryconditions} Both $D_{\min}$ and $D_{\max}$ are closed extensions of $D$, and it holds that $D_{\min} \subseteq D_{\max}$. Moreover, we have $D_{\max}=(D^\dagger)^*=(D_{\min}^\dagger)^*$, and hence $D_{\min}=(D_{\max}^\dagger)^*$. Thus, minimal and maximal boundary condition are indeed boundary conditions in the sense of \cref{DefBdryCond}. 
\item\label{BCsandwich} Any boundary condition $D_e$ for $D$ satisfies $D_{\min}\subseteq D_e\subseteq D_{\max}$. In fact, a closed extension satisfies \cref{DefBdryCond} if and only if this condition holds.
\item\label{differential_op_cont_sobolev} By definition, $D_{\max}$ acts in the distributional sense, which is also how $D$ acts on the local Sobolev spaces $H^s_\loc(\Omega;E)$ (see \eqref{diff_sobolev}). Hence, we have
$$ 
  D_{\max} = D \text{ as maps } \dom(D_{\max}) \cap H^{s}_\loc(\Omega;E)  \longrightarrow H^{s-d}_\loc(\Omega;F)
$$
for all $s\geq d$. Since any boundary condition is contained in the maximal one, the same is true with $D_{\max}$ replaced by an arbitrary boundary condition $D_e$.
\qedhere
\end{enumerate}
\end{rem}

In the following proposition, we will use interior regularity to characterize the minimal boundary condition. We note that it requires our assumption that $D$ be elliptic over $\olO$ instead of just $\Omega$, as it may fail otherwise.

\begin{prop}\label{MinDomSobolev}
  Suppose that $D$ is elliptic over $\olO$. Let $U$ be an open and precompact subset of $M$ whose boundary is locally a Lipschitz graph.
\begin{enumerate} 
\item 
  For $u \in \dom(D_{\min})$ we have $u \in H^d_{0,\loc}(\Omega;E)$ and an estimate
\begin{equation}\label{eq_estimate_Sobolev_graph}
\|u\|_{H^d(U\cap\Omega^\circ;E)} \leq C\|u\|_D
\end{equation}
with a constant $C$ depending only on $U$.
Hence, we have a continuous inclusion
\begin{equation}\label{eq_MinDomEmbedding}
\dom(D_{\min}) \hookrightarrow H^d_{0,\loc}(\Omega;E),
\end{equation}
where the domain is equipped with the graph norm.
\item\label{item_MinDomSobolev_ii}  
  If $u\in L^2(\Omega;E)$ is supported inside $U$,\footnote{Note that the support of $u$ may touch the boundary of $\Omega$ or of $M$.} then
  $$
  u \in \dom(D_{\min}) \quad \Leftrightarrow \quad u \in H^d_{0,\loc}(\Omega;E) ,
  $$
  and there is a constant $C$, dependent only on $U$, such that
\begin{equation}\label{eq_estimate_Sobolev_graph_both}
  C^{-1} \| u \|_D \leq \|u\|_{H^d(U\cap\Omega^\circ;E)} \leq C\|u\|_D .
\end{equation}
\end{enumerate}
\end{prop}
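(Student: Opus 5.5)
The plan is to prove the key estimate first for compactly supported smooth sections and then pass to closures. By \cref{EllipticExt}, $D$ extends to an elliptic operator on an open neighborhood $W$ of $\olO$ in a boundaryless manifold $\tilde M$. Enlarging $U$ if necessary, we may assume $\bar U$ is a compact subset of $W$. Choose precompact open sets $U \Subset V \Subset W$ with smooth boundary.

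\textbf{Step 1 (estimate on test sections).} Let $u \in \Gamma_\cc(\Omega;E)$. Extended by zero, it is a smooth compactly supported section on $W$, and $Du$ is the zero extension of $Du$ on $\Omega^\circ$. Applying \cref{IntReg} on $W$ gives
\[
\|u\|_{H^d(U\cap\Omega^\circ)} \le \|u\|_{H^d(U)} \le C\big(\|u\|_{L^2(V)}+\|Du\|_{L^2(V)}\big)\le C\|u\|_D .
\]
The constant $C$ depends only on $U$ and $V$, and $V$ is chosen depending on $U$ alone.

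\textbf{Step 2 (part a).} For $u\in\dom(D_{\min})$, take $u_n\in\Gamma_\cc(\Omega;E)$ with $u_n\to u$ in the graph norm. By Step 1, $(u_n)$ is Cauchy in $H^d(U\cap\Omega^\circ)$ for every such $U$. Every Lipschitz domain $V'\subset\Omega^\circ$ that is precompact in $\olO$ lies in such a $U$, so $(u_n)$ is Cauchy in $H^d_\loc(\Omega)$. Its limit coincides with the $L^2$-limit $u$. Hence $u\in H^d_{0,\loc}(\Omega;E)$, the estimate \eqref{eq_estimate_Sobolev_graph} follows by passing to the limit, and the estimate shows that the inclusion \eqref{eq_MinDomEmbedding} is continuous.

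\textbf{Step 3 (part b).} The implication $\Rightarrow$ and the right-hand inequality of \eqref{eq_estimate_Sobolev_graph_both} follow from part a. For the converse, suppose $u\in H^d_{0,\loc}$ is supported in $U$. Since $D$ has order $d$ with coefficients bounded on $\bar U$, we get $\|Du\|_{L^2(U\cap\Omega^\circ)}\le C\|u\|_{H^d(U\cap\Omega^\circ)}$, and $u$ and $Du$ vanish outside $U$. This gives the left-hand inequality, first for smooth sections supported in $U$, provided we can approximate within that support class.

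The main obstacle is this approximation, and I would handle it with a cutoff. Choose $\chi\in C_c^\infty(M)$ with $\chi=1$ near $\bar U$, supported in a slightly larger precompact Lipschitz set $U'$. Take $v_n\in\Gamma_\cc(\Omega;E)$ with $v_n\to u$ in $H^d_\loc$. Then $\chi v_n\in\Gamma_\cc$ and $\chi v_n\to\chi u=u$ in $H^d(U'\cap\Omega^\circ)$, using that multiplication by $\chi$ is bounded on $H^d$. The inequality $\|w\|_D\le C\|w\|_{H^d(U'\cap\Omega^\circ)}$ for $w$ supported in $U'$ then shows that $(\chi v_n)$ is Cauchy in the graph norm and converges to $u$ in $L^2$. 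Hence $u\in\dom(D_{\min})$, and the estimate passes to the limit.

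This bound is in terms of $U'$ rather than $U$. Since $u$ is supported in $U$, the norms over $U'\cap\Omega^\circ$ and $U\cap\Omega^\circ$ agree. One subtlety remains: whether $u\in H^d(U\cap\Omega^\circ)$ with support in $U$ really has zero extension lying in $H^d(U'\cap\Omega^\circ)$ with the same norm. I would argue this via $\chi u$, which is locally $H^d$ near $\partial U$.
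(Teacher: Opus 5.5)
Your proposal is correct and follows the paper's proof. You use interior regularity (\cref{IntReg}) on the elliptic extension $W$ for test sections and density in the graph norm for part a). For part b) you use the two-sided norm comparison plus an approximation, and your cutoff $\chi$ makes explicit the approximation step that the paper only asserts.

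The ``subtlety'' you leave open is automatic: $u\in H^d_{0,\loc}(\Omega;E)$ is already a section over all of $\Omega$, so $u|_{U'\cap\Omega^\circ}$ lies in $H^d$ by definition. Its norm is comparable to the one over $U\cap\Omega^\circ$ because $u$ and its weak derivatives vanish a.e.\ outside $U$.

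Two small corrections. Enlarging $U$ cannot place $\bar U$ inside $W$. Instead, use that $\overline{U\cap\Omega^\circ}$ is a compact subset of $\olO\subset W$, and, as the paper does, apply \cref{IntReg} to a smooth neighbourhood of this set rather than to the merely Lipschitz set $U$.
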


\begin{proof}
  To prove \eqref{eq_estimate_Sobolev_graph}, we first suppose $u \in \Gamma_\cc(\Omega;E)$. By \cref{EllipticExt} there is an open manifold $W$ containing $\bar\Omega$ such that $D$ extends to an elliptic operator over $W$. Let $U^\prime$ be any open neighbourhood of $\bar U$ that is precompact in $W$, $V'$ a neighbourhood of $U'$ that is also precompact in $W$, and let them have smooth boundary. \cref{IntReg} gives 
  $$
  \|u\|_{H^d(U';E)} \leq C\cdot\left( \|u\|_{L^2(V';E)} + \|Du\|_{L^2(V';F)} \right)
  $$
  for a constant $C$ depending only on the choices for $U'$ and $V'$. Since $U \cap \Omega^\circ \subset U'$, we get $\|u\|_{H^d(U \cap \Omega^\circ;E)} \le \|u\|_{H^d(U';E)}$. Since $u$ and $Du$ are still supported in $\Omega$, we further have $\|u\|_{L^2(V';E)} + \|Du\|_{L^2(V';F)} = \|u\|_{L^2(V'\cap \Omega;E)} + \|Du\|_{L^2(V' \cap \Omega;F)}$, and the right hand side can be estimated from above by the graph norm of $u$ just because $V'\cap \Omega \subset \Omega$. Everything together proves \eqref{eq_estimate_Sobolev_graph} for $u \in \Gamma_\cc(\Omega;E)$.
  
  The estimate \eqref{eq_estimate_Sobolev_graph} holds for every $u \in \Gamma_\cc(\Omega;E)$ and every precompact Lipschitz domain $U$ of $M$, and hence for any seminorm defining $H^d_\loc(\Omega;E)$. By definition, $\Gamma_\cc(\Omega;E)$ is dense in $\dom(D_{\min})$ with the graph norm, i.e. a general $u \in \dom(D_{\min})$ is a limit of a sequence $(u_n) \subset \Gamma_\cc(\Omega;E)$ in graph norm. From \eqref{eq_estimate_Sobolev_graph} we conclude that the sequence also converges to $u$ in every seminorm defining $H^d_\loc(\Omega;E)$, and since each $u_n$ is in $\Gamma_\cc(\Omega;E)$ this shows $u \in H^d_{0,\loc}(\Omega;E)$ and it still satisfies \eqref{eq_estimate_Sobolev_graph}.
  
To prove \eqref{eq_estimate_Sobolev_graph_both} first suppose again that $u \in \Gamma_{cc}(\Omega;E)$ and that it has support in~$U$. Because $\supp(u) \subseteq \Omega^\circ$ by definition, $u$ is automatically supported in $V \coloneqq U \cap \Omega^\circ$. We already know \eqref{eq_estimate_Sobolev_graph}, because $u \in \dom(D_{\min})$ in this case. Since the coefficients of $D$ are bounded over the precompact $V$, we also have the estimate\footnote{Here and in the following, the notion $a\lsim b$ shall be used to signify $a\leq Cb$ for some constant $C$ that is universal relative to the present context. For example, in the following estimate, the implicit constant may depend on $V$ and $D$, but not the section $u$.}
  $$
  \|u\|_D = \|u\|_{L^2(V;E)} + \|Du\|_{L^2(V;F)} \lsim \|u\|_{H^d(V)} ,
  $$
  proving the claimed norm equivalence \eqref{eq_estimate_Sobolev_graph_both} for $u \in \Gamma_{cc}(\Omega;E)$ supported in $U$.

  If $u\in L^2(\Omega;E)$ is in $\dom(D_{\min})$ or in $H^d_{0,\loc}(\Omega;E)$ and has support in $U$, then it can be approximated in the corresponding norm by elements of $\Gamma_{cc}(\Omega;E)$ that also have support in $U$. The norm equivalence for such sections shows that the approximating sequence also converges in the other norm. We conclude that $u\in\dom(D_{\min})$ if and only if $u\in H^d_{0,\loc}(\Omega;E)$, and the norm equivalence for $u$ follows from that for the approximating sequence.
\end{proof}

The next lemma establishes that minimal and maximal boundary condition coincide on the interior.

\begin{prop}\label{MinMaxDomain}
  Let $D$ be elliptic over $\olO$ and $u \in L^2(\Omega;E)$ have compact support in~$\Omega^\circ$. Then it holds that:
$$
u \in \dom(D_{\min}) \quad \Leftrightarrow \quad u\in \dom(D_{\max}).
$$
\end{prop}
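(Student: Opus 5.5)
The direction $u\in\dom(D_{\min})\Rightarrow u\in\dom(D_{\max})$ is immediate from $D_{\min}\subseteq D_{\max}$ (\cref{rem_min_max_conditions}). For the converse, I plan to show in turn that $u$ lies in $H^d$ and that a compactly supported $H^d$ section lies in $\dom(D_{\min})$, the latter via \cref{MinDomSobolev}.

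Let $u\in\dom(D_{\max})$ with $K\coloneqq\supp(u)$ compact in $\Omega^\circ$. By \cref{rem_min_max_conditions}, $D_{\max}$ acts distributionally, so $Du=D_{\max}u\in L^2(\Omega;F)$ in the sense of distributions on $\Omega^\circ$. The first step is to view $u$ as a distribution on the manifold $W$ of \cref{EllipticExt}. Since $K\subset\Omega^\circ$ is compact, $u$ vanishes near $W\setminus\Omega^\circ$, so its extension by zero $\tilde u$ is a distribution on $W$. Moreover $D\tilde u$ is the extension by zero of $Du$: for a test section $w$ on $W$, pick a cutoff $\chi\in C_c^\infty(\Omega^\circ)$ equal to $1$ near $K$. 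Then $\langle\tilde u,D^\dagger w\rangle=\langle u,\chi D^\dagger w\rangle=\langle u,D^\dagger(\chi w)\rangle$, because $D^\dagger(\chi w)=\chi D^\dagger w$ near $K$. By \eqref{eq_defn_Dmax} this equals $\langle D_{\max}u,\chi w\rangle$, and $D_{\max}u$ is supported in $K$, so this is $\langle D_{\max}u, w\rangle$. Hence $D\tilde u\in L^2(W;F)$.

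Interior regularity (\cref{IntReg}) then gives $\tilde u\in H^d_\loc(W;E)$. Since $\tilde u$ has compact support, it is in $H^d$ near $K$. Now choose a precompact open $U\subset\Omega^\circ$ with smooth boundary containing $K$. Mollifying $\tilde u$ in charts, combined with a partition of unity, produces $u_n\in\Gamma_\cc(\Omega;E)$ supported in $U$ with $u_n\to u$ in $H^d(U)$. Therefore $u\in H^d_{0,\loc}(\Omega;E)$ with support in $U$, and \cref{MinDomSobolev}\,\ref{item_MinDomSobolev_ii} yields $u\in\dom(D_{\min})$. Alternatively, one can conclude directly: the estimate \eqref{eq_estimate_Sobolev_graph_both} shows that $u_n\to u$ in the graph norm.

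The only delicate point is the extension-by-zero identity $D\tilde u=\widetilde{Du}$, which needs the cutoff argument and the fact that $\supp u$ stays away from $\dO$ and $\partial M$. The remaining steps are routine density statements.
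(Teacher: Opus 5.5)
Your proposal is correct and follows essentially the same route as the paper. Interior regularity, applied via the extension of $D$ to $W$ from \cref{EllipticExt}, puts $u$ in $H^d$ near its compact support in $\Omega^\circ$; this gives $u\in H^d_{0,\loc}(\Omega;E)$, and \cref{MinDomSobolev}.\ref{item_MinDomSobolev_ii} then yields $u\in\dom(D_{\min})$. You only spell out two steps the paper leaves implicit: the extension-by-zero identity $D\tilde u=\widetilde{D_{\max}u}$ (using that $D_{\max}u$ is supported in $\supp(u)$) and the mollification argument.
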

\begin{proof}
Assume $u \in \dom(D_{\max})$, i.e. we have in the distributional sense $D_{\max} u \in L^2(\Omega;F)$ (see \cref{rem_min_max_conditions}.\ref{differential_op_cont_sobolev}). From interior regularity (\cref{IntReg}) and \cref{EllipticExt} we conclude $u \in H^d_\loc(\Omega;E)$. In fact, since the support of $D_{\max} u$ is contained in the support of $u$, we have $u \in H^d(V;E)$ for an open Lipschitz neighbourhood $V \subset \Omega^\circ$ of the support of $u$ and such that $V$ is precompact in~$\Omega^\circ$ (not just in $\bar\Omega$). By the latter, $u$ is in $H^d_{0,\loc}(\Omega;E)$ and so by \cref{MinDomSobolev}.\ref{item_MinDomSobolev_ii} we have $u \in \dom(D_{\min})$.
\end{proof}

\begin{rem}\label{just_bc}
  As noted in \cref{rem_min_max_conditions}.\ref{BCsandwich}, boundary conditions satisfy $D_{\min}\subseteq D_e \subseteq D_{\max}$. Hence, \cref{MinMaxDomain} implies that $u\in L^2(\Omega;E)$ with compact support inside the interior $\Omega^\circ$ is in $\dom(D_e)$ if and only if it is in $\dom(D_{\min})$, which is the case if and only if it is in $\dom(D_{\max})$. In other words, sections supported in the interior do not feel the choice of boundary condition, as indeed it should be.
\end{rem}

The next two lemmas concern continuity of multiplication operators.

\begin{lem}\label{MultMinDomain}
  Let $D$ be elliptic over $\olO$, and let $f\in C^\infty_{c}(\Omega)$ be a smooth and compactly supported function. Then, 
  $$
  \dom(D_{\min}) \longrightarrow \dom(D_{\min}) , \quad u \longmapsto fu
  $$
  is a well-defined, continuous map.
\end{lem}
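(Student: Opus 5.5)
The strategy is to prove the continuity estimate first on the dense subspace $\Gamma_\cc(\Omega;E)$ and then pass to the closure. Here $f\in C^\infty_c(\Omega)$ means $f$ is smooth with compact support in $\Omega$. That support may touch $\partial M$, since $\Omega$ may contain boundary points of $M$. It is still precompact in $\olO$. First, for $u\in\Gamma_\cc(\Omega;E)$, the product $fu$ is again smooth with compact support in $\Omega^\circ$, so $fu\in\Gamma_\cc(\Omega;E)\subseteq\dom(D_{\min})$. By \cref{rem_min_max_conditions}, $\dom(D_{\min})$ is the graph-norm closure of $\Gamma_\cc(\Omega;E)$. It therefore suffices to prove an estimate
\[
\|fu\|_D \lsim \|u\|_D \qquad\text{for all } u\in\Gamma_\cc(\Omega;E),
\]
with an implicit constant depending only on $f$ and $D$. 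The map then extends by continuity, and the extension agrees with pointwise multiplication by $f$. Indeed, graph-norm convergence implies $L^2$-convergence, and multiplication by the bounded function $f$ is $L^2$-continuous.

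To prove the estimate, choose an open precompact subset $U\subset M$ with Lipschitz boundary containing $\supp f$. The term $\|fu\|_{L^2}$ is bounded by $\sup|f|\,\|u\|_{L^2}$. For the other term, note that $fu\in\Gamma_\cc(\Omega;E)$ is supported in $U$. Then \cref{MinDomSobolev}.\ref{item_MinDomSobolev_ii} gives $\|D(fu)\|_{L^2}\le\|fu\|_D\lsim\|fu\|_{H^d(U\cap\Omega^\circ;E)}$. The Leibniz rule shows that multiplication by the smooth function $f$, which is bounded together with all its derivatives on the precompact set $U\cap\Omega^\circ$, is bounded on $H^d(U\cap\Omega^\circ;E)$. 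So $\|fu\|_{H^d(U\cap\Omega^\circ)}\lsim\|u\|_{H^d(U\cap\Omega^\circ)}$. Finally, the estimate \eqref{eq_estimate_Sobolev_graph} in \cref{MinDomSobolev} bounds $\|u\|_{H^d(U\cap\Omega^\circ)}\lsim\|u\|_D$. This is the step where ellipticity over $\olO$ enters.

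An alternative, more elementary route avoids the full Sobolev norm. One writes $D(fu)=fDu+[D,f]u$, where $[D,f]$ is a differential operator of order $d-1$ with coefficients supported in $\supp f$. One then bounds $\|[D,f]u\|_{L^2}\lsim\|u\|_{H^{d-1}(U\cap\Omega^\circ)}\le\|u\|_{H^d(U\cap\Omega^\circ)}\lsim\|u\|_D$, again by \eqref{eq_estimate_Sobolev_graph}. Either way, the only nontrivial input is the local Sobolev control of the graph norm up to the boundary. That control is already available from \cref{MinDomSobolev}, so I expect no real obstacle.

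The main point of care is bookkeeping: $U$ must be chosen so that the constants in \cref{MinDomSobolev} depend only on $U$, hence only on $f$, and the Leibniz estimate must be carried out on the Lipschitz domain $U\cap\Omega^\circ$ rather than on all of $\Omega$.
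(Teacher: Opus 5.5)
Your proof is correct and follows essentially the same route as the paper. In both, \cref{MinDomSobolev}.\ref{item_MinDomSobolev_ii} bounds the graph norm of $fu$ by its $H^d(U\cap\Omega^\circ;E)$-norm, multiplication by $f$ is continuous on Sobolev spaces, and \eqref{eq_estimate_Sobolev_graph} returns to the graph norm. The only difference is cosmetic: you run the estimate on $\Gamma_\cc(\Omega;E)$ and extend by density, while the paper writes it as a composition of continuous maps through $H^d_{0,\loc}(\Omega;E)$, with the density argument hidden inside \cref{MinDomSobolev}.
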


\begin{proof}
  Let $U$ be a precompact, open neighborhood of $\supp(f)$ in $M$ with a Lipschitz boundary. The composition
  $$
  \dom(D_{\min}) \hookrightarrow H^d_{0,\loc}(\Omega;E) \xlongrightarrow{f} H^d_{0,\loc}(\Omega;E) \cap H^d(U\cap\Omega^\circ;E) \longrightarrow \dom(D_{\min})
  $$
  is well-defined and continuous. This is due to \eqref{eq_MinDomEmbedding} and \eqref{eq_estimate_Sobolev_graph_both} of \cref{MinDomSobolev} for the first and third arrow respectively, and for the second it is due to the fact that multiplication with smooth, compactly supported functions is continuous on $H^d_{0,\loc}(\Omega;E)$. 
\end{proof}

\begin{lem}\label{MultInt}
Suppose $D$ is elliptic over $\bar\Omega$ and let $f\in C^\infty_{cc}(\Omega)$. Then, multiplication by $f$ is a bounded map
$$
\dom(D_{\max}) \longrightarrow \dom(D_{\min}).
$$
\end{lem}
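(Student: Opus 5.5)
Fix $f\in C^\infty_{cc}(\Omega)$, so $K\coloneqq\supp(f)$ is a compact subset of the interior $\Omega^\circ$. The plan is to show that $fu$ has the regularity of a $D_{\min}$-element for every $u\in\dom(D_{\max})$, and then get the bound from interior regularity. Choose open sets $U\Subset V\Subset\Omega^\circ$ with smooth boundary and $K\subset U$; these exist because $K$ is compact in the open manifold $\Omega^\circ$. \cref{IntReg} then applies with $W=\Omega^\circ$ to any $u\in\dom(D_{\max})$. Indeed, $D_{\max}u$ is the distributional derivative (\cref{rem_min_max_conditions}) and lies in $L^2$, so $u\in H^d(U;E)$ with
\[
\|u\|_{H^d(U;E)}\lsim \|u\|_{L^2(V;E)}+\|D_{\max}u\|_{L^2(V;F)}\le \|u\|_{D_{\max}},
\]
where $\|\cdot\|_{D_{\max}}$ denotes the graph norm of $D_{\max}$.

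Next, multiplication by $f$ is bounded from $H^d(U;E)$ to $H^d(U;E)$, since $f$ is smooth with compact support in $U$. The product $fu$ is supported in $K\subset U$, so it lies in $H^d_{0,\loc}(\Omega;E)$. To see this, mollify $fu$ in charts near $K$ (or take $\chi u_n$ for a cutoff $\chi$ and any $H^d(U)$-approximants $u_n$). This produces elements of $\Gamma_\cc(\Omega;E)$ supported in a fixed compact neighborhood of $K$ inside $U$ that converge to $fu$ in $H^d$. We then apply \cref{MinDomSobolev}.\ref{item_MinDomSobolev_ii} to the precompact Lipschitz set $U$, noting $U\cap\Omega^\circ=U$. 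It gives $fu\in\dom(D_{\min})$ and
\[
\|fu\|_D\lsim \|fu\|_{H^d(U;E)}\lsim \|u\|_{H^d(U;E)}\lsim \|u\|_{D_{\max}}.
\]
This is exactly the claimed boundedness $\dom(D_{\max})\to\dom(D_{\min})$.

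The main thing to check is that \cref{IntReg} really applies to an arbitrary element of $\dom(D_{\max})$. That result requires only a distribution $u$ with $Du\in L^2$, and that condition holds by definition of $D_{\max}$ via \eqref{eq_defn_Dmax}. Because we work strictly inside $\Omega^\circ$, no ellipticity up to $\olO$ and no boundary behaviour of $u$ enter. The only other point is the density step showing $fu\in H^d_{0,\loc}$, which is routine because $fu$ is compactly supported away from $\dO$.
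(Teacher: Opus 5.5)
Your proof is correct, but it follows a different route from the paper's. You argue directly and quantitatively. \cref{IntReg} on $W=\Omega^\circ$ with $K\subset U\Subset V\Subset\Omega^\circ$ gives $\|u\|_{H^d(U;E)}\lsim\|u\|_{D_{\max}}$. The norm equivalence of \cref{MinDomSobolev} for sections supported in $U$ then converts $\|fu\|_{H^d(U;E)}$ back into the graph norm. So membership in $\dom(D_{\min})$ and the bound come out of one chain of estimates, using only ellipticity on $\Omega^\circ$.

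The paper instead tries to avoid writing any estimate. It lets $P$ be the graph-orthogonal projection of $\dom(D_{\max})$ onto the sections supported outside a neighbourhood $U$ of $\dO$. It places $\im(P)$ in $\dom(D_{\min})$ via \cref{MinMaxDomain} and bounds $f$ on $\im(P)$ by \cref{MultMinDomain}. It then claims that $f$ kills $\ker(P)$ because elements of $\ker(P)$ are supported in $U$.

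That last claim fails for an orthogonal complement in the graph inner product. Take $D=-i\,d/dx$ on $\Omega=(0,1)$. Integration by parts shows that $e^x$ is graph-orthogonal to every $H^1$-function supported in $[\epsilon,1-\epsilon]$, yet $e^x$ vanishes nowhere. Moreover, when $\olO$ is non-compact or incomplete, $\im(P)$ need not consist of compactly supported sections, so \cref{MinMaxDomain} does not apply to it directly. Your interior-regularity estimate sidesteps both issues, and it is essentially what is needed to make the paper's argument rigorous.

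One small simplification: you can skip the detour through $H^d_{0,\loc}$. If $u_n$ are smooth on $U$ with $u_n\to u$ in $H^d(U;E)$, then $fu_n\in\Gamma_\cc(\Omega;E)$ converge to $fu$ in the graph norm, because $D$ has bounded coefficients on $\overline{U}$. This gives $fu\in\dom(D_{\min})$ at once.
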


We note that by \cref{rem_min_max_conditions}.\ref{BCsandwich}, $D_{\max}$ could be replaced by an arbitrary boundary condition in \cref{MultInt}.

\begin{proof}
Fix some open neighborhood $U$ of $\partial\Omega \coloneqq \bar\Omega \setminus \Omega^\circ$ that does not intersect $\supp(f)$, and let $P \colon \, \dom(D_{\max})\to \dom(D_{\max})$ be the orthogonal projection onto the closed subspace of sections with support outside $U$.\footnote{Here we consider $\dom(D_{\max})$ as a Hilbert space under the inner product $\langle \cdot,\cdot\rangle_D=\langle\cdot,\cdot\rangle_{L^2} + \langle \cdot , |D_{\max}|^2\cdot \rangle_{L^2}$, which induces a norm equivalent to $\|\cdot \|_D$.} Notice that if $u\in \dom(D_{\max})$ lies in the kernel of $P$, then its support is contained in $U$. By \cref{MinMaxDomain} $\im(P)$ is contained in $\dom(D_{\min})$, so that $u\mapsto fu, \, \im(P) \to \dom(D_{\min})$ is bounded by \cref{MultMinDomain}. On the other hand, if $u\in \dom(D_{\max})$ is in the kernel of $P$, then its support is contained in $U$, whence $fu=0$. Hence $u\mapsto fu$ factors as 
$$
\dom(D_{\max})\xlongrightarrow{P} \im(P) \xlongrightarrow{f} \dom(D_{\min}),
$$ 
which is a composition of bounded maps.
\end{proof}

\subsection{Regularity, locality, and gradings}

Now, we introduce the notion of (semi-)regular boundary conditions that we will use in this paper. 

\begin{defn}\label{DefReg}
The boundary condition $D_e$ is \emph{semi-regular}, if $\dom(D_e)$ equipped with the graph norm is continuously included in $H^d_\loc(\Omega;E)$. It is \emph{regular} if both $D_e$ and $D_e^*$ are semi-regular.
\end{defn}

\cref{MinDomSobolev} implies that the minimal boundary condition is semi-regular. However, the maximal boundary condition is generally not semi-regular (see \cref{ExMinMax}), so that $D_{\min}$ is generally not regular. Further examples are defered to \cref{SectionExamples}.

\begin{rem}\label{SemiRegLocalNormEq}
If $D_e$ is semi-regular, then every $u\in\dom(D_e)$ satisfies an estimate of the form $\|u\|_{H^d(V;E)} \lsim \|u\|_D$ for every open Lipschitz domain $V\subseteq \Omega^\circ$ that is precompact in $\olO$. 
By \cref{differential_op_cont_sobolev} of \cref{rem_min_max_conditions} (with \cref{footnote_lem_differential_op_cont_sobolev}) there is also the converse estimate
\begin{equation}\label{eq_semireg_equiv}
\| u\|_{L^2(V;E)} + \|D_e u\|_{L^2(V;F)} \lsim \|u\|_{H^d(V;E)} .
\end{equation}
In particular, the graph norm and the $H^d(V;E)$-norm are equivalent for all $u\in\dom(D_e)$ with $\supp(u)\subseteq \overline{V}$. In fact, regularity estimates are a good way to establish that boundary conditions are semi-regular.
\end{rem}

\begin{lem}\label{MultSemiReg}
Let $D_e$ be a semi-regular boundary condition, and let $f\in C^\infty_c(\olO)$ preserve the domain of $D_e$, i.e. $f\cdot\dom(D_e)\subseteq \dom(D_e)$. Then,
$$
\dom(D_e) \longrightarrow\dom(D_e), \quad u\longmapsto fu
$$ 
is a bounded map (using the graph norm on domain and target).
\end{lem}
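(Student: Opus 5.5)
Two routes are available. One is the closed graph theorem, which needs only that $f$ preserves $\dom(D_e)$; semi-regularity is then used only to identify the limit. The other is a direct estimate through the Sobolev spaces. I will do the direct estimate, which follows the pattern of \cref{MultMinDomain}, and keep the closed graph argument as a fallback.

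For the direct route, fix a precompact open Lipschitz domain $V\subseteq\Omega^\circ$ that is precompact in $\olO$ and whose closure contains a neighbourhood of $\supp(f)\cap\olO$ in $\olO$. For $u\in\dom(D_e)$ we have $fu\in\dom(D_e)$ by assumption. Since $D_e\subseteq D_{\max}$ acts distributionally (\cref{rem_min_max_conditions}.\ref{differential_op_cont_sobolev}), the Leibniz rule gives
\[
D_e(fu)=fD_eu+[D,f]u ,
\]
where $[D,f]$ is a differential operator of order $d-1$ whose coefficients are supported in $\supp(f)$ and bounded there. Hence
\[
\|fu\|_D\le \|f\|_\infty\|u\|_D+\|[D,f]u\|_{L^2(V)}\lsim \|u\|_D+\|u\|_{H^{d-1}(V)}\lsim \|u\|_D+\|u\|_{H^{d}(V)} .
\]
By semi-regularity (\cref{SemiRegLocalNormEq}) we have $\|u\|_{H^d(V)}\lsim\|u\|_D$, so $\|fu\|_D\lsim\|u\|_D$, which is the claim.

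The main obstacle is justifying the commutator formula and the bound on $[D,f]u$ for $u$ only in $\dom(D_e)$. This is where semi-regularity is needed: it gives $u\in H^d_\loc(\Omega)$, so the Leibniz rule holds in $H^0_\loc$, and $[D,f]$ maps $H^{d-1}(V)\to L^2(V)$ continuously by the analogue of \eqref{diff_sobolev} (\cref{footnote_lem_differential_op_cont_sobolev}). One also needs a $V$ of the required form. It can be taken as a bounded Lipschitz neighbourhood of $\supp(f)$ intersected with $\Omega^\circ$, choosing it generically so that the intersection is again Lipschitz. This is the same kind of choice already made in the proof of \cref{prop_rellich_kondrachov}.

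If that geometric choice of $V$ causes trouble, I will switch to the closed graph theorem. The map $u\mapsto fu$ is a linear map $\dom(D_e)\to\dom(D_e)$ between Banach spaces, so it suffices to check that its graph is closed. Suppose $u_n\to u$ and $fu_n\to w$ in graph norm. Then $fu_n\to fu$ in $L^2$, so $w=fu$ as elements of $L^2$. Since $D_e$ is closed, $D_ew$ is then determined by $w$, and $fu_n\to fu$ in graph norm. This gives boundedness with no estimates at all.
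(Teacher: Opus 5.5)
Your direct estimate is correct and is essentially the paper's proof. The paper also routes $u\mapsto fu$ through $H^d_{\loc}(\Omega;E)$: it uses semi-regularity for $\|u\|_{H^d(V)}\lsim\|u\|_D$ and \eqref{eq_semireg_equiv} for $\|fu\|_D\lsim\|fu\|_{H^d(V)}$. You instead split $D_e(fu)=fD_eu+[D,f]u$ and need only the $H^{d-1}(V)$-bound on $u$, which is a cosmetic difference. The Leibniz identity holds distributionally for any $u$; semi-regularity is needed only for the $L^2$-bound on $[D,f]u$.

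Your fallback is a genuinely different and strictly more general argument, and it is a complete proof on its own. Contrary to your opening remark, it does not use semi-regularity at all. The limit $w=fu$ is identified purely from $L^2$-convergence, because multiplication by $f$ is bounded on $L^2$. Besides $f\cdot\dom(D_e)\subseteq\dom(D_e)$, the only input is that $D_e$ is closed. That is what makes $(\dom(D_e),\|\cdot\|_D)$ a Banach space, so the closed graph theorem applies; this, not ``$D_ew$ is determined by $w$'', is where closedness enters. Hence the lemma holds for every boundary condition whose domain is preserved by $f$.

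What the Sobolev route buys is an explicit constant, depending on $f$ only through finitely many derivatives on $V$ and on the semi-regularity constant for $V$. That matters if you want bounds uniform over families of functions. For the qualitative use of the lemma in the paper, namely boundedness at $\alpha=1$ before interpolating in \cref{LemmaMultFunc}, the closed graph argument suffices.
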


\begin{proof}
Let $V \subset \Omega^\circ$ be an open Lipschitz domain that is precompact in $\bar\Omega$ and such that $\supp(f) \subset \bar V$.
We consider the composition
\[
  \dom(D_e) \hookrightarrow H^d_{\loc}(\Omega;E) \cap \dom(D_e) \xlongrightarrow{f} H^d_{\loc}(\Omega;E) \cap \dom(D_e) \cap L^2(\bar V; E) \to \dom(D_e),
\]
where both $H^d_{\loc}(\Omega;E) \cap \dom(D_e)$ and $H^d_{\loc}(\Omega;E) \cap \dom(D_e) \cap L^2(\bar V; E)$ are equipped with the subspace topology induced from $H^d_{\loc}(\Omega;E)$. The first arrow is continuous by assumption ($D_e$ is semi-regular), the second arrow is continuous since multiplication by smooth, compactly supported functions is continuous on local Sobolev spaces, and the third arrow is continuous due to \eqref{eq_semireg_equiv}.
\end{proof}

Recall that a \emph{grading} on the vector bundle $E$ (equipped with a metric and connection) is a unitary bundle morphism with $\kappa^2=I$. A $p$-multigrading on $E$, $p\in\N_{>0}$, consists of $p$ odd unitary bundle morphisms $\varepsilon_1,\dots,\varepsilon_p$ such that $\varepsilon_i\varepsilon_j + \varepsilon_j\varepsilon_i = -2\delta_{ij}$. Note that the grading operator $\kappa$ and the mutigrading operators $\varepsilon_j$ extend to unitary (multi-)grading operators on $L^2(\olO;E)$. The differential operator $D$ is then called \emph{odd} if $D\kappa=-\kappa D$, and \emph{multigraded} if $\varepsilon_jD=D\varepsilon_j$. Conventionally, graded vector bundles are also refered to as \emph{$0$-multigraded} bundles.

\begin{defn}
Let $D\colon \Gamma_{cc}(\Omega;E)\to \Gamma_{cc}(\Omega;E)$ be a differential operator acting on sections of a $p$-multigraded vector bundle, $p\geq 0$. Suppose that $D$ is odd and $p$-multigraded. A boundary condition $D_e$ is called \emph{odd} if the grading operator $\kappa$ of $E$ preserves $\dom(D_e)$, and $D_e \kappa = - \kappa D_e$. The boundary condition is called \emph{$p$-multigraded} if every multigrading operator $\varepsilon_j$ of $E$ preserves $\dom(D_e)$ and $D_e\varepsilon_j=\varepsilon_j D_e$. 
\end{defn}

The following lemma is deduced rather directly from the definition of the adjoint boundary condition:

\begin{lem}\label{AdjointMultigraded}
A boundary condition is odd resp.\,multigraded if and only if its adjoint is odd resp.\,multigraded.
\end{lem}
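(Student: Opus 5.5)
The plan is to verify the claim directly from the definition of the Hilbert space adjoint, using that $\kappa$ and each $\varepsilon_j$ are unitary operators on $L^2(\Omega;E)$. Since $\kappa^* = \kappa^{-1} = \kappa$ and $\varepsilon_j^* = \varepsilon_j^{-1} = -\varepsilon_j$, both properties are statements about a bounded operator $S$, unitary up to sign and self-adjoint up to sign, satisfying $S D_e = \pm D_e S$ on $\dom(D_e)$, with $S$ preserving $\dom(D_e)$. First one notes that $D_e^*$ is indeed a boundary condition, because $(D_e^*)^* = D_e$ extends $D$ by \cref{rem_min_max_conditions}. The question thus makes sense, and it suffices to prove one direction. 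The converse follows by applying the same argument to $D_e^*$, since $D_e^{**} = D_e$ by closedness.

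The key step is to show that $S$ preserves $\dom(D_e^*)$ whenever it preserves $\dom(D_e)$ and $D_e S = \sigma S D_e$ with $\sigma \in \{\pm 1\}$. Take $v \in \dom(D_e^*)$ and an arbitrary $u \in \dom(D_e)$. Then $S^* u = \pm S u \in \dom(D_e)$, and we compute
\[
\langle S v, D_e u\rangle = \langle v, S^* D_e u\rangle = \pm\langle v, S D_e u\rangle = \pm\sigma\langle v, D_e S u\rangle = \pm\sigma\langle D_e^* v, S u\rangle = \sigma\langle S D_e^* v, u\rangle ,
\]
where the two signs coming from $S^* = \pm S$ cancel in the last equality. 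Here I use $S^* = \pm S$ and the fact that $S$ is unitary with $S^*S = I$. The map $u \mapsto \langle Sv, D_e u\rangle$ is therefore $L^2$-bounded, so $Sv \in \dom(D_e^*)$ and $D_e^* S v = \sigma S D_e^* v$.

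Applying this with $S=\kappa$, $\sigma=-1$ gives that $D_e^*$ is odd. Applying it with $S=\varepsilon_j$, $\sigma=+1$ gives that $D_e^*$ is multigraded. I expect no real obstacle here. The only care needed is the bookkeeping of the adjoint signs $\varepsilon_j^* = -\varepsilon_j$, which cancel as shown, and remembering that domain preservation is part of the definition and comes out of the adjoint computation automatically.
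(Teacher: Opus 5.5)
Your argument is correct and is precisely the direct verification from the definition of the Hilbert space adjoint that the paper has in mind; the paper gives no written proof and only remarks that the lemma follows directly from that definition. The sign bookkeeping via $\kappa^*=\kappa$ and $\varepsilon_j^*=-\varepsilon_j$, and the reduction of the converse to $D_e^{**}=D_e$, are both handled properly. One small addition: $D_e^*$ is a boundary condition for $D^\dagger$, which is itself odd resp.\ multigraded because $D$ is, so the definition applies to it.
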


The other major assumption needed for our main result is that of locality, adopted from \cite{BDT1989}, which is expressed through compatibility with multiplication by smooth functions.

\begin{defn}\label{def_local}
A boundary condition $D_e$ for $D$ is called \emph{local} if $f\cdot\dom(D_e)\subseteq \dom(D_e)$ for all $f\in C^\infty_c(\olO)$. 
\end{defn}

We note that for higher-order operators, this notion of locality is quite restrictive. For example, differential boundary conditions of strictly positive order will generally not be local in this sense, see \cref{ExNeumann} below.

\subsection{Local boundary conditions for first-order operators}

This subsection is largely technical in nature, and we need its results primarily to extend regularity and adjointess results for certain examples from the compact to the non-compact setting (\cref{RegAbsRelChiral} below). If those examples are not of particular interest to the reader, skipping this section is encouraged.

We will assume that $D$ is first-order, and $D_e$ a local boundary condition. A helpful feature of the first-order case is that $[f,D]=-i\sigma_D(df)$ for $f\in C^\infty_c(\olO)$, a priori unbounded operators with domain $\Gamma_{cc}(\Omega;E)$, extend to a bounded operators on $L^2(\olO;E)$. Hence, the commutator $[f,D_e]$, which we can define because $D_e$ is local, also extends to a bounded operator.

\begin{lem}\label{LocalFirstOrder}
Let $D_e$ be a boundary condition for a first-order differential operator~$D$. Then, $D_e$ is local if and only if $D_e^*$ is local.
\end{lem}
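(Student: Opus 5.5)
By symmetry it suffices to prove one direction. The boundary condition $D_e^*$ is a boundary condition for $D^\dagger$, which is again first-order, and $(D_e^*)^*=D_e$ because $D_e$ is closed. So I will assume $D_e$ is local and show that $f\cdot\dom(D_e^*)\subseteq\dom(D_e^*)$ for every $f\in C^\infty_c(\olO)$. Since $f=\operatorname{Re}f+i\operatorname{Im}f$, I may also take $f$ real-valued if convenient, though this should not be needed.

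Fix $f\in C^\infty_c(\olO)$ and $v\in\dom(D_e^*)$. By the definition of the Hilbert space adjoint, $fv\in\dom(D_e^*)$ holds exactly when the functional $u\mapsto\langle D_e u, fv\rangle$ on $\dom(D_e)$ is $L^2$-bounded. For $u\in\dom(D_e)$, locality gives $\bar f u\in\dom(D_e)$. The key identity I want is
\[
D_e(\bar f u)=\bar f D_e u-i\sigma_D(d\bar f)u .
\]
With it,
\[
\langle D_e u, fv\rangle=\langle \bar f D_e u, v\rangle=\langle D_e(\bar f u),v\rangle+\langle i\sigma_D(d\bar f)u,v\rangle=\langle u, f D_e^*v\rangle+\langle u,(i\sigma_D(d\bar f))^*v\rangle .
\]
This is bounded by $C\|u\|_{L^2}$, because $\sigma_D(d\bar f)$ is a compactly supported smooth bundle endomorphism extending over $\olO$, hence bounded on $L^2$. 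Therefore $fv\in\dom(D_e^*)$, with $D_e^*(fv)=fD_e^*v+(i\sigma_D(d\bar f))^*v$.

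The main step is justifying the key identity on all of $\dom(D_e)$, not just on $\Gamma_\cc$. Since $D_e\subseteq D_{\max}$, and $D_{\max}$ acts distributionally, it suffices to check $D_{\max}(\bar f u)=\bar fD_{\max}u-i\sigma_D(d\bar f)u$ in the sense of distributions. I will test against $w\in\Gamma_\cc(\Omega;F)$:
\[
\langle \bar f u, D^\dagger w\rangle=\langle u, f D^\dagger w\rangle=\langle u, D^\dagger(fw)\rangle+\langle u,[f,D^\dagger]w\rangle .
\]
Here $fw\in\Gamma_\cc$, so the first term equals $\langle D_{\max}u, fw\rangle$. The second term involves the zeroth-order operator $[f,D^\dagger]$; I will identify it with the adjoint of $-[\bar f,D]$ by taking formal adjoints, which is a routine symbol computation. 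Together these give the identity, and the graph norm plays no role beyond ensuring $\bar fu\in\dom(D_e)$, which locality provides.

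The only point needing care is therefore the bookkeeping of conjugates and the sign of the zeroth-order commutator term. No analytic obstacle is expected, since first-order commutators are bounded; this is the feature the subsection emphasises, and it is why the lemma fails in general for higher order.
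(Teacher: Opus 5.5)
Your proposal is correct and is essentially the paper's argument: locality of $D_e$ lets you move the multiplication operator onto the $\dom(D_e)$ side, and the zeroth-order commutator, which is a bounded bundle endomorphism because $D$ is first-order, is absorbed into the adjoint (the paper first reduces to real-valued $f$, whereas you carry $\bar f$ along). Your distributional check via $D_e\subseteq D_{\max}$, showing that the commutator formula holds on all of $\dom(D_e)$ and not merely on $\Gamma_\cc(\Omega;E)$, fills in a step the paper only asserts.
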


\begin{proof}
The roles of $D_e$ and $D_e^*$ are symmetric, so it suffices to prove that if $D_e$ is local, so is $D_e^*$. It further suffices to prove that any real-valued function $f\in C^\infty_c(\olO)$ preserves $\dom(D^*_e)$. Let $u\in \dom(D_e^*)$. We need to produce a vector $v\in L^2(\Omega)$ such that for every $w\in\dom(D_e)$ it holds that $\langle v,w\rangle = \langle fu,Dw\rangle$. Write 
$$
\langle fu,Dw\rangle = \langle u,fDw\rangle = \langle u, D(fw)\rangle + \langle u,[f,D]w\rangle =\langle fD_e^*u, w\rangle + \langle u,[f,D]w\rangle  \; .
$$
Since $D$ is first-order and $D_e$ local, $[f,D_e]$ is a bounded operator on $L^2(\Omega)$. Thus, $v \coloneqq fD_e^*u + [f,D_e]^*u$ is a well-defined element of $L^2(\Omega)$, for which it holds that
$$
\langle fu,Dw\rangle =\langle fD_e^*u, w\rangle + \langle u,[f,D]w\rangle = \langle v,w\rangle .
$$
This concludes the proof.
\end{proof}

\begin{lem}\label{ContMultFirstOrder}
  Let $D_e$ be a local boundary condition for a first-order differential operator~$D$, and let $\psi\in C^\infty_c(\olO)$. Then, 
  $$
    \dom(D_e) \longrightarrow \dom(D_e) \; , \; u \longmapsto \psi u
  $$
  is continuous.
\end{lem}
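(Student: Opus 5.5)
The plan is to use the first-order commutator identity directly instead of passing through Sobolev spaces. This route is needed because $D_e$ is not assumed semi-regular, so \cref{MultSemiReg} does not apply. Since $D_e$ is local, $\psi u\in\dom(D_e)$ for every $u\in\dom(D_e)$, and the map $u\mapsto\psi u$ is linear. It therefore suffices to bound the graph norm $\|\psi u\|_D=\|\psi u\|_{L^2}+\|D_e(\psi u)\|_{L^2}$ by a constant multiple of $\|u\|_D$.

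The first term is immediate: $\psi$ is bounded (it is smooth with compact support in $\olO$), so $\|\psi u\|_{L^2}\le\|\psi\|_\infty\|u\|_{L^2}$.

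For the second term, I will establish the Leibniz identity
\[
D_e(\psi u)=\psi D_e u - i\sigma_D(d\psi)u \qquad (u\in\dom(D_e)).
\]
To prove it, I pair against $w\in\Gamma_\cc(\Omega;F)$. Because $D_e\subseteq D_{\max}$ by \cref{rem_min_max_conditions}.\ref{BCsandwich}, we have $\langle D_e(\psi u),w\rangle=\langle \psi u,D^\dagger w\rangle=\langle u,\bar\psi D^\dagger w\rangle$. Next I write $\bar\psi D^\dagger w=D^\dagger(\bar\psi w)-[D^\dagger,\bar\psi]w$, noting that $\bar\psi w$ again lies in $\Gamma_\cc(\Omega;F)$. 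The commutator $[D^\dagger,\bar\psi]$ is the zeroth-order operator $-i\sigma_{D^\dagger}(d\bar\psi)$, and it equals $(-i\sigma_D(d\psi))^*$ pointwise. Using the maximal-domain characterization \eqref{eq_defn_Dmax} once more then gives
\[
\langle D_e(\psi u),w\rangle=\langle \psi D_eu,w\rangle+\langle [D,\psi]u,w\rangle .
\]
Since $\Gamma_\cc(\Omega;F)$ is dense in $L^2$, the identity follows. Alternatively, one can cite the remark preceding \cref{LocalFirstOrder}, which already records that $[\psi,D_e]$ extends to a bounded operator equal to $i\sigma_D(d\psi)$.

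Finally, $\sigma_D(d\psi)$ is a compactly supported, continuous bundle endomorphism on $\olO$, because the coefficients of $D$ extend to $\olO$. It is therefore bounded on $L^2$. This yields
\[
\|D_e(\psi u)\|_{L^2}\le\|\psi\|_\infty\|D_eu\|_{L^2}+\|\sigma_D(d\psi)\|_\infty\|u\|_{L^2}\lesssim\|u\|_D .
\]
Combined with the first term, this proves continuity.

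The only step requiring care is the sign and adjoint bookkeeping in the Leibniz identity, in particular handling complex-valued $\psi$ through $\bar\psi$. One can reduce to real $\psi$ by splitting $\psi$ into real and imaginary parts, which removes this difficulty.
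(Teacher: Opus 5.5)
Your proposal is correct and is essentially the paper's proof: both write $D(\psi u)=\psi Du+[D,\psi]u$ with $[D,\psi]$ a bounded zeroth-order term given by the principal symbol, and bound each piece by $\|u\|_D$. Your distributional verification of this Leibniz identity via $D_e\subseteq D_{\max}$ just makes explicit a step the paper takes for granted; note that the correct pointwise relation there is $[D^\dagger,\bar\psi]=-[D,\psi]^*$ rather than $+[D,\psi]^*$, and it is this sign that yields the final identity you display, so nothing downstream is affected.
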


\begin{proof}
  Let $u\in \dom(D_e)$. It needs to be shown that $\|\psi u\|_D\lsim \|u\|_D$. Write
  $$
    D(\psi u) = \psi Du + [D,\psi] u .
  $$
  The commutator $[D_e,\psi]$ is a bounded operator on $L^2(\olO;E)$, hence
  \begin{align*}
    \|\psi u\|_D &= \| \psi u\|_{L^2(\olO;E)} + \| D(\psi u)\|_{L^2(\olO;F)} \\
    &= \| \psi u\|_{L^2(\olO;E)} + \| \psi Du \|_{L^2(\olO;F)} + \|\sigma_D(d\psi) u\|_{L^2(\olO;F)} \\
    &\lsim \|u\|_{L^2(\olO;E)} + \| Du\|_{L^2(\olO;F)} \\
    &= \|u\|_D .
  \end{align*}
  This concludes the proof.
\end{proof}

We will conclude this section with a series of lemmas stating that for local boundary conditions for first-order operators properties like (semi-)regularity can be verified in coordinate charts. 

\begin{lem}\label{RegLocalized}
  Let $D$ be a first-order elliptic differential operator over $\olO$, and $D_e$ a local boundary condition for $D$. Then, the following are equivalent:
  \begin{enumerate}
    \item\label{cond_reg} $D_e$ is semi-regular. 
    \item\label{cond_local} For every Lipschitz domain $V\subseteq \Omega^\circ$ that is precompact in $\olO$, every $\psi\in C_{c}^\infty(\olO)$ with support in $\overline{V}$, and all $u\in\dom(D_e)$ it holds that $\psi u \in H^1(V;E)$ with estimates
    $$
      \|\psi u \|_D \lsim \|\psi u \|_{H^1(V;E)} \lsim \|\psi u \|_D .
    $$
    \item\label{cond_coords} The condition \ref{cond_local} holds for every $V$ that is a coordinate (half-)ball.
  \end{enumerate}
  Moreover, $D_e$ is regular if and only if condition \ref{cond_local} or \ref{cond_coords} holds for both $D_e$ and $D_e^*$.
\end{lem}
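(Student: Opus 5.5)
We prove the cycle of implications \ref{cond_reg} $\Rightarrow$ \ref{cond_local} $\Rightarrow$ \ref{cond_coords} $\Rightarrow$ \ref{cond_reg}. The ``moreover'' part then follows by applying the equivalence to both $D_e$ and $D_e^*$. This is legitimate because $D_e^*$ is again local by \cref{LocalFirstOrder}, and it is a boundary condition for the first-order elliptic operator $D^\dagger$.

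For \ref{cond_reg} $\Rightarrow$ \ref{cond_local}, let $u\in\dom(D_e)$ and $\psi$ be supported in $\overline V$. By locality, $\psi u\in\dom(D_e)$, and semi-regularity gives $\psi u\in H^1_\loc(\Omega;E)$, hence $\psi u\in H^1(V;E)$. The estimate $\|\psi u\|_{H^1(V)}\lsim\|\psi u\|_D$ follows from \cref{SemiRegLocalNormEq}. For the reverse estimate we use \eqref{eq_semireg_equiv}: since $\psi u$ is supported in $\overline V$, its graph norm only sees $V$. The implication \ref{cond_local} $\Rightarrow$ \ref{cond_coords} is trivial, since coordinate (half-)balls are Lipschitz domains.

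The substantial step is \ref{cond_coords} $\Rightarrow$ \ref{cond_reg}. Fix a Lipschitz domain $V\subseteq\Omega^\circ$ precompact in $\olO$. By compactness of $\overline V$, cover it by finitely many coordinate (half-)balls $B_1,\dots,B_N$ with Lipschitz boundary. Choose $\psi_i\in C^\infty_c(\olO)$ supported in $\overline{B_i}$ with $\sum_i\psi_i=1$ on $\overline V$. For $u\in\dom(D_e)$ we write $u|_V=\sum_i(\psi_iu)|_V$ and estimate
\[
\|u\|_{H^1(V;E)}\le\sum_i\|\psi_iu\|_{H^1(B_i;E)}\lsim\sum_i\|\psi_iu\|_D\lsim\|u\|_D .
\]
Here the middle inequality is \ref{cond_coords}. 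The last inequality is \cref{ContMultFirstOrder}, which uses that $D$ is first order: $[D_e,\psi_i]=\pm i\sigma_D(d\psi_i)$ is bounded. This shows that $u\in H^1_\loc(\Omega;E)$ together with continuity of $\dom(D_e)\hookrightarrow H^1_\loc(\Omega;E)$ for every defining seminorm, which is semi-regularity.

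The main obstacle is a technical one. In \ref{cond_coords} the support of $\psi_i$ must lie in $\overline{B_i}$ while the sum covers $\overline V$, and $\overline V$ may meet $\dO$ or $\partial M$. We must therefore use genuine half-balls adapted to the Lipschitz boundary and check that restriction from $B_i$ to $V\cap B_i$ is bounded in $H^1$. Restriction of $H^1$ functions to subdomains is bounded, so we expect this to cause no problem. One point needs care: $\psi_iu$ is an $H^1(B_i)$-function with support inside $B_i\cap\overline V$, and we must make sure its restriction to $V$ is indeed what enters the norm of $u|_V$.
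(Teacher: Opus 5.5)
Your proposal is correct and follows essentially the same route as the paper: a)$\Rightarrow$b) from locality together with \cref{SemiRegLocalNormEq}, c)$\Rightarrow$a) via a partition of unity subordinate to finitely many coordinate (half-)balls combined with \cref{ContMultFirstOrder}, and the regularity statement by applying the equivalence to $D_e^*$, which is local by \cref{LocalFirstOrder}. The paper does not address the support issue you flag. It is settled by taking the $\psi_i$ subordinate to the $B_i$ viewed as relatively open subsets of $\olO$, so that $\supp(\psi_i)$ avoids a neighborhood of $\partial B_i\setminus\dO$; the paper makes exactly this choice in the proof of \cref{LocalizeAdjoint}.
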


\begin{proof}
  The implication \ref{cond_reg}$\Rightarrow$\ref{cond_local} is due to \cref{MultSemiReg} and \cref{SemiRegLocalNormEq}. Since \ref{cond_coords} is a special case of \ref{cond_local}, we have to show \ref{cond_coords}$\Rightarrow$\ref{cond_reg}. To that end, fix a locally finite open cover $\{V_i\}$ of $\olO$ by coordinate (half-)balls that are precompact in $\olO$, and let $\{\psi_i\}$ be a subordinate partition of unity. 
  
  It needs to be shown that for every Lipschitz domain $V$ that is precompact in $\olO$ and every $u\in\dom(D_e)$, it holds that $u|_V \in H^1(V;E)$ and that $\|u\|_{H^1(V;E)}\lsim \|u\|_D$. Since $V$ is precompact, it is covered by a finite number of the $V_i$. By assumption \ref{cond_coords} it holds that $\psi_i u \in H^1(V_i;E)$ for all $i$. Hence, 
  $$
    u|_V = \sum_{V_i\cap V\neq\emptyset} \psi_i u
  $$
  is a finite sum of Sobolev functions, hence in $H^1(V;E)$. Moreover,
  $$
    \|u\|_{H^1(V;E)} \leq \sum_{V_i\cap V\neq\emptyset} \|\psi_i u\|_{H^1(V;E)} \lsim \sum_{V_i\cap V\neq\emptyset} \|\psi_i u\|_{D} \lsim \|u\|_D ,
  $$
  where we used \ref{cond_coords} and \cref{ContMultFirstOrder}. 
  
  Lastly, if $D_e$ is local, then so is $D_e^*$ by \cref{LocalFirstOrder}. Hence, the equivalence of \ref{cond_reg}, \ref{cond_local}, and \ref{cond_coords} holds for $D_e^*$ as well. This gives the claimed equivalence for regularity, concluding the proof.
\end{proof}

\begin{lem}\label{LocalizeAdjoint}
  Let $D$ be a symmetric first-order elliptic differential operator over $\olO$, and let $D_e$ and $D_d$ be local boundary conditions for $D$. Assume that
  $$
    \left\{ u\in\dom(D^*_e) \, \colon \, \supp(u)\subseteq\olO \text{ is compact} \right\}
  $$
  is dense in $\dom(D_e^*)$, and the same holds for $D_d$ instead of $D_e^*$. Then, the following are equivalent:
  \begin{enumerate}
    \item\label{adj} $D_e^*=D_d$.
    \item\label{adj_local} For every Lipschitz domain $V\subseteq \Omega^\circ$ that is precompact in $\olO$, compact $K \subset \overline{V}$ that does not intersect some neighborhood of $\partial V\setminus\dO$, and every $u\in L^2(\olO;E)$ with support in $K$ it holds that $u\in\dom(D_d)$ if and only if
    \begin{equation}
      \exists w\in L^2(V;E) \, \forall v \in \dom(D_e) \cap L^2(V;E) \colon \langle u, D_ev \rangle_{L^2} = \langle w, v \rangle_{L^2} . \tag{\textasteriskcentered}\label{loc_adj_cond}
    \end{equation}
    \item\label{adj_coords} The condition \ref{adj_local} holds for every $V$ that is a coordinate (half-)ball.
  \end{enumerate}
\end{lem}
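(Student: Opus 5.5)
The plan is to prove \ref{adj}$\Rightarrow$\ref{adj_local}$\Rightarrow$\ref{adj_coords}$\Rightarrow$\ref{adj}. The middle implication is trivial, since coordinate (half-)balls are special Lipschitz domains.

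For \ref{adj}$\Rightarrow$\ref{adj_local}, assume $D_e^*=D_d$. Let $V$, $K$, and $u$ be as in \ref{adj_local}, with $\supp(u)\subseteq K$.
\begin{itemize}
\item If $u\in\dom(D_d)=\dom(D_e^*)$, then (\ref{loc_adj_cond}) holds with $w=(D_e^*u)|_V$. This is immediate, because elements $v\in\dom(D_e)\cap L^2(V;E)$ are in particular in $\dom(D_e)$.
\item Conversely, assume (\ref{loc_adj_cond}) holds. Choose $\chi\in C^\infty_c(\olO)$ with $\chi\equiv1$ near $K$ and $\supp(\chi)\subseteq\overline V$, vanishing near $\partial V\setminus\dO$. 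This is possible by the hypothesis that $K$ avoids a neighborhood of $\partial V\setminus \dO$.
\item For arbitrary $v\in\dom(D_e)$, locality gives $\chi v\in\dom(D_e)\cap L^2(V;E)$. Since $D$ is first order, $D_e(\chi v)=\chi D_ev-i\sigma_D(d\chi)v$.
\item Because $u=\chi u$ and $d\chi=0$ on $\supp(u)$, we obtain
$$
\langle u,D_ev\rangle=\langle u,\chi D_ev\rangle=\langle u,D_e(\chi v)\rangle=\langle w,\chi v\rangle=\langle \chi w,v\rangle .
$$
\item Hence $u\in\dom(D_e^*)=\dom(D_d)$.
\end{itemize}

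For \ref{adj_coords}$\Rightarrow$\ref{adj}, I would show both inclusions $D_d\subseteq D_e^*$ and $D_e^*\subseteq D_d$. Both operators are extensions of $D^\dagger=D$ by sub-operators of $D_{\max}$, so on common domain elements they agree as $D_{\max}$. It therefore suffices to compare domains.

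Fix a locally finite cover of $\olO$ by precompact coordinate (half-)balls $V_i$. Take a partition of unity $\psi_i$ such that each $\supp\psi_i$ lies in a compact $K_i\subset\overline{V_i}$ avoiding a neighborhood of $\partial V_i\setminus\dO$.

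First, by the density hypotheses together with closedness of $D_e^*$ and $D_d$, it suffices to treat compactly supported $u$. Then:
\begin{itemize}
\item Let $u\in\dom(D_e^*)$ have compact support. By \cref{LocalFirstOrder}, $D_e^*$ is local, so $\psi_iu\in\dom(D_e^*)$. Each such piece satisfies (\ref{loc_adj_cond}) in $V_i$ by the argument of the first direction, so \ref{adj_coords} gives $\psi_iu\in\dom(D_d)$.
\item Because $\supp u$ is compact, $u=\sum_i\psi_iu$ is a finite sum, hence $u\in\dom(D_d)$. This shows that compactly supported elements of $\dom(D_e^*)$ lie in $\dom(D_d)$.
\item Let $u\in\dom(D_d)$ have compact support. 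Then $\psi_iu\in\dom(D_d)$ by locality of $D_d$. By \ref{adj_coords}, (\ref{loc_adj_cond}) holds for $\psi_iu$ in $V_i$.
\item The cutoff argument of the first direction, with $\chi_i\equiv 1$ near $K_i$, upgrades this to $\psi_iu\in\dom(D_e^*)$. Summing the finitely many pieces gives $u\in\dom(D_e^*)$.
\end{itemize}

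The density step is the point I expect to need the most care. For $u\in\dom(D_e^*)$, approximate $u$ in graph norm by compactly supported $u_n\in\dom(D_e^*)$; each $u_n$ lies in $\dom(D_d)$. On such elements $D_d u_n=D_e^*u_n$, so the graph norms agree and $(u_n)$ is Cauchy in $\dom(D_d)$. Closedness of $D_d$ then yields $u\in\dom(D_d)$. The reverse inclusion follows symmetrically, using the density assumption for $D_d$.
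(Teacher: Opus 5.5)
Your proposal is correct and follows essentially the same route as the paper. For (a)$\Rightarrow$(b) both use a direct check plus a locality cutoff, and for (c)$\Rightarrow$(a) both use a partition of unity subordinate to coordinate (half-)balls, locality of $D_e^*$ via \cref{LocalFirstOrder}, finite sums, and the density hypotheses. Two of your steps are, if anything, more careful than the paper's: the cutoff $\chi\equiv 1$ near $K$, which gives $D_e^*u=\chi w$ directly, and the explicit closedness argument for reducing to compactly supported sections.
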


\begin{proof}
  We begin with \ref{adj}$\Rightarrow$\ref{adj_local}. Recall that $u\in\dom(D_e^*)$ if and only if
  \begin{equation}
      \exists w\in L^2(\Omega;E) \, \forall v \in \dom(D_e) \colon \langle u, D_ev \rangle_{L^2} = \langle w, v \rangle_{L^2} . \tag{\textasteriskcentered\textasteriskcentered}\label{dom_adj}
    \end{equation}
  Let $V,K,u$ be as stated. If $u\in\dom(D_d)$, then \eqref{loc_adj_cond} is a consequence of $D_d$ being the adjoint of $D_d$, since $w=D_du$ is automatically supported in $K$ if $u$ is. Conversely, if $u$ satisfies \eqref{loc_adj_cond}, then it is in $\dom(D_e^*)$. This is almost by definition, except that it needs to be argued that \eqref{dom_adj} needs to be verified only for $v\in \dom(D_e) \cap L^2(V;E)$. To see this, pick a bump function $\psi \in C^\infty_c(\olO)$ with support in $\overline{V}$ and $\psi|_K=1$. Such a function exists due to our assumption that $K$ does not intersect some neighborhood of $\partial V\setminus\dO$. Then, for an arbitrary $v\in\dom(D_e)$ it holds that $\langle w, v \rangle_{L^2}=\langle w, \psi v \rangle_{L^2}$, and $\psi v \in\dom(D_e)$ due to the assumed locality. This proves \ref{adj}$\Rightarrow$\ref{adj_local}.

  Since \ref{adj_coords} is a special case of \ref{adj_local}, it remains to show \ref{adj_coords}$\Rightarrow$\ref{adj}. To that end fix an open cover of $\olO$ by pre-compact coordinate (half-)balls $\{V_i\}$, and let $\{\psi_i\}$ be a subordinate partition of unity such that the support of $\psi_i$ does not intersect some neighborhood of $\partial V_i\setminus\dO$. Also fix functions $\varphi_i\in C^\infty_{c}(V_i)$ satisfying $\varphi_i\psi_i = \psi_i$. We will first show that $\dom(D_d)\subseteq\dom(D_e^*)$. To that end, fix $u\in\dom(D_d)$. By our assumption on $D_d$, we may assume $u$ to have compact support. Because $D_d$ is local, $\psi_i u$ is in $\dom(D_d)$ as well. By \ref{adj_coords} there is $w_i \in L^2(V_i;E)$ such that 
  $$
    \langle \psi_i u , D_e v \rangle_{L^2} = \langle w_i, v \rangle_{L^2} \quad \forall v \in \dom(D_e) \cap L^2(V_i;E) .
  $$
  Since $u$ is compactly supported, all but a finite number of $w_i$ are zero, hence $w \coloneqq \sum_i w_i$ is a well-defined element of $L^2(\Omega;E)$. We will show that this $w$ satisfies the property \eqref{dom_adj}. To that end, calculate that
  \begin{align*}
    \langle w, v\rangle_{L^2} &= \sum_i \langle w_i, v\rangle_{L^2} \\
    &= \sum_{i} \langle w_i, \varphi_i v \rangle_{L^2} \\
    &= \sum_i \langle \psi_i u, D_e(\varphi_i v) \rangle_{L^2} \\
    &= \sum_i \langle \psi_i u, D_ev \rangle_{L^2} \\
    &= \langle u, D_ev \rangle_{L^2} .
  \end{align*}
  To get to the second-to-last line we used that the integrand of the inner product $\langle \psi_i u, D_e(\varphi_i v) \rangle_{L^2}$ is non-zero only over the support of $\psi_i$, where $\varphi_i=1$ by assumption. This shows that $u\in\dom(D_e^*)$ with $D_e^*u=w$.

  Conversely, let $u\in\dom(D_e^*)$ be compactly supported. We will show that $u\in\dom(D_d)$. Because $u$ equals the finite sum $\sum_i\psi_i u$, it suffices to show that each $\psi_i u$ is in $\dom(D_d)$. Note that by \cref{LocalFirstOrder}, $D_e^*$ is local because $D_e$ is. Hence $\psi_iu\in\dom(D_e^*)$. As used previously, any element of $\dom(D_e^*)$ satisfies \eqref{loc_adj_cond}. Since we assume \ref{adj_coords}, this implies $\psi_i u\in\dom(D_d)$, whence $u=\sum_i \psi_i u\in\dom(D_d)$. This concludes the proof.
\end{proof}

The following lemma establishes density of compactly supported sections as required for \cref{LocalizeAdjoint}, under an additional boundedness assumption on the principal symbol of $D$. This boundedness assumption is satisfied by any Dirac-type operator. A similar result was established by Bär--Bandara in \cite[Sec.~3]{Baer2025}.

\begin{lem}\label{cpt_dense}
  Let $D$ be a first-order differential operator, and $D_e$ a local boundary condition. Assume that $\olO$ is complete, and that
  $$
    \sup \left\{\left\|\sigma_D(\xi)\right\|\,\colon\, \xi \in T^*\Omega,\,\|\xi\|\leq 1\right\} < \infty .
  $$
  Then, 
  $$
    \left\{ u\in\dom(D_e) \colon \supp(u)\subseteq\olO \mathrm{\; is \; compact} \right\}
  $$
  is dense in $\dom(D_e)$.
\end{lem}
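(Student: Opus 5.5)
The plan is the standard cut-off argument. Completeness of $\olO$ supplies cut-off functions with uniformly small gradients, and locality of $D_e$ lets us multiply elements of $\dom(D_e)$ by them.

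First I construct the cut-offs. Fix a basepoint $x_0\in\olO$ and let $r$ be the distance to $x_0$ in $\olO$. Since $\olO$ is complete, the closed metric balls are compact (Hopf--Rinow, in the appropriate length-space form for $\olO$). I take a smooth function $\chi\colon\IR\to[0,1]$ with $\chi=1$ on $(-\infty,1]$ and $\chi=0$ on $[2,\infty)$, and set $\chi_n\coloneqq\chi(r/n)$. These functions are only Lipschitz, with $|d\chi_n|\le C/n$ almost everywhere. To get elements of $C^\infty_c(\olO)$, as locality requires, I smooth them. Concretely, I use a smooth function $\tilde r$ on a neighbourhood of $\olO$ in $M$ with $|\tilde r-r|\le 1$ and $|d\tilde r|\le 2$, and replace $r$ by $\tilde r$. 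Each $\chi_n$ then has compact support in $\olO$, satisfies $0\le\chi_n\le 1$, converges pointwise to $1$, and obeys $\sup|d\chi_n|\lsim 1/n$.

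Next, let $u\in\dom(D_e)$. By locality, $\chi_n u\in\dom(D_e)$, and its support is compact. On $\Gamma_{cc}(\Omega;E)$ we have the commutator identity $[D,\chi_n]=-i\sigma_D(d\chi_n)$. This extends to $\dom(D_e)$: I argue as in \cref{ContMultFirstOrder}, using that $D_e\subseteq D_{\max}$ acts distributionally (\cref{rem_min_max_conditions}) together with the Leibniz rule for distributions. This gives
\[
D_e(\chi_n u)=\chi_n D_e u - i\sigma_D(d\chi_n)u .
\]

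Now I estimate the graph-norm distance:
\[
\|u-\chi_n u\|_D\le \|(1-\chi_n)u\|_{L^2}+\|(1-\chi_n)D_eu\|_{L^2}+\|\sigma_D(d\chi_n)u\|_{L^2}.
\]
The first two terms tend to $0$ by dominated convergence. For the last term, the symbol bound gives $\|\sigma_D(d\chi_n)\|\le \sup_{\|\xi\|\le1}\|\sigma_D(\xi)\|\cdot\|d\chi_n\|_\infty\lsim 1/n$, so it tends to $0$ as well. Hence $\chi_n u\to u$ in the graph norm, which proves the claimed density.

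I expect the main obstacle to be the technical construction of smooth cut-offs on $\olO$ in $C^\infty_c(\olO)$ with uniformly bounded gradients and compact supports. This step requires care with the non-smooth boundary and with the relation between the intrinsic metric on $\olO$ and the metric of $M$. The remaining steps are routine once locality supplies $\chi_n u\in\dom(D_e)$.
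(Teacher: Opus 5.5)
Your argument is correct and is essentially the paper's cut-off argument. Locality puts the cut-off $\psi u$ in $\dom(D_e)$, the distributional Leibniz rule gives $D_e(\psi u)=\psi D_eu+[D,\psi]u$ with $\|[D,\psi]\|\le\sup_{\|\xi\|\le 1}\|\sigma_D(\xi)\|\cdot\|d\psi\|_\infty$, and completeness supplies compactly supported smooth cut-offs with controlled gradient. The only cosmetic difference is that you let $\|d\chi_n\|_\infty\to 0$ and use dominated convergence, whereas the paper fixes a compact $K$ outside which $u$ and $Du$ have $L^2$-norm below $\varepsilon/4$ and uses a single cut-off that equals $1$ near $K$ and has merely bounded gradient, so that the commutator term only sees $u$ on $\olO\setminus K$.
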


\begin{proof}
  Let $u\in\dom(D_e)$, and fix $\varepsilon>0$. We will find $\psi\in C^\infty_c(\olO)$ such that $\|u-\psi u\|_D<\varepsilon$. To that end, let $K\subseteq \olO$ be a compactum such that $\|u\|_{L^2(\olO\setminus K)}$ and $\|Du\|_{L^2(\olO\setminus K)}$ are less than $\frac{\varepsilon}{4}$. Let $\psi\in C^\infty_c(\olO)$ be such that $\psi=1$ on $K$. Because the principal symbol $\sigma_D$ is assumed bounded and $\olO$ complete, we can further take $\psi$ so that $\|\sigma_D(d\psi)\|\leq 1$\footnote{Let $C < \infty$ be a bound on the pointwise norm of $\sigma_D$. Let $N$ denote the complement of the $3C^{-1}$-neighborhood of $K$, and let $\psi'$ be the function $\psi'(x) = \min\{1, 2C d(x, \olO\setminus N)\}$. It is $(2C)^{-1}$-Lipschitz, compactly supported because $\olO$ is complete, and $\psi'(x)=1$ on the $C^{-1}$-neighborhood of $K$. Then, $\psi'$ can be mollified to a smooth function $\psi\in C^\infty_c(\olO)$ satisfying $\psi|_K=1$ and $\|d\psi\|_\infty \leq C^{-1}$. It follows that $\|\sigma_D(d\psi)\|\leq CC^{-1}= 1$ as claimed.}. Then,
  \begin{align*}
    \|u - \psi u\|_D &= \| u - \psi u\|_{L^2(\olO\setminus K)} + \| D(u - \psi u)\|_{L^2(\olO\setminus K)} \\
    &\leq \| (1-\psi)u\|_{L^2(\olO\setminus K)} + \| Du\|_{L^2(\olO\setminus K)}  + \|\psi Du\|_{L^2(\olO\setminus K)} + \| [\psi,D] u\|_{L^2(\olO\setminus K)} \\
    &\leq \|u\|_{L^2(\olO\setminus K)} + \|Du\|_{L^2(\olO\setminus K)}  + \|Du\|_{L^2(\olO\setminus K)} +  \|\sigma_D(d\psi)\| \cdot \|u\|_{L^2(\olO\setminus K)} \\
    &< \varepsilon ,
  \end{align*}
  which concludes the proof.
\end{proof}

\section{Examples}\label{SectionExamples}

The purpose of this section is to give examples of boundary conditions, and to illustrate their properties. We first relate our notion of regularity to regularity concepts already in use in the literature. Afterwards, we give a few concrete examples.

The theory of elliptic boundary conditions on compact manifolds is classical, see e.g. \cite[Chapter 10]{Hoermander1963}, \cite{BoossBavnbeck1993} or \cite[Section V.11]{Taylor1996}. There, a boundary condition is usually given by an equation of the form $Pu|_{\dO}=0$ for $P$ a \mbox{(pseudo-)} differential operators on the boundary of order less than $D$. Ellipticity is defined via an algebraic condition, the so-called Shapiro--Lopatinski condition, on the symbols of $D$ and $P$. One way of stating this condition is to require the principal symbol $\sigma_P(\xi)$ to be an isomorphism from the sum of eigenspaces of $\sigma_D(\xi)$ with positive real part onto the fiber of the target bundle of $P$ for all non-zero $\xi \in T^*(\partial\Omega)$. Such boundary conditions are semi-regular in the sense of Definition \ref{DefReg}:

\begin{prop}\label{classical_reg}
  Suppose that $\olO$ is compact with smooth boundary, let $D$ be an elliptic differential operator over $\olO$, and let $P$ be a (pseudo-)differential operator over $\dO$ of order less than $D$. Suppose that $(D,P)$ satisfy the Shapiro--Lopatinski condition. Then, 
  \begin{equation}\label{DomDiff}
    \dom(D_P) = \left\{ u \in H^d(\olO) \colon Pu|_{\dO}=0 \right\}
  \end{equation}
  is a semi-regular boundary condition for $D$.
\end{prop}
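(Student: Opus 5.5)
The proof has three parts: show $D_P$ is a closed operator, show it is a boundary condition in the sense of \cref{DefBdryCond}, and show it is semi-regular. The key input is the classical a priori estimate for Shapiro--Lopatinski elliptic boundary problems on compact manifolds with smooth boundary (e.g.\ \cite[Section V.11]{Taylor1996}, \cite[Chapter 10]{Hoermander1963}). For $u\in H^d(\olO)$ with $Pu|_{\dO}=0$ it gives
\[
\|u\|_{H^d(\olO)} \lsim \|Du\|_{L^2(\olO)} + \|u\|_{L^2(\olO)} .
\]
The reverse inequality $\|u\|_D\lsim\|u\|_{H^d}$ is trivial because the coefficients of $D$ are bounded on the compact $\olO$. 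Hence on the space \eqref{DomDiff} the graph norm and the $H^d$-norm are equivalent.

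\emph{Closedness.} Note first that $\{u\in H^d(\olO)\colon Pu|_{\dO}=0\}$ is closed in $H^d(\olO)$. This holds because $P$ has order $k<d$, so $u\mapsto Pu|_{\dO}$ is continuous from $H^d(\olO)$ to $H^{d-k-1/2}(\dO)$ by the trace theorem. By the norm equivalence, this space is then complete in the graph norm. So $D_P$, defined by $D$ acting on \eqref{DomDiff}, is closed, and the inclusion of its domain into $H^d(\olO)\subseteq H^d_\loc(\Omega)$ is continuous. This is exactly semi-regularity in the sense of \cref{DefReg}; since $\olO$ is compact, $H^d(\olO)$ controls every seminorm of $H^d_\loc$.

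\emph{Boundary condition.} By \cref{rem_min_max_conditions}.\ref{BCsandwich} it suffices to check $D_{\min}\subseteq D_P\subseteq D_{\max}$. The inclusion $D_P\subseteq D_{\max}$ follows from \cref{rem_min_max_conditions}.\ref{differential_op_cont_sobolev}: $D_P$ acts in the distributional sense on $H^d$-sections, so the defining identity \eqref{eq_defn_Dmax} holds by integration by parts against $w\in\Gamma_\cc$. For $D_{\min}\subseteq D_P$, observe that $\Gamma_\cc(\Omega)\subseteq\dom(D_P)$, since such sections vanish near $\dO$ and so $Pu|_{\dO}=0$. As $D_P$ is closed, it contains the closure $D_{\min}$.

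The only nontrivial ingredient is the a priori estimate, which is where the Shapiro--Lopatinski condition enters. I would cite it rather than reprove it; the proof is the standard localization to half-space models with frozen coefficients, solved via the ODE in the normal variable. One technical point needs attention: the cited estimates are usually stated for $P$ a system of differential or pseudo-differential boundary operators, possibly of differing orders. They are phrased in terms of $\|Du\|_{H^{s-d}}+\sum\|P_ju|_{\dO}\|_{H^{s-k_j-1/2}}$ plus a lower-order term in $u$. Taking $s=d$ and $Pu|_{\dO}=0$ gives the estimate above, with the lower-order term bounded by $\|u\|_{L^2}$ after an interpolation/Ehrling argument. This argument uses the compactness of $H^d\hookrightarrow H^{d-1}$ from \cref{prop_rellich_kondrachov}.
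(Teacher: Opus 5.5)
Your proposal is correct and takes the same route as the paper. The Shapiro--Lopatinski a priori estimate gives equivalence of the graph norm and the $H^d$-norm on $\dom(D_P)$, closedness in $H^d(\olO)$ then transfers to the graph norm, and semi-regularity follows immediately.

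You also spell out several points the paper leaves implicit: trace continuity for closedness in $H^d$, the sandwich $D_{\min}\subseteq D_P\subseteq D_{\max}$, and absorbing an $H^{d-1}$ lower-order term into $\|u\|_{L^2}$. For that last step, \cref{prop_rellich_kondrachov} as stated only gives compactness into $L^2$, not into $H^{d-1}$. The interpolation inequality $\|u\|_{H^{d-1}}\lsim \|u\|_{H^d}^{(d-1)/d}\|u\|_{L^2}^{1/d}$ together with Young's inequality does the job without any compactness.
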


\begin{proof}
  The Shapiro--Lopatinski condition implies an elliptic regularity estimate for $(D,P)$ (see e.g. \cite[ Prop. V.11.14]{Taylor1996}): If $u \in H^d(\olO)$, then
  \begin{equation}\label{reg_est}
    \|u\|_{H^d(\olO)} \lsim \|u\|_{L^2(\olO)} + \|Du\|_{L^2(\olO)} + \|Pu|_{\dO}\|_{H^{d-\ell-\sfrac{1}{2}}(\dO)} ,
  \end{equation}
  where $\ell < d$ is the order of $P$. For $u\in\dom(D_P)$, this gives $\|u\|_{H^d(\olO)}\leq \|u\|_D$. The converse estimate is a consequence of Sobolev-continuity of $D$. Hence, graph norm and Sobolev norm are equivalent on $\dom(D_P)$, and because $\dom(D_P)$ is closed with respect to the latter, it is also closed with respect to the former. Hence, it actually defines a boundary condition. From there, semi-regularity is apparent.
\end{proof}

\begin{rem}
  The proof of \cref{classical_reg} illustrates that a regularity estimate as in \eqref{reg_est} implies semi-regularity in our sense. Hence, its conclusion extends to other classical regularity notions that come with a regularity estimate, like regularity upon freezing coefficients (also discussed in \cite[Section V.11]{Taylor1996}).
\end{rem}

In recent work \cite{Baer2025}, Bär and Bandara investigate boundary conditions for first-order elliptic operators on manifolds with non-compact, smooth boundary satisfying some mild boundedness conditions. This extends earlier work by Bär and Ballmann treating the case of compact boundaries \cite{BaerBallmann2012}, and by Große and Nakad treating Dirac operators on spin$^c$ manifolds with boundary and bounded geometry \cite{Grosse2014}. Bär and Bandara parametrize boundary conditions via trace maps in terms of closed subspaces of a certain Sobolev-like space of Cauchy-data on the boundary. In their setup, a boundary condition is called semi-regular if its associated subspace is contained in $H^{\sfrac{1}{2}}_\loc(\partial\Omega)$. Continuity of the trace map implies that this is the case if and only if the boundary condition itself is contained in $H^1_\loc(\olO)$.

\begin{prop}[{\cite[Corollary 7.7]{Baer2025}}]
Suppose that $D$ is first-order, and that $\dO$ is smooth. Suppose further that $D$ and $\olO$ satisfy the regularity conditions (M1)--(M5) and (G1)--(G3) of \cite[Section 2]{Baer2025}. Then, the following are equivalent:
\begin{itemize}
\item $D_e$ is (semi-)regular in the sense of Definition \ref{DefReg} above.
\item $D_e$ is (semi-)regular in the sense of \cite[Definition 7.5]{Baer2025}.
\end{itemize}
\end{prop}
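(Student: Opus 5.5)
This equivalence is cited from \cite[Corollary 7.7]{Baer2025}; the point is to translate between the two notions, and I would argue via the trace map of Bär--Bandara. Under (M1)--(M5) and (G1)--(G3), their results give three things. First, a Cauchy data space $\check H(\partial\Omega)$ with $H^{\sfrac12}_\loc(\partial\Omega)\cap\check H(\partial\Omega)$ sitting inside it. Second, a continuous, surjective trace map $\gamma\colon\dom(D_{\max})\to\check H(\partial\Omega)$ whose kernel is $\dom(D_{\min})$. Third, a bijection between boundary conditions $D_e$ (closed $D_{\min}\subseteq D_e\subseteq D_{\max}$) and closed subspaces $B=\gamma(\dom(D_e))$. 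Their semi-regularity means $B\subseteq H^{\sfrac12}_\loc(\partial\Omega)$, and their regularity means that both $B$ and the adjoint subspace $B^{\mathrm{ad}}$ satisfy this. Since $\gamma$ intertwines passage to adjoints, $B^{\mathrm{ad}}$ is the trace of $\dom(D_e^*)$. The regularity statement is therefore just the semi-regularity statement applied to $D_e$ and $D_e^*$, so I only treat semi-regularity.

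\emph{Our notion implies theirs.} Suppose $\dom(D_e)\hookrightarrow H^1_\loc(\Omega)$. For $u\in\dom(D_e)$ and $\psi\in C^\infty_c(\olO)$, the section $\psi u$ lies in $H^1$ of a precompact domain. By the classical trace theorem, $(\psi u)|_{\dO}=\psi\,\gamma(u)$ lies in $H^{\sfrac12}$. Hence $\gamma(u)\in H^{\sfrac12}_\loc(\partial\Omega)$. This step also uses that $\gamma$ agrees with the restriction to the boundary on $H^1_\loc$ sections, which is part of the Bär--Bandara setup.

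\emph{Their notion implies ours.} This is the main step. Take $u\in\dom(D_e)$ with $\gamma(u)\in H^{\sfrac12}_\loc$. I would use the extension operator $\mathcal E$ of \cite{Baer2025}, which maps $H^{\sfrac12}_\loc$ boundary data into $H^1_\loc(\olO)\cap\dom(D_{\max})$ and satisfies $\gamma\circ\mathcal E=\mathrm{id}$. Then $u-\mathcal E\gamma(u)$ lies in $\ker\gamma=\dom(D_{\min})$, which sits inside $H^1_{0,\loc}$ by \cref{MinDomSobolev}. Hence $u\in H^1_\loc(\Omega)$.

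It remains to check that the inclusion $\dom(D_e)\to H^1_\loc(\Omega)$ is continuous, not merely well defined. The map is defined on the Hilbert space $\dom(D_e)$ with the graph norm and lands in the Fréchet space $H^1_\loc$. Its graph is closed, because both topologies are finer than the $L^2_\loc$ topology. The closed graph theorem then gives continuity.

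The main obstacle is the existence and properties of $\mathcal E$ and of the trace map on non-compact boundaries. This is exactly where (M1)--(M5) and (G1)--(G3) enter, and I would take these facts from \cite{Baer2025} rather than reprove them.
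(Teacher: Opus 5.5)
Your proposal is correct and takes the same route as the paper. The paper gives no separate proof: it cites \cite[Corollary~7.7]{Baer2025} and remarks that, via the Bär--Bandara trace map, containment of the boundary subspace in $H^{\sfrac{1}{2}}_\loc(\partial\Omega)$ is equivalent to $\dom(D_e)\subseteq H^1_\loc$. Your extension step (using $\ker\gamma=\dom(D_{\min})\subseteq H^1_{0,\loc}$) and your closed-graph argument spell out the converse direction and the continuity that the paper's one-line appeal to continuity of the trace glosses over. One precision: apply $\mathcal E$ only to data in $\check H(\partial\Omega)\cap H^{\sfrac{1}{2}}_\loc(\partial\Omega)$, or localize first with a cutoff $\psi$ using $\gamma(\psi u)=\psi\gamma(u)$, because general $H^{\sfrac{1}{2}}_\loc$ data on a non-compact boundary need not be the trace of an element of $\dom(D_{\max})$.
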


In this context, let us mention the related work by Bandara--Goffeng--Saratchandran treating the case of higher-order operators on compact manifolds with boundary \cite{Bandara2023}. They also describe boundary conditions in terms of subspaces of a Cauchy data space on the boundary. Their definition of (semi-)regularity agrees with the one we use, and they give an equivalent condition for (semi-)regularity in terms the associated subspaces.

Manifolds of bounded geometry are a convenient setting for analysis on non-compact manifolds, because by definition they come with uniform quantitative bounds on their geometric structure. This allows one to define global Sobolev spaces that behave essentially analogously to Sobolev spaces on compact manifolds. Furthermore, the theory of uniformly elliptic operators satisfying quantitative bounds on their coefficients and the inverse of their symbols is analogous to the elliptic theory on compact manifolds. Uniformly elliptic boundary conditions for such operators have initially been investigated by Schick in \cite{Schick1998}. More recently, Große and Nistor discussed uniform versions of different classical regularity notions for second-order uniformly elliptic operators \cite{Grosse2020}. These uniformly regular/elliptic boundary conditions are covered by our setting\footnote{In fact, the additional uniformness information allows one to carry out our subsequent $K$-homological discussion of \cref{SecMainResult} in \emph{uniform} K-homology, see \cite{LykoThesis}.}.

\begin{prop}
  Suppose that $\olO$ is a manifold with boundary and bounded geometry, $D$ a uniformly elliptic differential operator over $\olO$, and $P$ a uniform differential operator over the boundary with order less than $P$. Let $D_P$ be defined as in \eqref{DomDiff}, using the global Sobolev space $H^d(\olO)$ on $\olO$.
  \begin{itemize}
    \item[i)] If $(D,P)$ is a uniformly elliptic boundary condition in the sense of \cite[Definition~4.11]{Schick1998}, then $D_P$ is semi-regular.
    \item[ii)] Suppose that $D$ is a second-order differential operator in divergence form. If $(D,P)$ has uniform regularity of the local problems in the sense of \cite[Theorem~5.6]{Grosse2020}, satisfies the uniform Shapiro-Lopatinski condition of \cite[Definition~6.8]{Grosse2020}, or is well-posed in energy space in the sense of \cite[Theorem~7.2]{Grosse2020}, then $D_P$ is semi-regular.
\end{itemize}
\end{prop}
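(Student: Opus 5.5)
The plan is to deduce semi-regularity from a uniform a priori estimate, just as in the proof of \cref{classical_reg}, and then to localize it using bounded geometry. Semi-regularity means that the graph norm controls the $H^d(V)$-norm for every precompact Lipschitz domain $V \subseteq \Omega^\circ$. The global Sobolev norm $\|\cdot\|_{H^d(\olO)}$ dominates every such local norm, since by bounded geometry it is equivalent to the $\ell^2$-sum of local norms over a uniformly locally finite cover by normal coordinate (half-)balls. So the reduction is to the single global inequality
\begin{equation*}
\|u\|_{H^d(\olO)} \lsim \|u\|_{L^2(\olO)} + \|Du\|_{L^2(\olO)} \quad \text{for all } u \in \dom(D_P).
\end{equation*}

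For this inequality I would take, in each case, the uniform regularity estimate furnished by the cited work. In case i) this is Schick's estimate \cite{Schick1998} for uniformly elliptic boundary problems:
\begin{equation*}
\|u\|_{H^d} \lsim \|u\|_{L^2} + \|Du\|_{L^2} + \|Pu|_{\dO}\|_{H^{d-\ell-\sfrac12}(\dO)} .
\end{equation*}
In case ii) each of the three hypotheses of \cite{Grosse2020} is shown there either to be equivalent to, or to imply, a uniform $H^2$-regularity estimate of the same shape for the divergence-form operator. I expect the uniform Shapiro--Lopatinski and energy-space conditions to be routed through the ``uniform regularity of local problems'' statement \cite[Theorem~5.6]{Grosse2020}. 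On $\dom(D_P)$ the boundary term vanishes, which gives the displayed inequality.

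Next comes the converse estimate $\|u\|_D \lsim \|u\|_{H^d}$. It holds because a uniform operator has coefficients bounded in $C^k$ uniformly, so $D\colon H^d(\olO)\to L^2(\olO)$ is bounded. The two norms are then equivalent on $\dom(D_P)$. This subspace is closed in $H^d(\olO)$ by continuity of the trace map and of $P$, so it is closed in the graph norm and $D_P$ is a closed extension. Since $\Gamma_\cc \subseteq \dom(D_P)$ and $\dom(D_P) \subseteq \dom(D_{\max})$ (distributional action), \cref{rem_min_max_conditions}.\ref{BCsandwich} shows that $D_P$ is a boundary condition. Semi-regularity then follows from the continuous restriction $H^d(\olO)\to H^d(V)$.

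The main obstacle is bookkeeping rather than analysis: one has to check that each hypothesis in ii) really yields the estimate in exactly the global, uniform form on $H^2(\olO)$ with the Dirichlet/trace term, and not merely local estimates with non-uniform constants. Should only local uniform estimates be available, I would patch them together with a uniformly bounded partition of unity $\{\psi_i\}$. The commutator terms $[D,\psi_i]u$ are of order $d-1$ with uniformly bounded coefficients, and they are absorbed by interpolating $H^{d-1}$ between $L^2$ and $H^d$ (\cref{InterpolationSobolev}) using an $\varepsilon$-Young inequality. The constant stays uniform thanks to bounded overlap of the cover.
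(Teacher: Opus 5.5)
Your proposal is correct and follows the same route as the paper. The paper also takes the uniform regularity estimate in each case (\cite[Theorem~4.14]{Schick1998} and \cite[Theorems~5.6, 6.9, 7.2]{Grosse2020}), deduces as in \cref{classical_reg} that $\dom(D_P)$ is graph-norm closed and hence a boundary condition, and obtains semi-regularity from the continuous inclusion $H^d(\olO)\hookrightarrow H^d_\loc(\Omega;E)$. The only differences are minor: the paper cites Theorems~6.9 and~7.2 of \cite{Grosse2020} directly for the Shapiro--Lopatinski and energy-space cases rather than routing them through Theorem~5.6, and your partition-of-unity fallback is not needed.
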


\begin{proof}
  In each case, $(D,P)$ satisfies a uniform regularity estimate (these are \cite[Theorem 4.14]{Schick1998}, and \cite[Theorem 5.6]{Grosse2020}, \cite[Theorem 6.9]{Grosse2020} and \cite[Theorem 7.2]{Grosse2020}, respectively) analogous to \eqref{reg_est}, from which it can be analogously deduced that $\dom(D_P)$ is closed in the graph norm, i.e. a boundary condition. The global Sobolev space $H^d(\olO)$ includes continuously into the local space $H^d_\loc(\olO)$, giving semi-regularity.
\end{proof}

We move on to provide a few concrete examples of boundary conditions and their properties. The goal is by no means a general account, but rather an illustrative one. A summary can be found in Table~\ref{TableExamples}.

\begin{ex}[Minimal boundary condition]\label{ex_minimal_condition}
Every differential operator admits a minimal boundary condition. If the differential operator is symmetric, then the adjoint of the minimal is the maximal boundary condition. The minimal and maximal boundary condition are never equal unless the boundary is empty, therefore neither is self-adjoint.\footnote{In fact, a symmetric differential operator need not admit a self-adjoint boundary conditions at all, and even if it does, there need not be self-adjoint boundary conditions that are also semi-regular, or also local, see e.g. \cite[Example~13.1 and preceeding paragraph]{Schmudgen2012}.} \cref{MinDomSobolev} states that if $D$ is elliptic over $\olO$, then $\dom(D_{\min})$ is continuously included in $H^d_{0,\loc}(\Omega;E)$. Hence, the minimal boundary condition is semi-regular. The minimal boundary condition is also local by \cref{MultMinDomain}. Lastly, if $D$ is odd resp. multigraded, then so is $D_{\min}$.
\end{ex}

\begin{ex}[Maximal boundary condition]\label{ExMinMax}
As discussed in the previous example, the maximal boundary condition is generally not self-adjoint. It is also not regular unless $\olO$ is one-dimensional. For an explicit example, consider the Cauchy-Riemann operator $\bar{\partial}$ on the domain
$$
\Omega:= \left\{ z\in\C \, \colon \, 0<|z|<1, \; 0<\mathrm{arg}(z)<\frac{\pi}{2} \right\} \subset \C \; .
$$
Consider the function $u(z)=z^{-\sfrac{1}{2}}$. It holds that $u\in L^2(\Omega)$, and since $u$ is holomorphic, it holds that $\bar{\partial}u=0$ pointwise. Hence, $u \in \ker(\bar{\partial}_{\max})$, and in particular $u\in\dom(\bar{\partial}_{\max})$. Suppose that $u$ were also in $H^1(\Omega)$. Then, the partial derivative $\partial_x u = -\frac{1}{2} z^{-\sfrac{3}{2}}$ would be in $L^2(\Omega)$, which is not the case.

This example can also be used to illustrate that the maximal boundary condition is not local for higher-order operators (a similar example is given in \cite[~Remark 3.10]{Fries2025}). Since the function $u(z)=z^{-\sfrac{1}{2}}$ is holomorphic over $\Omega$, it is also in the kernel and thus in particular the domain of $\Delta_{\max}$ for the Laplacian $\Delta = \bar{\partial}^*\bar{\partial}$. Let $f\in C^\infty_c(\olO)$ be the function $f(x+iy)=x/2$. Then, 
$$
\Delta(fu) = 2 (\partial_xf)(\partial_x u) = \partial_x u
$$
is not in $L^2(\olO)$. Thus, $\Delta_{\max}$ is not local.

If $D$ is odd resp. multigraded, then so is $D_{\max}$. This follows from \cref{AdjointMultigraded} and the corresponding property of the minimal boundary condition.
\end{ex}

The next three examples concern the Dirac operator. For the sake of concreteness we focus on the spin Dirac operator, though the discussion applies equally to twisted and spin$^c$ Dirac operators. Let us mention the recent work of Große--Uribe--Van Den Bosch giving explicit descriptions of boundary conditions for the Dirac operator under various locality, regularity and self-adjointness assumption on low-dimensional manifolds \cite{Grosse2025}, though their work also contains results in general dimension.  

Assume that $\olO$ is a complete spin manifold with smooth boundary. Let $\slashed{S}(\olO)\to \olO$ denote the complex spinor bundle, and $\Dirac\colon \Gamma_{cc}(\Omega;\slashed{S}(\olO))\to \Gamma_{cc}(\Omega;\slashed{S}(\olO))$ the spin Dirac operator. If the dimension of $\Omega$ is odd, then $\slashed{S}(\olO)|_{\partial \Omega}$ is naturally isomorphic to $\slashed{S}(\dO)$, and if the dimension is even, then it has two graded components, both of which are isomorphic to $\slashed{S}(\dO)$. If $X$ is a tangent vector (field), we will denote by $c(X)$ the corresponding Clifford multiplication operator on $\slashed{S}(\olO)$. Lastly, we fix an inward-pointing unit normal $\nu$.

\begin{ex}[Chiral boundary conditions]
Clifford multiplication by $i\nu$ is a self-adjoint involution on $\slashed{S}(\olO)|_{\partial \Omega}$. Let $P_\pm$ denote projection onto its $(\pm 1)$-eigenspace, and define the \emph{chirality boundary conditions} $\Dirac_\pm$ by\footnote{We should elaborate how the condition $P_\pm u|_{\partial\Omega} = 0$ is to be interpreted. For general $\olO$ it is not clear if the maximal boundary condition $\dom(\Dirac_{\max})$ admits a trace map $u\mapsto u|_{\dO}$. Such trace maps do exist locally (for example, transfer coordinate (half-)balls to compact manifolds, and use the results of \cite{BaerBallmann2012}). Then, we can define $\dom(\Dirac_\pm)$ as the set of those $u\in\dom(\Dirac_{\max})$ which satisfy $P_\pm(\psi u)|_{\dO}=0$ for all $\psi\in C^\infty_c(\olO)$. Because the local trace maps are continuous, this defines a closed subset of $\dom(\Dirac_{\max})$, and hence a boundary condition. Moreover, since $P_\pm$ are bundle morphisms, hence commute with functions, this definition coincides with the expected interpretation of $P_\pm u|_{\dO}=0$ whenever a global trace $u\mapsto u|_{\dO}$ does exist.}
$$
\dom(\Dirac_\pm) = \left\{ u\in \dom(\Dirac_{\max}) \colon P_\pm u|_{\partial\Omega} = 0 \right\} .
$$
Since they are defined by bundle morphisms, the chirality boundary conditions are local. In \cref{RegAbsRelChiral} below we show that $\Dirac_\pm$ are regular, and $\Dirac_\pm^*= \Dirac_\mp$. However, the chiral boundary conditions are not graded. Indeed, suppose that $\dim(\Omega)$ is even. Then the spinor bundle is graded by the complex volume element $\kappa$, which is locally given by $\kappa=\pm i c(e_1)\cdots c(e_{n-1})c(\nu)$. Then, $\kappa c(\nu) = - c(\nu)\kappa$. Thus the grading operator $\kappa$ permutes the kernels of $P_\pm$, and we conclude that $\kappa \Dirac_\pm = -\Dirac_\mp \kappa$.
\end{ex}

\begin{ex}[APS-boundary condition]
For this example, we assume conditions (A1)-(A3) from \cite[Section~8.1]{Baer2025} to be able to use their results on boundary conditions for Dirac operators, in particular the existence of a global trace map. These conditions roughly require the existence of a uniformly thick collar neighborhood, on which curvature and second fundamental form are bounded. In particular, they hold for manifolds with compact boundary, or manifolds with boundary and bounded geometry. 

On the boundary, $\Dirac$ takes the form
$$
\Dirac = c(\nu) \left( \partial_\nu + A \right)  \; ,
$$
where $c(\nu)A=\Dirac_{\partial\Omega}$ if $\dim(\Omega)$ is odd, and $c(\nu)A=\begin{pmatrix}
0 & \Dirac_{\partial\Omega} \\ \Dirac_{\partial \Omega} & 0
\end{pmatrix}$ if it is even. In either case $A$ is an essentially self-adjoint operator, and thus admits a spectral projection $\pi_{<}=\mathbf{1}_{(-\infty,0)}(A)$. The \emph{APS-boundary condition} is defined by
$$
\dom(\Dirac_\mathrm{APS}) = \left\{ u\in\dom(\Dirac_{\max}) \colon \pi_{<}u|_{\partial\Omega} = 0 \right\}
$$
The spectral projection $\pi_{<}$ is a \emph{non-local} operator, therefore $\Dirac_\mathrm{APS}$ is not local. However, it is regular \cite[Proposition 7.23]{Baer2025}. The domain of $\dom(\Dirac_\mathrm{APS}^*)$ equals $\dom(\Dirac_\mathrm{APS}) \oplus \ker(A)$ \cite[Remark 7.24]{Baer2025}, so that the APS-boundary condition is self-adjoint if and only if $A$ has trivial kernel, which in turn is the case if and only if $\Dirac_{\dO}$ has trivial kernel. Moreover, if $\olO$ is even-dimensional, then the spinor bundle is graded by the complex volume element, which locally can be written as $\kappa=\pm i c(e_1)\cdots c(e_{n-1}) c(\nu)$, where $e_1,\ldots,e_{n-1}$ form a local orthonormal frame of $\nu^\perp=T(\dO) \subseteq T\olO$. The operator $\Dirac$ is odd with respect to this grading. Since both $c(\nu)$ and the operator $\begin{pmatrix}
0 & \Dirac_{\partial\Omega} \\ \Dirac_{\partial \Omega} & 0
\end{pmatrix}$ are odd with respect to this grading, the operator $A$ is commutes with the grading operator $\kappa$. Hence the same holds for $\pi_{<}$, and we conclude that $\kappa$ preserves $\dom(\Dirac_\mathrm{APS})$. Hence the APS-condition is odd in even dimensions. 
\end{ex}

\begin{ex}[Transmission boundary condition]\label{ex_transmission}
  Let $M$ be a complete spin manifold, and $\Sigma$ a hypersurface. We assume for simplicity that $\partial M =\emptyset$, and that $\Sigma$ is compact. We form the manifold $\olO$ by removing a tubular neighborhood of $\Sigma$ from $M$. The boundary $\dO$ has two connected components, $\Sigma_1$ and $\Sigma_2$, each can be identified with $\Sigma$, though they carry opposite orientations. The \emph{transmission boundary condition} for $\Dirac_\Omega$ is defined by
  $$
    \dom(\Dirac_T) = \left\{ (u,u) \in H^{\sfrac{1}{2}}(\Sigma_1) \oplus H^{\sfrac{1}{2}}(\Sigma_2) \colon u\in H^{\sfrac{1}{2}}(\Sigma)\right\} \subseteq H^{\sfrac{1}{2}}(\dO) .
  $$
  It is graph-norm closed, hence indeed a boundary condition \cite[Ex.~7.28]{BaerBallmann2012}. It is also self-adjoint and regular \cite[~Theorem 7.4]{Grosse2025}. It is not local: A function $f\in C^\infty_c(\olO)$ preserves $\dom(\Dirac_T)$ if and only if $f|_{\Sigma_1}=f|_{\Sigma_2}$. In even dimensions, $\Dirac_T$ is odd.
\end{ex}

For the next example, again assume that $\olO$ has smooth boundary. We consider $d+d^*$ acting on the bundle $\bigwedge\olO=\bigoplus_{p=0}^{\dim(\Omega)} \bigwedge^p T^*\olO$, graded by even/odd degree. The resulting odd, graded operator is the Euler characteristic operator, which we denote by $D^\chi$. Again, fix an inward-pointing unit normal $\nu$.

\begin{ex}[Absolute and relative boundary conditions for $D^\chi$]\label{ex_defn_euler_op}
There is an orthogonal splitting
$$
\left(\bigwedge  T^*\olO\right)\bigg|_{\partial \Omega} = \left( \bigwedge T^*(\partial \Omega)  \right) \oplus \left( \bigwedge T^*(\partial \Omega)  \right) \wedge (\C\cdot \nu^\flat).
$$
Denote by $P_\mathrm{tan}$ the projection onto the first summand, and by $P_\mathrm{nor}$ that onto the second. Define the \emph{absolute} and \emph{relative} boundary conditions $D^\chi_\mathrm{abs}$ and $D^\chi_\mathrm{rel}$ by\footnote{Again, in the potential absence of a global trace map, we can make sense of the condition $P_j\alpha|_{\dO} = 0$ locally.}
\begin{align*}
\dom(D^\chi_\mathrm{abs}) &= \left\{ \alpha\in\dom(D^\chi_{\max}) \, \colon \, P_\mathrm{nor}\alpha|_{\dO} = 0 \right\}, \\
\dom(D^\chi_\mathrm{rel}) &= \left\{ \alpha\in\dom(D^\chi_{\max}) \, \colon \, P_\mathrm{tan}\alpha|_{\dO} = 0 \right\}  .
\end{align*}
They are defined by bundle morphism, and hence local. Moreover, they are both self-adjoint and regular, see the \cref{RegAbsRelChiral} below. The grading by degree preserves the domains of $D^\chi_\mathrm{abs/rel}$, hence $D^\chi_\mathrm{abs/rel}$ is odd.
\end{ex}

\begin{ex}[Dirichlet and Neumann Laplacian]\label{ExNeumann}
Consider the Laplace operator $\Delta=d^*d$ on functions and fix again an inward-pointing unit normal $\nu$. Define the \emph{Dirichlet and Neumann boundary conditions} by
\begin{align*}
\dom(\Delta_\mathrm{Dir}) &= \left\{ u\in\dom(\Delta_{\max}) \, \colon \, u|_{\partial\Omega} = 0 \right\} \\
\dom(\Delta_\mathrm{Neu}) &= \left\{ u\in\dom(\Delta_{\max}) \, \colon \, (\partial_\nu u)|_{\partial\Omega} = 0 \right\} \; .
\end{align*}
In the case that $\olO$ is a manifold with boundary and bounded geometry, it is known that both are self-adjoint \cite[Theorem 1.2]{Ammann2019} and regular \cite[Proposition 5.9]{Schick1998}. The Dirichlet condition is local in the sense of \cref{def_local}, but the Neumann condition is not. Indeed, a function $f\in C^\infty_c(\olO)$ preserves $\dom(\Delta_\mathrm{Neu})$ if and only if $(\partial_\nu f)|_{\partial \Omega}=0$. This illustrates that differential boundary conditions need not be local in our sense. Indeed, if the differential operator defining the boundary condition has order at least $1$, there is no reason to expect the boundary condition to be local in this sense.
\end{ex}

\begin{table}[t]
\centering
\begin{tabular}{|c||c|c|c|c|c|c|}
\hline
  & self-adjoint & local & semi-reg. & reg. & odd & multigraded \\ \hline \hline
 $D_{\min}$ & x & $\checkmark$ & $\checkmark$ & x & ($\checkmark$) & ($\checkmark$) \\  \hline
 $D_{\max}$ & x & If $d=1$ & x & x & ($\checkmark$) & ($\checkmark$) \\  \hline
  $\Dirac_{\pm}$ & $\Dirac_\pm^*=\Dirac_\mp$ & $\checkmark$ & $\checkmark$ & $\checkmark$ & ($\kappa\Dirac_\pm=-\Dirac_\mp\kappa$) &  \\  \hline
 $\Dirac_\mathrm{APS}$ & Iff $\ker(\Dirac_{\partial\Omega})=0$ & x & $\checkmark$ & $\checkmark$ & ($\checkmark$) &  \\  \hline
  $\Dirac_T$ & $\checkmark$ & x & $\checkmark$ & $\checkmark$ & ($\checkmark$) &  \\  \hline
  $D^\chi_\mathrm{abs/rel}$ & $\checkmark$ & $\checkmark$ & $\checkmark$ & $\checkmark$ & $\checkmark$ &  \\  \hline
  $\Delta_\mathrm{Dir}$ & $\checkmark$ & $\checkmark$ & $\checkmark$ & $\checkmark$ & & \\ \hline
  $\Delta_\mathrm{Neu}$ & $\checkmark$ & x & $\checkmark$ & $\checkmark$ & & \\ \hline
\end{tabular}
\caption{Summary of the properties of the boundary conditions discussed in this section. A check-mark in brackets indicates that the property is possessed if the prerequisites are met. For example the mark ($\checkmark$) for $D_{\min}$ in the \glqq odd\grqq -column means that if $D$ is odd, then so is $D_{\min}$. Similarly, no mark is made if the grading property cannot meaningfully applied to the operator to begin with. For example the box for $\Delta_\mathrm{Dir}$ in the \glqq odd\grqq -column is left empty because the Laplacian is not a graded operator.}\label{TableExamples}
\end{table}

We now provide proofs for the regularity and adjoints of absolute/relative and chiral boundary conditions on general non-compact manifolds with smooth boundary. For compact manifolds, these properties are well-established (see e.g. \cite[Lemma 4.1.1]{Gilkey1996} and \cite[Section 5]{Hijazi2002}), and our proof works by reduction to that case. In the non-compact case, \cite{Baer2025} provide general techniques for establishing these properties, which work under their mild boundedness assumptions.

\begin{lem}\label{RegAbsRelChiral}
  Let $\olO$ be a complete Riemannian manifold with smooth boundary.
  \begin{enumerate}
    \item\label{AbsRelReg} The absolute and relative boundary conditions for the Euler characteristic operator over $\olO$ are self-adjoint and regular.
    \item\label{ChiralReg} Suppose that $\olO$ is spin. The chiral boundary conditions for the spin Dirac operator over $\olO$ are regular, and $\Dirac^*_\pm = \Dirac_\mp$.   
  \end{enumerate}
\end{lem}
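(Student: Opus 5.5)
The plan is to reduce to the compact case by the localization lemmas of the previous subsection, \cref{RegLocalized} and \cref{LocalizeAdjoint}. All operators involved ($D^\chi$ and $\Dirac$) are first-order, symmetric and elliptic over $\olO$, and all four boundary conditions are local, being defined by bundle projections. Moreover, $D^\chi$ and $\Dirac$ have principal symbols given by (a combination of) Clifford-type multiplication, so $\|\sigma(\xi)\|\leq C\|\xi\|$ uniformly. Since $\olO$ is complete, \cref{cpt_dense} therefore applies to every local boundary condition in sight. In particular, compactly supported elements are dense in the domains of $D^\chi_{\abs/\rel}$, $\Dirac_\pm$ and their adjoints, which are local by \cref{LocalFirstOrder}. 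This supplies the density hypothesis of \cref{LocalizeAdjoint}.

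\emph{Regularity.} By \cref{RegLocalized}, it suffices to verify condition \ref{cond_coords} for $D_e$ and $D_e^*$ on coordinate (half-)balls $V$. Fix such a $V$ and $\psi\in C^\infty_c(\olO)$ supported in $\overline V$. Choose a compact Riemannian manifold $N$ with smooth boundary (spin in case \ref{ChiralReg}) that contains an isometric copy of a neighbourhood of $\overline V$, e.g.\ by doubling a slightly larger half-ball and smoothing the metric away from $\overline V$. The operators and the boundary projections $P_{\tan/\mathrm{nor}}$, $P_\pm$ are defined intrinsically from the metric, the normal $\nu$ and the spin structure, so they transfer to $N$. If $u\in\dom(D_e)$, then $\psi u\in\dom(D_e)$ by locality, and $\psi u$ transfers to an element of the domain of the corresponding boundary condition on $N$. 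Here we use that membership is tested via the local traces, as in the footnote definitions. The compact results (\cite[Lemma 4.1.1]{Gilkey1996} for $D^\chi$, \cite[Section 5]{Hijazi2002} for $\Dirac_\pm$) give that these conditions are regular on $N$, with estimates $\|\psi u\|_{H^1}\lsim\|\psi u\|_D$. The reverse estimate is trivial. This yields \ref{cond_coords}.

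\emph{Adjoints.} We apply \cref{LocalizeAdjoint}\ref{adj_coords} with $(D_e,D_d)=(D^\chi_{\abs},D^\chi_{\abs})$, $(D^\chi_{\rel},D^\chi_{\rel})$, and $(\Dirac_\pm,\Dirac_\mp)$. For $u$ supported in $K\subset\overline V$, condition \eqref{loc_adj_cond} only involves test sections $v\in\dom(D_e)\cap L^2(V)$, which again transfer to $N$. Conversely, test sections on $N$ can be cut off by a bump function that is $1$ near $K$ and supported in $\overline V$, without changing the pairing. Hence \eqref{loc_adj_cond} on $V$ is equivalent to $u$ lying in the domain of the adjoint of the transferred condition on $N$. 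On $N$ that adjoint is known to be the absolute, relative, respectively the opposite chiral condition. Thus \eqref{loc_adj_cond} holds iff $u\in\dom(D_d)$. Consequently $D^\chi_{\abs/\rel}$ are self-adjoint and $\Dirac_\pm^*=\Dirac_\mp$. Regularity of $D_e^*$ then follows from the regularity of $D_d$ already shown, so the conditions are regular in the full sense.

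The main obstacle is the transfer step. Its ingredients are building $N$ so that its geometry agrees with that of $\olO$ near $\overline V$, and checking that the locally defined trace conditions correspond exactly. The latter amounts to the locality of the traces from \cite{BaerBallmann2012}: for sections supported in $\overline V$, traces computed on $\olO$ and on $N$ agree. Beyond that, everything is bookkeeping with cutoffs.
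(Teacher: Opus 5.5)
Your proposal is correct and follows essentially the same route as the paper: locality together with \cref{RegLocalized} gives semi-regularity, and \cref{LocalizeAdjoint} (applicable by \cref{cpt_dense}) gives the adjoints, in both cases via transfer of a coordinate half-ball to a compact model where \cite{Gilkey1996} and \cite{Hijazi2002} apply. The only difference is cosmetic: the paper's compact model is a smooth bounded domain $\overline{U}\subset\R^n$ whose boundary agrees with the flat boundary piece over $\overline{V}$ (with the metric extended and, in the spin case, the spin structure induced from $\R^n$), which avoids the smoothing at the rim that your doubling construction would need.
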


\begin{proof}
  We begin with \ref{AbsRelReg}, and treat the absolute boundary condition for concreteness. We will prove regularity and self-adjointness via reduction to the compact case, where these properties are already known to hold (see e.g. \cite[Lemma 4.1.1]{Gilkey1996}). The absolute boundary condition is local. By \cref{RegLocalized} it therefore suffices to prove that for every coordinate (half-)ball $V\subseteq \Omega^\circ$ that is precompact in $\olO$, every $\psi\in C_{c}^\infty(\olO)$ with support in $\overline{V}$, and all $u\in\dom(D_\mathrm{abs}^\chi)$ it holds that $\psi u \in H^1(V;\bigwedge T^*V)$ with estimates
  $$
    \|\psi u \|_D \lsim \|\psi u \|_{H^1(V;\bigwedge T^*V)} \lsim \|\psi u \|_D .
  $$
  Using a coordinate map, transfer $V$ into $\R^n$. There, choose a compact domain $\overline{U}\subset \R^n$ with smooth boundary that contains $V$ in such a way that $\overline{V} \cap \partial U$ is equal to the image of $\overline{V} \cap \dO$ under the coordinate map. Over $V$ the Euler characteristic operator $D^\chi$ of $\Omega$ (pulled back via the coordinate map) coincides with the Euler characteristic operator $D^{\chi, U}$ over $U$. Moreover, $u\in\dom(D^\chi_\mathrm{abs})$ implies that $\psi u \in \dom(D^{\chi,U}_\mathrm{abs})$, since the boundary conditions for both operators are defined locally via an inward-pointing unit normal. Now, the absolute boundary condition on a compact manifold with smooth boundary is semi-regular. Hence, $\psi u \in H^1(V;\bigwedge T^*V)$ with the corresponding norm estimate. This establishes the condition \ref{cond_coords} of \cref{RegLocalized}, and thus semi-regularity of the absolute boundary conditions.

  To see that $D^\chi_\mathrm{abs}$ is self-adjoint (and therefore regular), we appeal to \cref{LocalizeAdjoint}, which is applicable due to \cref{cpt_dense}. Thus, it suffices to verify property \ref{adj_coords} from \cref{LocalizeAdjoint} for every coordinate half-ball $V$. Again, we may transfer such a half-ball to a compact smooth domain $U$ in Euclidean space, and treat the absolute boundary condition $D^{\chi,U}_\mathrm{abs}$ of that domain. On compact manifolds with smooth boundary the absolute boundary condition is known to be self-adjoint. \cref{LocalizeAdjoint} in particular implies that \ref{adj_coords} holds for the specific half-ball $V$ in question. Since the absolute boundary conditions for $D^\chi$ and $D^{\chi,U}$ are identified over $V$ via the coordinate map, we conclude that \ref{adj_coords} holds for the absolute boundary condition, whence it is self-adjoint. This proves the claimed properties of $D^\chi_\mathrm{abs}$. The exact same proof applies to $D^\chi_\mathrm{rel}$. 

  The chiral boundary conditions are handled analogously by a reduction to compact smooth domains in Euclidean space, which is equipped with the spin structure induced by the ambient Euclidean space. For these boundary conditions regularity and self-adjointness in the compact case is shown in \cite[Section 5]{Hijazi2002}.
\end{proof}

\section{The formal operator calculus}\label{SectionOpCalc}

On manifolds without boundary, the prerequisite (local) compactness statements needed to prove that a given elliptic operator defines a K-homology cycle are conveniently proved via mapping properties of pseudo-differential operators on the Sobolev scale. To replicate that style of argument in the presence of a boundary, we will use a formal scale of spaces adapted to the given boundary condition, and investigate the mapping properties of operators on this scale. This circumvents pseudo-differential theory, and the need to establish the required Sobolev estimates. Only after carrying out the computations on the formal scale will we appeal to (semi-)regularity to connect our formal scale to the Sobolev scale. Nevertheless, the reader is invited to mentally connect the spaces $\cH^\alpha$ introduced below with Sobolev spaces, and operators mapping $\cH^\alpha$ to $\cH^{\alpha+\beta}$ with operators that are smoothing of degree $\beta$.

\subsection{The abstract framework}

We begin with a general discussion of the scale associated to a fixed closed operator $T$ on a Hilbert space $H$. For $\alpha\geq0$ let
\[
\cH^\alpha(T) \coloneqq \text{domain of } (T^*T+I)^{\frac{\alpha}{2}} \text{ equipped  with } \|u\|_{\cH^\alpha(T)} \coloneqq \|(T^*T+I)^{\alpha/2} u\|.
\]
This norm is equivalent to the graph norm of $(T^*T+I)^{\frac{\alpha}{2}}$. Moreover, we set
\[
\cH^{-\alpha}(T) \coloneqq (\cH^\alpha(T))^*.
\]
These are Hilbert spaces, and for $\alpha\leq\alpha'$ we have a bounded inclusion $\cH^{\alpha'}(D_e)\hookrightarrow \cH^\alpha(D_e)$. Lastly, we set
\[
\cH^\infty(T) \coloneqq \bigcap_{\alpha\in\R}\cH^\alpha(T) \quad\text{and}\quad \cH^{-\infty}(T) \coloneqq \bigcup_{\alpha\in\R}\cH^\alpha(T).
\]
When the operator $T$ is clear from context we shall take the liberty of writing $\cH^\alpha$, $\cH^\infty$, etc. instead of $\cH^\alpha(T)$, $\cH^\infty(T)$, etc.
\begin{prop}\label{FormalSobolevStructure}
\mbox{}
\begin{enumerate}
\item\label{item_first_scale} It holds that $\cH^1(T)=\dom(T)$ with equivalent norms.
\item For all $\alpha\geq 0$ the inner product on $H$ extends to a perfect pairing between $\cH^\alpha(T)$ and $ \cH^{-\alpha}(T)$.
\item For all $\alpha,\beta\in\R$, $\alpha\leq \beta$, the embedding $\cH^\beta(T)\hookrightarrow\cH^\alpha(T)$ has dense image. In fact, $\cH^\infty(T)$ is dense in each $\cH^\alpha(T)$. 
\item\label{item_scale_norm} For $\alpha \geq 0$ the norm of $u\in H$ as an element of $\cH^{-\alpha}(T)$ is given by 
$$
\|u\|_{\cH^{-\alpha}(T)} =  \left|\left| \left(T^*T+I\right)^{-\frac{\alpha}{2}}u \right|\right| .
$$
\item\label{item_interpolation_scale} For all $\alpha < \beta$ the interpolation space $[\cH^\alpha(T),\cH^\beta(T)]_\theta$ is given by $\cH^{(1-\theta)\alpha+\theta\beta}(T)$.
\end{enumerate}
\end{prop}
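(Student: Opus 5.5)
The whole proposition should follow from the spectral theorem applied to the positive self-adjoint operator $A \coloneqq T^*T+I$. Since $T$ is closed and densely defined, von Neumann's theorem gives that $T^*T$ is self-adjoint and non-negative, so $A\geq I$. Hence $A^{s}$ is defined for all $s\in\R$ by functional calculus, and $A^{-s}$ is bounded for $s\geq 0$. I would first fix a unitary $U\colon H\to L^2(X,\mu)$ transforming $A$ into multiplication by a measurable function $a\geq 1$. Then $\cH^\alpha$ for $\alpha\geq0$ is identified with the weighted space $L^2(X,a^{\alpha}\mu)$. All five statements then reduce to elementary facts about weighted $L^2$-spaces.

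For \ref{item_first_scale}, I would compute, for $u\in\dom(T^*T)$,
\[
\|A^{1/2}u\|^2=\langle Au,u\rangle=\|Tu\|^2+\|u\|^2 .
\]
The key input is that $\dom(T^*T)$ is a core for $T$, again by von Neumann's theorem, and it is also a core for $A^{1/2}$. Closing in both norms then gives $\dom(A^{1/2})=\dom(T)$, with the norm of $\cH^1$ equal to the graph norm up to equivalence (the $\ell^2$ versus $\ell^1$ combination).

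For the pairing and the formula for the negative norms, I would define, for $u\in H$ and $v\in\cH^\alpha$,
\[
\langle u,v\rangle=\langle A^{-\alpha/2}u,A^{\alpha/2}v\rangle .
\]
Since $A^{\alpha/2}\colon\cH^\alpha\to H$ is unitary, Riesz representation gives
\[
\|u\|_{(\cH^\alpha)^*}=\|A^{-\alpha/2}u\| ,
\]
which is \ref{item_scale_norm}. It also shows that $\cH^{-\alpha}$ is the completion of $H$ in this norm, namely $L^2(X,a^{-\alpha}\mu)$. The pairing between $L^2(a^{\alpha}\mu)$ and $L^2(a^{-\alpha}\mu)$ induced by $\int f\bar g\,d\mu$ is perfect, which gives the second item.

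For density, $\cH^\infty$ contains $\bigcup_n \mathbf 1_{[1,n]}(A)H$. Under $U$ this corresponds to functions supported where $a\leq n$, and such functions are dense in every weighted space $L^2(a^{s}\mu)$ by dominated convergence. This gives the third item. For \ref{item_interpolation_scale}, I would use the Stein–Weiss theorem for complex interpolation of weighted $L^2$-spaces:
\[
[L^2(a^{\alpha}\mu),L^2(a^{\beta}\mu)]_\theta=L^2(a^{(1-\theta)\alpha+\theta\beta}\mu)
\]
with equal norms. Alternatively, one can verify this directly via the analytic family $z\mapsto a^{(\beta-\alpha)(z-\theta)/2}f$ and the three lines lemma.

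I expect the main obstacle to be bookkeeping rather than mathematics. The negative-order spaces are defined abstractly as duals, so one has to check carefully that the identification of $\cH^{-\alpha}$ with the weighted space is compatible with the inclusions $\cH^\beta\hookrightarrow\cH^\alpha$ across signs. One also has to make sure that interpolation pairs straddling $0$ are treated consistently, via $H\subset\cH^{-\alpha}$ densely.
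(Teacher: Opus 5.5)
Your argument is correct, but it takes a different route from the paper. The paper gives no self-contained proof. It quotes items a)--d) from the appendix of Bär--Bandara and item e) from Taylor. Taylor's result only covers $[H,\cH^k(T)]_\theta=\cH^{k\theta}(T)$, so the remaining cases, in particular negative exponents, are handled in a footnote. There a power of $T^*T+I$ is viewed as a closed operator on $\cH^\alpha(T)$, and its scale is a shifted copy of the scale of $T$, which reduces the general case to exponent $0$.

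Your spectral-theorem route replaces this with a single identification $\cH^s(T)\cong L^2(X,a^{s}\mu)$ for all $s\in\R$. After that, positive, negative and straddling exponents are treated uniformly. Item e) is then just Stein--Weiss, with no reiteration or scale-shifting needed. The price is exactly the bookkeeping you flag: checking that the abstract duals $(\cH^\alpha)^*$ and the embeddings $\cH^\beta\hookrightarrow\cH^\alpha$ match the weighted-space inclusions.

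Two small points need fixing.

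First, the isometry $\|u\|_{(\cH^\alpha)^*}=\|A^{-\alpha/2}u\|$ alone does not show that $H$ is dense in $(\cH^\alpha)^*$, so it does not yet give \enquote{$\cH^{-\alpha}$ is the completion of $H$}. It is cleaner to argue directly. By Riesz, every functional on $L^2(X,a^{\alpha}\mu)$ has the form $f\mapsto\int f\bar h\,a^{\alpha}\,d\mu$ with $h\in L^2(X,a^{\alpha}\mu)$. Then $g\coloneqq a^{\alpha}h$ lies in $L^2(X,a^{-\alpha}\mu)$ with the same norm, and the functional is $f\mapsto\int f\bar g\,d\mu$. This gives the identification of $\cH^{-\alpha}$ and the perfect pairing at once.

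Second, in your alternative direct proof of e) the exponent has the wrong sign. With $s=(1-\theta)\alpha+\theta\beta$ you need $F(z)=a^{(\alpha-\beta)(z-\theta)/2}f$. Then $\|F(it)\|_{L^2(a^{\alpha}\mu)}=\|F(1+it)\|_{L^2(a^{\beta}\mu)}=\|f\|_{L^2(a^{s}\mu)}$. Your choice instead produces the weight exponent $\alpha-\theta(\beta-\alpha)$ on the line $\operatorname{Re} z=0$.
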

\begin{proof}
Items \ref{item_first_scale}--\ref{item_scale_norm} are proved in \cite[Appendix A.1]{Baer2025} and Item \ref{item_interpolation_scale} is proved in \cite[Proposition 4.2.2]{Taylor1996}.\footnote{Strictly speaking, in that proposition it is only proved that $[H,\cH^k]_\theta=\cH^{k\theta}$. This takes care of the case $\alpha\geq 0$.For $\beta <0$ note that via the dense embedding $H \hookrightarrow \cH^{\alpha}$, $\dom(T)=\dom((T^*T+I)^{1/2})$ is also the domain of the operator $T^{(\alpha)} \coloneqq (T^*T+I)^{\frac{1-\alpha}{2}}$ viewed as an unbounded operator on $\cH^\alpha$. In fact, since $(T^*T+I)^{-\alpha/2}$ is an isometric isomorphism from $H$ to $\cH^\alpha$, the operator $T^{(\alpha)}$ is closed, and it holds that $\cH^{\gamma}(T^{\alpha})=\cH^{(1-\alpha)\gamma+\alpha}(T)$. The case $\alpha <0$ may thus be reduced to the case $\alpha=0$.}
\end{proof}
Let $A\colon \cH^\infty(T)\to \cH^{-\infty}(T)$ be a linear map. The majority of our investigations will revolve around the question whether there is $\beta\in\R$ such that $A$ extends to a bounded map $\cH^\alpha(T)\to \cH^{\alpha-\beta}(T)$ for some or even all $\alpha$. In the case where $T=D_e$ is a suitable boundary condition, such operators have similar mapping properties as pseudo-differential operators on the Sobolev scale, so we should think of them as operators of order $\beta$ on the scale associated to $T$. Our first lemma is an application of duality and interpolation. 
\begin{lem}\label{DualityInterpolation}
Let $A,B\colon \cH^\infty \to H$ be linear maps with $\langle u,Av \rangle_H= \langle Bu,v\rangle_H$. Let $\alpha,\beta\in \R$. 
\begin{enumerate}
\item\label{DualityInterpolation_a} If $A$ extends to a bounded map from $\cH^\alpha$ to $\cH^{\alpha-\beta}$, then $B$ extends to a bounded map from $\cH^{-\alpha+\beta}$ to $\cH^{-\alpha}$.
\item\label{DualityInterpolation_b} If $A$ and $B$ extend to bounded maps $\cH^{\alpha} \to \cH^{\alpha-\beta}$, then $A$ and $B$ extend to bounded maps $\cH^{\gamma}\to \cH^{\gamma-\beta}$ for all $\gamma$ between $-\alpha+\beta$ and $\alpha$.
\end{enumerate}
\end{lem}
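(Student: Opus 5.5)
For part \ref{DualityInterpolation_a} I would use the perfect pairing of \cref{FormalSobolevStructure}, which identifies $\cH^{-\gamma}$ with the dual of $\cH^{\gamma}$ for every $\gamma\in\R$, together with the density of $\cH^\infty$ in every $\cH^\gamma$.

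Fix $u\in\cH^\infty$ and define a functional on $\cH^\infty$ by $v\mapsto\langle Bu,v\rangle_H=\langle u,Av\rangle_H$. Then
\[
|\langle u,Av\rangle_H|\le \|u\|_{\cH^{-\alpha+\beta}}\,\|Av\|_{\cH^{\alpha-\beta}}\lsim \|u\|_{\cH^{-\alpha+\beta}}\,\|v\|_{\cH^\alpha},
\]
where the first inequality is the pairing estimate between $\cH^{-(\alpha-\beta)}$ and $\cH^{\alpha-\beta}$. Strictly, this estimate is first known for elements of $H$, and $Av\in H$ by assumption. So the functional extends to $\cH^\alpha$, with norm at most $C\|u\|_{\cH^{-\alpha+\beta}}$.

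By duality this gives $\|Bu\|_{\cH^{-\alpha}}\lsim\|u\|_{\cH^{-\alpha+\beta}}$ for $u\in\cH^\infty$. Here I use \cref{FormalSobolevStructure}\ref{item_scale_norm}, or simply the duality description when $-\alpha>0$, to see that the $\cH^{-\alpha}$-norm of $Bu\in H$ is the dual norm of this functional. Density of $\cH^\infty$ in $\cH^{-\alpha+\beta}$ then yields the bounded extension.

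The delicate point in this part is sign-independence. The pairing identification should hold for both signs of $\alpha$, since $\cH^{-\gamma}=(\cH^\gamma)^*$ symmetrically. For negative indices I would note that reflexivity gives $(\cH^{-\gamma})^*=\cH^{\gamma}$, and that the $H$-pairing realises this identification on dense subspaces.

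For part \ref{DualityInterpolation_b}, apply part \ref{DualityInterpolation_a} twice with the roles of $A$ and $B$ swapped. The relation $\langle u,Av\rangle=\langle Bu,v\rangle$ is equivalent to $\langle v,Bu\rangle=\langle Av,u\rangle$ by conjugation, so the hypothesis is symmetric in $A$ and $B$. Hence both $A$ and $B$ are bounded $\cH^{-\alpha+\beta}\to\cH^{-\alpha}$, that is $\cH^{\gamma_0}\to\cH^{\gamma_0-\beta}$ with $\gamma_0=-\alpha+\beta$. They are also bounded $\cH^{\alpha}\to\cH^{\alpha-\beta}$ by assumption.

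Now interpolate using \cref{FormalSobolevStructure}\ref{item_interpolation_scale}. For $\theta\in[0,1]$ we have $[\cH^{\gamma_0},\cH^{\alpha}]_\theta=\cH^{\gamma}$ and $[\cH^{\gamma_0-\beta},\cH^{\alpha-\beta}]_\theta=\cH^{\gamma-\beta}$, with the same $\gamma=(1-\theta)\gamma_0+\theta\alpha$. The interpolation property of the complex method then gives boundedness $\cH^\gamma\to\cH^{\gamma-\beta}$ for every $\gamma$ between $-\alpha+\beta$ and $\alpha$.

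Two technicalities need checking here. First, the two extensions must be compatible, i.e. they agree on the common dense subspace $\cH^\infty$; this holds because both extend the same map. Second, the interpolation result is stated for $\alpha<\beta$, so I would order the endpoints appropriately. The degenerate case where the two endpoints coincide is trivial.
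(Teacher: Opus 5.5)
Your proposal is correct and follows essentially the same route as the paper. In part a) you write out the duality estimate $|\langle Bu,v\rangle_H|=|\langle u,Av\rangle_H|\lsim\|u\|_{\cH^{-\alpha+\beta}}\|v\|_{\cH^\alpha}$ explicitly, where the paper passes to the Banach adjoint $A'$ of $A\colon\cH^\alpha\to\cH^{\alpha-\beta}$; in part b) you apply a) with $A$ and $B$ exchanged and then interpolate via \cref{FormalSobolevStructure}.\ref{item_interpolation_scale}, exactly as the paper does, including its treatment of the dual-norm identification by definition or by reflexivity depending on the sign of $\alpha$.
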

\begin{proof}
Ad \ref{DualityInterpolation_a}: $\cH^{-\alpha}$ is the dual space of $\cH^\alpha$, by definition if $\alpha\geq 0$ and by reflexivity if $\alpha<0$. Consider the dual operator $A'$ to $A \colon \cH^\alpha \to \cH^{\alpha-\beta}$, it maps $(\cH^{\alpha-\beta})^*=\cH^{-\alpha+\beta}$ to $(\cH^{\alpha})^*=\cH^{-\alpha}$. By \cref{FormalSobolevStructure} it is fully determined by the images of $\varphi_u \in (\cH^{\alpha})^*$, $\varphi_u(v)=\langle u,v\rangle_H$, for $u\in\cH^\infty\subseteq \cH^{\alpha-\beta}$. For $v\in \cH^\infty\subseteq \cH^{-\alpha+\beta}$ this image is given by
$$
\left(A'\varphi_u \right) v = \varphi_u(Av) = \langle u,Av \rangle_H = \langle Bu,v \rangle_H = \varphi_{Bu}(v) .
$$
Note that since $B$ extends to a bounded map $\cH^\alpha \to \cH^{\alpha-\beta}$, $\varphi_{Bu}$ is an element of the dual of $\cH^{-\alpha+\beta}$. Since $A'$ is bounded as the dual operator to a bounded operator, it follows that $B$ extends to a bounded operator $\cH^{-\alpha+\beta}\to \cH^{-\alpha}$ via $\varphi_u \mapsto \varphi_{Bu}$. 

Ad \ref{DualityInterpolation_b}: If $A,B$ both extend to bounded maps from $\cH^\alpha$ to $\cH^{\alpha-\beta}$, then both maps also extend to bounded maps from $\cH^{-\alpha+\beta}$ to $\cH^{-\alpha}$ by item (i). Therefore $A$ and $B$ also extend to bounded maps from $[\cH^{-\alpha+\beta},\cH^\alpha]_\theta$ to $[\cH^{-\alpha},\cH^{\alpha-\beta}]_\theta$ by interpolation, and by \cref{FormalSobolevStructure} these exhaust all intermediate exponents.
\end{proof}
Next we come to functions of the operators. Applying a function $f$ to the operator $T$ produces an operator of order $\beta$ on the scale of $T$, where the order $\beta$ depends on the growth of the function $f$. Concretely, for $\beta \in\R$ let $\cF^\beta$ denote the space of measurable functions $f$ on $\IR$ for which $(1+x^2)^{-\frac{\beta}{2}} f \in \mathcal{L}^\infty(\IR)$.
We equip $\cF^\beta$ with the norm $\|f\|_{\cF^\beta} \coloneqq \|(1+x^2)^{-\frac{\beta}{2}} f\|_\infty$. Then, we have the following:
\begin{prop}\label{MappingPropertiesT}
Let $\alpha,\beta\in\R$ and $f\in\cF^\beta$. 
\begin{enumerate}
\item\label{MappingPropertiesT_a} $T$ is a bounded operator from $\cH^\alpha(T)$ to $\cH^{\alpha-1}(T^*)$, and $T^*$ is a bounded operator from $\cH^\alpha(T^*)$ to $\cH^{\alpha-1}(T)$.
\item\label{MappingPropertiesT_b} $f(|T|)$ defines a bounded operator from $\cH^\alpha(T)$ to $\cH^{\alpha-\beta}(T)$, and $\|f(|T|)\|\leq\|f\|_{\cF^\beta}$ independently of $\alpha$.
\item\label{MappingPropertiesT_c} Suppose that $T$ is self-adjoint. Then $f(T)$ defines a bounded operator from $\cH^\alpha(T)$ to $\cH^{\alpha-\beta}(T)$, and $\|f(T)\|\leq \|f\|_{\cF^\beta}$ independently of $\alpha$.
\end{enumerate}
\end{prop}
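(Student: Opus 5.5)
The whole argument runs through the polar decomposition $T = U|T|$, where $|T| = (T^*T)^{1/2}$ and $U$ is a partial isometry, together with the identities $T^*T + I = |T|^2+I$ and $TT^*+I = U(|T|^2+I)U^* + (I-UU^*)$. The scales $\cH^\alpha(T)$ and $\cH^\alpha(T^*)$ are defined through the positive self-adjoint operators $\Lambda \coloneqq (T^*T+I)^{1/2}$ and $\Lambda_* \coloneqq (TT^*+I)^{1/2}$. All mapping statements should therefore reduce to spectral calculus of commuting functions of $|T|$, or of $|T^*|$. I will prove everything first on $\cH^\infty$, where all compositions make sense. For negative exponents I extend by density, using \cref{FormalSobolevStructure}, together with the norm formula \cref{FormalSobolevStructure}.\ref{item_scale_norm}.

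I start with \ref{MappingPropertiesT_b}. For $u \in \cH^\infty(T)$ and any $\alpha\in\R$, the operators $\Lambda^{\alpha-\beta}$, $f(|T|)$ and $\Lambda^{-\alpha}$ are all functions of the single self-adjoint operator $|T|$. Hence
$$
\|f(|T|)u\|_{\cH^{\alpha-\beta}} = \|\Lambda^{\alpha-\beta} f(|T|)\Lambda^{-\alpha}\,\Lambda^\alpha u\| \le \sup_{x\ge0}\,(1+x^2)^{-\beta/2}|f(x)|\;\|u\|_{\cH^\alpha}\le\|f\|_{\cF^\beta}\|u\|_{\cH^\alpha}.
\]
This uses the norm formula \cref{FormalSobolevStructure}.\ref{item_scale_norm} for negative exponents and the definition for positive ones. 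One small point needs checking: $f(|T|)$ maps $\cH^\infty$ into $\cH^{-\infty}$, and the displayed identity makes sense there. This is clear because $\cH^\infty = \bigcap_k \dom(\Lambda^k)$ and $f(|T|)\Lambda^{-N}$ is bounded for $N \ge \beta$. Density of $\cH^\infty$ then gives the extension.

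Part \ref{MappingPropertiesT_c} is the same computation. Since $T$ is self-adjoint, $T^*T + I = T^2 + I$, so $\Lambda = (1+T^2)^{1/2}$ is a function of $T$ itself and commutes with $f(T)$. The supremum now runs over $x\in\R$ instead of $x\ge0$.

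For \ref{MappingPropertiesT_a}, the key intertwining relation is $T\,g(T^*T) = g(TT^*)\,T$ on suitable domains, for bounded Borel $g$. This follows from the polar decomposition: $TT^* = U(T^*T)U^*$ on $(\ker T^*)^\perp$, and $T$ maps into that space. Equivalently, $\Lambda_*^{\alpha-1}T = U|T|\Lambda^{\alpha-1}$. Then for $u\in\cH^\infty(T)$,
$$
\|Tu\|_{\cH^{\alpha-1}(T^*)} = \|\Lambda_*^{\alpha-1} T u\| = \|U\,|T|\Lambda^{-1}\,\Lambda^{\alpha}u\| \le \|u\|_{\cH^\alpha(T)},
$$
since $x(1+x^2)^{-1/2}\le1$ and $\|U\|\le1$. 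Swapping the roles of $T$ and $T^*$ (so $T^{**}=T$, $T^* = U^*|T^*|$) gives the statement for $T^*$. I expect the main obstacle to be justifying the intertwining relation for the unbounded powers $\Lambda^{\alpha-1}$ with $\alpha$ of arbitrary sign. I would first prove it for bounded functions $g$, for instance resolvents $(T^*T+\lambda)^{-1}$, where it reduces to $T(T^*T+\lambda)^{-1} = (TT^*+\lambda)^{-1}T$, verified directly on $\dom(T)$. I would then pass to general Borel $g$ via functional calculus, and check that $T$ maps $\cH^\infty(T)$ into $\cH^\infty(T^*)$. Once that is in place, the negative exponents follow from the norm formula, or alternatively from the duality in \cref{DualityInterpolation} with $A=T$, $B=T^*$.
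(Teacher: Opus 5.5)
Your proposal is correct and takes essentially the same approach as the paper. Parts (b) and (c) go through the bounded function $(1+x^2)^{-\beta/2}f$ of $|T|$ (resp.\ $T$) on $\cH^\infty$ followed by density, and (a) goes through the intertwining $T\,g(T^*T)=g(TT^*)\,T$ together with $\|Tv\|=\||T|v\|\le\|(T^*T+I)^{1/2}v\|$; the only difference is that you derive the intertwining from the polar decomposition for general Borel $g$, whereas the paper cites it from Schm\"udgen for $g\in C_0(\R)$, so your version also cleanly covers the unbounded powers needed when $\alpha>1$.
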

\begin{proof}
Ad \ref{MappingPropertiesT_a}: Let $u\in \cH^\alpha(T)\cap \cH^1(T)$. Then,
$$
\|Tu\|_{\cH^{\alpha-1}(T^*)} = \|(TT^*+I)^{\frac{\alpha-1}{2}} Tu\| = \| T(T^*T+I)^{\frac{\alpha-1}{2}} u \| \leq \| (T^*T+I)^{\frac{\alpha}{2}} u \| \leq \|u\|_{\cH^\alpha(T)} ,
$$
for any $\alpha\in\R$, where we have used that $Tf(T^*T)=f(TT^*)T$ for $f\in C_0(\R)$, see the proof of \cite[Lemma~5.8(ii)]{Schmudgen2012}. Using that $\cH^1$ is dense in $\cH^\alpha(T)$ for $\alpha\leq 1$ the claim for $T$ follows. For $T^*$ it is proved analogously.

Ad \ref{MappingPropertiesT_b}: By definition of $\cF^\beta$, the operator $(T^*T+I)^{-\beta/2}f(|T|)$ is bounded, and it has norm equal to $\|f\|_{\cF^\beta}$. For $u\in \cH^\infty(T)$ we conclude
$$
\|f(|T|)u\|_{\cH^{\alpha-\beta}} = \left|\left| \left((T^*T+I)^{-\beta/2} f(|T|) \right)\cdot (T^*T+I)^{\alpha/2} u \right|\right| \leq \|f\|_{\cF^\beta}\cdot \|u\|_{\cH^\alpha(T)} .
$$
Using that $\cH^\infty(T)$ is dense in $\cH^\alpha(T)$, the claim for $f(|T|)$ follows.

The proof of \ref{MappingPropertiesT_c} is completely analogous to \ref{MappingPropertiesT_b}.
\end{proof}
\subsection{The case of elliptic operators}\label{SubsecFormalElliptic}
In our application to K-homology, we shall need compactness statements related to boundary conditions for elliptic operators. These must come from the corresponding statements for Sobolev spaces. We will handle calculations with boundary conditions with the formal operator calculus. Therefore, we need to connect the formal scale of these boundary conditions to the Sobolev scale, so that we can then deduce compactness statements based on behavior in the operator calculus. More concretely, we will see how statements that operators have negative order in the abstract operator calculus of a boundary condition imply compactness statements. 

Throughout this section we consider two situations simultaneously: The behavior of arbitrary boundary conditions in the interior, and that of semi-regular boundary conditions up to the boundary. In either case we can connect the domain of the boundary condition to Sobolev space, in the former case by interior regularity and in the latter by assumed semi-regularity. In either case we will deduce the desired statements for the operator calculus from the corresponding statement for Sobolev spaces. The behavior of arbitrary boundary conditions on the interior will eventually give rise to the fact that any boundary condition defines a K-homology class on the interior, while that of (local) semi-regular boundary conditions will yield the relative K-homology classes associated to such boundary conditions, or even absolute classes if regularity and self-adjointness are assumed.

Let $M$ be a smooth Riemannian manifold, $\Omega \subseteq M$ a Lipschitz domain, and $E,F\to \Omega$ vector bundles. Let $D\colon \Gamma_{cc}(\Omega;E)\to\Gamma_{cc}(\Omega;E)$ be a differential operator of order $d$ that is elliptic over $\olO$. Let $D_e$ be a boundary condition for $D$. Then, $D_e$ is a closed operator on the Hilbert space $L^2(\Omega)$, and we will investigate its formal scale $\cH^\alpha(D_e)$. We begin with a lemma concerning multiplication by functions.
\begin{lem}\label{LemmaMultFunc}
Let $D_e$ be a boundary condition for $D$, and $f\in C^\infty_c(\olO)$. Suppose that either $D_e$ is arbitrary and $\supp(f)\subset\Omega^\circ$, or $D_e$ is semi-regular and $f\cdot\dom(D_e)\subseteq \dom(D_e)$. Then multiplication by $f$ is a bounded map 
$$
\cH^{\alpha}(D_e) \longrightarrow \cH^{\alpha}(D_e)
$$
for all $0\leq\alpha\leq 1$.
\end{lem}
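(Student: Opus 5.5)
The plan is to prove boundedness at the two endpoints $\alpha=0$ and $\alpha=1$, and then obtain all intermediate $\alpha$ by interpolation.

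At $\alpha=0$ we have $\cH^0(D_e)=L^2(\Omega;E)$. Multiplication by $f$ is bounded there with norm $\|f\|_\infty$, because $f$ is bounded, being continuous and compactly supported on $\olO$.

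At $\alpha=1$ we use \cref{FormalSobolevStructure}.\ref{item_first_scale}, which identifies $\cH^1(D_e)=\dom(D_e)$ with norm equivalent to the graph norm $\|\cdot\|_D$. It therefore suffices to show that $u\mapsto fu$ maps $\dom(D_e)$ boundedly into itself in the graph norm, and here the argument splits according to the hypotheses.
\begin{itemize}
\item If $D_e$ is semi-regular and $f$ preserves $\dom(D_e)$, this is exactly \cref{MultSemiReg}.
\item If $D_e$ is arbitrary and $\supp(f)\subset\Omega^\circ$, then $\supp(f)$ is a compact subset of $\Omega^\circ$, so $f\in C^\infty_{cc}(\Omega)$. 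By \cref{MultInt} (and the remark after it, replacing $D_{\max}$ by $D_e$), multiplication by $f$ is bounded from $\dom(D_{\max})\supseteq\dom(D_e)$ into $\dom(D_{\min})\subseteq\dom(D_e)$. Since $D_{\min}\subseteq D_e\subseteq D_{\max}$, the graph norms agree on these subspaces, so the composite $\dom(D_e)\hookrightarrow\dom(D_{\max})\xrightarrow{f}\dom(D_{\min})\hookrightarrow\dom(D_e)$ is bounded.
\end{itemize}

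For $0<\alpha<1$ we apply \cref{FormalSobolevStructure}.\ref{item_interpolation_scale}, which gives $[\cH^0(D_e),\cH^1(D_e)]_\alpha=\cH^\alpha(D_e)$. An operator that is bounded on both endpoint spaces of a compatible couple is bounded on the complex interpolation spaces. Multiplication by $f$ is a single operator on $L^2$ that restricts to the bounded map on $\cH^1$, so interpolation yields boundedness on $\cH^\alpha(D_e)$ for all $0\le\alpha\le1$.

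There is no serious obstacle. The only points needing care are the identification of $\cH^1$ with the graph-norm domain and the observation that the graph norms of $D_{\min}$, $D_e$ and $D_{\max}$ coincide where they are defined, which holds because each is a restriction of $D_{\max}$.
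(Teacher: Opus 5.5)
Your proposal is correct and matches the paper's proof: you establish the endpoint $\alpha=0$ trivially on $L^2(\Omega;E)$, you get $\alpha=1$ from \cref{MultInt} or \cref{MultSemiReg} via $\cH^1(D_e)=\dom(D_e)$, and you obtain the intermediate exponents by interpolation using \cref{FormalSobolevStructure}. Your remark that the graph norms of $D_{\min}\subseteq D_e\subseteq D_{\max}$ agree where defined is a detail the paper leaves implicit.
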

\begin{proof}
For $\alpha=1$ the statement holds by Proposition \ref{MultInt} or Lemma \ref{MultSemiReg}, for $\alpha=0$ it is true because $\cH^0(D_e)=L^2(\Omega;E)$. For intermediate degrees it follows by interpolation via \cref{FormalSobolevStructure}.\ref{item_interpolation_scale}.
\end{proof}

\begin{lem}\label{lem_regular_interpolation}
Let $D_e$ be a semi-regular boundary condition for $D$. Then for all $0 \le \alpha \le 1$ we have a continuous inclusion 
$$
\cH^\alpha(D_e) \hookrightarrow H^{d\alpha}_\loc(\Omega;E) .
$$
\end{lem}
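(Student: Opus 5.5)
The plan is to interpolate between the endpoint cases $\alpha=0$ and $\alpha=1$, working one seminorm of $H^{s}_\loc(\Omega;E)$ at a time. Fix an open Lipschitz domain $V\subseteq\Omega^\circ$ that is precompact in $\olO$, and let $R_V\colon u\mapsto u|_V$ be the restriction map. It suffices to show that $R_V$ is bounded $\cH^\alpha(D_e)\to H^{d\alpha}(V;E)$ for every such $V$. The reason is that these restrictions give exactly the seminorms defining the locally convex topology of $H^{d\alpha}_\loc(\Omega;E)$, and $\cH^\alpha(D_e)$ is a Hilbert space.

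The endpoints are as follows. For $\alpha=0$ we have $\cH^0(D_e)=L^2(\Omega;E)$, and $R_V\colon L^2(\Omega;E)\to L^2(V;E)$ is trivially bounded. For $\alpha=1$, \cref{FormalSobolevStructure}.\ref{item_first_scale} identifies $\cH^1(D_e)$ with $\dom(D_e)$ carrying the graph norm, up to equivalent norms. Semi-regularity (\cref{DefReg}, see also \cref{SemiRegLocalNormEq}) then gives $\|u\|_{H^d(V;E)}\lsim\|u\|_D\lsim\|u\|_{\cH^1(D_e)}$, so $R_V\colon \cH^1(D_e)\to H^d(V;E)$ is bounded.

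For the interpolation step, we have a continuous inclusion $\cH^1(D_e)\hookrightarrow\cH^0(D_e)$, and $R_V$ is one linear map that is bounded on both endpoint couples $\cH^0\to L^2(V)$ and $\cH^1\to H^d(V)$. The interpolation property of the complex method then yields boundedness
$$
R_V\colon [\cH^0(D_e),\cH^1(D_e)]_\alpha \longrightarrow [L^2(V;E),H^d(V;E)]_\alpha .
$$
By \cref{FormalSobolevStructure}.\ref{item_interpolation_scale} the left-hand side is $\cH^\alpha(D_e)$. By \cref{InterpolationSobolev} with $k=d$ and $\theta=\alpha$, the right-hand side is $H^{d\alpha}(V;E)$. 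Both identifications hold with equivalent norms, so $\|u\|_{H^{d\alpha}(V;E)}\lsim\|u\|_{\cH^\alpha(D_e)}$, as required.

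I expect essentially no serious obstacle; the only point to check is compatibility. The two identifications must be the same as the interpolation spaces of the compatible couples we are actually using. On the Sobolev side this means the couple $(L^2(V),H^d(V))$ realized via the Lipschitz extension operator, which is exactly what \cref{InterpolationSobolev} covers. We should also make sure that the resulting maps agree with honest restriction of distributions on the dense subspace $\cH^1(D_e)$. Since every map involved is restriction of an $L^2$ function, this is automatic, and the inclusion is injective because $\cH^\alpha(D_e)\subseteq L^2(\Omega;E)$.
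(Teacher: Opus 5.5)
Your proposal is correct and matches the paper's proof: restrict to a precompact Lipschitz domain $V$, get boundedness of $\cH^1(D_e)\to H^d(V;E)$ from semi-regularity together with $\cH^1(D_e)=\dom(D_e)$, and interpolate using \cref{FormalSobolevStructure} and \cref{InterpolationSobolev}. The paper leaves the trivial $\alpha=0$ endpoint and the compatibility of the interpolation couples implicit; you check both explicitly.
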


\begin{proof}
We have to show that for each Lipschitz domain $V\subseteq\Omega^\circ$ that is precompact in $\olO$ and each $u\in\cH^\alpha(D_e)$ it holds that $u|_V \in H^{d\alpha}(V;E)$ with $\|u\|_{H^{d\alpha}(V;E)}\lsim \|u\|_{\cH^\alpha(D_e)}$. This is true for $\alpha=1$ due to the assumed semi-regularity, and the fact that $\dom(D_e)=\cH^1(D_e)$ with equivalent norms, see \cref{FormalSobolevStructure}.\ref{item_first_scale}. Hence, restriction to $V$ yields a continuous map
$$
  \cH^1(D_e) \hookrightarrow H^d_\loc(\Omega;E) \longrightarrow H^d(V;E) .
$$
Interpolation of this map together with \cref{InterpolationSobolev} yields a continuous map 
$$
  \cH^\alpha(D_e) \longrightarrow H^{d\alpha}(V;E) ,
$$
which on elements is still simply restriction to $V$. This shows that for each $u\in\cH^\alpha(D_e)$ we have $u|_V \in H^{d\alpha}(V;E)$ with $\|u\|_{H^{d\alpha}(V;E)}\lsim \|u\|_{\cH^\alpha(D_e)}$, proving the claim.
\end{proof}
For compact $K\subseteq \olO$ we shall write
\[
L^2_K(\Omega;E) \coloneqq \{u\in L^2(\Omega;E)\colon \mathrm{ess\text{-}supp}(u) \subset K\}.
\]
In the following we transfer the Rellich--Kondrachov theorem to the spaces $\cH^\alpha(D_e)$. This forms the technical backbone of our subsequent investigations into the K-homology classes of elliptic operators.
\begin{prop}\label{FormalRellich}
Let $K\subseteq \olO$ be a compact subset. Assume that either:
\begin{enumerate}
\item\label{FormalRellich_a} $K\subset \Omega^\circ$, and $D_e$ is arbitrary,
\item\label{FormalRellich_b} or $K$ is arbitrary, and $D_e$ is semi-regular.
\end{enumerate}
Let $0<\alpha\leq 1$. The maps 
$$
\cH^\alpha(D_e)\cap L^2_K(\Omega;E) \xlongrightarrow{j_\alpha} L^2_K(\Omega;E) \quad \mathrm{and} \quad L^2_K(\Omega;E) \xlongrightarrow{j_{-\alpha}} \cH^{-\alpha}(D_e) \cap L^2_K(\Omega;E)
$$
are compact.
\end{prop}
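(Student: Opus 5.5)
The plan is to prove compactness of $j_\alpha$ first and then obtain $j_{-\alpha}$ by duality.

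\emph{Compactness of $j_\alpha$.} Choose an open Lipschitz domain $V\subseteq\Omega^\circ$, precompact in $\olO$, with $K\subseteq\overline{V}$. In case \ref{FormalRellich_a} we take $V$ precompact in $\Omega^\circ$. The map $j_\alpha$ will be factored as
\[
\cH^\alpha(D_e)\cap L^2_K(\Omega;E)\longrightarrow H^{d\alpha}(V;E)\longrightarrow L^2(V;E),
\]
followed by extension by zero, which lands in $L^2_K(\Omega;E)$ because the supports are contained in $K$. The second arrow is compact by \cref{prop_rellich_kondrachov}, since $d\alpha>0$. In case \ref{FormalRellich_b} the first arrow is the restriction map, which is continuous by \cref{lem_regular_interpolation}. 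In case \ref{FormalRellich_a}, $D_e$ need not be semi-regular, so we instead pick $f\in C^\infty_{cc}(\Omega)$ with $f=1$ on a neighbourhood of $K$. By \cref{MultInt}, multiplication by $f$ maps $\dom(D_e)$ boundedly into $\dom(D_{\min})$, and $D_{\min}$ is semi-regular by \cref{MinDomSobolev}. Interpolating between $\alpha=0$ and $\alpha=1$ gives a bounded map $\cH^\alpha(D_e)\to\cH^\alpha(D_{\min})$. Composing with \cref{lem_regular_interpolation} for $D_{\min}$ yields a continuous map $u\mapsto (fu)|_V\in H^{d\alpha}(V;E)$, and this equals $u|_V$ whenever $u$ is supported in $K$. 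In either case $j_\alpha$ is a composition of a bounded map with a compact one, hence compact.

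\emph{Compactness of $j_{-\alpha}$.} Let $P_K$ denote multiplication by the indicator function of $K$, and let $\iota\colon L^2\to\cH^{-\alpha}$ be the inclusion. The map $\iota|_{L^2_K}$ is dual to $P_K|_{\cH^\alpha}\colon\cH^\alpha(D_e)\to L^2(\Omega;E)$ under the pairing of \cref{FormalSobolevStructure}: for $u\in L^2_K$ and $v\in\cH^\alpha$ we have $\langle u,v\rangle=\langle u,P_Kv\rangle$. By Schauder's theorem it therefore suffices to show that $P_K\colon\cH^\alpha\to L^2$ is compact.

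This is the step I expect to be the main obstacle. The difficulty is that $P_Kv$ need not lie in $\cH^\alpha$, so the argument for $j_\alpha$ does not apply directly. The way around this is to avoid the multiplication altogether and factor through the restriction map to $V$: the map $\cH^\alpha\to H^{d\alpha}(V)$, $v\mapsto v|_V$, is continuous. In case \ref{FormalRellich_b} this is \cref{lem_regular_interpolation}, and in case \ref{FormalRellich_a} it is the cutoff argument above. Composing with the compact Rellich map $H^{d\alpha}(V)\to L^2(V)$, then multiplying by the indicator of $K$ and extending by zero, shows that $P_K$ is compact. Its adjoint $j_{-\alpha}$ is then compact, and its image consists of elements supported in $K$, as the statement requires.
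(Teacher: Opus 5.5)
Your argument for $j_\alpha$ is the paper's argument. It reduces case~\ref{FormalRellich_a} to $D_{\min}$ via \cref{MultInt} and interpolation, then restricts to $V$ using \cref{lem_regular_interpolation} and applies \cref{prop_rellich_kondrachov}.

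For $j_{-\alpha}$ you take a genuinely different duality route. The paper dualizes $j_\alpha$ itself. It identifies the dual of the closed subspace $Y\coloneqq\cH^\alpha(D_e)\cap L^2_K(\Omega;E)$ with $\cH^{-\alpha}(D_e)/\{u\colon u|_K=0\}$, and then with the elements of $\cH^{-\alpha}(D_e)$ supported in $K$. You instead dualize $P_K\colon\cH^\alpha(D_e)\to L^2(\Omega;E)$, which is defined on the whole space, and get its compactness from the same restriction-plus-Rellich factorization.

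The paper's version reuses $j_\alpha$ verbatim. However, Schauder's theorem applied to $j_\alpha$ only gives compactness of $L^2_K(\Omega;E)\to Y^*$, i.e.\ with respect to the quotient norm of $\cH^{-\alpha}(D_e)$ modulo the annihilator of $Y$. On $L^2_K(\Omega;E)$ this norm can be strictly weaker than the $\cH^{-\alpha}(D_e)$-norm. Take $d=\alpha=1$ and $K$ a closed ball in $\Omega^\circ$. The elements of $Y$ are then $H^1_0$-functions on the ball. Pairing such a function with $\varepsilon^{-1}$ times the indicator of an inner $\varepsilon$-collar of $\partial K$ gives $O(\varepsilon^{1/2})$ times its $H^1$-norm. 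By contrast, a cutoff equal to $1$ near $K$ lies in $\dom(D_e)$ and pairs with these collar functions to give a quantity bounded below. So the paper's identification with elements supported in $K$ needs more justification than it receives.

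Your route avoids this issue. The adjoint of $P_K$ is $u\mapsto P_Ku$ with values in the genuine $\cH^{-\alpha}(D_e)$-norm, and its restriction to $L^2_K(\Omega;E)$ is exactly $j_{-\alpha}$. The only extra care needed is the point you flag yourself: $P_Kv\notin\cH^\alpha(D_e)$ is harmless, because your factorization passes through $v\mapsto v|_V$ rather than through multiplication by an indicator on $\cH^\alpha(D_e)$.
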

\begin{proof}
We begin with analysis of the map $j_\alpha$. For the case \ref{FormalRellich_a} fix a smooth, compactly supported function $\psi\in C_\cc^\infty(\Omega^\circ)$ with $\psi|_K=1$. Multiplication by $\psi$ is a continuous map from $\dom(D_e)$ to $\dom(D_{\min})$ by \cref{MultInt}, hence interpolates to a continuous map from $\cH^\alpha(D_e)$ to $\cH^\alpha(D_{\min})$. Hence, case \ref{FormalRellich_a} reduces to the minimal boundary condition. Since $D_{\min}$ is semi-regular, case \ref{FormalRellich_a} further reduces to case \ref{FormalRellich_b}.

Let us discuss case \ref{FormalRellich_b} for the map $j_\alpha$. Hence, let $D_e$ be semi-regular, and $K\subseteq \olO$ an arbitrary compactum. By \cref{lem_regular_interpolation} we may factor $j_\alpha$ as 
$$
\cH^\alpha(D_e) \cap L^2_K(\Omega;E) \longrightarrow H^{d\alpha}_\loc(\Omega;E) \cap L^2_K(\Omega;E) \subseteq H^{d\alpha}(V;E)\longrightarrow L^2(V;E) ,
$$
where $V\subseteq\Omega^\circ$ is some Lipschitz domain that is precompact in $\olO$ and contains $K\setminus\partial M$. The last arrow of that composition is compact by \cref{prop_rellich_kondrachov}, hence so is $j_\alpha$.

For the compactness of $j_{-\alpha}$ simply note that the dual space of $\cH^\alpha(D_e)\cap L^2_K(\Omega;E)$ is isomorphic to 
$$
\cH^{-\alpha}(D_e)/\{ u \in \cH^{-\alpha} \colon u|_K = 0\} \cong \left\{ u\in \cH^{-\alpha}(D_e) \colon \supp(u)\subseteq K \right\},
$$
which contains $\cH^{-\alpha}(D_e) \cap L^2_K(\Omega;E)$ as a subspace. Under this isomorphism $j_{-\alpha}$ corresponds to the dual map of $j_\alpha$. Since the dual of a compact map is compact, this concludes the proof.
\end{proof}
Using this adaptation of the Rellich--Kondrachov theorem, we deduce that if is locally smoothing on the formal scale of a boundary condition, then it is locally compact:
\begin{prop}\label{NegOrderCpt}
Let $D_e$ be a boundary condition for $D$. Let $f\in C^\infty_c(\olO)$ be a smooth, compactly supported function, and suppose that either
\begin{itemize}
  \item[(a)] $D_e$ is arbitrary, and $f$ is supported inside $\Omega^\circ$,
  \item[(b)] or $D_e$ is semi-regular, and $f$ is arbitrary.
\end{itemize} 
Let $T$ be a bounded operator on $L^2(\Omega;E)$ such that $Tf$ extends to a bounded operator $\cH^{-\alpha}(D_e) \to L^2(\Omega;E)$ for some $\alpha>0$. Then $Tf$ is a compact operator on $L^2(\Omega;E)$.
\end{prop}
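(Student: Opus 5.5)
The plan is to factor $Tf$ through the compact map $j_{-\alpha}$ of \cref{FormalRellich}. Replacing $\alpha$ by $\min\{\alpha,1\}$ is harmless: for $\alpha'\le\alpha$ the inclusion $\cH^{-\alpha'}(D_e)\hookrightarrow\cH^{-\alpha}(D_e)$ is bounded, so $Tf$ still extends boundedly from $\cH^{-\alpha'}(D_e)$. We may therefore assume $0<\alpha\le 1$, which is the range covered by \cref{FormalRellich}.

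Next fix a cutoff. Choose $\psi\in C^\infty_c(\olO)$ with $\psi f=f$. In case (a) take $\supp(\psi)\subset\Omega^\circ$, and in case (b) let $\psi$ be arbitrary. Put $K\coloneqq\supp(\psi)$, a compact set, contained in $\Omega^\circ$ in case (a). Since $f=f\psi$, we can write
$$
Tf = (Tf)\circ j_{-\alpha}\circ \psi ,
$$
where $\psi$ is multiplication $L^2(\Omega;E)\to L^2_K(\Omega;E)$ (bounded), $j_{-\alpha}\colon L^2_K(\Omega;E)\to\cH^{-\alpha}(D_e)$ is the inclusion, and $Tf$ is its assumed bounded extension $\cH^{-\alpha}(D_e)\to L^2(\Omega;E)$. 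This identity holds on all of $L^2$ because the extension agrees with $Tf$ on $L^2\subseteq\cH^{-\alpha}$ by construction.

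The middle factor is compact by \cref{FormalRellich}, using case \ref{FormalRellich_a} or \ref{FormalRellich_b} according to the hypothesis on $D_e$. Composing a compact map with bounded maps on both sides gives a compact operator, so $Tf$ is compact on $L^2(\Omega;E)$.

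The only point needing care is that \cref{FormalRellich} states compactness into $\cH^{-\alpha}(D_e)\cap L^2_K(\Omega;E)$ equipped with the $\cH^{-\alpha}$-norm. Composing with the isometric inclusion into $\cH^{-\alpha}(D_e)$ preserves compactness, so this causes no problem. I do not expect a real obstacle here: the analytic content sits in \cref{FormalRellich}, and this proof is essentially bookkeeping.
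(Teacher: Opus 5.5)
Your proof is correct and follows the paper's argument: you restrict to sections supported in a compact $K$, factor $Tf$ through the compact embedding $j_{-\alpha}$ of \cref{FormalRellich}, and then apply the bounded extension $\cH^{-\alpha}(D_e)\to L^2(\Omega;E)$. The differences are cosmetic. The paper takes $K=\supp(f)$ and restricts to $L^2_K(\Omega;E)$ instead of inserting a cutoff $\psi$, and it leaves implicit your (correct and harmless) reduction to $0<\alpha\le 1$.
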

\begin{proof}
Let $K$ denote the support of $f$. Since $Tf=0$ on the complement of $L^2_K(\Omega;E)$, it suffices to prove that the restriction of $Tf$ to $L^2_K(\Omega;E)$ is compact. This restriction we can factor as
$$
  L^2_K(\Omega;E) \xlongrightarrow{j_{-\alpha}} \cH^{-\alpha}(D_e) \cap L^2_K(\Omega;E) \xlongrightarrow{Tf} L^2(\Omega;E) .
$$
The first arrow is compact by \cref{FormalRellich}, and the second is bounded by assumption. We conclude that $Tf$ is a compact operator.
\end{proof}
We note that there is an analogous version of \cref{NegOrderCpt} for operators $T$ with the property that $fT$ extends to a bounded map $L^2(\Omega;E)\to \cH^{\alpha}(D_e)$ for some $\alpha>0$. This remark also applies to the following lemma. 
\begin{lem}\label{CorNegOrder}
Suppose that $T$ is a bounded operator on $L^2(\Omega;E)$ and extends to a bounded map $\cH^{-\alpha}(D_e)\to L^2(\Omega;E)$ for some $\alpha >0$. Then $Tf$ is compact for all $f\in C_\cc^\infty(\Omega^\circ)$. If $D_e$ is semi-regular, then $Tf$ is compact for all $f\in C_c^\infty(\olO)$.
\end{lem}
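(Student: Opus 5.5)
This is a direct specialization of \cref{NegOrderCpt}. Since $T$ is bounded on $L^2(\Omega;E)$ and extends to a bounded map $\cH^{-\alpha}(D_e)\to L^2(\Omega;E)$, we first check that $Tf$ inherits the same property. Then we apply \cref{NegOrderCpt} in case (a) when $f\in C^\infty_\cc(\Omega^\circ)$, and in case (b) when $D_e$ is semi-regular and $f\in C^\infty_c(\olO)$.

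The only step needing an argument is that multiplication by $f$ extends to a bounded map $\cH^{-\alpha}(D_e)\to\cH^{-\alpha}(D_e)$; then $Tf$ is the composition $\cH^{-\alpha}\xrightarrow{f}\cH^{-\alpha}\xrightarrow{T}L^2$. We may assume $0<\alpha\le 1$, since the inclusion $\cH^{-\min(\alpha,1)}\hookrightarrow\cH^{-\alpha}$ lets us decrease $\alpha$. To get boundedness on $\cH^{-\alpha}$ we use duality. Multiplication by $\bar f$ is bounded on $\cH^{\alpha}(D_e)$ by \cref{LemmaMultFunc}.
\begin{itemize}
\item In the interior case $\bar f$ is again supported in $\Omega^\circ$, so \cref{LemmaMultFunc} applies directly.
\item In the semi-regular case we need $\bar f$ to preserve $\dom(D_e)$. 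If $f$ is only assumed to preserve the domain, this is the delicate point. However, for the statement as given, with $f$ arbitrary in $C^\infty_c(\olO)$, the hypotheses of \cref{NegOrderCpt}(b) are exactly what is used there, so we simply invoke it rather than re-prove the extension.
\end{itemize}
Given boundedness of $\bar f$ on $\cH^{\alpha}$, the relation $\langle u,\bar f v\rangle=\langle fu,v\rangle$ together with \cref{DualityInterpolation}.\ref{DualityInterpolation_a} (applied with $A=\bar f$, $B=f$, and $\beta=0$) shows that $f$ extends boundedly to $\cH^{-\alpha}(D_e)$.

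Alternatively, and more simply, one can avoid extending $f$ to negative degree entirely. In the proof of \cref{NegOrderCpt}, the factorization $L^2_K\xrightarrow{j_{-\alpha}}\cH^{-\alpha}\cap L^2_K\xrightarrow{Tf}L^2$ only uses $Tf$ on $L^2_K$-elements, where $fu\in L^2_K$ with $\|fu\|_{\cH^{-\alpha}}\lesssim\|u\|_{\cH^{-\alpha}}$. So it suffices to factor the restriction of $Tf$ to $L^2_K$ as
\[
L^2_K\xrightarrow{f}L^2_K\xrightarrow{j_{-\alpha}}\cH^{-\alpha}\cap L^2_K\xrightarrow{T}L^2 .
\]
Here the middle arrow is compact by \cref{FormalRellich} (case a or b, with $K=\supp f$), and the last arrow is bounded by hypothesis. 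This argument is cleaner and needs nothing beyond \cref{FormalRellich}, so I would present it as the proof. The main (minor) obstacle is keeping track of which case of \cref{FormalRellich} applies, which is dictated by whether $\supp f\subset\Omega^\circ$ or $D_e$ is semi-regular.
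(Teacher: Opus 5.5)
Your final argument is exactly the paper's proof: multiplication by $f$ into $L^2_K(\Omega;E)$, then the compact map $j_{-\alpha}$ from \cref{FormalRellich}, then the bounded extension of $T$. The paper factors directly from all of $L^2(\Omega;E)$, and your reduction to $0<\alpha\le 1$ supplies a detail the paper leaves implicit. You were right to discard the first route, since in the semi-regular case an arbitrary $f\in C^\infty_c(\olO)$ need not preserve $\dom(D_e)$, so \cref{LemmaMultFunc} gives no bound for $f$ on $\cH^{\pm\alpha}(D_e)$, and appealing to \cref{NegOrderCpt} would presuppose exactly that missing bound.
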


\begin{proof}
  In either case we may factor $Tf$ as, where $K$ is the support of $f$,
  $$
    L^2(\Omega;E) \xlongrightarrow{f} L^2_K(\Omega;E) \xlongrightarrow{j_{-\alpha}} \cH^{-\alpha}(D_e) \cap L^2_K(\Omega;E) \xlongrightarrow{T} L^2(\Omega;E) ,
  $$
  where the middle map is compact by \cref{FormalRellich}.
\end{proof}
An analogous version of \cref{NegOrderCpt} also holds for commutators, though here it is more convenient to state the condition for all functions simultaneously:
\begin{prop}\label{CommLowerOrder}
Let $D_e$ be a boundary condition $D$. Let $T$ be a bounded operator on $L^2(\Omega;E)$, and let $\alpha>0$. 
\begin{enumerate}
\item\label{CommLowerOrder_a} If the commutators $[f,T]$ and $[f,T^*]$ extend to bounded maps $\cH^{-\alpha}(D_e)\to L^2(\Omega;E)$ for all $f\in C^\infty_{cc}(\Omega^\circ)$, then $[f,T]$ is compact for all such $f$.
\item\label{CommLowerOrder_b} Let $D_e$ be semi-regular. If the commutators $[f,T]$ and $[f,T^*]$ extend to bounded maps $\cH^{-\alpha}(D_e)\to L^2(\Omega;E)$ for all $f\in C^\infty_{c}(\olO)$, then $[f,T]$ is compact for all such~$f$.
\end{enumerate}
\end{prop}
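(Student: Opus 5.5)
The plan is to reduce both items to \cref{NegOrderCpt} by inserting a cutoff. Fix $f$ (in $C^\infty_{cc}(\Omega^\circ)$ for \ref{CommLowerOrder_a}, in $C^\infty_c(\olO)$ for \ref{CommLowerOrder_b}). Choose $\psi\in C^\infty_{cc}(\Omega^\circ)$, respectively $\psi\in C^\infty_c(\olO)$, with $\psi\equiv 1$ on a neighbourhood of $\supp(f)$, so that $f\psi=\psi f=f$. I will then split the commutator as
\begin{equation*}
[f,T] = [f,T]\psi + [f,T](1-\psi).
\end{equation*}

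\textbf{The term $[f,T]\psi$.} By hypothesis $[f,T]$ extends to a bounded map $\cH^{-\alpha}(D_e)\to L^2(\Omega;E)$. I will apply \cref{NegOrderCpt} with the bounded operator $[f,T]$ and the function $\psi$, using case (a) or (b) as appropriate. Since $[f,T]\psi$ is the composition of multiplication by $\psi$ with $[f,T]$, I need multiplication by $\psi$ to be bounded on $\cH^{-\alpha}$. This I will get from \cref{LemmaMultFunc} by duality via \cref{DualityInterpolation}, after shrinking $\alpha$ to lie in $(0,1]$, which is harmless because the inclusions $\cH^{-\alpha'}\hookrightarrow\cH^{-\alpha}$ for $\alpha'\le\alpha$ are bounded. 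There is a gap in case \ref{CommLowerOrder_b}: \cref{LemmaMultFunc} needs $\psi$ to preserve $\dom(D_e)$, which is not assumed. Avoiding this is the reason for the more careful route below.

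\textbf{Using the adjoint instead.} To avoid needing multiplication by $\psi$ on $\cH^{-\alpha}$, I will instead use \cref{CorNegOrder} directly. It factors through $L^2\to L^2_K\to\cH^{-\alpha}\cap L^2_K$, which is compact by \cref{FormalRellich}, and only needs $T$ to be bounded from $\cH^{-\alpha}$. Applied to $[f,T]$ and $\psi$, it gives that $[f,T]\psi$ is compact.

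\textbf{The term $[f,T](1-\psi)$.} Since $f(1-\psi)=0$, this term equals $-Tf(1-\psi)$, hmm, more precisely $[f,T](1-\psi)=fT(1-\psi)$. I will write
\begin{equation*}
fT(1-\psi)=\psi f T(1-\psi)=\psi[f,T](1-\psi),
\end{equation*}
using $\psi f=f$. Its adjoint is
\begin{equation*}
(1-\psi)[T^*,\bar f]\psi = -(1-\psi)[\bar f,T^*]\psi .
\end{equation*}
Taking $f$ real, or using that $\bar f$ is again an admissible function, the hypothesis on $[\bar f,T^*]$ together with \cref{CorNegOrder} shows that $[\bar f,T^*]\psi$ is compact. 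Hence the adjoint is compact, and so is $[f,T](1-\psi)$.

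Summing the two compact pieces gives compactness of $[f,T]$. The main point of care is this second term, where the hypothesis on $T^*$ is genuinely used via adjoints. The first term is a direct application of \cref{CorNegOrder}.
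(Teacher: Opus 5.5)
Your argument is correct, but it uses a different decomposition from the paper's. The paper writes $f=\psi f$ and applies the Leibniz rule, $[f,T]=\psi[f,T]+[\psi,T]f$. It handles $\psi[f,T]=-([\bar f,T^*]\bar\psi)^*$ through the adjoint hypothesis, and $[\psi,T]f$ directly by \cref{CorNegOrder}. So the paper invokes the negative-order hypothesis for $\bar f$ and for the cutoff $\psi$, and never for $[f,T]$ itself. You split instead as $[f,T]=[f,T]\psi+\psi[f,T](1-\psi)$, so you only use the hypothesis for the fixed $f$ and for $\bar f$. In your route the cutoff enters only through \cref{CorNegOrder}, which puts no condition on $\psi$ (in particular, $\psi$ need not preserve $\dom(D_e)$). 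Your version therefore proves a slightly sharper statement that holds function by function. That is the form one would want in \cref{ClassesNonlocal}, where negative order of the commutators is available only for (real) functions preserving $\dom(D_e)$. There the paper's identity would additionally need a domain-preserving cutoff $\psi$ with $\psi f=f$. The paper's identity is shorter, but it relies on the hypothesis being quantified over all admissible functions, including cutoffs. In a final write-up, drop the opening detour through \cref{LemmaMultFunc} and \cref{DualityInterpolation}. As you noticed, it is unnecessary, and in case \ref{CommLowerOrder_b} unjustified, because \cref{CorNegOrder} already gives compactness of $[f,T]\psi$.
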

\begin{proof}
We prove \ref{CommLowerOrder_a}, the proof of \ref{CommLowerOrder_b} is analogous. Let $f\in C^\infty_{cc}(\Omega^\circ)$, and choose $\psi \in C^\infty_{cc}(\Omega^\circ)$ with $\psi f=f$. Write
$$
[f,T] = [\psi f, T] = \psi [f,T] + [\psi,T]f = -\left( [f,T^*] \bar{\psi} \right)^*+ [\psi,T]f .
$$
Both $[f,T^*]$ and $[\psi,T]$ extend to bounded operators $\cH^{-\alpha}(D_e) \to L^2(\Omega;E)$ by assumption. The claim then follows from \cref{CorNegOrder}. 
\end{proof}
\subsection{Commutators with functions}\label{SectionFormalPseudolocal}
In this section we investigate commutators of functions of a boundary condition $D_e$ with smooth functions on $\Omega$. The result will be analogous to the fact that the commutator of an operator of order $\beta$ with a smooth function has order $\beta -1$. In particular, we will be able to conclude the pseudo-locality of the bounded transform $D_e(D_e^*D_e+I)^{-\frac{1}{2}}$.

Holomorphic functional calculus is our tool of choice. Suppose that $T$ is a closed operator, $U\subseteq \C$ an open neighborhood of the spectrum $\sigma(T)$, $\psi$ a holomorphic function on $U$, and $\Gamma$ a curve in $U\setminus \sigma(T)$ circumscribing $\sigma(T)$ in positive direction. Then, the holomorphic functional calculus is defined as 
$$
\psi(T) = \frac{i}{2\pi} \int_\Gamma \psi(z) (T-z)^{-1} dz .
$$ 
If $T$ is self-adjoint, then it coincides with the Borel functional calculus. In this case, the holomorphic functional calculus is a convenient way to obtain an integral representation in terms of resolvents, and this is how we use it. Indeed, it essentially suffices to prove the desired pseudo-compactness for resolvents, and the statement for holomorphic functions of the operator then follows via holomorphic functional calculus. The use of integral representations to reduce pseudo-locality to the case of resolvents is a standard technique (see for example \cite{Kasparov1988} or \cite{BDT1989}). We use the holomorphic functional calculus because it covers a reasonably large class of functions\footnote{The holomorphic functional calculus can also be extended to functions with almost-analytic extensions (see for example \cite[Chapter 8]{Dimassi1999}), including for example compactly supported smooth functions, so that our results can be extended to this class of functions. We will not have need for this however.}, but the choice is ultimately arbitrary. 

As usual let $D_e$ be a boundary condition for the elliptic operator $D$ of order $d$. Let $f\in C^\infty_c(\olO)$ be a real-valued function. Throughout this section we assume that either
\begin{itemize}
\item $D_e$ is arbitrary, and $f\in C^\infty_{cc}(\Omega)$, or
\item $D_e$ is local and semi-regular, and $f$ is arbitrary.
\end{itemize}
Throughout, we also let $\mu \coloneqq \sfrac{1}{d}$.
\begin{lem}\label{CommutatorBasicLemma}
  Let  $D_e$ be a boundary condition for $D$ and $f\in C^\infty_c(\olO)$. Suppose either
  \begin{enumerate}
    \item\label{CommutatorBasicLemma_a} $D_e$ is arbitrary, and $f\in C^\infty_{cc}(\Omega)$,
    \item\label{CommutatorBasicLemma_b} or $D_e$ is local and semi-regular, and $f$ is arbitrary. 
  \end{enumerate}
  Then, the commutator $[f,D_e]$ extends to a bounded operator from $\cH^{1-\mu}(D_e)$ to $L^2(\Omega;F)$, and $[f,D_e^*D_e]$ extends to a bounded operator $\cH^{1-\mu}(D_e)\to \cH^{-1}(D_e)$.
\end{lem}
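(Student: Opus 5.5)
The first claim comes from the Sobolev continuity of the differential operator $[f,D]$, which has order $d-1$, combined with \cref{lem_regular_interpolation}. The second claim comes from a Leibniz-type splitting of $[f,D_e^*D_e]$ into terms whose orders I can read off on the formal scales via \cref{MappingPropertiesT} and duality.

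\textbf{Step 1: the commutator acts as a differential operator.} In both cases $f$ preserves $\dom(D_e)$: in case \ref{CommutatorBasicLemma_a} by \cref{MultInt}, in case \ref{CommutatorBasicLemma_b} by locality. So $[f,D_e]$ is defined on $\cH^1(D_e)=\dom(D_e)$. By \cref{rem_min_max_conditions}.\ref{differential_op_cont_sobolev} it acts there as the differential operator $[f,D]$. This operator has order $d-1$ and coefficients supported in $\supp(f)$.

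\textbf{Step 2: the $L^2$-bound for $[f,D_e]$.} Fix a Lipschitz domain $V\subseteq\Omega^\circ$, precompact in $\olO$, with $\supp(f)\subseteq \overline V$. Then
$$\|[f,D]u\|_{L^2}\lsim \|u\|_{H^{d-1}(V;E)}.$$
In case \ref{CommutatorBasicLemma_b}, \cref{lem_regular_interpolation} with $\alpha=1-\mu$ (note $d(1-\mu)=d-1$) gives
$$\|u\|_{H^{d-1}(V)}\lsim\|u\|_{\cH^{1-\mu}(D_e)}.$$
In case \ref{CommutatorBasicLemma_a}, choose $\psi\in C^\infty_{cc}(\Omega^\circ)$ with $\psi=1$ near $\supp(f)$, so that $[f,D]u=[f,D](\psi u)$. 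Multiplication by $\psi$ maps $\cH^1(D_e)\to\cH^1(D_{\min})$ by \cref{MultInt}, and $L^2\to L^2$. Interpolating, it maps $\cH^{1-\mu}(D_e)\to\cH^{1-\mu}(D_{\min})$. Since $D_{\min}$ is semi-regular, this reduces case \ref{CommutatorBasicLemma_a} to the previous one. Finally, density of $\cH^1(D_e)$ in $\cH^{1-\mu}(D_e)$ (\cref{FormalSobolevStructure}) yields the bounded extension $\cH^{1-\mu}(D_e)\to L^2(\Omega;F)$.

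\textbf{Step 3: splitting $[f,D_e^*D_e]$.} The same argument applies to $D_e^*$ in place of $D_e$ whenever $f$ preserves $\dom(D_e^*)$: in case \ref{CommutatorBasicLemma_a} always, and in case \ref{CommutatorBasicLemma_b} under locality and semi-regularity of the adjoint. Where this is unavailable I would work only with the pairing below, so that only the domain of $D_e$ is needed. For $u\in\cH^\infty(D_e)$ and $v\in\cH^1(D_e)$, with $f$ real, expanding $D_e(fv)$ and $D_e(fu)$ gives
$$\langle [f,D_e^*D_e]u,v\rangle=\langle D_eu,D_e(fv)\rangle-\langle D_e(fu),D_ev\rangle=\langle D_eu,[D_e,f]v\rangle-\langle [D_e,f]u,D_ev\rangle .$$
The second term is bounded by
$$\|u\|_{\cH^{1-\mu}}\,\|v\|_{\cH^1}$$
by Step 2 and \cref{FormalSobolevStructure}.\ref{item_first_scale}. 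Since $\cH^{-1}$ is the dual of $\cH^1$, this is exactly the required type of estimate.

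\textbf{Step 4: the main obstacle.} The first term, $\langle D_eu,[D_e,f]v\rangle$, is where I expect the difficulty: $D_eu$ is only in $\cH^{-\mu}(D_e^*)$ by \cref{MappingPropertiesT}.\ref{MappingPropertiesT_a}. My first attempt is to show that $[D_e,f]$ maps $\cH^1(D_e)$ boundedly into $\cH^{\mu}(D_e^*)$. In that case the pairing is bounded by
$$\|u\|_{\cH^{1-\mu}}\,\|v\|_{\cH^1}$$
via the perfect pairing of \cref{FormalSobolevStructure}.

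To get this mapping property I would proceed in two sub-steps. First, show that $[D,f]v$ lies in $\dom(D_e^*)$, hence in $\cH^1(D_e^*)$, for $v$ in a dense smooth subclass, with the bound
$$\|[D,f]v\|_{\cH^1(D_e^*)}\lsim\|v\|_{\cH^{1+\mu}(D_e)}.$$
Second, interpolate this bound with the $\cH^{1-\mu}(D_e)\to L^2$ bound from Step 2 using \cref{FormalSobolevStructure}.\ref{item_interpolation_scale}: the midpoint $\theta=\tfrac12$ gives $\cH^1(D_e)\to\cH^{\mu}(D_e^*)$. If this fails because of domain issues at the boundary, the fallback is \cref{DualityInterpolation}. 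There I would treat the adjoint of $[f,D_e^*]$ as $-[f,D_e]$ and transfer the bound of Step 2 to the dual scale. This is the delicate point, since it requires comparing the scales of $D_e$ and $D_e^*$.
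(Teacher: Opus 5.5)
Your Steps 1--3 are correct and match the paper. This covers the first assertion via \cref{lem_regular_interpolation} and the order-$(d-1)$ operator $[f,D]$, the reduction of case~a) to $D_{\min}$ through a cutoff $\psi$, and the splitting
\[
\langle [f,D_e^*D_e]u,v\rangle=\langle D_eu,[D,f]v\rangle-\langle [D,f]u,D_ev\rangle ,
\]
with the second term bounded as you say.

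\textbf{The gap in Step~4.} The first term $\langle D_eu,[D,f]v\rangle$ is never actually bounded, and the route through the scale of $D_e^*$ cannot work. It needs $[D,f]v\in\dom(D_e^*)$. In case~b) only $D_e$ is semi-regular, and $[D,f]v$ typically violates the boundary condition. For the Dirichlet Laplacian, $v|_{\dO}=0$ gives $([D,f]v)|_{\dO}=-2\,\partial_\nu f\,\partial_\nu v$. Your exponents also only fit $d=2$: $[D,f]$ has formal order $1-\mu$, so the endpoint should be $\cH^{2-\mu}(D_e)\to\cH^1(D_e^*)$, interpolated at $\theta=\mu$.

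\textbf{The missing idea, which the paper uses.} Stay on the Sobolev scale and never touch the scale of $D_e^*$. From $u\in\cH^{1-\mu}(D_e)\hookrightarrow H^{d-1}(V)$ you get $Du\in H^{-1}(V)$. From $v\in\cH^1(D_e)\hookrightarrow H^d(V)$ you get $[D,f]v\in H^1(V)$. The $H^{-1}(V)$--$H^1_0(V)$ duality then gives
\[
|\langle D_eu,[D,f]v\rangle|\lsim\|u\|_{H^{d-1}(V)}\|v\|_{H^d(V)}\lsim\|u\|_{\cH^{1-\mu}(D_e)}\|v\|_{\cH^1(D_e)} .
\]
After your reduction to $D_{\min}$, this settles case~a), because there $[D,f]v$ is compactly supported in $\Omega^\circ$ and hence lies in $H^1_0(V)$.

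\textbf{Case~b) cannot be completed as stated.} When $\supp f$ meets $\dO$, $[D,f]v\notin H^1_0(V)$. Integration by parts then leaves a boundary term $\int_{\dO}(\partial^{d-1}u)\,\overline{[D,f]v}$ that $\|u\|_{H^{d-1}}$ does not control. So the paper's duality step does not apply there either, and the second assertion is in fact false.

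Here is a counterexample. Take $\Omega=(0,\infty)\subset\R$, $D=-\partial_x^2$, and $D_e$ the Dirichlet realization with domain $H^2\cap H^1_0$. It is local, self-adjoint and regular, $\mu=\frac12$, and $\cH^{1/2}(D_e)=H^1_0$ with equivalent norms. Let $f,v\in C^\infty_c([0,\infty))$ be real with $f(x)=v(x)=x$ on $[0,1]$. Let $u_\varepsilon(x)=\varepsilon\phi(x/\varepsilon)$ with $\phi\in C^\infty_c([0,2))$ and $\phi(t)=t$ near $0$. Then $u_\varepsilon,v\in\dom(D_e)$ and $\|u_\varepsilon\|_{H^1}=O(\varepsilon^{1/2})$. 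For $\varepsilon<\frac12$, using $[D,f]=-f''-2f'\partial_x$ and $Dv=0$ on $[0,1]$,
\[
\langle [f,D_e^*D_e]u_\varepsilon,v\rangle=\int_0^{2\varepsilon}2u_\varepsilon''\,dx=-2 .
\]
Hence $[f,D_e^*D_e]$ is unbounded from $\cH^{1/2}(D_e)$ to $\cH^{-1}(D_e)$, and neither your Step~4 nor its duality fallback can be completed in case~b).

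What can be proved there, by estimating the boundary term with the trace theorem on Lipschitz domains, is boundedness from $\cH^s(D_e)$ for every $s>1-\frac{\mu}{2}$. Later uses of the lemma would have to be adapted to this smaller gain.
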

\begin{proof}
Consider case \ref{CommutatorBasicLemma_a}, and let $\psi\in C^\infty_{cc}(\Omega^\circ)$ be such that $\psi=1$ on the support of $f$. Then $[f,D_e]=[f,D_e]\psi$. By \cref{MultInt} and interpolation, $\psi$ yields a continuous map $\cH^{1-\mu}(D_e)\to \cH^{1-\mu}(D_{\min})$. Hence, if $[f,D_{\min}]$ extends to a continuous map $\cH^{1-\mu}(D_{\min})\to L^2(\Omega;E)$, we can factor $[f,D_e]$ as
$$
  \cH^{1-\mu}(D_e) \xlongrightarrow{\psi} \cH^{1-\mu}(D_{\min}) \xlongrightarrow{[f,D_{\min}]} L^2(\Omega;E).
$$
Similarly, if $[f,D^*_{\min}D_{\min}]$ extends to a continuous map $\cH^{1-\mu}(D_{\min})\to \cH^{-1}(D_{\min})$, then $[f,D_e^*D_e]$ factors as 
$$
\cH^{1-\mu}(D_e) \xlongrightarrow{\psi} \cH^{1-\mu}(D_{\min}) \xlongrightarrow{[f,D^*_{\min}D_{\min}]} \cH^{-1}(D_{\min}) \xlongrightarrow{\psi^*} \cH^{-1}(D_e) .
$$
Here, $\psi^*$ is the dual map to multiplication by $\psi$, which acts as the identity on the image of $[f,D^*_{\min}D_{\min}]$. We conclude that case \ref{CommutatorBasicLemma_a} is again a special case of case \ref{CommutatorBasicLemma_b}.

Thus, let $D_e$ be semi-regular and local, and $f$ be arbitrary. \cref{lem_regular_interpolation} provides a continuous inclusion $\cH^{1-\mu}(D_e) \to H^{d-1}_\loc(\Omega;E)$. Since $[f,D]$ is a compactly supported differential operator of order $d-1$, it continuously maps $H^{d-1}_\loc(\Omega;E)$ to $L^2(\Omega;E)$. Therefore, $[f,D_e]$ factors as 
$$
  \cH^1(D_e) \longrightarrow \cH^{1-\mu}(D_e) \longrightarrow H^{d-1}_\loc(\Omega;E) \xlongrightarrow{[f,D_e]} L^2(\Omega;F).
$$
For the second commutator, note that it suffices by \cref{FormalSobolevStructure} to show that
$$
\left| \left\langle [f,D_e^*D_e] u, v \right\rangle_{L^2(\Omega)} \right| \lsim \|u\|_{\cH^{1-\mu}(D_e)} \|v\|_{\cH^1(D_e)}
$$
for all $u,v \in \cH^1(D_e)$. We may assume $f$ to be real-valued. Then we have
$$
\left\langle [f,D_e^*D_e] u, v \right\rangle_{L^2(\Omega;E)} = \left\langle [f,D_e]u,D_ev \right\rangle_{L^2(\Omega;E)} - \left\langle D_eu,[f,D_e]v\right\rangle_{L^2(\Omega;E)} .
$$
Again, due to \cref{lem_regular_interpolation} we have $v,w \in H^d_\loc(\Omega;E)$, which we may assume to be supported inside some precompact neighborhood $V\subseteq \olO$ of $K$. Then,
$$
\left\langle [f,D_e]u,D_ev \right\rangle_{L^2(\Omega;E)} \leq \|[f,D]u\|_{L^2(V;F)} \|Dv\|_{L^2(V;F)} \lsim \|u\|_{H^{d-1}(V;E)} \|v\|_{H^1(V;E)}
$$
as a consequence of the Cauchy--Schwarz inequality and the boundedness of $[f,D]$ and $D$ on the Sobolev scale. Since these Sobolev norms are bounded by the respective graph norms (again due to \cref{lem_regular_interpolation}), this shows the desired norm estimate for the first summand. For the second summand we instead use the fact that the $L^2$-inner product extends to a perfect pairing\footnote{We have eschewed introducing negative-order Sobolev spaces in \cref{sec_sobolev}, so the reader is refered to e.g. \cite[Section~IV.5]{Taylor1996}, specifically Proposition 5.1 therein.} between $H^{-1}(V;F)$ and $H^1_0(V;F)$ to estimate that
$$
\left\langle D_eu,[f,D_e]v\right\rangle_{L^2(\Omega;E)} \lsim  \|Du\|_{H^{-1}(V;F)} \|[f,D]v\|_{H^1(V;F)} \lsim \|u\|_{H^{d-1}(V;E)} \|v\|_{H^1(V;E)} .
$$
Another appeal to \cref{lem_regular_interpolation} gives the desired norm estimate for the second summand as well. This concludes the proof.
\end{proof}
From here on our calculations are purely formal, and dependent only on the validity of the above lemma. We first deduce that the commutator of the resolvent with $f$ has lower order.
\begin{lem}\label{CommutatorResolvent}
Let $z\in \C\setminus [0,\infty)$ and $\gamma\geq 0$. The commutator $\left[ f, \left(D_e^*D_e-z\right)^{-1} \right]$ extends to a bounded operator $\cH^{-1-\mu}(D_e)\to \cH^{1-\gamma}(D_e)$ with norm $\lsim |z|^{-\frac{\gamma}{2}}$.
\end{lem}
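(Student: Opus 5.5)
The plan is to use the standard resolvent commutator identity. Write $R_z \coloneqq (D_e^*D_e - z)^{-1}$. On $\cH^\infty(D_e)$ we have
$$
[f, R_z] = R_z\,[D_e^*D_e, f]\,R_z = -R_z\,[f, D_e^*D_e]\,R_z ,
$$
which follows from $f R_z - R_z f = R_z\big((D_e^*D_e - z) f - f (D_e^*D_e - z)\big) R_z$. The bracket is understood through \cref{CommutatorBasicLemma} as a bounded map $\cH^{1-\mu}(D_e)\to\cH^{-1}(D_e)$. The proof then reduces to chasing degrees on the scale and bounding each resolvent factor with the correct power of $|z|$.

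The degree bookkeeping goes as follows. The operator $R_z$ is a function of $|D_e|$, namely $g_z(|D_e|)$ with $g_z(x) = (x^2 - z)^{-1}$. By \cref{MappingPropertiesT}.\ref{MappingPropertiesT_b}, $R_z$ maps $\cH^\alpha$ to $\cH^{\alpha+2-\delta}$ with norm at most $\sup_x (1+x^2)^{1-\delta/2}|x^2-z|^{-1}$ for $0\le\delta\le 2$. For $z\in\C\setminus[0,\infty)$ this supremum is controlled by a constant, depending on the angle of $z$ to $[0,\infty)$, times $|z|^{-\delta/2}$ for large $|z|$, and it stays bounded for small $|z|$.

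The chain is then assembled in three steps.
\begin{itemize}
\item The first factor $R_z\colon \cH^{-1-\mu}\to\cH^{1-\mu}$ gains two degrees, so $\delta=0$ and its norm is $O(1)$.
\item The middle factor $[f,D_e^*D_e]\colon \cH^{1-\mu}\to\cH^{-1}$ comes from \cref{CommutatorBasicLemma}.
\item The last factor $R_z\colon\cH^{-1}\to\cH^{1-\gamma}$ has $\delta=\gamma$ and norm $\lsim|z|^{-\gamma/2}$.
\end{itemize}
Composing the three gives the claimed bound, with the constant uniform in $|z|$ along rays or sectors avoiding $[0,\infty)$. That uniformity is exactly what the holomorphic functional calculus integral will need. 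The degree of freedom $\gamma$ is absorbed entirely by the last resolvent; alternatively one could split the decay between the two factors.

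There is a technical point I expect to be the main obstacle. The identity must be justified on a dense subspace, and both sides must then be extended. The first $R_z$ should map $\cH^\infty$ into $\cH^\infty$, which it does by \cref{MappingPropertiesT}. However, $f$ preserves only $\cH^\alpha$ for $0\le\alpha\le1$ (\cref{LemmaMultFunc}), not $\cH^\infty$. So I would verify the identity in weak form: pair $[f,R_z]u$ with $v\in\cH^\infty$, and use $\langle (D_e^*D_e - \bar z) R_{\bar z} v, \cdot\rangle$ together with $f R_z u\in\cH^1=\dom(D_e)$. The expression $\langle [f,D_e^*D_e] R_z u, R_{\bar z} v\rangle$ is then exactly the sesquilinear form computed in the proof of \cref{CommutatorBasicLemma}, namely $\langle [f,D_e]\cdot, D_e\cdot\rangle - \langle D_e\cdot, [f,D_e]\cdot\rangle$, which only requires $\cH^1$ regularity. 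Density of $\cH^\infty$ in $\cH^{-1-\mu}$ (\cref{FormalSobolevStructure}) then gives the extension. The last factor must be read as $R_z$ on $\cH^{-1}$, meaning the extension of $R_z$ by duality, since $R_{\bar z}^*=R_z$. Finally, for $\gamma>2$ the exponent bound is meaningless in the positive direction, so I would restrict to $0\le\gamma\le2$ and note that larger $\gamma$ follows trivially from the bounded inclusion $\cH^{1}\hookrightarrow\cH^{1-\gamma}$ combined with a bound on $\|R_z\|$ as $|z|\to\infty$.
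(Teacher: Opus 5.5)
Your proposal is correct and follows the paper's proof. Both write $[f,R_z]=R_z[D_e^*D_e,f]R_z$, take the middle factor from \cref{CommutatorBasicLemma}, and bound the two outer resolvents on the scale via \cref{MappingPropertiesT}, with the $|z|^{-\gamma/2}$ decay carried by one factor; your weak-form verification through the sesquilinear form is more careful than the paper's ``verified on $\cH^\infty(D_e)$'', since $f$ need not preserve $\dom(D_e^*D_e)$.

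One small slip: $\sup_{x}(1+x^2)^{1-\gamma/2}|x^2-z|^{-1}\geq |z|^{-1}$ does not stay bounded as $z\to 0$. So the estimate, in the paper as well, is really only uniform for $z$ in a sector with $|z|$ bounded below and $0\leq\gamma\leq 2$, with merely $|z|^{-1}$ decay for $\gamma>2$; this is all that the contour integral in \cref{FormalPseudoLocality} needs.
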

\begin{proof}
The formula
\begin{equation}\label{comm_res}
  \left(D_e^*D_e-z\right)\left[ f, \left(D_e^*D_e-z\right)^{-1} \right] \left(D_e^*D_e-z\right) = \left[ D_e^*D_e,f \right]
\end{equation}
is straightforwardly verified to hold on $\cH^{\infty}(D_e)$. It follows that the left-hand side extends to a map $\cH^{1-\mu}(D_e) \to \cH^{-1}(D_e)$, or equivalently that $\left[ f, \left(D_e^*D_e+I\right)^{-1} \right]$ extends to a bounded operator $\cH^{-1-\mu}(D_e)\to \cH^{1}(D_e)$. Thus it also extends to a bounded operator $\cH^{-1-\mu}(D_e)\to \cH^{1-\gamma}(D_e)$ for all $\gamma\geq 0$. 

For the norm estimate, note that the norm of $(D_e^*D_e-z)^{-1}$ as a bounded map $\cH^{\alpha}(D_e)\to \cH^{\alpha+2-\gamma}(D_e)$ is bounded by $\|(x^2-z)^{-1}\|_{\cF^{2-\gamma}}$ by \cref{MappingPropertiesT}. An elementary calculation shows this to be proportional to $|z|^{-\frac{\gamma}{2}}$. Using \eqref{comm_res} and the fact that $\left[ D_e^*D_e,f \right]$ is a bounded map $\cH^{1-\mu}(D_e) \to \cH^{-1}(D_e)$, one deduces that 
$$
\left\|\left[ f, \left(D_e^*D_e-z\right)^{-1} \right]\right\| \lsim |z|^{-\frac{\gamma}{2}}
$$
as a map $\cH^{-1-\mu}(D_e)\to \cH^{1-\gamma}(D_e)$.
\end{proof}
We extend from resolvents to holomorphic functions via the holomorphic functional calculus. Let $\beta\in\R$, and define $\cO^{-\beta}$ as the space of holomorphic functions $\psi \colon U\to \C$ with $U\subseteq \C$ an open neighborhhod of the positive real axis, such that $\sup_{z\in U} |z|^{\beta}|\psi(z)| <\infty$.
\begin{prop}\label{FormalPseudoLocality}
Let $\psi \in \cO^{-\beta}$ for $0<\beta\leq 1$. Then $\left[  f, \psi(D_e^*D_e+I)\right]$ extends to a continuous map $\cH^{-1-\mu}(D_e)\to \cH^{-1+2\beta-\gamma}(D_e)$ for all $\gamma>0$.
\end{prop}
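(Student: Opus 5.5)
We represent $\psi(D_e^*D_e+I)$ through the holomorphic functional calculus and push the commutator inside the integral. Set $S \coloneqq D_e^*D_e+I$. It is self-adjoint with spectrum in $[1,\infty)$. Choose a contour $\Gamma$ in the domain $U$ of $\psi$ that encircles $[1,\infty)$, for instance two rays $\{1/2 + re^{\pm i\theta}\colon r\geq 0\}$ with small $\theta$, staying inside $U$ near infinity. If $U$ only contains a thin neighbourhood of the positive axis, I would take instead a contour at bounded distance from $[1,\infty)$; the estimates below only use that $|z-x|\gtrsim 1+|z|$ for $x\geq 1$ up to constants, and this still holds for $z\in\Gamma$ on the part where $|\mathrm{Im} z|\gtrsim |z|$. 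If that is unavailable, I would fall back on a contour $|\mathrm{Im} z| = c$, and the bound below then gets a $|z|^{-\gamma/2}$ type loss, which is exactly what the $\gamma>0$ is for. On $\cH^\infty(D_e)$ we have
\[
[f,\psi(S)] = \frac{i}{2\pi}\int_\Gamma \psi(z)\,\bigl[f,(S-z)^{-1}\bigr]\,dz = \frac{i}{2\pi}\int_\Gamma \psi(z)\,\bigl[f,(D_e^*D_e-(z-1))^{-1}\bigr]\,dz .
\]
The plan is to show that this integral converges absolutely in the operator norm of bounded maps $\cH^{-1-\mu}(D_e)\to\cH^{-1+2\beta-\gamma}(D_e)$.

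The key input is \cref{CommutatorResolvent} applied at the spectral parameter $z-1$, which lies in $\C\setminus[0,\infty)$ along $\Gamma$. Taking its parameter $\gamma' \coloneqq 2-2\beta+\gamma\geq 0$, the commutator $[f,(S-z)^{-1}]$ extends to a bounded map $\cH^{-1-\mu}\to\cH^{1-\gamma'}=\cH^{-1+2\beta-\gamma}$ with norm $\lsim |z-1|^{-\gamma'/2} = |z-1|^{-1+\beta-\gamma/2}$. Combining this with $|\psi(z)|\lsim |z|^{-\beta}$ gives an integrand bounded by $|z|^{-1-\gamma/2}$ for large $|z|$. This is integrable along the rays precisely because $\gamma>0$, and this is why the statement loses an arbitrary $\gamma$. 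Near the vertex of $\Gamma$ everything is bounded. One should check that the implicit constant in \cref{CommutatorResolvent} is uniform along $\Gamma$. Its proof uses $\|(x^2-z)^{-1}\|_{\cF^{2-\gamma}}\lsim |z|^{-\gamma/2}$, which holds uniformly for $z$ in a sector avoiding $[0,\infty)$ away from $0$, after shifting by $1$. If $\Gamma$ cannot be kept in such a sector, this is where I would redo that elementary estimate along the actual contour.

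The remaining steps are routine. The Bochner integral defines a bounded operator $\cH^{-1-\mu}\to\cH^{-1+2\beta-\gamma}$. On the dense subspace $\cH^\infty(D_e)$ it agrees with $[f,\psi(S)]$, because the functional calculus integral commutes with multiplication by the bounded $f$ in $L^2$; the contour integral converges strongly there since $\psi(z)(S-z)^{-1}$ decays like $|z|^{-1-\beta}$. By the density in \cref{FormalSobolevStructure}, this bounded operator is the claimed continuous extension. The main obstacle is the bookkeeping of uniform resolvent bounds along the contour, in particular the choice of $\Gamma$ compatible with an arbitrary neighbourhood $U$ of the positive axis.
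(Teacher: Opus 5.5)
Your main line is the paper's own proof. You write $\psi(D_e^*D_e+I)$ as a resolvent contour integral, move the commutator inside, and apply \cref{CommutatorResolvent} with $\gamma'=2-2\beta+\gamma$. The resulting bound $|z|^{-\beta}\cdot|z|^{-1+\beta-\gamma/2}=|z|^{-1-\gamma/2}$ is integrable, and that integrability is exactly where $\gamma>0$ is used. Your extra remarks on the shift $z\mapsto z-1$, on uniformity of the constant in \cref{CommutatorResolvent} along $\Gamma$, and on the density step are correct for a contour made of two rays in a sector $\{|\arg(z-\tfrac12)|\le\theta\}$. They make explicit what the paper leaves tacit.

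Your fallbacks for a thin neighbourhood $U$, however, fail and should be dropped.

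\textbf{Why the fallback contours fail.} On a contour at bounded distance $c$ from $[1,\infty)$ the resolvent does not decay. If, say, $\sigma(S)=[1,\infty)$, then $\|(S-z)^{-1}\|_{L^2\to L^2}=c^{-1}$ for all $z=t\pm ic$ with $t\geq1$. Its norm as a map $\cH^{\alpha}\to\cH^{\alpha+2-\gamma'}$ is at least $t^{1-\gamma'/2}/c$. So neither your bound on $[f,(S-z)^{-1}]$ survives, nor even norm convergence of the integral defining $\psi(S)$: the integrand is then only $O(|z|^{-\beta})$, with $\beta\le1$. The condition $|\mathrm{Im}\,z|\gtrsim|z|$ you invoke holds only on a compact piece of such a contour. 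The $\gamma>0$ in the statement absorbs only the logarithmic divergence along sector rays, not a loss from the choice of contour.

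\textbf{Why no contour can fix it.} Take $D=-i\partial_x$ on $\R$, so $\mu=1$ and $\cH^\alpha=H^\alpha$. Take $\psi(z)=(1+z)^{-\beta}\sin z$, which lies in $\cO^{-\beta}$ for the strip $U=\{|\mathrm{Im}\,z|<1,\ \mathrm{Re}\,z>-\tfrac12\}$. High-frequency wave packets $\phi(x-x_0)e^{i\xi x}$ are transported by $\sin S$ a distance of order $|\xi|$ out of $\mathrm{supp}\,f$. This shows that $[f,\psi(S)]$ has order exactly $-2\beta$, so the statement fails for $\gamma<1$.

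The class $\cO^{-\beta}$ must therefore be read with $U$ containing a sector around the positive axis. This is what the paper's proof tacitly uses, and it holds in the only application, $\psi(w)=w^{-1/2}$.
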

\begin{proof}
According to the holomorphic functional calculus we may write
$$
\psi(D_e^*D_e+I) = \frac{i}{2\pi} \int_\Gamma \psi(z)\cdot (D_e^*D_e+1-z)^{-1} \, dz ,
$$
so that 
$$
\left[f,\psi(D_e^*D_e+I)\right] = \frac{i}{2\pi} \int_\Gamma \psi(z)\cdot \left[f,(D_e^*D_e+1-z)^{-1}\right] \, dz.
$$
By \cref{CommutatorResolvent}, $\left[ f, \left(D_e^*D_e-z\right)^{-1} \right]$ extends to a bounded map $\cH^{-1-\mu}(D_e)\to \cH^{1-\gamma'}(D_e)$ for all $\gamma'\geq 0$, and its norm is bounded by $|z|^{-\frac{\gamma'}{2}}$. It follows that $\left[  f, \psi(D_e^*D_e+I)\right]$ extends to a bounded operator $\cH^{-1-\mu}(D_e)\to \cH^{1-\gamma'}(D_e)$ whenever the integral $\int_\Gamma \left|\psi(z)\right|\cdot |z|^{-\frac{\gamma'}{2}}\, dz$ converges. Since $\psi \in \cO^{-\beta}$, meaning that $|\psi(z)|\lsim |z|^{-\beta}$, this is the case if $-\beta-\frac{\gamma'}{2}<-1$, i.e. if $\gamma'>2(1-\beta)$. Defining $\gamma \coloneqq 2(\beta-1)+\gamma'$, the claim follows.
\end{proof}
We cannot extend the result of \cref{FormalPseudoLocality} to cover $\gamma=0$ as well. This is because an infinitessimal amount of $\gamma$ is needed to ensure that the integral representation of $[f,\psi(D_e^*D_e+I)]$ converges in the operator norm of maps from $\cH^{-1-\mu}\to \cH^{-1+2\beta-\gamma}$. This is not a fundamental limitation, but rather an artifact of our choice to treat $\psi(D_e^*D_e+I)$ via an integral representation. However, using duality and interpolation we can extend the statement to more domains. Indeed, suppose that $\psi\in\cO^{-\beta}$. It holds that 
$$
\left\langle \left[f, \psi(D_e^*D_e+I)\right]u,v \right\rangle_H = \left\langle u,\left[\overline{\psi}(D_e^*D_e+I),f\right]v \right\rangle_H 
$$
for all $u,v \in \cH^\infty(D_e)$. It (almost) never holds that $\overline{\psi}$ is in $\cO^{-\beta}$, but if $\psi$ is real-valued on $\R_{\geq 0}$, then $\overline{\psi}(D_e^*D_e+I)=\psi(D_e^*D_e+I)$. In this case the conditions for \cref{DualityInterpolation} are satisfied, and \cref{FormalPseudoLocality} can be strengthened as follows.
\begin{cor}\label{FormalPseudoLocStrengthened}
Let $\psi \in \cO^{-\beta}$ with $\beta\geq 0$, and suppose that $\psi$ is real-valued on $\R_{\geq 0}$. Then $\left[  f, \psi(D_e^*D_e+I)\right]$ extends to a bounded operator $\cH^{\alpha}(D_e)\to \cH^{\alpha+2\beta+\mu-\gamma}(D_e)$ for every $\alpha\in[-1-\mu,1-2\beta]$ and all $\gamma>0$.
\end{cor}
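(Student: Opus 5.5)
The plan is to obtain the statement from \cref{FormalPseudoLocality} via \cref{DualityInterpolation}. Set $A \coloneqq D_e^*D_e+I$ and $C \coloneqq [f,\psi(A)]$, with $f$ real-valued as throughout this subsection. Since $f$ and $\psi|_{\R_{\geq 0}}$ are real, $\overline{\psi}(A)=\psi(A)$. The identity displayed before the corollary then reads $\langle Cu,v\rangle_H = \langle u,-Cv\rangle_H$ for $u,v\in\cH^\infty(D_e)$. So \cref{DualityInterpolation} applies with the pair $(C,-C)$, and a single mapping property of $C$ is automatically shared by the partner operator.

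\emph{Step 1 (the base estimate, $0\le\beta\le1$).} For $0<\beta\le 1$, \cref{FormalPseudoLocality} says that $C$ extends boundedly $\cH^{-1-\mu}\to\cH^{-1+2\beta-\gamma}$ for every $\gamma>0$. The gain in degree is $(-1+2\beta-\gamma)-(-1-\mu)=2\beta+\mu-\gamma$.

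For $\beta=0$ I would rerun the proof of \cref{FormalPseudoLocality}. Since $\sigma(A)\subseteq[1,\infty)$, we may shrink $U$ so that $|z|\ge\tfrac12$ on $U$; then $|\psi(z)|\lsim |z|^{-\beta}$ holds uniformly. The contour integral converges in norm $\cH^{-1-\mu}\to\cH^{1-\gamma'}$ as soon as $\gamma'>2$, by \cref{CommutatorResolvent}. This again gives target degree $-1+2\beta-\gamma$ with $\gamma=\gamma'-2>0$.

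\emph{Step 2 (duality and interpolation).} Apply \cref{DualityInterpolation}.\ref{DualityInterpolation_b} with $\alpha_0=-1-\mu$ and order $-(2\beta+\mu-\gamma)$. This yields boundedness $\cH^{\alpha}\to\cH^{\alpha+2\beta+\mu-\gamma}$ for all $\alpha$ between $\alpha_0$ and
\[
-\alpha_0-(2\beta+\mu-\gamma)=1-2\beta+\gamma .
\]
That interval contains $[-1-\mu,1-2\beta]$, which proves the claim for $0\le\beta\le 1$.

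\emph{Step 3 ($\beta>1$, the main obstacle).} \cref{CommutatorResolvent} only allows $\gamma'\ge 0$, so the target degree of the contour integral is capped at $\cH^{1}$ and the gain $2\beta$ is not seen directly. I would first try to factor $\psi(z)=z^{-k}\varphi(z)$ with $k\in\N$ and $\varphi\in\cO^{-(\beta-k)}$, $0\le\beta-k\le 1$; $\varphi$ is still real on $\R_{\ge0}$. Then
\[
C=[f,A^{-k}]\,\varphi(A)+A^{-k}\,[f,\varphi(A)] .
\]
The second term is handled by Step 2 together with the shift $A^{-k}\colon\cH^{s}\to\cH^{s+2k}$ from \cref{MappingPropertiesT}. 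For the first term, expand $[f,A^{-k}]=\sum_{j}A^{-j}[f,A^{-1}]A^{-(k-1-j)}$. Use \cref{CommutatorResolvent} with $z=-1$ and $\gamma=0$ on $\cH^{-1-\mu}\to\cH^{1}$, extended by Step 2 to the symmetric range. Combine this with the order $-2(\beta-k)$ of $\varphi(A)$ from \cref{MappingPropertiesT}, and check that every intermediate degree lands in the range where the commutator estimate is available. This bookkeeping of admissible $\alpha$ is where I expect the difficulty to lie. If it fails for the full interval, the fallback is to establish the estimate at the single endpoint $\alpha=-1-\mu$ and reapply Step 2, noting that both factors are real so the same duality applies.
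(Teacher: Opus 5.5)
Your Steps 1--2, for $0<\beta\le 1$, are exactly the paper's argument. The paragraph preceding the corollary uses that $f$ and $\psi|_{\R_{\ge0}}$ are real, so that $-[f,\psi(D_e^*D_e+I)]$ is the formal adjoint. It then feeds the endpoint bound of \cref{FormalPseudoLocality} at $\alpha=-1-\mu$ into \cref{DualityInterpolation}, which gives the range $[-1-\mu,1-2\beta+\gamma]$. This justification only covers $0<\beta\le 1$, the hypothesis of \cref{FormalPseudoLocality}, even though the statement says $\beta\ge 0$; the corollary is only ever used with $\beta=\tfrac12$. Your two extensions beyond that range fare differently.

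\textbf{The case $\beta=0$ has a genuine gap.} Rerunning \cref{FormalPseudoLocality} needs the decay $|z|^{-\gamma'/2}$ of \cref{CommutatorResolvent} for some $\gamma'>2$. The lemma is stated for all $\gamma\ge 0$, but its proof only delivers this for $\gamma'\le 2$. Write $A=D_e^*D_e+I$, so the commutator is $(A-z)^{-1}[A,f](A-z)^{-1}$.
\begin{itemize}
\item The right resolvent, $\cH^{-1-\mu}\to\cH^{1-\mu}$, has norm $\sup_{t\in\sigma(A)}t/|t-z|\approx 1$. It cannot be made to decay, because $[A,f]$ is only known on $\cH^{s}$ for $s\ge 1-\mu$ and $(A-z)^{-1}$ gains at most $2$.
\item The left resolvent, $\cH^{-1}\to\cH^{1-\gamma'}$, has norm $\sup_{t\in\sigma(A)}t^{1-\gamma'/2}/|t-z|$. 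For $\gamma'\ge 2$ this supremum sits at the bottom of the spectrum and is only $\approx|z|^{-1}$.
\end{itemize}
For bounded $\psi$ the integral $\int_\Gamma|\psi(z)|\,|z|^{-1}\,|dz|$ therefore diverges logarithmically. The assumption $\beta>0$ in \cref{FormalPseudoLocality} is exactly what allows choosing $\gamma'\in(2(1-\beta),2]$. In addition, for $\beta=0$ the contour integral for $\psi(A)$ itself is not norm-convergent, so even the commutator formula would need a separate approximation argument. The $\beta=0$ case is therefore not established by your proposal, and the paper's argument does not cover it either.

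\textbf{The case $\beta>1$ does close with your factorization.} The bookkeeping you were worried about is short once you note that $[-1-\mu,1-2\beta]$ is empty unless $\beta\le 1+\mu/2\le\tfrac32$. Hence $k=1$ and $\varphi(z)=z\psi(z)\in\cO^{-(\beta-1)}$ with $0<\beta-1\le\mu/2$.
\begin{itemize}
\item The term $A^{-1}[f,\varphi(A)]$ gains $2(\beta-1)+\mu-\gamma$ on $[-1-\mu,3-2\beta]$ by Steps 1--2, plus $2$ from $A^{-1}$.
\item In $[f,A^{-1}]\varphi(A)$, the factor $\varphi(A)$ raises degrees by $2(\beta-1)$. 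It carries $[-1-\mu,1-2\beta]$ into $[-1-\mu,-1]$, which is exactly where $[f,A^{-1}]$ gains $2+\mu$. That gain comes from \cref{CommutatorResolvent} at $z=-1$ with $\gamma=0$, followed by \cref{DualityInterpolation}.
\end{itemize}
Both terms gain at least $2\beta+\mu-\gamma$, so no higher $k$ and no fallback are needed.
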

The additional range provided by the previous corollary allows us to prove that the commutator of the bounded transform with $f$ has negative order.
\begin{cor}\label{BoundedTransLowerOrder}
The commutator $\left[f, D_e(D_e^*D_e+I)^{-\frac{1}{2}}\right]$ extends to a bounded operator $\cH^{-\gamma}(D_e)\to L^2(\Omega;E)$ for all $0<\gamma<\mu$.
\end{cor}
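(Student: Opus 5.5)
We write the bounded transform as $D_e\,\psi(D_e^*D_e+I)$ with $\psi(z)=z^{-1/2}$, which lies in $\cO^{-1/2}$ on a neighbourhood of $[1,\infty)$ avoiding the branch cut and is real-valued on $\R_{\geq 0}$. Then we split the commutator by the Leibniz rule:
$$
\left[f, D_e(D_e^*D_e+I)^{-\frac12}\right] = [f,D_e]\,(D_e^*D_e+I)^{-\frac12} + D_e\left[f,(D_e^*D_e+I)^{-\frac12}\right].
$$
This identity is checked on $\cH^\infty(D_e)$, using that $f$ preserves $\dom(D_e)$ (by locality in case b, or by \cref{MultInt} in case a), and then extended by density. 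We estimate each summand as a map $\cH^{-\gamma}(D_e)\to L^2(\Omega;F)$.

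\emph{First summand.} By \cref{MappingPropertiesT}.\ref{MappingPropertiesT_b}, $(D_e^*D_e+I)^{-1/2}$ maps $\cH^{-\gamma}(D_e)$ boundedly to $\cH^{1-\gamma}(D_e)$. Since $\gamma<\mu$, we have $\cH^{1-\gamma}(D_e)\hookrightarrow\cH^{1-\mu}(D_e)$. By \cref{CommutatorBasicLemma}, $[f,D_e]$ extends to a bounded map $\cH^{1-\mu}(D_e)\to L^2(\Omega;F)$. Composing gives the claim for this term.

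\emph{Second summand.} We apply \cref{FormalPseudoLocStrengthened} with $\beta=\tfrac12$, $\alpha=-\gamma$ and a small auxiliary $\gamma'>0$. The admissible range is $\alpha\in[-1-\mu,0]$, which contains $-\gamma$. This shows that $[f,(D_e^*D_e+I)^{-1/2}]$ maps $\cH^{-\gamma}(D_e)$ to $\cH^{-\gamma+1+\mu-\gamma'}(D_e)$. Choosing $\gamma'\le\mu-\gamma$, which is possible because $\gamma<\mu$, the target is contained in $\cH^{1}(D_e)$. By \cref{MappingPropertiesT}.\ref{MappingPropertiesT_a}, $D_e$ maps $\cH^1(D_e)$ boundedly to $\cH^0(D_e^*)=L^2(\Omega;F)$. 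So the second term is also bounded $\cH^{-\gamma}(D_e)\to L^2$.

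\emph{Main obstacle.} The delicate point is the bookkeeping of exponents. We need the strict inequality $\gamma<\mu$ in two places: to absorb the loss $\gamma'$ coming from \cref{FormalPseudoLocStrengthened}, and to land inside $\cH^{1-\mu}$ in the first term. We also need to check that $\psi=z^{-1/2}$ genuinely lies in $\cO^{-1/2}$ on a sector around $[1,\infty)$, on which the spectrum of $D_e^*D_e+I$ lies.

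A further point concerns the target space. If $D$ maps $E$ to $F$, the statement's "$L^2(\Omega;E)$" should be read as $L^2(\Omega;F)$. The argument is unaffected, since \cref{MappingPropertiesT}.\ref{MappingPropertiesT_a} gives exactly the $L^2$ space on which $D_e^*$ is defined.
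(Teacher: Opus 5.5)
Your proposal is correct and is essentially the paper's proof: the same Leibniz splitting, \cref{CommutatorBasicLemma} composed with $(D_e^*D_e+I)^{-1/2}$ for the first term, and \cref{FormalPseudoLocStrengthened} with $\beta=\tfrac12$, $\alpha=-\gamma$ followed by $D_e\colon\cH^1(D_e)\to L^2$ for the second. Your bookkeeping is slightly more careful than the written proof in two places: $\psi(z)=z^{-1/2}$ is the function the calculus $\psi(D_e^*D_e+I)$ actually requires, and choosing $\gamma'\le\mu-\gamma$ (the paper only asks for $0<\gamma'<\gamma$) is what guarantees that the target $\cH^{-\gamma+1+\mu-\gamma'}(D_e)$ lies in $\cH^1(D_e)$.
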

\begin{proof}
Write
$$
\left[f, D_e(D_e^*D_e+I)^{-\frac{1}{2}}\right] = \left[ f,D_e\right](D_e^*D_e+I)^{-\frac{1}{2}} + D_e \left[ f, (D_e^*D_e+I)^{-\frac{1}{2}} \right] .
$$
The first summand extends to a bounded operator $\cH^{-\mu}(D_e)\to L^2(\Omega;E)$. For the second summand note that $(z^2+1)^{-\frac{1}{2}} \in \cO^{-\frac{1}{2}}$ is real-valued for all $z\in\R_{\geq 0}$. The commutator $\left[ f, (D_e^*D_e+I)^{-\frac{1}{2}} \right]$ thus extends to a bounded operator $\cH^{-\gamma}(D_e)\to \cH^{-\gamma+1+\mu-\gamma'}(D_e)$ for all $0\leq\gamma\leq 1+\mu$ and all $\gamma'>0$ by the previous corollary. Taking $0<\gamma'<\gamma<\mu$ we get that $\left[ f, (D_e^*D_e+I)^{-\frac{1}{2}} \right]$ extends to a bounded operator $\cH^{-\gamma}(D_e) \to \cH^1(D_e)$. Since $D_e$ maps $\cH^1(D_e)$ to $L^2(\Omega;E)$ the claim follows.
\end{proof}
Appealing to \cref{CommLowerOrder} lets us now conclude the desired pseudo-locality statements:
\begin{cor}\label{BdTComm}
Let $D_e$ be a boundary condition for the elliptic operator $D$ over $\olO$.
\begin{enumerate}
\item $\left[ f,D_e(D_e^*D_e+I)^{-\frac{1}{2}} \right]$ is compact for all functions $f\in C_\cc^\infty(\Omega^\circ)$.
\item If $D_e$ is local and semi-regular, then $\left[ f,D_e(D_e^*D_e+I)^{-\frac{1}{2}} \right]$ is compact for all functions $f\in C_c^\infty(\olO)$.
\end{enumerate}
\end{cor}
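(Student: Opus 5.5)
The plan is to apply \cref{CommLowerOrder} with $T \coloneqq D_e(D_e^*D_e+I)^{-\frac{1}{2}}$, which is a bounded operator on $L^2$ (its norm is bounded by $\|x(1+x^2)^{-1/2}\|_\infty$ via polar decomposition). For the first item we use part \ref{CommLowerOrder_a}, and for the second item part \ref{CommLowerOrder_b}. The latter is available precisely because $D_e$ is semi-regular.

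The hypotheses of \cref{CommLowerOrder} ask for more than \cref{BoundedTransLowerOrder} gives. We need bounded extensions $\cH^{-\alpha}(D_e)\to L^2$ for both $[f,T]$ and $[f,T^*]$, for some fixed $\alpha>0$. For $[f,T]$ this is \cref{BoundedTransLowerOrder}: for real-valued $f$ in the relevant class, take any $\alpha=\gamma\in(0,\mu)$. Complex $f$ is handled by splitting into real and imaginary parts, which stay in the same class. For the adjoint, note $T^* = (D_e^*D_e+I)^{-\frac{1}{2}}D_e^* = D_e^*(D_eD_e^*+I)^{-\frac12}$.

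I see two routes for $[f,T^*]$. The cleanest is to observe that $D_e^*$ is itself a boundary condition, namely for $D^\dagger$, which is again elliptic over $\olO$. In case (a) this is immediate. Rerunning \cref{BoundedTransLowerOrder} with $D_e^*$ in place of $D_e$ then shows that $[f,T^*]$ extends to $\cH^{-\gamma}(D_e^*)\to L^2$. \cref{CommLowerOrder} as stated, however, wants the space $\cH^{-\alpha}(D_e)$. So the better approach is to inspect its proof: it only uses \cref{CorNegOrder}, applied once to $[f,T^*]\bar\psi$ and once to $[\psi,T]f$. For the first term it suffices that $[f,T^*]\bar\psi$ is compact. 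This follows from \cref{CorNegOrder} applied on the scale of $D_e^*$ (for $D^\dagger$, case (a) needs nothing further).

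In case (b), applying \cref{CorNegOrder} on the scale of $D_e^*$ needs $D_e^*$ to be semi-regular and local, which is not assumed. Therefore the second, more robust route is to avoid $D_e^*$ and estimate $[f,T^*]$ directly on the scale of $D_e$. Taking adjoints of the extension from \cref{BoundedTransLowerOrder} gives $-[\bar f,T^*]=[f,T]^*\colon L^2\to\cH^{\gamma}(D_e)$, by \cref{DualityInterpolation}.\ref{DualityInterpolation_a}. Then $f'[f,T^*]$, where $f'$ is a cutoff equal to $1$ near $\supp f$, factors through $\cH^\gamma(D_e)\cap L^2_K \xrightarrow{j_\gamma} L^2$. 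This is compact by \cref{FormalRellich}, which is exactly the analogue of \cref{CorNegOrder} noted after \cref{NegOrderCpt}.

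So I would simply redo the decomposition in the proof of \cref{CommLowerOrder}. Write
$$[f,T]=\psi[f,T]+[\psi,T]f,$$
with $\psi f=f$. The term $[\psi,T]f$ is compact by \cref{CorNegOrder} and \cref{BoundedTransLowerOrder}. For the term $\psi[f,T]$, use that $[f,T]$ maps $\cH^{-\gamma}(D_e)\to L^2$, so its adjoint maps $L^2\to\cH^{\gamma}(D_e)$, and apply the dual version of \cref{CorNegOrder} with $\bar\psi$; taking adjoints gives compactness. The main obstacle is this adjoint bookkeeping, that is, making sure the dual statement of \cref{FormalRellich} (compactness of $j_\gamma$ after cutting off with $\psi$) is used on the $D_e$-scale rather than on the $D_e^*$-scale.
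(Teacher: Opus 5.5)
You correctly notice that \cref{CommLowerOrder} needs more than \cref{BoundedTransLowerOrder} supplies. The paper's one-line proof simply cites the two in succession and never addresses $[f,T^*]$. Your first route repairs part (a): view $D_e^*$ as a boundary condition for $D^\dagger$, then apply \cref{BoundedTransLowerOrder} and \cref{CorNegOrder} on the scale of $D_e^*$ to get $[f,T^*]\bar\psi$ compact.

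Your repair of part (b), however, has a genuine gap: the adjoint bookkeeping lands on the wrong side. From $[f,T]\colon\cH^{-\gamma}(D_e)\to L^2$ you get $[f,T]^*\colon L^2\to\cH^{\gamma}(D_e)$, so the dual form of \cref{CorNegOrder} makes $\bar\psi[f,T]^*$ compact. But $(\bar\psi[f,T]^*)^*=[f,T]\psi$, not $\psi[f,T]$, and $[f,T]\psi$ is the right-cutoff term that \cref{CorNegOrder} already gives directly. The term that needs an argument in $[f,T]=\psi[f,T]+[\psi,T]f$ is $\psi[f,T]=-([f,T^*]\bar\psi)^*$ (for real $f$), which is a right cutoff on $[f,T^*]$. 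It requires $[f,T^*]\colon\cH^{-\alpha}(D_e)\to L^2$, equivalently the left-smoothing bound $[f,T]\colon L^2\to\cH^{\alpha}(D_e)$. Dualizing a right-smoothing bound for $[f,T]$ never produces that.

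Your estimate on $f'[f,T^*]$ has the same defect: it is a left cutoff on $[f,T^*]$, while the proof of \cref{CommLowerOrder} uses $[f,T^*]\bar\psi$. Put differently, in $[f,F_e]=[f,F_e]\psi+fF_e(1-\psi)$ you never control the off-diagonal piece $fF_e(1-\psi)$. Nothing in \cref{SectionOpCalc} gives a left-smoothing bound on the $D_e$-scale, since $F_e$ takes values in the scale of $D_e^*$, which need not be semi-regular (think $D_e=D_{\min}$, $D_e^*=D_{\max}$).

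A fix that avoids $D_e^*$ entirely is to go back to resolvents. Set $R_\lambda\coloneqq D_e^*D_e+1+\lambda^2$; then $F_e=\frac{2}{\pi}\int_0^\infty D_eR_\lambda^{-1}\,d\lambda$ strongly. Choose $\phi\in C^\infty_c(\olO)$ with $\phi=1$ near $\supp f$; then
\[
  [f,D_eR_\lambda^{-1}] = [f,D]\,\phi R_\lambda^{-1} + D_eR_\lambda^{-1}\,[D_e^*D_e,f]\,\phi R_\lambda^{-1}.
\]
Inserting $\phi$ is harmless because both $[f,D]$ and $[D_e^*D_e,f]$ only see $u$ near $\supp f$; for the latter, use the form identity in the proof of \cref{CommutatorBasicLemma}. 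The factors behave as follows:
\begin{itemize}
\item $\phi R_\lambda^{-1}\colon L^2\to\cH^{1-\mu}(D_e)$ is compact with norm $\lesssim(1+\lambda^2)^{-(1+\mu)/2}$. It is bounded into $\cH^1(D_e)$ by \cref{LemmaMultFunc} and compact into $L^2$ by \cref{FormalRellich}; then use $\|w\|_{\cH^{1-\mu}}\le\|w\|^{\mu}\|w\|_{\cH^1}^{1-\mu}$.
\item $[f,D]$ and $[D_e^*D_e,f]$ are bounded from $\cH^{1-\mu}(D_e)$ to $L^2$ and to $\cH^{-1}(D_e)$ respectively, by \cref{CommutatorBasicLemma}.
\item $D_eR_\lambda^{-1}\colon\cH^{-1}(D_e)\to L^2$ has norm at most $1$.
\end{itemize}
So the integrand is compact and norm-integrable in $\lambda$, and $[f,F_e]$ is compact. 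Here the compactly supported cutoff always sits directly to the left of $R_\lambda^{-1}$, which maps into $\cH^1(D_e)$ where semi-regularity applies. The same argument, using the interior versions of the cited lemmas, covers part (a) as well.
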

\subsection{Lower-order perturbations}\label{SectionLowerOrderPerturbations}
We come to the comparison of boundary conditions for two elliptic operators that differ only by lower-order terms. In particular this includes the case of two boundary conditions for the same operator. Our strategy is analogous to that of the previous section: Use comparison to Sobolev spaces to argue that the difference between the operators has lower order on the formal scale, deduce that the analogous statement holds for resolvents as well, then extend to holomorphic functions via holomorphic functional calculus, and finally derive the desired result for the bounded transforms. This process is slightly complicated by the fact that the formal scales of the two boundary conditions are generally different, so that there is no common scale on which to work. This issue is solved by interior regularity. Thus the results of this section work only on the interior $\Omega^\circ$. While an extension to $\olO$ might be possible in favorable cases, we will not investigate this further. 

Throughout this section we consider the following situation: 
Let $D$ and $\tilde{D}$ be elliptic operators over $\olO$ acting on sections of the same bundles. Assume that $D$ and $\tilde{D}$ both have order $d$, and that $D-\tilde{D}$ is a differential operator of order $\leq d-1$. Let $D_e$ and $\tilde{D}_e$ be boundary conditions for $D$ and $\tilde{D}$ respectively. Note that $D_e$ and $\tilde{D}_e$ could in particular be two boundary conditions for the same operator. We set $\mu \coloneqq \sfrac{1}{d}$. Lastly, let $f\in C^\infty_{cc}(\Omega)$ be a smooth function with compact support in the interior.

Let us start with the following observation. Interior regularity for the operators $D$ and $\tilde{D}$ implies that $f$ maps the domains of $D_e$ and $\tilde{D}_e$ to $\tilde{H}^d(V;E) \subseteq \dom(D_e)\cap \dom(\tilde{D}_e)$ for $V\subseteq\Omega^\circ$ a precompact Lipschitz domain containing the support of $f$. Similarly, $f$ maps the domains of $D^*_e D_e$ and $\tilde{D}^*_e \tilde{D}_e$ to $\tilde{H}^{2d}(V;E) \subseteq \dom(D^*_e D_e)\cap\dom(\tilde{D}^*_e \tilde{D}_e)$. As a consequence, it makes sense to consider $(D_e^*D_e - \tilde{D}_e^*\tilde{D}_e)f$ as a bounded operator from $\cH^2(D_e)=\dom(D_e^*D_e)\to L^2(\Omega;E)$. Then, by arguing via Sobolev spaces in an analogous manner to the last section one proves the following:
\begin{lem}\label{DifferencesBasicLemma}
The operators $[f,D_e^*D_e]$ and $\left( D_e^*D_e - \tilde{D}_e^*\tilde{D}_e \right)f$ extend to bounded operators $\cH^{2-\mu}(D_e)\to L^2(\Omega;E)$.
\end{lem}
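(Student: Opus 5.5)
The plan is to reduce both claims to Sobolev mapping properties of differential operators, using interior regularity to pass between the formal scale of $D_e$ and local Sobolev spaces. Fix $\psi\in C^\infty_{cc}(\Omega^\circ)$ with $\psi=1$ on a neighbourhood of $\supp(f)$, and a precompact Lipschitz domain $V\subseteq\Omega^\circ$, precompact in $\Omega^\circ$, containing $\supp(\psi)$. By \cref{MultInt} and interpolation (as in \cref{LemmaMultFunc}), multiplication by $\psi$ is bounded $\cH^{\alpha}(D_e)\to\cH^\alpha(D_{\min})$ for $0\le\alpha\le1$. Combined with \cref{lem_regular_interpolation} for the semi-regular $D_{\min}$, this gives a bounded map $\cH^{\alpha}(D_e)\to H^{d\alpha}(V;E)$, $u\mapsto \psi u$. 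We need this for $\alpha=2-\mu>1$. For that range, first note that for $u\in\cH^2(D_e)=\dom(D_e^*D_e)$ the section $\psi u$ satisfies $D^\dagger D(\psi u)\in L^2$ locally. Interior regularity (\cref{IntReg}) for the elliptic operator $D^\dagger D$ of order $2d$ then gives $\|\psi u\|_{H^{2d}(V)}\lsim \|u\|_{\cH^2(D_e)}$. Interpolating between $\alpha=1$ and $\alpha=2$ (using \cref{FormalSobolevStructure}.\ref{item_interpolation_scale} and the Sobolev interpolation \cref{InterpolationSobolev} extended to orders up to $2d$) yields boundedness of $u\mapsto\psi u$ from $\cH^{2-\mu}(D_e)$ to $H^{2d-1}(V;E)$.

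For the commutator, on $\cH^\infty(D_e)$ we have $[f,D_e^*D_e]u=[f,D^\dagger D]u=[f,D^\dagger D]\psi u$. This holds because $D_e^*D_e$ acts as the differential operator $D^\dagger D$ on sections compactly supported in the interior, since $f u$ and $\psi u$ lie in $H^{2d}$ with compact support in $\Omega^\circ$ and hence in $\dom(D_{\min}^*D_{\min})$-type domains by \cref{MinMaxDomain}. Now $[f,D^\dagger D]$ is a differential operator of order $2d-1$ with coefficients supported in $\supp(f)$. It therefore maps $H^{2d-1}(V;E)\to L^2(\Omega;E)$ boundedly, and the composite extends to $\cH^{2-\mu}(D_e)$ by density of $\cH^\infty$.

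For the difference, write $(D_e^*D_e-\tilde D_e^*\tilde D_e)f u=(D^\dagger D-\tilde D^\dagger\tilde D)(f\psi u)$, again justified by the interior observation preceding the lemma. The operator $D^\dagger D-\tilde D^\dagger\tilde D=D^\dagger(D-\tilde D)+(D^\dagger-\tilde D^\dagger)\tilde D$ has order $\le 2d-1$, since $D-\tilde D$ has order $\le d-1$. Composed with multiplication by $f$, it is bounded $H^{2d-1}(V;E)\to L^2$, so the same factorization through $\psi$ gives the claim.

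The main obstacle I expect is the justification that $\psi$ maps $\cH^{2-\mu}(D_e)$ into $H^{2d-1}(V)$. This requires the interior-regularity estimate at level $\cH^2$, so one must check that $\dom(D_e^*D_e)$ elements localize to $H^{2d}$. It also requires that interpolation of the two localized maps (at $\alpha=1,2$) is legitimate, i.e. that the maps are compatible restrictions of one map on $L^2$. I would handle the latter by viewing $u\mapsto\psi u$ as a single map on $\cH^1$ and interpolating between $\cH^1\to H^d(V)$ and $\cH^2\to H^{2d}(V)$. This also needs the analogue of \cref{InterpolationSobolev}, $[H^d(V),H^{2d}(V)]_{1-\mu}=H^{2d-1}(V)$, which follows from the same Lions–Magenes reference.
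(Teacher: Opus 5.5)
Your proposal is correct and follows the route the paper indicates just before the lemma. Interior regularity sends $\psi\cdot\cH^1(D_e)$ and $\psi\cdot\cH^2(D_e)$ into $H^d(V;E)$ and $H^{2d}(V;E)$, interpolation gives $\cH^{2-\mu}(D_e)\to H^{2d-1}(V;E)$, and the order-$(2d-1)$ operators $[f,D^\dagger D]$ and $(D^\dagger D-\tilde D^\dagger\tilde D)f$ then map into $L^2(\Omega;E)$. One step needs rewording: apply interior regularity for $D^\dagger D$ to $u$ itself, whose distributional $D^\dagger D u$ equals $D_e^*D_e u\in L^2$, rather than to $\psi u$, since $D^\dagger D(\psi u)\in L^2$ is not known a priori.
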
 
From this we may again derive the corresponding statement for resolvents. Now we multiply the difference of the resolvents from both sides by functions with compact support in the interior. This produces an operator that acts on a single scale. 
\begin{lem}
Let $f,g\in C^\infty_{cc}(\Omega^\circ)$, and let $\gamma\geq 0$, $z\in\C\setminus[0,\infty)$. The operator 
$$
g\left( (D_e^*D_e-z)^{-1} - (\tilde{D}^*_e\tilde{D}_e-z)^{-1} \right)f
$$ 
extends to bounded operators $\cH^{-1-\mu}(D_e)\to \cH^{1-\gamma}(D_e)$, with norm $\lsim |z|^{-\frac{\gamma}{2}}$.
\end{lem}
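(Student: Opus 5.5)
We mimic the proof of \cref{CommutatorResolvent}, writing $R(z)\coloneqq(D_e^*D_e-z)^{-1}$ and $\tilde R(z)\coloneqq(\tilde D_e^*\tilde D_e-z)^{-1}$. The point is to reduce the difference of resolvents to the two operators controlled by \cref{DifferencesBasicLemma}, sandwiched between resolvents whose mapping properties and $z$-dependence are given by \cref{MappingPropertiesT}.

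The first step is to choose cutoffs. Fix $\varphi,\chi\in C^\infty_{cc}(\Omega^\circ)$ with $\varphi f=f$ and $\chi\varphi=\varphi$. By the second resolvent identity, applied formally on the interior,
\[
g\bigl(R(z)-\tilde R(z)\bigr)f = g R(z)\bigl(\tilde D_e^*\tilde D_e - D_e^*D_e\bigr)\tilde R(z) f .
\]
To make sense of this, we need $\tilde R(z)f$ to land in a space where the difference operator acts. We therefore commute $f$ through and write
\[
\tilde R(z) f = \varphi\tilde R(z) f + [\tilde R(z),\varphi] f .
\]
The main term is $gR(z)(\tilde D_e^*\tilde D_e-D_e^*D_e)\varphi\,\tilde R(z)f$. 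Here $\varphi\tilde R(z)f$ lies in $\tilde H^{2d}(V)\subseteq\dom(D_e^*D_e)$ by interior regularity, as observed before \cref{DifferencesBasicLemma}. The difference operator times $\varphi$ has order $2-\mu$ by \cref{DifferencesBasicLemma}, in its dual or $H^{-\mu}$-valued form. The outer resolvent $R(z)$ gains two orders with norm $\lsim|z|^{-\gamma/2}$ at a loss of $\gamma$, by \cref{MappingPropertiesT}. Finally, $g$ is bounded on the scale by \cref{LemmaMultFunc}, extended by interpolation and duality.

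The remaining commutator term is $g R(z)(\dots)[\tilde R(z),\varphi]f$. It is handled by \cref{CommutatorResolvent} applied to $\tilde D_e$, which lowers the order by $\mu$ again. Alternatively, one can rewrite it as $g(R(z)-\tilde R(z))\chi[\ldots]$ and use that $[\tilde R,\varphi]f$ is supported away from $\operatorname{supp} f$. This yields the remaining estimate.

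The main obstacle is that $\tilde R(z)$ acts on the scale of $\tilde D_e$, not that of $D_e$. We plan to transfer between the scales by interior regularity. For $\alpha\in[-1-\mu,1]$ and $u$ with support in a fixed compact $K\subset\Omega^\circ$, the norms $\|u\|_{\cH^\alpha(D_e)}$ and $\|u\|_{\cH^\alpha(\tilde D_e)}$ should both be equivalent to $\|u\|_{H^{d\alpha}}$. For $0\le\alpha\le1$ this follows from \cref{MultInt} (reduction to $D_{\min}=\tilde D_{\min}$ up to lower order) and \cref{lem_regular_interpolation}; for negative $\alpha$ it follows by duality. Consequently, cutting off by $\varphi,\chi$ identifies the scales locally. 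With this transfer in hand, the $|z|^{-\gamma/2}$ bound follows by the same bookkeeping as in \cref{CommutatorResolvent}: exactly one resolvent absorbs the loss $\gamma$, while the other is used only as a bounded map $\cH^{\alpha}\to\cH^{\alpha+2}$ with $z$-uniform bound on $\{z\notin[0,\infty)\}$ away from $0$.
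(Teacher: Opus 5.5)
There is a genuine gap in the decomposition. The part of your proposal that handles the main term is fine; the problem is the identity underneath it.

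\textbf{Why the resolvent identity fails.} The second resolvent identity $R(z)-\tilde R(z)=R(z)\bigl(\tilde D_e^*\tilde D_e-D_e^*D_e\bigr)\tilde R(z)$ requires $\tilde R(z)$ to map into $\dom(D_e^*D_e)$, and it does not. The section $\tilde R(z)fu$ is not compactly supported in $\Omega^\circ$, and near $\dO$ it satisfies the boundary condition of $\tilde D_e$, not that of $D_e$.

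Splitting $\tilde R(z)f=\varphi\tilde R(z)f+[\tilde R(z),\varphi]f$ afterwards does not repair this. The piece $[\tilde R(z),\varphi]f=(1-\varphi)\tilde R(z)f$ is exactly the part that reaches the boundary. So $\bigl(\tilde D_e^*\tilde D_e-D_e^*D_e\bigr)[\tilde R(z),\varphi]f$ is undefined, and \cref{CommutatorResolvent} says nothing about it.

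This is not a technicality. The lemma must cover two boundary conditions for the same operator, $\tilde D=D$. In that case your main term vanishes identically, because both operators act as $D^\dagger D$ on $\varphi\tilde R(z)fu$, which is compactly supported in $\Omega^\circ$. Reading the remaining difference as the differential operator $\tilde D^\dagger\tilde D-D^\dagger D=0$ would then give $g\bigl(R(z)-\tilde R(z)\bigr)f=0$. That is false in general, e.g.\ for the Dirichlet and Neumann Laplacians.

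\textbf{The missing term and a fix.} What is missing is the commutator of the operator $\tilde D_e^*\tilde D_e$ itself with the cutoff, not the commutator of its resolvent. This is the only term that sees the differing boundary conditions. You can derive the correct identity directly:
\begin{itemize}
\item For $w=\tilde R(z)fu$, interior regularity gives $\varphi w\in\dom(D_e^*D_e)\cap\dom(\tilde D_e^*\tilde D_e)$.
\item It also gives $(\tilde D_e^*\tilde D_e-z)\varphi w=fu+[\tilde D_e^*\tilde D_e,\varphi]w$.
\item Write $\varphi w=R(z)(D_e^*D_e-z)\varphi w$ and choose $\varphi=1$ also on $\supp(g)$.
\end{itemize}
This yields
\[
g\bigl(R(z)-\tilde R(z)\bigr)f=gR(z)\bigl(\tilde D_e^*\tilde D_e-D_e^*D_e\bigr)\varphi\tilde R(z)f-gR(z)\bigl[\tilde D_e^*\tilde D_e,\varphi\bigr]\tilde R(z)f .
\]
The first term is your main term. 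The second has the same structure: $[\tilde D_e^*\tilde D_e,\varphi]$ maps $\cH^{1-\mu}(\tilde D_e)\to\cH^{-1}(\tilde D_e)$ by \cref{CommutatorBasicLemma} applied to $\tilde D_e$. With this identity, your scale transfer and $|z|$-bookkeeping go through.

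\textbf{How the paper does it.} The paper instead writes $f=(D_e^*D_e-z)fR(z)+[f,D_e^*D_e]R(z)$. This gives
\[
\bigl(R(z)-\tilde R(z)\bigr)f=\tilde R(z)\bigl((\tilde D_e^*\tilde D_e-D_e^*D_e)f-[f,D_e^*D_e]\bigr)R(z)+R(z)[f,D_e^*D_e]R(z).
\]
It first proves boundedness $\cH^{-\mu}(D_e)\to\cH^{2-\gamma}(D_e)$. It then reaches $\cH^{-1-\mu}\to\cH^{1-\gamma}$ via \cref{DualityInterpolation} applied to the adjoint. This avoids transferring negative-order spaces between the two scales.
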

\begin{proof}
Write $R \coloneqq D_e^*D_e-z$ and $\tilde{R} \coloneqq \tilde{D}_e^*\tilde{D}_e-z$. On the intersection $\dom(R)\cap \dom(\tilde{R})$ the formula
$$
\left( R^{-1}- \tilde{R}^{-1} \right) R = \tilde{R}^{-1} \left( \tilde{R}- R \right)
$$
holds. Since $f$ maps $\dom(R)$ to $\dom(R)\cap \dom(\tilde{R})$ by \cref{MinMaxDomain}, we can thus calculate as follows:
\begin{align*}
\left( R^{-1} - \tilde{R}^{-1} \right)f &= \left( R^{-1} - \tilde{R}^{-1} \right) \left( RfR^{-1} + \left[f,R\right]R^{-1} \right) \\
&= \tilde{R}^{-1}\left(\left( \tilde{R}-R \right) f - \left[f,R\right]\right)R^{-1} + R^{-1}\left[f,R\right]R^{-1} .
\end{align*}
By the previous lemma the operator $\left(\left( \tilde{R}-R \right) f - \left[f,R\right]\right)R^{-1}$ extends to a bounded operator $\cH^{-\mu}(D_e)\to L^2(\Omega;E)$, so that the first summand extends to a bounded operator $\cH^{-\mu}(D_e)\to \cH^{2-\gamma}(\tilde{D}_e)$ with norm $\lsim |z|^{-\frac{\gamma}{2}}$ (the norm bound is derived analogously to that in \cref{CommutatorResolvent}). The second summand extends to a bounded operator $\cH^{-\mu}(D_e)\to \cH^{2-\gamma}(D_e)$, again with an analogous norm bound. Now, multiplication by $g$ is a bounded map on $\cH^{2-\gamma}(D_e)$. We conclude that $g(R^{-1}-\tilde{R}^{-1})f$ extends to a bounded operator $\cH^{-\mu}(D_e)\to \cH^{2-\gamma}(D_e)$ with norm $\lsim |z|^{-\frac{\gamma}{2}}$. Note that the same is also true for 
$$
\left( g\left(R^{-1}-\tilde{R}^{-1}\right)f \right)^* = \bar{f}\left(R^{-1}-\tilde{R}^{-1}\right)\bar{g} .
$$
Thus \cref{DualityInterpolation} implies that $g(R^{-1}-\tilde{R}^{-1})f$ actually extends to a bounded map $\cH^{\alpha}(D_e)\to \cH^{\alpha+\mu+2-\gamma}(D_e)$ for all $\alpha$ between $-2$ and $-\mu$ with an analogous norm bound. The claim is the special case of $\alpha=-1-\mu$.
\end{proof}
Analogously to \cref{FormalPseudoLocality} and \cref{FormalPseudoLocStrengthened} the result may again be extended to functions of $D_e^*D_e$ and $\tilde{D}_e^*\tilde{D}_e$ via holomorphic functional calculus. 
\begin{prop}\label{LowerOrderPsi}
Let $f,g\in C^\infty_{cc}(\Omega^\circ)$, and let $\psi \in \cO^{-\beta}$, $0\leq\beta\leq 1$, be real-valued on $\R_{\geq 0}$. Then, the operator 
$$
g\left( \psi(D_e^*D_e+I) - \psi(\tilde{D}_e^*\tilde{D}_e+I) \right)f
$$
extends to bounded maps $\cH^{\alpha}(D_e) \to \cH^{\alpha+\mu+2\beta-\gamma}(D_e)$ and $\cH^{\alpha}(\tilde{D}_e) \to \cH^{\alpha+\mu+2\beta-\gamma}(\tilde{D}_e)$ for all $\alpha\in[-1-\mu,1-2\beta]$ and all $\gamma>0$.
\end{prop}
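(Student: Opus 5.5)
The plan is to copy the proofs of \cref{FormalPseudoLocality} and \cref{FormalPseudoLocStrengthened}, with the commutator replaced by the sandwiched difference of resolvents from the preceding lemma. Write $R_z \coloneqq D_e^*D_e+1-z$ and $\tilde R_z \coloneqq \tilde D_e^*\tilde D_e+1-z$, and let $\Gamma$ be a contour around $[1,\infty)$ inside the domain of holomorphy of $\psi$. The holomorphic functional calculus, applied to both self-adjoint operators with the same contour, gives
$$
g\left( \psi(D_e^*D_e+I) - \psi(\tilde{D}_e^*\tilde{D}_e+I) \right)f = \frac{i}{2\pi}\int_\Gamma \psi(z)\, g\left(R_z^{-1}-\tilde R_z^{-1}\right)f\,dz .
$$
By the preceding lemma (with $z$ replaced by $z-1$), the integrand extends to a bounded map $\cH^{-1-\mu}(D_e)\to\cH^{1-\gamma'}(D_e)$ with norm $\lsim|\psi(z)|\,|z|^{-\gamma'/2}\lsim |z|^{-\beta-\gamma'/2}$. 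This is uniform for $z$ along $\Gamma$ away from the origin, after possibly shifting $\Gamma$ so that $|z-1|\sim|z|$. The integral therefore converges in operator norm once $\gamma'>2(1-\beta)$. Setting $\gamma=\gamma'-2(1-\beta)>0$, this yields boundedness $\cH^{-1-\mu}(D_e)\to\cH^{-1-\mu+\mu+2\beta-\gamma}(D_e)$, which is the claim at the endpoint $\alpha=-1-\mu$. For $\beta=0$ the same argument works, because the resolvent-difference bound for arbitrary $\gamma'$ still gives convergence for $\gamma'>2$.

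To fill in the whole range $\alpha\in[-1-\mu,1-2\beta]$, I will use \cref{DualityInterpolation}. Since $\psi$ is real-valued on $\R_{\geq0}$, we have $\bar\psi(D_e^*D_e+I)=\psi(D_e^*D_e+I)$, and likewise for the tilde operator. Hence the formal adjoint of $A\coloneqq g(\psi(\cdot)-\psi(\tilde\cdot))f$ is $B\coloneqq \bar f(\psi(\cdot)-\psi(\tilde\cdot))\bar g$, which is of the same form with the roles of $f,g$ swapped and complex-conjugated. The first step applies equally to $B$, so both $A$ and $B$ are bounded $\cH^{-1-\mu}\to\cH^{-1-\mu+\beta'}$ with $\beta'\coloneqq-(\mu+2\beta-\gamma)$. \cref{DualityInterpolation}.\ref{DualityInterpolation_b} then gives boundedness $\cH^\alpha\to\cH^{\alpha-\beta'}$ for $\alpha$ between $-1-\mu$ and $1+\mu+\beta'=1-2\beta+\gamma$, which contains $[-1-\mu,1-2\beta]$. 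Here I use the identity $\langle u,Av\rangle=\langle Bu,v\rangle$ on $\cH^\infty(D_e)$, which needs care since $\psi(\tilde D_e^*\tilde D_e+I)$ does not act on $\cH^\infty(D_e)$ a priori. I will handle this by the observation before \cref{DifferencesBasicLemma}: $f$ and $g$ map into $\tilde H^{k}(V)$, which lies in both scales, so all terms are defined as bounded operators on $L^2$ and the identity holds there.

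The statement for the scale of $\tilde D_e$ follows by symmetry. The roles of $D,\tilde D$ are interchangeable, since $\tilde D-D$ also has order $\leq d-1$, and the operator only changes sign under the swap. The main obstacle is the bookkeeping in the preceding lemma's output. Its two summands land a priori in $\cH^{2-\gamma}(\tilde D_e)$ and $\cH^{2-\gamma}(D_e)$ respectively, and only after multiplying by $g$ do they land in a common scale. So I must check that multiplication by $g$ maps $\cH^{s}(\tilde D_e)\to\cH^{s}(D_e)$ for the relevant $s\leq 2$, uniformly in $z$. I would get this from interior regularity: $g$ maps both domains into $H^{kd}$ of a precompact interior domain, which sits in both $\dom((D_e^*D_e)^{k/2})$ and $\dom((\tilde D_e^*\tilde D_e)^{k/2})$ for $k=1,2$ with equivalent norms, and then interpolate.
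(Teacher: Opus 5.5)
Your proposal follows the route the paper intends; the paper gives no separate proof, only ``analogously to \cref{FormalPseudoLocality} and \cref{FormalPseudoLocStrengthened}''. The steps are the same: a contour integral of the sandwiched resolvent difference from the preceding lemma, norm convergence once $\gamma'>2(1-\beta)$, then \cref{DualityInterpolation} using that $\psi$ is real on the spectrum, and the $\tilde D_e$-scale by the symmetry $D\leftrightarrow\tilde D$. Your range bookkeeping is correct. For $0<\beta\le 1$ the argument is fine, and this range contains the only case used later, $\psi(w)=w^{-1/2}$ in \cref{BdTLowerOrderPerturbation}. For small $\gamma$ you may take $\gamma'=\gamma+2-2\beta\le 2$, and larger $\gamma$ follow by inclusion.

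The step that does not hold up is your claim that $\beta=0$ works by the same argument with $\gamma'>2$. The preceding lemma (like \cref{CommutatorResolvent}) asserts decay $|z|^{-\gamma'/2}$, but its proof only produces this for $\gamma'\le 2$. By \cref{MappingPropertiesT}, the norm of $(D_e^*D_e-z)^{-1}$ as a map $\cH^{s}\to\cH^{s+2-\gamma'}$ is $\sup_{x\in\sigma(|D_e|)}(1+x^2)^{(2-\gamma')/2}\,|x^2-z|^{-1}$. For $\gamma'\ge 2$ this supremum is attained near the bottom of the spectrum and is of size $|z|^{-1}$, not $|z|^{-\gamma'/2}$. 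The other resolvent in $\tilde R^{-1}((\tilde R-R)f-[f,R])R^{-1}$, used as a map $\cH^{-\mu}\to\cH^{2-\mu}$, contributes only $O(1)$. So at $\beta=0$ your integrand is only $O(|z|^{-1})$ along $\Gamma$, and $\int_\Gamma |z|^{-1}\,|dz|$ diverges logarithmically. This is exactly why \cref{FormalPseudoLocality} assumes $\beta>0$.

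Closing the $\beta=0$ endpoint needs two further ingredients. First, a sharper resolvent-difference estimate giving decay $|z|^{-\gamma'/2}$ for some $\gamma'>2$, for instance by extracting decay from both resolvents, which requires the middle factor to act between lower-degree spaces. Second, an approximation argument, since for bounded $\psi$ the contour integrals for $\psi(D_e^*D_e+I)$ and $\psi(\tilde D_e^*\tilde D_e+I)$ do not converge separately. The paper does not supply this either: its statement includes $\beta=0$, but the template it points to covers only $\beta>0$.
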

Again, we use this to get the local compactness statement for the bounded-transforms that we are ultimately after:
\begin{cor}\label{BdTLowerOrderPerturbation}
  The operator
  $$
    \left( D_e\left(D_e^*D_e+I\right)^{-\frac{1}{2}} - \tilde{D}_e\left(\tilde{D}_e^*\tilde{D}_e + I\right)^{-\frac{1}{2}} \right)f
  $$
is compact for all $f\in C_\cc^\infty(\Omega^\circ)$.
\end{cor}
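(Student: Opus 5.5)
We reduce the statement to \cref{NegOrderCpt} (case (a)). The plan is to show that the operator, after multiplication by a cutoff, extends to a bounded map $\cH^{-\alpha}(D_e)\to L^2(\Omega;E)$ for some $\alpha>0$.

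Fix $g\in C^\infty_{cc}(\Omega^\circ)$ with $gf=f$ and $g=1$ on a neighbourhood of $\supp(f)$, and write $F\coloneqq D_e(D_e^*D_e+I)^{-\frac12}$ and $\tilde F\coloneqq\tilde D_e(\tilde D_e^*\tilde D_e+I)^{-\frac12}$. We split
$$
(F-\tilde F)f=(1-g)Ff-(1-g)\tilde Ff+g(F-\tilde F)f .
$$
The first two terms equal $-[g,F]f$ and $-[g,\tilde F]f$, since $(1-g)f=0$. By \cref{BoundedTransLowerOrder} the commutator $[g,F]$ extends to a bounded map $\cH^{-\gamma}(D_e)\to L^2$ for $0<\gamma<\mu$. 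Hence $[g,F]f$ is compact by \cref{CorNegOrder} (interior case), and the same argument on the scale of $\tilde D_e$ handles $[g,\tilde F]f$.

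For the main term we write $g(F-\tilde F)f$ as
$$
g(D_e-\tilde D_e)(D_e^*D_e+I)^{-\frac12}f+g\tilde D_e\Big((D_e^*D_e+I)^{-\frac12}-(\tilde D_e^*\tilde D_e+I)^{-\frac12}\Big)f .
$$
To make sense of $g(D_e-\tilde D_e)$ we insert a second cutoff $h\in C^\infty_{cc}(\Omega^\circ)$ with $hg=g$.

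\begin{itemize}
\item In the first summand, $g(D_e-\tilde D_e)=g(D-\tilde D)$ on interior-supported elements of $\dom(D_e)$, and it is a compactly supported operator of order $d-1$. Using interior regularity and \cref{lem_regular_interpolation} (for $D_{\min}$, reached via \cref{MultInt}), it is bounded $\cH^{1-\mu}(D_e)\to L^2$ on elements supported near $\supp g$. We then commute $(D_e^*D_e+I)^{-\frac12}$ with $f$ through \cref{FormalPseudoLocStrengthened} and use that $(D_e^*D_e+I)^{-\frac12}$ maps $\cH^{-\mu}\to\cH^{1-\mu}$. This gives a bounded map $\cH^{-\mu}(D_e)\to L^2$, up to an already-compact commutator term.
\item In the second summand we write $g\tilde D_e=\tilde D_e g-[\tilde D_e,g]$ and insert $h$ on the right of $g$, so that the difference is sandwiched as $h(\cdots)f$. \cref{LowerOrderPsi} with $\psi(z)=z^{-1/2}$, $\beta=\tfrac12$, on the scale of $\tilde D_e$ gives boundedness $\cH^{-\gamma}(\tilde D_e)\to\cH^{1+\mu-2\gamma}(\tilde D_e)$. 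For $\gamma<\mu/2$ this lands in $\cH^{1}(\tilde D_e)$, which $\tilde D_e$ maps to $L^2$. The commutator $[\tilde D_e,g]$ is of order $d-1$ and is handled as in the first summand.
\end{itemize}

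The main obstacle is that the two formal scales differ. The compactness input \cref{NegOrderCpt} needs boundedness from $\cH^{-\alpha}$ of a single boundary condition, whereas the estimates naturally mix $\cH(D_e)$ and $\cH(\tilde D_e)$. We resolve this via the observation preceding \cref{DifferencesBasicLemma}. For interior-supported sections, multiplication by a cutoff maps both $\cH^\alpha(D_e)$ and $\cH^\alpha(\tilde D_e)$ continuously into $\cH^\alpha(D_{\min})$ and $\cH^\alpha(\tilde D_{\min})$, for $0\le\alpha\le1$, by \cref{MultInt} and interpolation. These in turn are comparable to $H^{d\alpha}$ on a fixed precompact $V\subset\Omega^\circ$, by \cref{MinDomSobolev}. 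By duality the same holds for negative orders on compactly supported distributions. This lets us transfer every localized bound to one scale, after which \cref{NegOrderCpt} (or directly \cref{FormalRellich} with $K=\supp f$) yields compactness of each localized piece, and hence of the whole operator.
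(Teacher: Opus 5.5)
Your proposal is correct and follows essentially the same route as the paper. Both arguments split off commutator terms, use that the lower-order difference $D-\tilde D$ is bounded $\cH^{1-\mu}(D_e)\to L^2$ after an interior cutoff, control the cutoff-sandwiched difference of the operators $(\cdot+I)^{-1/2}$ via \cref{LowerOrderPsi}, and conclude with \cref{NegOrderCpt}. The only difference is bookkeeping: you move the cutoff to the left by commuting with the full bounded transforms (\cref{BoundedTransLowerOrder}), whereas the paper commutes $f$ through $(D_e^*D_e+I)^{-1/2}$.

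Two parts of your write-up are not needed. The term $g(D-\tilde D)h(D_e^*D_e+I)^{-1/2}f$ already has the form required by \cref{NegOrderCpt}, so commuting it via \cref{FormalPseudoLocStrengthened} is superfluous. And since every summand in your decomposition lives on a single scale, applying \cref{NegOrderCpt} summand by summand removes the scale-mixing concern your final paragraph addresses.
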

\begin{proof}
Fix $f\in C^\infty_\cc(\Omega^\circ)$, together with a function $\psi\in C_\cc^\infty(\Omega^\circ)$ such that $\psi=1$ on the support of $f$. Then, write
\begin{align*}
&\left( D_e\left(D_e^*D_e+I\right)^{-\frac{1}{2}} - \tilde{D}_e\left(\tilde{D}_e^*\tilde{D}_e + I\right)^{-\frac{1}{2}} \right)f \\
&= D_e f \left(D_e^*D_e+I\right)^{-\frac{1}{2}} - \tilde{D}_e f \left(\tilde{D}_e^*\tilde{D}_e + I\right)^{-\frac{1}{2}}\psi \\
&\quad - D_e\left[f,\left(D_e^*D_e+I\right)^{-\frac{1}{2}}\right] + \tilde{D}_e \left[ f,\left(\tilde{D}_e^*\tilde{D}_e + I\right)^{-\frac{1}{2}} \right]\psi \\
&= \left( D_e - \tilde{D}_e\right) f \left(D_e^*D_e+I\right)^{-\frac{1}{2}} + \tilde{D}_e f \left( \left(D_e^*D_e+I\right)^{-\frac{1}{2}} - \left(\tilde{D}_e^*\tilde{D}_e + I\right)^{-\frac{1}{2}} \right)\psi \\
&\quad - D_e\left[f,\left(D_e^*D_e+I\right)^{-\frac{1}{2}}\right] + \tilde{D}_e \left[ f,\left(\tilde{D}_e^*\tilde{D}_e + I\right)^{-\frac{1}{2}} \right]\psi
\end{align*}
The first summand extends to a bounded map $\cH^{-\mu}(D_e)\to L^2(\Omega;E)$, because $(D_e-\tilde{D}_e)f$ extends to a bounded map $\cH^{1-\mu}(D_e)\to L^2(\Omega;E)$. The third summand extends to a bounded map $\cH^{-\mu}(D_e)\to L^2(\Omega;E)$ by the results of the previous section. The fourth summand analogously extends to a bounded operator $\cH^{-\mu}(\tilde{D}_e)\to L^2(\Omega;E)$. It remains to treat the second summand. \cref{LowerOrderPsi} implies via an analogous argument to that in the proof of \cref{BoundedTransLowerOrder} that 
$$
f \left( \left(D_e^*D_e+I\right)^{-\frac{1}{2}} - \left(\tilde{D}_e^*\tilde{D}_e + I\right)^{-\frac{1}{2}} \right)\psi
$$
extends to a bounded operator $\cH^{-\gamma}(\tilde{D}_e)\to \cH^1(\tilde{D}_e)$ for $0<\gamma<\mu$. Thus the second summand extends to a bounded map $\cH^{-\gamma}(\tilde{D}_e)\to L^2(\Omega;E)$. We conclude that 
$$
    \left( D_e\left(D_e^*D_e+I\right)^{-\frac{1}{2}} - \tilde{D}_e\left(\tilde{D}_e^*\tilde{D}_e + I\right)^{-\frac{1}{2}} \right)f
$$
extends to a bounded operator $\cH^{-\gamma}(\tilde{D}_e)\to \cH^1(\tilde{D}_e)$ for each $0<\gamma<\mu$, so that the claim follows from \cref{NegOrderCpt}. 
\end{proof}

\section{\texorpdfstring{$\boldsymbol{K}$}{K}-homology classes from boundary conditions}\label{SecMainResult}

In this section, we prove our main theorem on the $K$-homology classes defined by boundary conditions (\cref{ExistencKHomClasses}). Afterwards, we will discuss examples, connect our set-up to the slightly different one used by Baum--Douglas--Taylor in the graded case, and briefly discuss finite-propagation results and coercive boundary conditions. Lastly, in \cref{SecNonlocal} we weaken the locality required for our main theorem, and discuss classes defined by non-local boundary conditions. 

\subsection{The main theorem for local boundary conditions}

  We briefly recall the definition of Fredholm modules, the cycles for analytic $K$-homology. A detailed exposition on Fredholm modules and $K$-homology can be found in \cite{HigsonRoe2000}. We also reference the multigrading terminology used therein.
  \begin{defn}
    Let $X$ be a locally compact Hausdorff space. An \emph{ungraded Fredholm module} over $X$ is a triple $(H,\rho,F)$ consisting of a Hilbert space $H$, a representation $\rho\colon\, C_0(X)\to\fBH$, and a bounded operator $F\in\fBH$, so that:
    \begin{enumerate}
      \item $F$ is pseudo-local, i.e. $[\rho(f),F]$ is compact for all $f\in C_0(X)$,
      \item\label{item_square} $I-F^2$ is locally compact, i.e. $(I-F^2)\rho(f)$ is compact for all $f\in C_0(X)$,
      \item\label{item_adjoint} $F-F^*$ is locally compact, i.e. $(F-F^*)\rho(f)$ is compact for all $f\in C_0(X)$.
    \end{enumerate}
    If $H$ is graded, the representation $\rho$ is by even operators and $F$ is an odd operator, the triple is called a \emph{graded Fredholm module}. If $H$ is in addition $p$-multigraded ($p>0$) and the operators $F$ and $\rho(f)$ commute with the multigrading operators, then the triple is called a \emph{$p$-multigraded Fredholm module}. Lastly, if $Z\subseteq X$ is a closed subspace, then an \emph{ungraded/graded/multigraded relative Fredholm module} is defined as an analogous triple, where \cref{item_square} and \cref{item_adjoint} are only required to hold for functions in $C_0(X\setminus Z)$.
  \end{defn}
  Conventionally, a graded Fredholm module is called $0$-multigraded, and an ungraded Fredholm module $(-1)$-multigraded. Then, for $p\geq -1$ the $(-p)$th analytic $K$-homology group $K_{-p}(X)$ is defined as the free group generated by unitary equivalence classes of $p$-multigraded Fredholm modules, modulo homotopies and direct sums. The relative $K$-homology groups $K_{-p}(X,Z)$ are defined analogously using relative Fredholm modules. 

With the technical prerequisites established in \cref{SectionOpCalc}, proving our main theorem is largely a matter of assembling what was already shown.

\begin{thm}\label{ExistencKHomClasses}
Let $M$ be a smooth Riemannian manifold (possibly with boundary or incomplete) and $\Omega \subset M$ be a Lipschitz domain. Denote by $\Omega^\circ:=\Omega\setminus\partial M$ and $\dO:=\olO\setminus\Omega^\circ$ the interior and boundary of $\Omega$ respectively. 

Let $E\to \Omega$ be a $p$-multigraded vector bundle for $p\geq -1$. Let $D\colon \Gamma_{cc}(\Omega;E)\to \Gamma_{cc}(\Omega;E)$ be a symmetric differential operator which is elliptic over $\olO$. Let $D_e$ be a boundary condition for $D$. If $p\geq 0$, assume that both $D$ and $D_e$ are odd and $p$-multigraded. Set $H \coloneqq L^2(\Omega;E)$ and let $\rho$ be the representation of $C_0(\olO)$ on $H$ by multiplication operators. We denote the restriction of $\rho$ to $C_0(\Omega^\circ)$ by $\rho$ as well. Lastly, set 
$$
F_e \coloneqq D_e(D_e^*D_e+I)^{-\frac{1}{2}} .
$$
Then:
\begin{itemize}
\item[i)]\label{mainthm_local_item_i} $(H,\rho,F_e)$ is a $p$-multigraded Fredholm module over $\Omega^\circ \coloneqq \Omega \setminus \partial M$.
\item[ii)]\label{mainthm_local_item_ii} Assume that $D_e$ is local and semi-regular. Then $(H,\rho,F_e)$ is a $p$-multigraded relative Fredholm module over $(\olO,\partial \Omega)$.
\item[iii)]\label{mainthm_local_item_iii} Assume that $D_e$ is local, self-adjoint and regular. Then $(H,\rho,F_e)$ is a $p$-multigraded Fredholm module over $\olO$.
\end{itemize}
The resulting K-homology classes enjoy the following properties:
\begin{itemize}
\item[a)] Assume that $D_e$ is local and semi-regular. The excision map $\exc\colon K_{-p}(\olO,\partial \Omega) \to K_{-p}(\Omega^\circ)$ maps the class defined by ii) to the class defined by i).
\item[b)] Assume that $D_e$ is local, self-adjoint and regular. The restriction map $K_{-p}(\olO) \to K_{-p}(\olO,\partial \Omega)$ maps the class defined by iii) to the class defined by ii).
\item[c)] The classes in $K_{-p}(\Omega^\circ)$ and $K_{-p}(\olO;\dO)$ defined by i) and ii), respectively, do not dependent on the boundary condition and depend only on the principal symbol of~$D$.
\item[d)]\label{ItemNormFun} Assume that the boundary condition $D_e$ is self-adjoint. Then, the conclusions of i), ii) and iii) also hold with $D_e(D_e^*D_e+I)^{-\frac{1}{2}}$ replaced by $\chi(D_e)$ for any continuous odd function $\chi\colon \R\to\R$ with $\lim_{\lambda\to\infty}\chi(\lambda)=1$. The resulting K-homology classes are independent of $\chi$.
\end{itemize}
\end{thm}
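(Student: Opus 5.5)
The plan is to verify the three Fredholm module axioms for $F_e$ one at a time, using the formal calculus of \cref{SectionOpCalc}, and then to deduce a)--d) from standard $K$-homological facts.

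\textbf{Pseudo-locality.} This is \cref{BdTComm}. Part (1) covers $f\in C^\infty_{cc}(\Omega^\circ)$ for arbitrary $D_e$; part (2) covers $f\in C^\infty_c(\olO)$ when $D_e$ is local and semi-regular. Both statements extend to $C_0(\Omega^\circ)$, resp.\ $C_0(\olO)$, by density, since compact operators form a norm-closed set.

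\textbf{The adjoint condition $F_e-F_e^*$.} Since $D$ is symmetric, $D_e^*$ is a boundary condition for the same operator $D$, and $F_e^*=(D_e^*D_e+I)^{-1/2}D_e^*=D_e^*(D_eD_e^*+I)^{-1/2}=F_{e^*}$. Hence $(F_e-F_e^*)f$ is compact for interior $f$ by \cref{BdTLowerOrderPerturbation}, applied with $\tilde D=D$ and $\tilde D_e=D_e^*$. This gives i) and the relative part of ii). In iii), $D_e$ is self-adjoint, so $F_e=F_e^*$ exactly.

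\textbf{The square condition $I-F_e^2$.} If $D_e$ is self-adjoint, then $I-F_e^2=(D_e^2+I)^{-1}$. This maps $L^2=\cH^0$ into $\cH^2$, and a fortiori into $\cH^{\alpha}$ with $\alpha>0$. Taking adjoints, $(D_e^2+I)^{-1}$ extends to a bounded map $\cH^{-\alpha}\to L^2$. \cref{CorNegOrder} then gives compactness of $(I-F_e^2)f$ for interior $f$, and, for semi-regular $D_e$, for all $f\in C_c^\infty(\olO)$. Regularity is not needed beyond semi-regularity here; I expect it is only used to make sense of the self-adjoint setting.

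In the general case, write $F_e^2=F_eF_{e^*}^*$ and reduce modulo locally compact operators, using the previous step, to
\[
I-F_eF_e^*=I-D_e(D_e^*D_e+I)^{-1}D_e^*=(D_eD_e^*+I)^{-1}.
\]
This is the resolvent of the boundary condition $D_e^*$, so \cref{CorNegOrder} applies on the interior. For the relative statement only interior functions are required, so semi-regularity of $D_e^*$ is never needed.

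One point requires care: the reduction needs $(F_e-F_e^*)F_e^*f$ to be compact. I would rewrite this as $(F_e-F_e^*)f F_e^* + (F_e-F_e^*)[F_e^*,f]$ and use pseudo-locality. Pseudo-locality of $F_e^*=F_{e^*}$ on the interior holds by \cref{BdTComm}(1) applied to $D_e^*$. This bookkeeping is the main obstacle.

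\textbf{Gradings.} If $D_e$ is odd and multigraded, then $D_e^*D_e$ commutes with $\kappa$ and with the $\varepsilon_j$, by \cref{AdjointMultigraded}. Hence $F_e$ is odd and multigraded, and multiplication operators are even and commute with the multigrading.

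\textbf{Properties a)--d).} Part a): the excision map restricts the representation to $C_0(\Omega^\circ)$, so the same triple represents both classes. Part b) is the same observation for the map from absolute to relative classes.

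Part c): for two boundary conditions $D_e,D_{e'}$, or for lower-order perturbations, use the linear path $F_t=(1-t)F_e+tF_{e'}$. By \cref{BdTLowerOrderPerturbation}, $F_e-F_{e'}$ is locally compact on $\Omega^\circ$, so the path is an operator homotopy of Fredholm modules over $\Omega^\circ$. For the relative class I would use the same homotopy: pseudo-locality over $\olO$ holds for each endpoint when both conditions are local and semi-regular, while the remaining axioms only involve interior functions. For a general second condition, compare with $D_{\min}$, which is local and semi-regular.

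Part d): $\chi(D_e)-F_e=g(D_e)$ with $g\in C_0(\R)$. Such a $g(D_e)$ is locally compact by approximating $g$ with resolvents and using \cref{CorNegOrder}. Therefore the linear path between $F_e$ and $\chi(D_e)$ is an operator homotopy, which shows the class is independent of $\chi$.
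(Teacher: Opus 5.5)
Your proposal is correct and follows essentially the same route as the paper. Pseudo-locality comes from \cref{BdTComm}. The condition on $F_e-F_e^*$ comes from \cref{BdTLowerOrderPerturbation} applied to the boundary condition $D_e^*$. The square condition comes from \cref{CorNegOrder} applied to a resolvent, and c) and d) are compact perturbations. The one difference is that you use $I-F_eF_e^*=(D_eD_e^*+I)^{-1}$ where the paper uses $I-F_e^*F_e=(D_e^*D_e+I)^{-1}$.

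In your ordering the correction term is $F_e(F_e^*-F_e)f$, which is compact at once because $(F_e^*-F_e)f$ is compact. So the step you single out as the main obstacle is not actually needed. Your treatment of $(F_e-F_e^*)F_e^*f$ is still valid. The paper's ordering needs the analogous term $(F_e^*-F_e)F_ef$, which is handled by pseudo-locality and which the paper leaves implicit.
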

\begin{proof}
The properties necessary to establish i), ii) and iii) were essentially proved over the course of \cref{SectionOpCalc}.

Let us begin with i). That $[F_e,f]$ is compact for $f\in C_0(\Omega^\circ)$ follows from \cref{BdTComm}. Furthermore, $I- F_e^*F_e  = (D_e^*D_e+I)^{-1}$ extends to a bounded map $\cH^{-2}(D_e)\to L^2(\Omega;E)$, so that \cref{CorNegOrder} yields that $(I-F_e^*F_e)f$ is compact for $f\in C_0(\Omega^\circ)$.

Suppose that $D_d$ is another boundary condition for $D$, and let $F_d \coloneqq D_d(D_d^*D_d+I)^{-1/2}$. It follows from \cref{BdTLowerOrderPerturbation} that $(F_e-F_d)f$ is compact for $f\in C_0(\Omega^\circ)$. Since $D$ is assumed symmetric, $D_d \coloneqq D_e^*$ is also a boundary condition for $D$, and this particular case yields that $(F_e-F_e^*)f$ is compact for $f\in C_0(\Omega^\circ)$.

Lastly, if $p\geq 0$ and $D_e$ is odd and $p$-multigraded, then $D^*_eD_e$ is even and $p$-multigraded, hence so is $(D_e^*D_e+I)^{-\frac{1}{2}}$. It follows that $F_e$ is odd and $p$-multigraded. This proves i).

In the above argument, we have already seen that if $F_d$ is the bounded transform of a different extension $D_d$, then $(F_e-F_d)f$ is compact for $f\in C_0(\Omega^\circ)$. Therefore $F_e$ and $F_d$ differ by a compact perturbation, and thus define the same class in $K_*(\Omega^\circ)$. More generally, \cref{BdTLowerOrderPerturbation} yields that if $D$ and $\tilde{D}$ are two elliptic operators of order $d$ over $\olO$ that differ by an operator of order $\leq d-1$, then the Fredholm modules defined by any boundary conditions for $D$ and $\tilde{D}$ differ by a compact perturbation over $\Omega$, proving c) in the case of absolute classes over the interior $\Omega^\circ$.

For ii) it remains to show that $[f,F_e]$ is compact for $f\in C_0(\olO)$ if $D_e$ is local and semi-regular. This was shown in \cref{BdTComm}. Note that a) then holds since the excision map is the identity at the level of Fredholm modules. If $\tilde{D}_d$ is a local and semi-regular boundary condition of another elliptic operator with the same principal symbol (or in particular, another such boundary condition for $D$), \cref{BdTLowerOrderPerturbation} again gives that $F_e$ and $\tilde{F}_d$ differ by a compact perturbation. Hence the relative case of c) follows.

For iii) we have already noted in ii) that $[f,F_e]$ is compact for $f\in C_0(\olO)$, and that $(F_e-F_e^*)f$ is compact holds trivially due to the assumed self-adjointness. It remains to show that $(I-F_e^*F_e)f$ is compact for $f\in C_0(\olO)$. Because $I-F_e^*F_e=(D_e^*D_e+I)^{-1}$ extends to a bounded map $\cH^{-2}(D_e)\to L^2(\Omega;E)$, this is again implied by \cref{CorNegOrder} via (semi-)regularity. This concludes the proof of iii). Then b) holds because the restriction map $K_*(\olO)\to K_*(\olO,\dO)$ is the identity at the level of Fredholm modules.

It remains to prove d). Note that the operator $F_e$ considered thus far corresponds to the normalizing function $\chi_0(x)=x(x^2+1)^{-\frac{1}{2}}$. By a density argument it suffices to consider normalizing functions $\chi$ that differ from $\chi_0$ by a function in $\cF^{-\beta}$ for some $\beta>0$. Then, $(\chi-\chi_0)(D_e)$ extends to a map $\cH^{-\beta}(D_e)\to L^2(\Omega;E)$ by \cref{MappingPropertiesT}. If $D_e$ is arbitrary, we conclude from \cref{CorNegOrder} that $\chi(D_e)=F_e+(\chi-\chi_0)(D_e)$ is pseudo-local over $\Omega$ and differs from $F_e$ by a compact perturbation. If $D_e$ is local and semi-regular, we conclude from the same that $\chi(D_e)$ is pseudo-local over $\olO$, and that it differs from $F_e$ by a compact perturbation over $\olO$. This proves d), completing the proof of the theorem.
\end{proof}

\begin{rem}
  As was already the case for our discussion in \cref{sec_sobolev}, the Lipschitz assumption on the boundary of $\olO$ is a matter of technical convenience, not fundamental limitation. In \cite{LykoThesis} the counterpart of \cref{ExistencKHomClasses} in uniform K-homology is established for general open domains. This requires a slightly stronger notion of semi-regularity, roughly speaking imposing regularity in intermediate degrees between $0$ and the order of the operator. If the domain is at least Lipschitz, any semi-regular boundary condition automatically also satisfies this stronger property. Moreover, the minimal boundary condition always possesses this stronger regularity property. In particular, \cref{ex_khom_min} below can be extended to arbitrary open domains $\Omega$. For the purposes of this article, we decided to avoid these technical complications, and give our results under the convenient Lipschitz assumption.
\end{rem}

Let us consider some of the examples of \cref{SectionExamples}.

\begin{ex}[Relative classes and absolute classes over the interior]\label{ex_khom_min}
  The minimal boundary condition exists for every elliptic operator, and is always local, semi-regular, and possesses the appropriate grading properties (see \cref{ex_minimal_condition}). Thus, Items i) and ii) of \cref{ExistencKHomClasses} apply to it. In conjunction with c), we conclude that there are well-defined K-homology classes
  $$
    [D] \in K_{-p}(\Omega^\circ) \quad \text{and} \quad [D] \in K_{-p}(\olO,\dO)
  $$
  that are independent of boundary condition chosen to represent them. The existence of these classes has been established previously in special cases, and owing to the excision isomorphism one does not necessarily have to appeal to boundary conditions to define them. Nevertheless, the explicit representatives obtained from boundary conditions could provide a useful tool in applications.
\end{ex}

\begin{ex}[Absolute and relative boundary conditions]\label{ex_euler_char}
Both the absolute and the relative boundary condition for the Euler characteristic operator are local, self-adjoint and regular, and thus define Fredholm modules over $\olO$. If $\olO$ is compact, the Fredholm index of these operators equals the absolute resp. relative Euler characteristic of~$M$ (see e.g. \cite[Section~4.1]{Gilkey1996}). In particular, they are generally unequal. Since the index can be read off $[D^\chi_{\abs}]$ and $[D^\chi_{\rel}]$ by mapping to a point via $K_0(\olO) \to K_0(\pt) \cong \IZ$, we conclude that in general $[D^\chi_{\abs}] \not= [D^\chi_{\rel}] \in K_0(\olO)$. Hence, the absolute classes over $\olO$ generally depend on the choice of boundary condition.

For a concrete example, we first note that the long exact sequence in homology implies $\chi_\abs(\olO) = \chi_\rel(\olO) + \chi(\dO)$. Thus, every compact manifold $\olO$ with $\chi(\dO) \not= 0$ will do the job, for example a closed ball of odd-dimension. In fact, $\chi(\dO) = (1+(-1)^n)\chi_\abs(\olO)$ for any compact $(n+1)$-dimensional manifold $\olO$ with boundary $\dO$ \cite[Proposition~18.6.2]{tom_dieck}, so that
\[
  \mathrm{index}(D^\chi_\abs) = \chi_\abs(\olO) = \frac{1}{2}\chi(\dO) \quad \text{and} \quad \mathrm{index}(D^\chi_\rel) = \chi_\rel(\olO) =  -\frac{1}{2}\chi(\dO) .\qedhere
\]
\end{ex}

\begin{ex}[APS- and chirality boundary conditions]\label{ExAPSChirality}
The APS-boundary conditions are self-adjoint (if $\ker(\Dirac_{\dO})=0$) and regular, but not local. Thus they define a Fredholm module over $\Omega^\circ$, but not an absolute one over $\olO$. The chirality boundary conditions are local and regular, but not self-adjoint. In even dimensions they further lack the correct grading properties, but in odd dimensions this is no problem. In this case they define an ungraded relative Fredholm module over $(\olO,\dO)$, but due to lack of self-adjointness we cannot conclude from Theorem \ref{ExistencKHomClasses}.iii) that they define a Fredholm module over $\olO$ in this case. 

In fact, by boundary-of-Dirac-is-Dirac the class $[\Dirac]\in K_*(\olO,\dO)$ resulting from the above boundary condtions is mapped to $[\Dirac_{\dO}] \neq 0$ under the K-homological boundary map, so no boundary condition for $\Dirac$ can define an absolute Fredholm module over $\olO$. Hence, \cref{ExistencKHomClasses}.iii) cannot hold without at least some condition on the locality and adjoint of the boundary condition.
\end{ex}

\begin{ex}[Chirality boundary conditions on odd-dimensional manifolds]\label{ex_khom_chiral}
  We just noted in \cref{ExAPSChirality} that the chiral boundary conditions on an odd-dimensional spin manifold do not define an absolute cycle for $K_1(\olO)$. However, the boundary condition
$$
  \hat{\Dirac} = \begin{pmatrix}
    0 & \Dirac_- \\ \Dirac_+ & 0
  \end{pmatrix}
$$
does satisfy all conditions of \cref{ExistencKHomClasses}.iii), hence defines a graded absolute cycle for $K_0(\olO)$. We note however that in the compact case, $\hat{\Dirac}$ has trivial (graded) Fredholm index \cite[Section~8.5]{BaerBallmann2012}. Hence, any potential non-triviality of $[\hat{\Dirac}]\in K_0(\olO)$ would have to be observed by other means. 
\end{ex}

\begin{ex}[Dirichlet Laplacian]\label{DirichletTrivial}
The Dirichlet Laplacian is self-adjoint, local and regular (see \cref{ExNeumann}) and thus $\left( L^2(\olO),\rho, \Delta_\mathrm{Dir}(\Delta_\mathrm{Dir}^2+I)^{-1/2} \right)$ defines an ungraded Fredholm module over $\olO$. Unfortunately, its K-homology class is trivial. To see this, let $F=\Delta_\mathrm{Dir}(\Delta_\mathrm{Dir}^2+I)^{-1/2}$ and note that $F$ is positive. Then, we see that
$$
F_t = \left( \sin(t) + \cos(t)F \right)\left( I + \cos(t)\sin(t) F \right)^{-\frac{1}{2}}
$$
is an operator-homotopy from $F$ to the degenerate Fredholm module $(L^2(\olO),\rho,I)$. More generally suppose that $D$ is any positive symmetric elliptic operator, and $D_e \geq 0$ a positive self-adjoint boundary condition. Then, the class $[D_e] \in K_1(\Omega^\circ)$ vanishes. If $D_e$ is also local and regular, then the class $[D_e]\in K_1(\olO)$ vanishes as well.
\end{ex}

In their work \cite{BDT1989} on K-homology classes for elliptic operators on manifolds with boundary, Baum-- Douglas--Taylor approach the case of relative classes slightly differently. They consider graded odd operators $D=\begin{pmatrix}
0 & D^- \\ D^+ & 0
\end{pmatrix}$, and instead of looking at semi-regular boundary condition for $D$ directly---which is what we did above---they consider semi-regular boundary conditions $D_e^+$ for $D^+$, and form the boundary condition
$$
D_e = \begin{pmatrix}
 0 & (D^+_e)^* \\ D_e^+ & 0
\end{pmatrix} .
$$
It is not semi-regular unless $D^+_e$ is already regular. They then show that the boundary condition $D_e$ defines a relative Fredholm module. We can do the same in our setting:

\begin{prop}\label{KHomGradedComp}
Let $D$ be a graded, odd elliptic operator over $\olO$, and let $D^\pm$ denote the graded components of $D$. Let $D_e^+$ be a local and semi-regular boundary condition for $D^+$, and set 
$$
D_e = \begin{pmatrix}
 0 & (D^+_e)^* \\ D_e^+ & 0
\end{pmatrix} .
$$
Then, $\left( L^2(\Omega;E),\rho,\chi(D_e)\right)$ is a graded relative Fredholm module over $(\olO,\partial \Omega)$. Its class in $K_0(\olO,\partial\Omega)$ coincides with that defined in \cref{ExistencKHomClasses}.ii). In particular, it is independent of the choice of normalizing function.
\end{prop}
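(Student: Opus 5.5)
The plan is to compare $D_e$ with an auxiliary boundary condition that is covered by \cref{ExistencKHomClasses}.ii), namely
$$
D_{e,0} \coloneqq \begin{pmatrix} 0 & (D^-)_{\min} \\ D^+_e & 0\end{pmatrix},
$$
and to reduce everything to a statement about the single component $D^+_e$. First, $D_e$ is self-adjoint, odd, and a boundary condition for $D$: it is a closed extension of $D$, sandwiched between $D_{\min}$ and $D_{\max}$, because $(D^-)_{\min}\subseteq (D^+_e)^*\subseteq (D^-)_{\max}$. Self-adjointness lets us form $\chi(D_e)$ by the Borel calculus. For the normalizing function $\chi_0(x)=x(1+x^2)^{-1/2}$ we have
$$
\chi_0(D_e) = \begin{pmatrix} 0 & (D_e^+)^*(D_e^+(D_e^+)^*+I)^{-1/2} \\ D_e^+((D_e^+)^*D_e^++I)^{-1/2} & 0\end{pmatrix} = \begin{pmatrix} 0 & F_+^* \\ F_+ & 0\end{pmatrix},
$$
where $F_+ \coloneqq D_e^+((D_e^+)^*D_e^++I)^{-1/2}$. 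It is therefore enough to check the relative Fredholm module axioms for this off-diagonal matrix.

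\emph{Step 1 (pseudo-locality over $\olO$).} I will check $[f,F_+]$ for $f\in C_c^\infty(\olO)$ and then take adjoints, which handles the other off-diagonal entry. Since $D^+$ maps $E^+$ to $E^-$, the calculus of \cref{SectionOpCalc} has to be applied with distinct bundles $E,F$; the relevant results (\cref{CommutatorBasicLemma}, \cref{FormalPseudoLocStrengthened}, \cref{BoundedTransLowerOrder}) only use the scale $\cH^\alpha(D_e^+)$ together with semi-regularity and locality of $D_e^+$. So \cref{BoundedTransLowerOrder} shows that $[f,F_+]$ extends to a bounded map $\cH^{-\gamma}(D_e^+)\to L^2$. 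The same holds for $[f,F_+^*]=-[\bar f,F_+]^*$ paired appropriately. Compactness then follows as in \cref{CommLowerOrder}.\ref{CommLowerOrder_b}, where I use \cref{CorNegOrder} on the scale of $D_e^+$ and its dual version (operators of the form $gT$ mapping $L^2\to\cH^\alpha$). The expected sticking point is the entry $F_+^*$, whose natural scale is that of $(D_e^+)^*$, which is not semi-regular. My plan is to write it only as an adjoint of operators that live on the scale of $D_e^+$, so that no regularity of $(D_e^+)^*$ is ever needed.

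\emph{Step 2 (local compactness on $\Omega^\circ$, and comparison).} Since $\chi_0(D_e)$ is self-adjoint, the adjoint condition is automatic. The operator $I-\chi_0(D_e)^2=(D_e^2+I)^{-1}$ is compact after multiplication by $f\in C_0(\Omega^\circ)$; this is exactly \cref{ExistencKHomClasses}.i) applied to the boundary condition $D_e$ of $D$. For the class, I will compare with $F_{e,0}=\chi_0(D_{e,0})$ from \cref{ExistencKHomClasses}.ii). Its lower-left entry is $F_+$, the same as for $D_e$, and its upper-right entry is $(D^-)_{\min}((D^-)_{\min}^*(D^-)_{\min}+I)^{-1/2}$. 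By \cref{BdTLowerOrderPerturbation}, applied to the two boundary conditions $(D^+_e)^*$ and $(D^-)_{\min}$ of $D^-$, these upper-right entries differ by a locally compact operator over $\Omega^\circ$. Both $\chi_0(D_e)$ and $F_{e,0}$ are pseudo-local over $\olO$. Hence the straight-line homotopy between them is a path of relative Fredholm modules: pseudo-locality is preserved along it, and over $\Omega^\circ$ it is a compact perturbation. So the two classes agree in $K_0(\olO,\dO)$.

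\emph{Step 3 (normalizing functions).} For a general $\chi$, I argue as in \cref{ExistencKHomClasses}.d). It suffices to treat $\chi-\chi_0\in\cF^{-\beta}$, and then show that $(\chi-\chi_0)(D_e)f$ is compact for $f\in C_c^\infty(\olO)$. This is the main obstacle, because $D_e$ itself is not semi-regular. My first attempt would exploit that $\chi-\chi_0$ is odd, so that $(\chi-\chi_0)(D_e)$ is off-diagonal, with entries $D_e^+h((D_e^+)^*D_e^+)$ and its adjoint, where $h$ decays. The entry $D_e^+h(\cdot)f$ maps $L^2$ into the scale of $D_e^+$ only after being written as an adjoint, $\bigl(\bar f\,h((D_e^+)^*D_e^+)(D_e^+)^*\bigr)^*$. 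Then $f h(\dots)$ extends to a map $\cH^{-\beta'}(D_e^+)\to L^2$, and \cref{CorNegOrder} together with its dual version gives compactness. If this bookkeeping fails, I would fall back on pseudo-locality alone, which gives a relative module for each $\chi$, and use the convex homotopy $t\chi+(1-t)\chi_0$, which only requires compactness over $\Omega^\circ$; that compactness follows from \cref{ExistencKHomClasses}.d).
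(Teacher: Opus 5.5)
Your Step 1 and the interior checks in Step 2 are the paper's argument. The paper also gets pseudo-locality of $\chi_0(D_e)$ by applying \cref{BdTComm} to $D_e^+$ and passing to $F_+^*$ through $[f,F_+^*]=-[\bar f,F_+]^*$, and it takes the conditions over $\Omega^\circ$ from \cref{ExistencKHomClasses}.i).

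One caution on Step 1: the adjoint device only transfers compactness from $[f,F_+]$ to $[f,F_+^*]$. If you actually run \cref{CommLowerOrder} for $F_+\colon L^2(\Omega;E^+)\to L^2(\Omega;E^-)$, the term $\psi[f,F_+]$ still has its multiplier acting on $E^-$, and no rewriting by adjoints moves it to the $E^+$ side. So on this point your argument relies on exactly the same input as the paper's appeal to \cref{BdTComm}, not on your bookkeeping.

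Where you genuinely differ is in identifying the class. The paper argues via excision: both relative classes map to the same class in $K_0(\Omega^\circ)$ by \cref{ExistencKHomClasses}.c), and $K_0(\olO,\dO)\to K_0(\Omega^\circ)$ is an isomorphism. You instead compare directly with the local, semi-regular, odd condition $D_{e,0}$, using \cref{BdTLowerOrderPerturbation} on the upper-right entries and the straight-line homotopy. This is correct and more elementary: it needs no excision theorem and gives an explicit homotopy. The paper's version is shorter once excision is available.

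The main attempt in Step 3 does not work as written, and it proves more than you need. Compactness of $(\chi-\chi_0)(D_e)f$ for $f$ touching $\dO$ involves two entries, with $Q=(D_e^+)^*D_e^+$ and $h$ as in your Step 3:
\begin{itemize}
\item $D_e^+h(Q)f$ with $f$ acting on $E^+$. Your adjoint rewriting treats only this one, and \cref{CorNegOrder} already gives it directly.
\item $\bigl(D_e^+h(Q)\bigr)^*f=\bigl(\bar f\,D_e^+h(Q)\bigr)^*$ with $f$ acting on $E^-$. Here the multiplier sits on the side governed by the scale of $(D_e^+)^*$, where \cref{FormalRellich} is unavailable near $\dO$. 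This entry is not handled.
\end{itemize}

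Your fallback is the right route, but you must justify that $\chi(D_e)$ is pseudo-local over $\olO$ for general $\chi$, since Step 1 treats only $\chi_0$. This follows from functional calculus:
\begin{itemize}
\item Since $\chi_0$ is a homeomorphism $\R\to(-1,1)$ and $\chi$ has limits $\pm1$, you can write $\chi=g\circ\chi_0$ with $g\in C([-1,1])$.
\item Operators commuting with $\rho(C_0(\olO))$ modulo compacts form a $C^*$-algebra. It contains the self-adjoint operator $\chi_0(D_e)$, hence also $\chi(D_e)=g(\chi_0(D_e))$.
\end{itemize}
With the compactness over $\Omega^\circ$ from \cref{ExistencKHomClasses}.d), the convex homotopy then goes through. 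This replaces the paper's \cref{SemiRegGradedComp}, which re-derives compactness of $[f,\varphi(D_e)]$ for each $\varphi(x)=x(x^2+\lambda)^{-\beta}$ with the commutator machinery. Your route needs the boundary analysis only once, for $\chi_0$.
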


We need a lemma.

\begin{lem}\label{SemiRegGradedComp}
Let $D$ and $D_e$ be as above. Then, for any odd function $C_0(\R)$ with $\varphi(0)=0$ it holds that 
$$
[f,\varphi(D_e)] \quad \text{and} \quad \varphi(D_e)g
$$ 
are compact for all $f\in C_0(\olO)$ and $g\in C_0(\Omega^\circ)$.
\end{lem}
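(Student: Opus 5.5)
Write $A\coloneqq D_e^+$, so that $D_e=\begin{pmatrix}0&A^*\\A&0\end{pmatrix}$ is self-adjoint and $D_e^2=\mathrm{diag}(A^*A,AA^*)$. The plan is to reduce everything to functions of $A^*A$ and $AA^*$, where $A$ is local and semi-regular and the formal calculus of \cref{SectionOpCalc} applies. Since $\varphi$ is odd and continuous, we may write $\varphi(\lambda)=\lambda\,h(\lambda^2)$ with $h$ continuous on $[0,\infty)$ and $\lambda h(\lambda^2)\to 0$. Then
\[
\varphi(D_e)=\begin{pmatrix}0& A^*h(AA^*)\\ A h(A^*A)&0\end{pmatrix}
=\begin{pmatrix}0& h(A^*A)\,A^*\\ A\,h(A^*A)&0\end{pmatrix}.
\]
Moreover $(A^*h(AA^*))^*=A\,h(A^*A)$ for real $h$. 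By a density argument (the set of $\varphi$ for which the statement holds is norm-closed, and the compactness conditions are linear), it suffices to treat the generator-type function $\varphi(\lambda)=\lambda(\lambda^2+1)^{-1}$, i.e.\ $h(x)=(x+1)^{-1}$. Polynomials in such functions together with $\lambda(\lambda^2+1)^{-k}$ are dense in the odd part of $C_0(\R)$ by Stone--Weierstrass, and products of compacts with bounded operators stay compact. Alternatively, one could use $h(x)=(x+1)^{-1/2}\cdot(x+1)^{-\epsilon}$, which fits \cref{MappingPropertiesT}.

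For the lower-left entry $A(A^*A+I)^{-1}$, we work on the scale $\cH^\alpha(A)$. Its commutator with $f\in C_0(\olO)$ (approximated by $C_c^\infty(\olO)$) is
\[
[f,A](A^*A+I)^{-1}+A[f,(A^*A+I)^{-1}].
\]
By \cref{CommutatorBasicLemma}, the first term maps $\cH^{-1-\mu}(A)\to L^2$. By \cref{FormalPseudoLocStrengthened} with $\beta=1$, the second term maps $\cH^{-\gamma}(A)\to\cH^{1}(A)\to L^2$. Since $A$ is semi-regular, \cref{CorNegOrder} and \cref{CommLowerOrder}.\ref{CommLowerOrder_b} give compactness; the same bounds hold for the adjoint commutator, as $f$ ranges over conjugates. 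The upper-right entry is, up to sign, the adjoint of $[\bar f, A(A^*A+I)^{-1}]$, hence also compact. This proves that $[f,\varphi(D_e)]$ is compact on $\olO$ without any regularity of $A^*$.

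For $\varphi(D_e)g$ with $g\in C_0(\Omega^\circ)$, the lower-left block $A(A^*A+I)^{-1}g$ maps $\cH^{-1}(A)\to L^2$ by \cref{MappingPropertiesT}, so it is compact by \cref{CorNegOrder}, which here requires only the semi-regularity of $A$. The upper-right block $A^*(AA^*+I)^{-1}g$ lives on the scale of $A^*$, which need not be semi-regular. However, $g$ has interior support, so case (a) of \cref{CorNegOrder} applies to the arbitrary boundary condition $A^*$ of $D^-$, giving compactness again.

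The expected obstacle is the upper-right block in the commutator part. Here I deliberately avoid working on the scale of $A^*$, where no semi-regularity is available up to the boundary, by expressing that block as an adjoint of a lower-left-type operator. The point to check is that compactness of $[f,T]$ for all $f$ transfers to $T^*$, which it does via $[f,T^*]=-[\bar f,T]^*$.
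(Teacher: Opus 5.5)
Your reduction to the lower-left block $T=A(A^*A+I)^{-1}$ with $A=D_e^+$ is correct, and it is the same route as the paper. So are the identity $[f,T^*]=-[\bar f,T]^*$ for the upper-right block, the order bound $[f,T]\colon\cH^{-1-\mu}(A)\to L^2(\Omega;E^-)$, and your treatment of $\varphi(D_e)g$.

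\textbf{The gap} is the last step, from that bound to compactness of $[f,T]$. \cref{CommLowerOrder}(b) concerns an operator on one Hilbert space carrying one semi-regular scale. In its proof the piece $\psi[f,T]$ is rewritten through $[f,T^*]\bar\psi$ and made compact by \cref{CorNegOrder}. Here $\bar\psi$ acts on $L^2(\Omega;E^-)$, whose natural scale is that of $A^*$. But $A^*$ is exactly what is not assumed semi-regular (nor, for $d>1$, local). Your remark that ``the same bounds hold for the adjoint commutator'' does not supply this. Dualizing only gives $[f,T^*]\colon L^2(\Omega;E^-)\to\cH^{1+\mu}(A)$, a smoothing statement with the cutoff on the wrong side for Rellich. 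The paper's proof invokes \cref{CommLowerOrder} here just as briefly, so this step has to be filled in.

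\textbf{The fix.} What you need is compactness of $fT$ with $f$ acting on $E^-$; $Tf$ is already compact by \cref{CorNegOrder}. Choose $\chi\in C^\infty_c(\olO)$ with $\chi=1$ near $\supp f$, and set $R=(A^*A+I)^{-1}$.
\begin{enumerate}
\item Since $A$ is local and $D^+$ is a differential operator, $fAu=fA(\chi u)$ for $u\in\dom(A)$. Hence $fT=fA\,(\chi R)$.
\item Write $\chi R=R\chi+[\chi,R]$. Dualizing the bound $[\bar\chi,R]\colon\cH^{-1-\mu}(A)\to\cH^{1}(A)$ from \cref{CommutatorResolvent} gives $[\chi,R]\colon\cH^{-1}(A)\to\cH^{1+\mu}(A)$. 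So $\chi R$ maps $L^2(\Omega;E^+)$ boundedly into $\cH^{1+\mu}(A)\cap L^2_K(\Omega;E^+)$, where $K=\supp\chi$.
\item By \cref{FormalRellich} and the interpolation inequality $\|u\|_{\cH^1(A)}\le\|u\|_{L^2}^{\mu/(1+\mu)}\|u\|_{\cH^{1+\mu}(A)}^{1/(1+\mu)}$, this space embeds compactly into $\cH^1(A)=\dom(A)$, on which $fA$ is bounded.
\end{enumerate}
Hence $fT$ and $Tf$ are compact separately, so $[f,T]$ is compact. Your explicit commutator estimate is then not even needed.

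\textbf{The density reduction} does not work as written. Odd functions with the commutator property are closed under sums, norm limits, and products of an odd number of such functions. They are not closed under multiplication by even functions like $(\lambda^2+1)^{-1}$: the Leibniz rule then produces $[f,(AA^*+I)^{-1}]$ on the $E^-$-block, which your argument does not control. Moreover, odd functions of $\lambda(\lambda^2+1)^{-1}$ alone cannot be dense, since it takes the same value at $\lambda$ and at $1/\lambda$. Instead, run the argument above directly for each $\lambda(\lambda^2+1)^{-k}$, $k\geq 1$. Replace $R$ by $R^k$ and use $[\chi,R^k]=\sum_{j=0}^{k-1}R^j[\chi,R]R^{k-1-j}$. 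The linear span of these functions is dense in the odd part of $C_0(\R)$. The paper instead uses the family $x(x^2+\lambda)^{-\beta}$.
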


\begin{proof}
By way of a density argument it suffices to consider the case where $\varphi(x)=x(x^2+\lambda)^{-\beta}$ for $\lambda >0$ and $\beta>\frac{1}{2}$. In this case
$$
\varphi(D_e) =  \begin{pmatrix}
 0 & \left( D_e^+((D_e^+)^*D^+_e+\lambda)^{-\beta}\right)^* \\  D_e^+((D_e^+)^*D^+_e+\lambda)^{-\beta}
\end{pmatrix} ,
$$
so that it suffices to prove that $[f,D_e^+((D_e^+)^*D^+_e+\lambda)^{-\beta}]$ is compact for all $f$. To see this, it suffices to note that $(x^2+\lambda)^{-\beta} \in \cF^{-2\beta}$, so that $D_e^+((D_e^+)^*D^+_e+\lambda)^{-\beta}$ extends to a bounded operator $\cH^{-2\beta+1}(D_e^+)\to \cH^0((D_e^+)^*)=L^2(\Omega,E^-)\subseteq L^2(\Omega;E)$. Then, \cref{CommLowerOrder} and \cref{CorNegOrder} yield the claim.
\end{proof}

\begin{proof}[Proof of \cref{KHomGradedComp}]
Since $D_e$ is an odd boundary condition, $\chi(D_e)$ defines a graded Fredholm module over $\Omega^\circ$ by \cref{ExistencKHomClasses}.i). Thus it remains to show that $[f,\chi(D_e)]$ is actually compact for all $f\in C_0(\olO)$. First, consider $\chi_0(x)=x(x^2+1)^{-1/2}$. Then, $\chi_0(D_e) = \begin{pmatrix}
0 & F^*\\ F& 0
\end{pmatrix}$ with $F=D_e^+((D_e^+)^*D^+_e+I)^{-1/2}$. Since $D^+_e$ is local and semi-regular, the commutator of $F$ and hence also of $F^*$ with any $f\in C_0(\olO)$ is compact. Thus, $\chi_0(D_e)$ actually defines a relative Fredholm module over $(\olO,\dO)$. Now, an arbitrary normalizing function $\chi$ differs from $\chi_0$ by an odd function $\varphi \in C_0(\R)$ with $\varphi(0) = 0$ vanishing both at infinity and at $0$. Thus, we deduce through \cref{SemiRegGradedComp} that any $\chi(D_e)$ is a compact perturbation of $\chi_0(D_e)$, hence also defines a relative Fredholm module over $(\olO,\dO)$. That it defines the same relative K-homology class as that constructed in \cref{ExistencKHomClasses}.ii) follows via excision.
\end{proof}

We conclude this subsection with a few results on finite-propagation properties and coercivity. They hint towards possible applications to higher indices, secondary invariants, and scalar curvature, though beyond a few remarks we will not pursue this here.

Recall that an operator $T$ on $L^2(\olO)$ is said to have \emph{finite propagation} if there is $R\geq 0$ so that $fTg=0$ whenever $f,g \in C_c(\olO)$ have supports with distance $\geq R$ from another.

\begin{lem}\label{lem_fin_prop}
  Let $D$ be a first-order differential operator with 
  $$
    \sup \left\{\left\|\sigma_D(\xi)\right\|\,\colon\, \xi \in T^*\Omega,\,\|\xi\|\leq 1\right\} < \infty .
  $$
  Let $D_e$ be a local, self-adjoint boundary condition for $D$. Let $\varphi \in C_b(\R)$ have compactly supported Fourier transform. Then, the operator $\varphi(D_e)$ has finite propagation.
\end{lem}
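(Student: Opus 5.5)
We plan to follow the standard route via the wave equation and finite propagation speed, adapted to local boundary conditions through an energy estimate. By Fourier inversion,
\[
\varphi(D_e) = \frac{1}{2\pi}\int_{\R} \hat\varphi(t)\, e^{itD_e}\,dt ,
\]
where the unitary group $e^{itD_e}$ exists because $D_e$ is self-adjoint. If $\supp\hat\varphi\subseteq[-R,R]$, it suffices to show that $e^{itD_e}$ has propagation at most $c|t|$ with $c\coloneqq\sup\{\|\sigma_D(\xi)\|\colon\|\xi\|\le 1\}$. Then $\varphi(D_e)$ has propagation at most $cR$. Concretely, we will show the following: if $u_0\in L^2(\olO;E)$ is supported in a closed set $K$, then $e^{itD_e}u_0$ is supported in the closed $c|t|$-neighbourhood of $K$.

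The key step is an energy estimate. Fix a point $x$ with $d(x,K)=r>c|t|$. We want $e^{itD_e}u_0$ to vanish near $x$. We will use a Lipschitz (then mollified) cutoff of the form $h(s,y)=\phi(d(y,x)+cs)$, where $\phi$ is smooth, nonincreasing, equal to $1$ on a small interval and $0$ beyond $r-\varepsilon$. By density we may take $u_0\in\dom(D_e)$ (even $\cH^\infty(D_e)$), so that $u_s\coloneqq e^{isD_e}u_0\in\dom(D_e)$ and $\partial_s u_s = iD_eu_s$.

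By locality of $D_e$ and \cref{ContMultFirstOrder}, $h(s,\cdot)u_s\in\dom(D_e)$. Differentiating gives
\[
\frac{d}{ds}\langle h u_s,u_s\rangle
= \langle (\partial_s h)u_s,u_s\rangle + i\langle hD_eu_s,u_s\rangle - i\langle hu_s,D_eu_s\rangle .
\]
Self-adjointness of $D_e$, applied to the vector $hu_s\in\dom(D_e)$, turns the last two terms into $i\langle [h,D_e]u_s,u_s\rangle = \langle \sigma_D(dh)u_s,u_s\rangle$. Here we use that $[h,D_e]$ is the bounded operator $-i\sigma_D(dh)$, as noted before \cref{LocalFirstOrder}. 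Since $\partial_s h = c\phi'\le 0$ and $\|\sigma_D(dh)\|\le c|\phi'|$ pointwise, the integrand satisfies $c\phi' + \sigma_D(dh)\le 0$ pointwise. Hence the energy $s\mapsto\langle h u_s,u_s\rangle$ is nonincreasing.

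At $s=0$ the energy vanishes, since $h(0,\cdot)$ is supported in the ball $B(x,r-\varepsilon)$, which misses $K$. Thus $\langle h(t)u_t,u_t\rangle=0$ for $t\ge0$, so $u_t$ vanishes near $x$. Negative $t$ are handled symmetrically. Passing from $u_0\in\dom(D_e)$ to general $u_0$ is done by approximation: we approximate $u_0$ by $(D_e^2/n^2+1)^{-1}u_0$, which lies in $\dom(D_e)$ but need not be supported in $K$. To get around this, we will instead phrase the statement as $f e^{itD_e}g=0$ for $f,g$ with separated supports. We prove it for $g u_0$ with $u_0\in\dom(D_e)$; here $gu_0\in\dom(D_e)$ by locality and has support in $\supp g$. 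Density of $\dom(D_e)$ in $L^2$ together with boundedness of $g$ then gives the claim for all $u_0$.

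The main obstacle is the energy computation. One must justify that the boundary contributes nothing, which is exactly where locality plus self-adjointness replace an integration by parts with boundary terms: the identity $\langle hD_eu,u\rangle-\langle hu,D_eu\rangle = \langle[h,D_e]u,u\rangle$ uses $\langle D_e(hu),u\rangle=\langle hu,D_eu\rangle$. One must also check differentiability in $s$ and the smoothing of the distance function so that $\|dh\|$ stays bounded by $(1+\varepsilon)|\phi'|$. The smoothing costs an arbitrarily small increase of the propagation bound, which is harmless. Completeness is not needed here, since the cutoff $h$ is compactly supported near $x$ whenever $r$ is finite, or one may replace it by cutoffs supported in small balls.
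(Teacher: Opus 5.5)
Your strategy is the one the paper has in mind. The paper's proof only cites finite propagation speed of the wave group $e^{itD_e}$ \cite[Exercise~10.9.13(a)]{HigsonRoe2000}, to be proved as in the boundaryless case. Your energy estimate is exactly that proof, with locality plus self-adjointness replacing the integration by parts: $\langle hu_s,D_eu_s\rangle=\langle D_e(hu_s),u_s\rangle$ once $hu_s\in\dom(D_e)$. Provided every $h(s,\cdot)$ lies in $C^\infty_c(\olO)$, the argument is correct. Two points are routine. First, $\hat\varphi$ is in general only a compactly supported distribution, so apply Fourier inversion to approximants with Fourier transforms $\hat\varphi*\eta_\epsilon\in C^\infty_c$, which converge to $\varphi$ boundedly and pointwise. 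Second, $\phi$ must equal $1$ on $[0,c|t|+\eta]$, not just on some small interval.

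The genuine gap is your last paragraph: completeness cannot be dropped. If $\olO$ is incomplete, closed balls of finite radius need not be compact, so in general $h(s,\cdot)\notin C^\infty_c(\olO)$. Locality in the sense of \cref{def_local} then says nothing about $hu_s$. The boundary term you discard sits at the incomplete end, which locality does not see. Cutoffs on small balls cannot rescue this, because the conclusion itself fails.

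\emph{First example.} Take $\olO=(0,1)$, $D=-i\,d/dx$ and the periodic condition $u(0)=u(1)$. It is local, since each $f\in C^\infty_c(\olO)$ vanishes near $0$ and $1$, so $f\cdot\dom(D_e)\subseteq\dom(D_{\min})$. It is also self-adjoint. But $e^{itD_e}$ is translation on a circle of length $1$, so it carries a bump near $0.01$ to a bump near $0.99$ at $|t|=0.02$. This violates your bound $c|t|$, here with $c=1$.

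\emph{Second example.} Take $\olO=(0,1)\sqcup(2,3)$ with $u(1)=u(2)$ and $u(3)=u(0)$; this is again local and self-adjoint. Now $e^{itD_e}$ transports mass between the two components. Choose $\hat\varphi\geq 0$ a bump positive on $(-\tfrac1{10},\tfrac1{10})$, and nonnegative nonzero $f,g$ supported in $(0.95,1)$ and $(2,2.05)$. Then $f\varphi(D_e)g\neq 0$, although these supports are at infinite distance, so $\varphi(D_e)$ has no finite propagation at all.

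So the lemma needs $\olO$ to be complete, i.e.\ proper, as assumed in \cref{cpt_dense} and as is implicitly required for working with $D^*(\olO)$. The paper's statement and proof sketch leave this implicit too: its parenthetical remark is right that compactness is inessential, but completeness is essential. With that hypothesis added, your argument is a complete proof along the paper's lines.
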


\begin{proof}
  This can be derived from the fact that the wave operator associated to $D_e$ has finite propagation. The latter fact is posed as \cite[Exercise~10.9.13(a)]{HigsonRoe2000} (for compact $\olO$, but this is inessential), and can be proved analogously to the corresponding finite-propagation result for manifolds without boundary, see e.g. \cite[Proposition~8.2.1]{Willett2019}. 
\end{proof}

We denote by $D^*(\olO)$ the $C^*$-algebra generated by all finite-propagation operators $T$ with $[f,T]$ compact for all $f\in C_0(\olO)$, and by $C^*(\olO)$ the ideal in $D^*(\olO)$ generated by all finite-propagation $T$ with $fT$ and $Tf$ compact for all $f\in C_0(\olO)$. It holds that 
$$
  K_*(\olO) \cong K_{*+1}\left( D^*(\olO)/C^*(\olO) \right) ,
$$
so that $K_*(\olO)$ fits into a long exact sequence
$$
  \cdots \longrightarrow K_{*+1}\left(D^*(\olO)\right) \longrightarrow K_*(\olO) \xlongrightarrow{\partial} K_*\left(C^*(\olO)\right) \longrightarrow \cdots ,
$$
see \cite[Section~12.3]{HigsonRoe2000}. While this isomorphism implies that any K-homology class, in particular one defined by a suitable boundary condition, has a representative in $D^*(\olO)$ that an involution modulo $C^*(\olO)$, it is often convenient to know that the cycle itself is already of this form. 

\begin{prop}\label{rep_structure_alg}
  Let $D_e$ be a local, regular, and self-adjoint boundary condition for $D$. Let $\chi$ be a normalizing function. Then, $\chi(D_e)$ is an element of $D^*(\olO)$ that is an involution modulo $C^*(\olO)$.
\end{prop}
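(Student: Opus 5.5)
The plan is to check two things about $\chi(D_e)$ separately: membership in $D^*(\olO)$, and being an involution modulo $C^*(\olO)$. Both rest on \cref{ExistencKHomClasses}.iii) and d), which say that $\chi(D_e)$ is pseudo-local over $\olO$ and that $(\chi(D_e)^2-I)f$ is compact for all $f\in C_0(\olO)$. Since $D_e$ is self-adjoint, $\chi(D_e)$ is self-adjoint, so the same compactness holds for $f(\chi(D_e)^2-I)$ by taking adjoints.

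\textbf{Step 1: $\chi(D_e)\in D^*(\olO)$.}
\begin{itemize}
\item Finite propagation is only available through \cref{lem_fin_prop}, which needs $D$ first-order with bounded symbol. In that case I would proceed as follows. Choose normalizing functions $\chi_n$ whose distributional Fourier transform is compactly supported and with $\|\chi_n-\chi\|_\infty\to 0$. This is the standard construction: take $\chi*\phi_n$ for mollifiers $\phi_n$ with $\hat\phi_n$ compactly supported.
\item Each $\chi_n(D_e)$ then has finite propagation by \cref{lem_fin_prop}.
\item Each $\chi_n(D_e)$ is pseudo-local by \cref{ExistencKHomClasses}.d).
\item Hence $\chi_n(D_e)\in D^*(\olO)$, and $\chi(D_e)=\lim_n\chi_n(D_e)$ in norm, since $\|\chi_n(D_e)-\chi(D_e)\|\le\|\chi_n-\chi\|_\infty$. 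As $D^*(\olO)$ is norm closed, $\chi(D_e)\in D^*(\olO)$.
\item Strictly, $\chi_n$ need not have compactly supported Fourier transform in the classical sense, since $\chi$ is not integrable. I would handle this the usual way, as in Higson--Roe: write $\chi_n=\chi_0+g_n$ with $\chi_0$ a fixed normalizing function whose Fourier transform is supported near $0$ (as a principal value distribution), and $g_n\in C_0$ with compactly supported Fourier transform.
\end{itemize}
The higher-order case is not covered by \cref{lem_fin_prop}. I expect the statement is meant under its hypotheses, and this is the main obstacle. If needed, I would restrict to that setting.

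\textbf{Step 2: involution modulo $C^*(\olO)$.}
\begin{itemize}
\item Since $\chi(D_e)\in D^*(\olO)$ is self-adjoint, it suffices to show $\chi(D_e)^2-I\in C^*(\olO)$. Write $\chi(D_e)^2-I=(\chi^2-1)(D_e)$ with $\chi^2-1\in C_0(\R)$.
\item Approximate $\chi^2-1$ uniformly by functions $h_n\in C_0(\R)$ with compactly supported Fourier transform. Such functions are dense in $C_0(\R)$, for instance Fejér-type convolutions of compactly supported approximants.
\item Each $h_n(D_e)$ has finite propagation by \cref{lem_fin_prop}.
\item Local compactness of $h_n(D_e)$ follows by factoring $h_n(x)=\big((1+x^2)^{-1/2}\big)\cdot k_n(x)$ with $k_n$ bounded. \cref{MappingPropertiesT} then shows that $h_n(D_e)$ extends to a bounded map $\cH^{-1}(D_e)\to L^2$. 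Regularity together with \cref{CorNegOrder} gives $h_n(D_e)f$ compact, and $f h_n(D_e)$ compact by adjoints, using self-adjointness.
\item If the factorization fails for a particular $h_n$, I would reduce by density to the case where $h_n$ decays like $(1+x^2)^{-1/2}$, as in the proof of \cref{ExistencKHomClasses}.d).
\item Thus $h_n(D_e)\in C^*(\olO)$, and taking the norm limit gives $(\chi^2-1)(D_e)\in C^*(\olO)$.
\end{itemize}
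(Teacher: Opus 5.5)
Your proposal is correct and follows the paper's proof. The paper approximates $\chi$ in sup-norm by normalizing functions with compactly supported Fourier transform. It then obtains finite propagation from \cref{lem_fin_prop} and the compactness statements from \cref{ExistencKHomClasses}.d). You are right that \cref{lem_fin_prop} silently requires $D$ to be first-order with bounded principal symbol; the paper's proof uses it without saying so, and only \cref{coercive_lift} states this hypothesis explicitly. Your separate approximation of $\chi^2-1$ in Step 2 is valid but unnecessary. Once $\chi$ has compactly supported Fourier transform, $\chi(D_e)^2-I$ already has finite propagation, since propagations add under products. It is locally compact by \cref{ExistencKHomClasses}.d), so it lies in $C^*(\olO)$ directly, and norm limits finish the argument.
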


\begin{proof}
  Since normalizing functions with compactly supported Fourier transform are dense in the space of all normalizing functions, we may assume $\chi$ to have this property. Then, finite propagation is implied by \cref{lem_fin_prop}, and the necessary compactness statements by \cref{ExistencKHomClasses}.d).
\end{proof}

\begin{rem}
  There is an analogous version of \cref{rep_structure_alg} in the semi-regular case. It is most easily phrased in the setting of \cref{KHomGradedComp}. The boundary condition $D_e$ as defined in that proposition is self-adjoint and local, hence the finite propagation result of \cref{lem_fin_prop} still applies. Letting $\chi$ be any normalizing function and appealing to the compactness statements of \cref{KHomGradedComp}, one still has that $\chi(D_e)$ is in $D^*(\olO)$, but it is now only invertible modulo $D^*(\olO,\dO)$, the ideal of $D^*(\olO)$ generated by all finite-propagation operators $T$ with $Tf$, $fT$ compact for all $f\in C_0(\Omega^\circ)$.
\end{rem}

An important situation in higher index theory on boundaryless manifolds is when the operator in question is coercive, i.e. has a spectral gap around $0$. In this case, one can lift the corresponding K-homology class to a secondary invariant in the K-theory of the $D^*$-algebra. We shall show that the same is possible in the presence of a boundary.

\begin{defn}
  A boundary condition $D_e$ for $D$ is called \emph{coercive} if there is $c>0$ such that 
  \[
    \|D_eu\| \geq c\|u\| \text{ for all } u\in \dom(D_e).\qedhere
  \]
\end{defn}

\begin{ex}[Dirac operator and curvature]\label{ex_dirac_curv}
  Suppose that $\olO$ is spin, and that the metric has uniformly positive scalar curvature $\mathrm{scal}\geq 4c^2 > 0$. The Lichnerowicz formula implies that 
  $$
    \|\Dirac u \|^2 \geq \|\frac{\mathrm{scal}}{4} u\|^2 \geq c^2 \|u\|^2
  $$
  for all $u\in\Gamma_\cc(\Omega,\slashed{S}(\Omega))$. Passing to the graph norm closure yields that $\Dirac_{\min}$ is coercive. If in addition to uniformly positive scalar curvature we assume non-negative mean curvature for the boundary, then it is shown in \cite[Propositions~8.4 \& 8.6]{Baer2025} that both the APS-boundary condition and the chiral boundary conditions are coercive (under some mild boundedness conditions). 
\end{ex}

If $D_e$ is a coercive boundary condition, then we are free to choose a normalizing function $\chi$ taking only the values $\pm 1$ on the spectrum of $D_e$. Then, $\chi(D_e)$ is an involution on the nose. Hence, we conclude the following from \cref{rep_structure_alg}:

\begin{cor}\label{coercive_lift}
  Let $D$ be first-order with bounded principal symbol. Let $D_e$ be a coercive, local, regular, and self-adjoint boundary condition for $D$. Let $\chi$ be a normalizing function with $\chi(\lambda) = \pm 1$ for $\lambda \in \sigma(D_e)$. Then, $\chi(D_e)$ is an involution in $D^*(\olO)$. In particular, $[D_e] \in K_*(D_e)$ admits a canonical lift
  $$
    \rho(D_e) \; \in \; K_*\left( D^*(\olO) \right) .
  $$
\end{cor}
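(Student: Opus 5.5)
The plan is to reduce \cref{coercive_lift} to \cref{rep_structure_alg} by choosing a suitable normalizing function, and then to invoke the standard fact that an involution in a unital $C^*$-algebra defines a $K$-theory class. Since $D_e$ is self-adjoint and coercive, $\|D_eu\|\geq c\|u\|$ forces $\sigma(D_e)\cap(-c,c)=\emptyset$. I will fix a normalizing function $\chi$, i.e.\ continuous, odd, with $\chi(\lambda)\to\pm1$ as $\lambda\to\pm\infty$, which equals $\operatorname{sign}(\lambda)$ for $|\lambda|\geq c$. By the functional calculus $\chi(D_e)^2=(\chi^2)(D_e)=I$ and $\chi(D_e)^*=\chi(D_e)$, because $\chi^2=1$ on $\sigma(D_e)$. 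So $\chi(D_e)$ is a self-adjoint unitary on the nose.

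Next I show $\chi(D_e)\in D^*(\olO)$. \cref{rep_structure_alg} already gives that $\chi'(D_e)\in D^*(\olO)$ for every normalizing function $\chi'$. Its proof uses approximation by normalizing functions with compactly supported Fourier transform, together with \cref{lem_fin_prop}, which needs $D$ first-order with bounded principal symbol and $D_e$ local and self-adjoint. It also uses \cref{ExistencKHomClasses}.d) for pseudo-locality over $\olO$, which needs locality and regularity. All of these hypotheses are assumed here, so the argument applies to our particular $\chi$. The one point to check is that the approximation happens in operator norm: if $\chi_n\to\chi$ uniformly on $\R$, then $\chi_n(D_e)\to\chi(D_e)$ in norm. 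Since $D^*(\olO)$ is norm-closed, the limit stays in $D^*(\olO)$. I expect this density step to be the only mild obstacle. One needs normalizing functions with compactly supported Fourier transform to be uniformly dense among normalizing functions. That holds by the usual argument: take a fixed such $\chi_1$ and add Schwartz approximants of $\chi-\chi_1\in C_0(\R)$ whose Fourier transforms have compact support, then take odd parts.

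Finally, the projection $p=\frac12(I+\chi(D_e))$ lies in the unitalization of $D^*(\olO)$; note that $D^*(\olO)$ contains $I$, since $I$ has propagation $0$. It therefore defines a class $\rho(D_e)\in K_0(D^*(\olO))$. In the graded or multigraded case one uses instead the odd involution, or its grading-twisted unitary, to get a class in the appropriately shifted group. Under the identification $K_*(\olO)\cong K_{*+1}(D^*(\olO)/C^*(\olO))$, the class $[D_e]$ is represented by the image of $\chi(D_e)$ in the quotient, again using \cref{ExistencKHomClasses}.d). The class $\rho(D_e)$ maps to this image under the quotient map, so it is a lift of $[D_e]$.

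It remains to show the lift is canonical. Any two normalizing functions that equal $\pm1$ on $\sigma(D_e)$ give literally the same operator $\chi(D_e)$, because the spectral calculus depends only on values on the spectrum. So $\rho(D_e)$ does not depend on the choice of $\chi$.
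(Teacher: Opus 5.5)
Your proposal is correct and follows the paper's argument. Coercivity plus self-adjointness gives a spectral gap around $0$, so a normalizing function equal to $\operatorname{sign}$ on $\sigma(D_e)$ makes $\chi(D_e)$ an involution on the nose, and membership in $D^*(\olO)$ comes from \cref{rep_structure_alg} (you also spell out its density step and the first-order, bounded-symbol hypotheses that \cref{lem_fin_prop} needs). One small caveat on canonicity: ``literally the same operator'' requires $\chi=\operatorname{sign}$ on $\sigma(D_e)$, not merely $\chi^2=1$ there, because a normalizing function can take the value $-1$ at an isolated positive point of the spectrum; that is the intended reading of the hypothesis, and your involution and membership arguments work under either reading.
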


\begin{rem}
  Again, there is a relative version of \cref{coercive_lift}, though it is not as satisfying. It is again phrased easiest in the setting of \cref{KHomGradedComp}. Even if $D_e^+$ is coercive, $D_e^*$ need not be. Thus, $\chi(D_e)$ may have zero as isolated spectral value, hence not be an involution. However, due to the coercivity assumption, it can be shown to be an involution modulo $D^*(\dO\subseteq\olO,\dO)$, the ideal of $D^*(\olO)$ generated by all finite-propagation operators $T$ with $Tf$, $fT$ compact for all $f\in C_0(\olO\setminus\dO)$, and such that there is $R\geq 0$ so that $Tf=fT=0$ if $\supp(f)$ has distance $\geq R$ from $\dO$. We will not give the details here, and restrict ourselves to saying that they are very much analogous to similar statements in the boundaryless case (e.g. \cite[Section~2]{Roe2016}). We also refer to \cite[Section~7.1]{LykoThesis}, where the details can be found for the uniform version of these statements, and to the forthcoming article \cite{LykoAPS}, where the resulting boundary map into $D^*(\dO\subseteq\olO,\dO)$ is connected to secondary invariants on $\dO$.
\end{rem}

\begin{ex}[Chiral boundary conditions and curvature]\label{ex_chiral_lift}
  Let $\olO$ be an odd-dimensional spin manifold. We have noted in \cref{ex_khom_chiral} above that in this case the boundary condition 
  $$
  \hat{\Dirac} \coloneqq \begin{pmatrix}
    0 & \Dirac_- \\ \Dirac_+ & 0
  \end{pmatrix}
  $$ 
  defines an absolute cycle over $\olO$. If $\olO$ has uniformly positive scalar curvature and $\dO$ non-negative mean curvature, then this boundary condition is also coercive by \cref{ex_dirac_curv} (provided that $\dO$ is compact or otherwise sufficiently regular as to fit in the setting of \cite{Baer2025}). Hence, by \cref{coercive_lift} there is a lift
  $$
    \rho(\hat{\Dirac}) \in K_1\left( D^*(\olO) \right) .
  $$
  Conceptually, this class is analogous to the higher $\rho$-invariant for Dirac operators on manifolds of uniformly positive scalar curvature without boundary. However, as noted in \cref{ex_khom_chiral}, for all we know it might be true that both the classes $[\hat{\Dirac}]\in K_0(\olO)$ and $\rho(\hat{\Dirac}) \in K_1( D^*(\olO) )$ are trivial.
\end{ex}

\subsection{Extension to non-local boundary conditions}\label{SecNonlocal}

For the existence of relative K-homology classes in \cref{ExistencKHomClasses} we assumed that the boundary condition in question is local. While \cref{ExAPSChirality} showed that some sort of locality assumption is necessary, our notion of locality is actually stronger than necessary. The next proposition shows that we get relative classes as long as the set of functions preserving $\dom(D_e)$ is sufficiently large as to generate $C_0(\olO)$. This is connected to the construction described in \cite[Section~2.2.2]{Forsyth2019}.

\begin{prop}\label{ClassesNonlocal}
  In the setting of \cref{ExistencKHomClasses}, assume that the boundary condition $D_e$ is semi-regular, and that the set of functions
  $$
    \{f\in C^\infty_c(\olO) \colon f\cdot\dom(D_e)\subseteq\dom(D_e)\}
  $$
  generates the $C^*$-algebra $C_0(\olO)$. Then, 
  $$
    \left( L^2(\olO), \rho, D_e\left( D_e^*D_e + I \right)^{-\sfrac{1}{2}} \right)
  $$
  is a relative Fredholm module over $(\olO,\dO)$. If $D_e$ is also regular and self-adjoint, then it defines an absolute Fredholm module over $\olO$. If $D_e$ is odd/multigraded, then this is again an odd/multigraded Fredholm module.
\end{prop}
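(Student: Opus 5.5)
The plan is to reduce everything to the proof of \cref{ExistencKHomClasses}, isolating where locality was actually used. Let $\mathcal{A}_e \coloneqq \{f\in C^\infty_c(\olO)\colon f\cdot\dom(D_e)\subseteq\dom(D_e)\}$. Locality of $D_e$ entered only through \cref{LemmaMultFunc}, \cref{MultSemiReg} and \cref{CommutatorBasicLemma}, and hence through \cref{BdTComm}. Each of these results is a statement about a single function $f$. Each requires only that this particular $f$ preserves $\dom(D_e)$, together with semi-regularity. In the proof of \cref{CommutatorBasicLemma}, semi-regularity gives $\cH^{1-\mu}(D_e)\hookrightarrow H^{d-1}_\loc$, and this makes $[f,D_e]$ bounded on $\cH^{1-\mu}(D_e)$. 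The same lemma also needs the pairing computation for $[f,D_e^*D_e]$, and that computation only requires that $fu\in\dom(D_e)$ for $u\in\dom(D_e)$. So the first step is to rerun \cref{CommutatorBasicLemma}, \cref{CommutatorResolvent}, \cref{FormalPseudoLocality}, \cref{FormalPseudoLocStrengthened} and \cref{BoundedTransLowerOrder} verbatim, for real-valued $f\in\mathcal{A}_e$.

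One subtlety needs checking. \cref{CommLowerOrder} uses an auxiliary cutoff $\psi$ with $\psi f=f$, and that cutoff may not lie in $\mathcal{A}_e$. I will avoid this by using \cref{CorNegOrder} directly. By semi-regularity it applies to every $g\in C_c^\infty(\olO)$, not only $g\in\mathcal{A}_e$. The argument runs as follows. First, $[f,F_e]$ extends to a bounded map $\cH^{-\gamma}(D_e)\to L^2$. Second, this commutator equals $[f,F_e]\psi$ plus $[f,F_e](1-\psi)$. Alternatively, one can write $[f,F_e]=[f,F_e]\psi-F_ef(1-\psi)$, where $\psi\in C^\infty_c(\olO)$ is arbitrary with $\psi=1$ on $\supp f$. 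The first term is compact by \cref{CorNegOrder}. The second term is zero because $f(1-\psi)=0$, and the term $-(1-\psi)F_ef$ vanishes modulo compacts by the same reasoning applied to the adjoint. Thus $[f,F_e]$ is compact for every real $f\in\mathcal{A}_e$. Since $\mathcal{A}_e$ is closed under complex conjugation (its defining condition is linear), this gives compactness for all of $\mathcal{A}_e$.

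The second step is to pass from $\mathcal{A}_e$ to $C_0(\olO)$. The set $\{f\in C_0(\olO)\colon [f,F_e]\text{ compact}\}$ is a norm-closed $*$-subalgebra: it is closed because the compact operators form a closed set, it is closed under products by the Leibniz rule, and it is closed under adjoints because $F_e^*$ is handled symmetrically or via $[f,F_e]^*=-[\bar f,F_e^*]$. For the adjoint case one can simply note that $\mathcal{A}_e$ is $*$-closed, so that $[\bar f,F_e]$ compact gives $[f,F_e^*]$ compact. This set contains $\mathcal{A}_e$, which generates $C_0(\olO)$ by hypothesis, so it is all of $C_0(\olO)$. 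The remaining relative conditions, namely local compactness of $I-F_e^*F_e$ and of $F_e-F_e^*$ against $C_0(\Omega^\circ)$, come from \cref{ExistencKHomClasses}.i) unchanged. This yields the relative Fredholm module.

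For the absolute statement, assume $D_e$ is regular and self-adjoint. Then $F_e=F_e^*$. Moreover, $(I-F_e^2)g=(D_e^*D_e+I)^{-1}g$ is compact for every $g\in C_c^\infty(\olO)$ by \cref{CorNegOrder} in its semi-regular form, which requires no locality. It therefore holds for all $g\in C_0(\olO)$ by density. The grading and multigrading claims follow exactly as before, since $\kappa$ and $\varepsilon_j$ commute with functions. The step I expect to be the main obstacle is the bookkeeping in the first step. There I must verify that the proof of \cref{CommutatorBasicLemma} uses only $f\cdot\dom(D_e)\subseteq\dom(D_e)$ and never the domain-preservation property for an auxiliary cutoff. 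This is delicate in the estimate of $\langle D_eu,[f,D_e]v\rangle$, where the $H^{-1}$–$H^1_0$ pairing on a neighbourhood $V$ is used. I would handle that estimate by choosing $V$ as a precompact neighbourhood of $\supp f$ in $\olO$, with the cutoff acting only through Sobolev norms, which are available by semi-regularity.
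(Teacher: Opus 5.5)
Your overall plan is the paper's: rerun \cref{SectionFormalPseudolocal} for functions preserving $\dom(D_e)$, then pass to the generated $C^*$-algebra. You rightly notice something the paper's sketch passes over: the cutoff $\psi$ in \cref{CommLowerOrder} need not preserve $\dom(D_e)$. However, your repair fails. The identity $[f,F_e]=[f,F_e]\psi-F_ef(1-\psi)$ is false, because its right-hand side equals $fF_e\psi-F_ef$. The actual remainder is $[f,F_e](1-\psi)=fF_e(1-\psi)=\psi[f,F_e](1-\psi)$, which is not zero. It is exactly the off-diagonal term that the cutoff in \cref{CommLowerOrder} exists to handle.

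\cref{CorNegOrder} gives compactness only after multiplying on the right by a compactly supported function. Taking adjoints merely moves the non-compactly supported factor $1-\psi$ to the left. Using the form $\psi[f,F_e](1-\psi)$ instead would require $[f,F_e]\colon L^2\to\cH^{\gamma}(D_e)$. But $F_e$ lands in the scale of $D_e^*$, which is not semi-regular in the relative case. This step does go through when $D_e$ is self-adjoint, since then $[f,F_e]$ is skew-adjoint for real $f$. It also goes through when $\olO$ is compact, where you may take $\psi\equiv1$. In general you need another mechanism. For instance, show directly that $[f,D](D_e^*D_e+I)^{-1/2}$ is compact, and likewise each integrand $D_eR_\lambda[D_e^*D_e,f]R_\lambda$ of the norm-convergent resolvent integral. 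Each factors through restriction to a precompact neighbourhood $V$ of $\supp f$ followed by a compact Sobolev embedding on $V$. This never multiplies by functions outside $\mathcal{A}_e$.

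Second, $\mathcal{A}_e$ is not closed under complex conjugation in general: the defining condition is linear, but $f\mapsto\bar f$ is not. Take the Dirac operator on the unit disc with the boundary condition $u|_{S^1}\in\{(g,e^{i\theta}g)\colon \hat g(n)=0\text{ for }n<0\}$. This condition is semi-regular and is preserved by $z$ but not by $\bar z$. Its domain-preserving functions generate $C(\bar{\mathbb{D}})$ as a $C^*$-algebra, yet the real-valued ones are constant on $S^1$.

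Real-valuedness is genuinely used in your argument. The pairing identity in \cref{CommutatorBasicLemma} needs $\bar f\cdot\dom(D_e)\subseteq\dom(D_e)$. Likewise, $*$-closedness of $\{f\colon[f,F_e]\text{ compact}\}$ needs $[f,F_e^*]$ compact. That does not follow from compactness of $[f,F_e]$ when $F_e-F_e^*$ is not locally compact at $\dO$.

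So your argument only yields pseudo-locality for the $C^*$-algebra generated by the real-valued elements of $\mathcal{A}_e$, which is $C(\bar{\mathbb{D}}/S^1)$ in the example. The paper's one-line proof has the same limitation, since it tacitly relies on \cref{SectionFormalPseudolocal}, which is set up for real-valued $f$. The issue disappears when $D_e$ is self-adjoint and first-order. In that case the computation in \cref{LocalFirstOrder} shows that $\bar f$ preserves $\dom(D_e^*)=\dom(D_e)$ whenever $f$ preserves $\dom(D_e)$.
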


\begin{proof}
  The results of \cref{SectionFormalPseudolocal} yield that the commutator of $D_e\left( D_e^*D_e + I \right)^{-\sfrac{1}{2}}$ with every function $f\in C^\infty_c(\olO)$ such that $f\cdot\dom(D_e)\subseteq\dom(D_e)$ is compact. The set of $f\in C_0(\olO)$ that commute with this operator up to compacts is a closed two-sided $^\ast$-ideal. Hence, if the smooth, compactly supported functions preserving $\dom(D_e)$ generate $C_0(\olO)$, then $D_e\left( D_e^*D_e + I \right)^{-\sfrac{1}{2}}$ is pseudo-local over $\olO$. If $D_e$ is also regular and self-adjoint, the remaining properties of an absolute Fredholm module over $\olO$ follow as in the proof of \cref{ExistencKHomClasses}.
\end{proof}

\begin{ex}[Neumann Laplacian]\label{ex_neumann_laplace_trivial}
A function $f\in C^\infty_c(\olO)$ preserves $\dom(\Delta_\mathrm{Neu})$ if and only if $\partial_\nu f = 0$. The set of all such functions is nowhere-vanishing and separates points on $\olO$. Hence by Stone--Weierstrass, $\Delta_\mathrm{Neu}$ satisfies the conditions of \cref{ClassesNonlocal}, thus defines an absolute Fredholm module over $\olO$. For the same reason as for the Dirichlet Laplacian (\cref{DirichletTrivial}), the K-homology class $[\Delta_\mathrm{Neu}]$ is trivial.
\end{ex}

The result can be extended to boundary conditions with even less locality. However, this comes at the expense of changing the total space. Let $D_e$ be a boundary condition for $D$. Let $A$ denote the $C^*$-algebra generated by all $f\in C^\infty_c(\olO)$ with $f\cdot\dom(D_e)\subseteq\dom(D_e)$. Since $A$ is commutative, by Gelfand duality it is isomorphic to $C_0(X_e)$ for some space $X_e$: $A \cong C_0(X_e)$. \cref{MultInt} implies that $A$ contains $C_0(\Omega^\circ)$ as an ideal, and by definition we have $A \subset C_0(\olO)$. Therefore, $X_e$ contains $\Omega^\circ$ as an open subspace, and the inclusion $C_0(X_e) \to C_0(\olO)$ in turn defines a surjection $\olO\to X_e$. Hence, geometrically, $X_e$ arises through quotienting $\dO$ in a way dictated by the boundary condition $D_e$.

\begin{prop}
  In the setting of \cref{ExistencKHomClasses}, assume that the boundary condition $D_e$ is semi-regular, and let $X_e$ be as above. Then, 
  $$
    \left( L^2(\olO), \rho, D_e\left( D_e^*D_e + I \right)^{-\sfrac{1}{2}} \right)
  $$
  is a relative Fredholm module over $(X_e,X_e\setminus\Omega^\circ)$. If $D_e$ is also regular and self-adjoint, then it defines an absolute Fredholm module over $X_e$. If $D_e$ is odd/multigraded, then this is again an odd/multigraded Fredholm module.
\end{prop}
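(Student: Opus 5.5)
The plan is to run the proof of \cref{ClassesNonlocal} again, with the representation $\rho$ of $C_0(\olO)$ restricted along the inclusion $A\cong C_0(X_e)\subseteq C_0(\olO)$. So $C_0(X_e)$ acts on $L^2(\Omega;E)$ by multiplication operators. We must check three things. First, $F_e=D_e(D_e^*D_e+I)^{-1/2}$ is pseudo-local for all of $C_0(X_e)$. Second, $(I-F_e^*F_e)\rho(g)$ and $(F_e-F_e^*)\rho(g)$ are compact for $g\in C_0(X_e\setminus(X_e\setminus\Omega^\circ))=C_0(\Omega^\circ)$. Third, in the regular self-adjoint case, these two compactness statements hold for all $g\in C_0(X_e)$.

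For pseudo-locality, the argument is as follows. The results of \cref{SectionFormalPseudolocal}, specifically \cref{CommutatorBasicLemma}, \cref{CommutatorResolvent} and \cref{FormalSobolevStructure}, and then \cref{BdTComm}, are used only through the following facts about a real-valued $f\in C^\infty_c(\olO)$:
\begin{itemize}
\item $f\cdot\dom(D_e)\subseteq\dom(D_e)$;
\item semi-regularity;
\item multiplication by $f$ is bounded on $\cH^\alpha(D_e)$, $0\le\alpha\le1$, which holds by \cref{LemmaMultFunc}.
\end{itemize}
So, exactly as in \cref{ClassesNonlocal}, $[f,F_e]$ is compact for every generator $f$ of $A$.

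Some care is needed on one point. In \cref{CommutatorBasicLemma} and \cref{CommLowerOrder}.\ref{CommLowerOrder_b}, locality entered through the cutoff $\psi$ with $\psi f=f$ and through $[f,T^*]$. For \cref{CommLowerOrder} the cutoff can be avoided: apply \cref{CorNegOrder} directly to $[f,F_e]f'$-type factorizations, or simply note that the set of $f\in C_0(\olO)$ with $[f,F_e]$ compact is a closed $*$-subalgebra. Since the generators may be taken real-valued, this set contains $A$. Hence $[\rho(g),F_e]$ is compact for all $g\in C_0(X_e)$.

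For the relative conditions, \cref{MultInt} shows that $C_0(\Omega^\circ)$ is the ideal of $A$ corresponding to $\Omega^\circ\subseteq X_e$. For $g\in C_0(\Omega^\circ)$, the compactness of $(I-F_e^*F_e)g$ and $(F_e-F_e^*)g$ is exactly part i) of \cref{ExistencKHomClasses}. This uses \cref{CorNegOrder} and \cref{BdTLowerOrderPerturbation} with $D_d=D_e^*$, and needs no regularity.

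In the regular self-adjoint case, $F_e=F_e^*$. Also, $I-F_e^2=(D_e^2+I)^{-1}$ extends to a bounded map $\cH^{-2}(D_e)\to L^2$, so \cref{CorNegOrder} gives compactness of $(I-F_e^2)f$ for all $f\in C_0^\infty(\olO)$, and hence by density for all $g\in C_0(X_e)\subseteq C_0(\olO)$. Here \cref{CorNegOrder} is applicable because $D_e$ is semi-regular. Finally, if $D_e$ is odd or multigraded, $F_e$ inherits this property as in the proof of \cref{ExistencKHomClasses}, while the multiplication operators are even and commute with the multigrading.

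I expect the main obstacle to be confirming that the pseudo-locality machinery of \cref{SectionFormalPseudolocal} really needs only domain-preservation by the single function $f$, and not locality for auxiliary cutoffs. This is the same point already implicitly used in \cref{ClassesNonlocal}, and I would address it by rechecking \cref{CommutatorBasicLemma} case b) with $f$ itself in place of any auxiliary cutoff.
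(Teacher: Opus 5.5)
Your outline follows the paper's own route: its proof only says the argument is analogous to \cref{ExistencKHomClasses} and \cref{ClassesNonlocal}. The relative, absolute and grading parts of your proposal are fine. The gap is in the passage from generators to all of $C_0(X_e)$, and both of your remarks there fail.

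\emph{The generators cannot in general be taken real-valued.} If $f$ preserves $\dom(D_e)$, neither $\bar f$ nor $\mathrm{Re} f$ need do so. The machinery of \cref{SectionFormalPseudolocal}, however, is written for real $f$. For instance, in \cref{CommutatorBasicLemma} the identity $\langle [f,D_e^*D_e]u,v\rangle=\langle [f,D_e]u,D_ev\rangle-\langle D_eu,[f,D_e]v\rangle$ uses $\bar f v\in\dom(D_e)$.

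\emph{The set $\{g\colon [g,F_e]\ \text{compact}\}$ is a closed subalgebra, but not automatically $*$-closed.} Since $[\bar g,F_e]=-[g,F_e^*]^*$, closure under conjugation would require control of commutators with $F_e^*$, the bounded transform of $D_e^*$, and nothing is assumed about it.

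This is not cosmetic; the claim as stated can fail. Take $\Dirac=\left(\begin{smallmatrix}0&-2\partial_z\\ 2\partial_{\bar z}&0\end{smallmatrix}\right)$ on the unit disk.
\begin{itemize}
\item Let $D^+_e$ be $2\partial_{\bar z}$ with boundary values in $\overline{\mathrm{span}}\{e^{in\theta}\colon n<0\}$. By Green's formula its adjoint is $-2\partial_z$ with boundary values in $\{n\ge 1\}$. Hence $\hat D=\left(\begin{smallmatrix}0&(D^+_e)^*\\ D^+_e&0\end{smallmatrix}\right)$ is self-adjoint and regular.
\item Now set $D_e=\left(\begin{smallmatrix}0&D^-_{\min}\\ D^+_e&0\end{smallmatrix}\right)$. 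It is semi-regular, and $\bar z$ preserves $\dom(D_e)$, so $A=C(\bar{\mathbb D})$ and $X_e=\bar{\mathbb D}$.
\item $F_e$ and the bounded transform of $\hat D$ have the same lower-left entry $G^+$. So pseudo-locality of $F_e$ over $C(\bar{\mathbb D})$ would make $[g,G^+]$ compact for every $g$. Then the bounded transform of $\hat D$ would be pseudo-local over $\bar{\mathbb D}$.
\item It would therefore define a class in $K_0(\bar{\mathbb D})$ mapping to $[\Dirac]\neq 0$ in $K_0(\bar{\mathbb D},S^1)$ (by part c) and excision). This is impossible, since $K_0(\bar{\mathbb D})\to K_0(\bar{\mathbb D},S^1)$ is zero, as in \cref{ExAPSChirality}.
\end{itemize}
So $A$ must be generated by real-valued domain-preserving functions, equivalently by those $f$ for which $f$ and $\bar f$ both preserve $\dom(D_e)$. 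The closed subalgebra these generate is then automatically a $C^*$-algebra. The paper's one-line proof passes over the same point.

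Separately, your handling of the cutoff does not work.
\begin{itemize}
\item The subalgebra remark only propagates compactness from the generators to $A$; it says nothing about a single generator.
\item \cref{CorNegOrder} does give $[f,F_e]f'$ compact for any $f'\in C^\infty_c(\olO)$. But for $f'=1$ near $\supp f$ the remainder is $[f,F_e](1-f')=f[f',F_e]$. Handling it needs $[f',F_e]$ to be of negative order, i.e.\ $f'$ itself must preserve $\dom(D_e)$.
\item The cutoff sits in \cref{CommLowerOrder}, not in \cref{CommutatorBasicLemma}.b), which uses none. So your closing plan to recheck \cref{CommutatorBasicLemma}.b) looks in the wrong place.
\end{itemize}

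For real $f$ this is repairable:
\begin{enumerate}
\item By semi-regularity, if $g_n\to g$ in $C^d$ with supports in a fixed compact, then $g_nu\to gu$ in the graph norm. Since $\dom(D_e)$ is closed in that norm, domain-preserving functions are closed under such limits.
\item By polynomial approximation, $\phi(f)$ then preserves $\dom(D_e)$ for every smooth $\phi$ with $\phi(0)=0$.
\item Choose $\chi_\varepsilon,\phi_\varepsilon$ vanishing near $0$ with $\phi_\varepsilon\chi_\varepsilon=\chi_\varepsilon$ and $|\chi_\varepsilon(t)-t|\le 2\varepsilon$. Run the argument of \cref{CommLowerOrder} for $\chi_\varepsilon(f)$ with cutoff $\phi_\varepsilon(f)$.
\item Then $[f,F_e]$ is the norm limit of the compact operators $[\chi_\varepsilon(f),F_e]$, hence compact.
\end{enumerate}
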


\begin{proof}
  The proof is entirely analogous to those of \cref{ExistencKHomClasses} and \cref{ClassesNonlocal}.
\end{proof}

\begin{ex}[APS-boundary condition]\label{ex_aps_collapse}
  Assume that $\olO$ is spin, and $\dO$ compact. It is shown in \cite[Proposition~4.7]{Battisti2017} that any function $f\in C^\infty_c(\olO)$ that preserves $\dom(\Dirac_\mathrm{APS})$ is locally constant on the boundary. Hence, the APS-boundary condition defines a class in the $K$-homology of the space obtained by collapsing each component of $\dO$ to a point. See also \cite[~Appendix A]{BDT1989}.
\end{ex}

\begin{ex}[Transmission boundary condition]
  Recall the transmission boundary condition from \cref{ex_transmission}. It is regular, self-adjoint, odd in even dimensions, but not local. A smooth function $f\in C^\infty_c(\olO)$ preserves $\dom(\Dirac_T)$ if and only if $f|_{\Sigma_1} = f|_{\Sigma_2}$. It follows that the space $X_T$ can be canonically identified with $M$. Hence, 
  $$
     \left( L^2(\olO), \rho, \Dirac_T\left( \Dirac_T^2 + I \right)^{-\sfrac{1}{2}} \right)
  $$
  is an absolute Fredholm module over $M$. The identification of $L^2(\olO)$ with $L^2(M)$ by gluing together along $\Sigma$ is compatible with the respective $C_0(M)$-representations, and it is not hard to see that $\Dirac_T$ and $\Dirac_M$ agree under this identification. Hence,
  \[
    [\Dirac_T] = [\Dirac_M] \in K_*(M) .\qedhere
  \]
\end{ex}

\section{Further questions}\label{sec_questions}

We discuss a few questions and possible directions for future inquiry. The first points concern concrete examples (or the current shortage thereof), while the later ones target a more general understanding of the absolute classes defined by regular boundary conditions.

\begin{enumerate}
\item\label{bullet_abs_rel} On a closed connected even-dimensional manifold $M$ it holds that
$$
[D^\chi] = \chi(M) \cdot i_*(\mathbf{1}) \in K_0(M),
$$ 
where $i_*(\mathbf{1})$ is the image of a generator of $K_0(\mathrm{pt})$ under the inclusion of a point. Thus the K-homology class of the Euler characteristic operator contains no information beyond its index. On a non-compact manifold without boundary, the class $[D^\chi]$ vanishes. This was proved by Rosenberg \cite{Rosenberg1999}. To what extent is this true for manifolds with boundary, i.e. are $[D^\chi_{\abs}]$ and $[D^\chi_{\rel}]$ in the image of the inclusion of a point, and vanish in the non-compact case?

\item\label{bullet_chiral} In \cref{ex_khom_chiral} we have noted that on odd-dimensional spin manifolds, the chiral boundary conditions define an even-dimensional absolute $K$-homology class 
\begin{equation}\label{offdiag_chiral}
  \left[  \begin{pmatrix}
    0 & \Dirac_-  \\ \Dirac_+ & 0
  \end{pmatrix} \right] \; \in \; K_0(\olO).
\end{equation}
If $\olO$ is compact, the index of this operator is zero \cite[Section~8.5]{BaerBallmann2012}. Hence, it is natural to ask whether the entire $K$-homology class \eqref{offdiag_chiral} is zero. If it is actually non-trivial, this could have interesting consequences for the study of positive-scalar-curvature metrics on $\olO$. Specifically, if $\olO$ has positive scalar curvature and $\dO$ non-negative mean curvature, this boundary condition is coercive, whence its $K$-homology class admits a lift to $K_1(D^*(\olO))$ (see \cref{ex_chiral_lift}). This lift would provide an analogue to the higher $\rho$-invariant in the boundaryless case. Since the higher $\rho$-invariant contains much information about the geometry of positive-scalar-curvature metrics (see e.g. \cite{Piazza2014,Xie2021}), it would be quite desirable to have a counterpart for manifolds with boundary. Hence, we ask: Is the class \eqref{offdiag_chiral} non-trivial? If so, can the associated lift be used to study metrics of positive scalar curvature and non-negative mean curvature on the boundary?

\item The examples of absolute classes we have discussed here range from conjecturally uninteresting beyond their index (absolute/relative boundary conditions, \cref{bullet_abs_rel}) over potentially trivial (chiral boundary conditions, \cref{bullet_chiral}) to known-to-be-trivial (Dirichlet/Neumann Laplacian, \cref{DirichletTrivial} and \cref{ex_neumann_laplace_trivial}). While we do not doubt the existence of non-trivial, interesting examples, we currently do not have any. Hence: What are interesting operators with suitable boundary conditions that gives rise to interesting absolute K-homology classes?

\item\label{bullet_AD} Assume that $D$ is an elliptic operator with $D_d$ and $D_e$ two self-adjoint, local, regular boundary conditions. Consider the exact sequence
\[
K_*(\dO) \to K_*(\olO) \to K_*(\olO,\dO).
\]
Both $[D_d],[D_e] \in K_*(\olO)$ map to the same element in $K_*(\olO,\dO)$ (Items~b) and~c) in \cref{ExistencKHomClasses}) and so $[D_d] - [D_e]$ lifts to a class in $K_*(\dO)$. Is there a concrete way to describe a Fredholm module over $\dO$ that represents this lift?

Agranovich--Dynin established a formula for the difference of Fredholm indices of $[D_d]$ and $[D_e]$, see \cite{AgranovichDynin}. Hence, we are asking for a $K$-homological lift of the Agranovich--Dynin formula.

\item Rempel--Schulze found an Agranovich--Dynin formula in the Boutet de Monvel calculus \cite[Section 3.2.1.3]{RempelSchulze}. They directly provide an elliptic pseudodifferential operator over the boundary of the manifold whose index is the difference of indices of the two boundary conditions. The obvious conjecture here is that the $K$-homological lift sought in \cref{bullet_AD} is exactly given by the class of this operator. Hence: How does one get absolute classes from the Boutet de Monvel calculus?

\item Spaces of (self-adjoint) unbounded operators can be topologized using the \emph{Riesz metric}, which is defined exactly such that the bounded transform $T\mapsto T(T^*T+I)^{-\sfrac{1}{2}}$ is an isometry to the space of bounded operators. The Riesz metric was already investigated in connection to $K$-theory and index theory \cite{Nicolaescu2000,Bandara2019,BoossBavnbeck2005}. For an elliptic operator $D$, we define $\mathrm{BC}(D)$ to be the set of all local, regular, and self-adjoint boundary conditions for $D$. Then, a Riesz-continuous path in $\mathrm{BC}(D)$ gives rise to an operator-homotopy of absolute cycles for $K_1(\olO)$. Hence, there is a well-defined map
$$
  \pi_0\left( \mathrm{BC}(D) \right) \longrightarrow K_1(\olO) .
$$
By viewing $D$ as one of the components of an odd operator, one gets an analogous map from path-components of local, regular boundary conditions for $D$ to $K_0(\olO)$. We note that the image of these maps would be described by the hypothetical $K$-homological Agranovich--Dynin formula of \cref{bullet_AD} above. Therefore: Can one use $K$-homology and a hypothetical $K$-homological Agranovich--Dynin formula to describe spaces of local, regular boundary conditions?

More generally, in \cref{SecNonlocal} we associate to a regular, non-local boundary condition $D_e$ a space $X_e$ so that $D_e$ defines a $K$-homology class over $X_e$. If $D_e$ and $D_d$ are two regular boundary conditions connected by a Riesz-continuous path, how do the spaces $X_e$ and $X_d$ relate? Are they homotopy equivalent to each other? Can these spaces and the resulting $K$-homology classes over them be used to describe spaces of regular, not-necessarily-local boundary conditions?
\end{enumerate}

\printbibliography[heading=bibintoc]

\end{document}